\numberwithin{equation}{section}
\newcommand{\nc}{\newcommand}
\nc{\on}{\operatorname}
\newcommand{\spa}{\vspace{0.3ex}\noindent}
\newcommand{\spaa}{\vspace{0.5ex}\noindent}
\newtheorem{theorem}{Theorem}[section]
\newtheorem{proposition}[theorem]{Proposition}
\newtheorem{lemma}[theorem]{Lemma}
\newtheorem{corollary}[theorem]{Corollary}
\theoremstyle{definition}
\newtheorem{definition}[theorem]{Definition}
\newtheorem{notation}[theorem]{Notation}
\newtheorem{example}[theorem]{Example}
\newtheorem{remark}[theorem]{Remark}
\newtheorem{conjecture}[theorem]{Conjecture}
\nc{\RR}{\mathrm{R}}
\nc{\LL}{\mathrm{L}}
\newcommand{\RC}{{\rm C}}
\newcommand{\C}{{\mathbb{C}}}
\newcommand{\N}{{\mathbb{N}}}
\newcommand{\R}{{\mathbb{R}}}
\newcommand{\Z}{{\mathbb{Z}}}
\newcommand{\BBV}{\mathbb{V}}
\newcommand{\Fil}{\operatorname{F}}
\def\phi{{\varphi}}
\def\epsilon{\varepsilon}
\newcommand{\cor}{{\bf k}}
\def\sha{\mathscr{A}}
\def\shb{\mathscr{B}}
\def\shhb{{\mathscr{B}}}
\def\shc{\mathscr{C}}
\def\shhc{{\mathscr{C}}}
\def\shd{\mathscr{D}}
\def\shhd{{\mathscr{D}}}
\def\shf{\mathscr{F}}
\def\shh{\mathscr{H}}
\def\shi{\mathscr{I}}
\def\shj{\mathscr{J}}
\def\shl{\mathscr{L}}
\def\shm{\mathscr{M}}
\def\sho{\mathscr{O}}
\def\shs{\mathscr{S}}
\def\sht{\mathscr{T}}
\def\shu{\mathscr{U}}
\renewcommand{\ker}{\operatorname{Ker}}
\DeclareMathOperator{\im}{Im}
\newcommand{\into}{\hookrightarrow}
\newcommand{\vvert}{\vert\mspace{-2mu}\vert}
\renewcommand{\to}[1][]{\xrightarrow[]{#1}}
\newcommand{\isoto}[1][]{\xrightarrow[#1]%
{{\raisebox{-.6ex}[0ex][-.6ex]{$\mspace{1mu}\sim\mspace{2mu}$}}}}
\newcommand{\tto}{\rightrightarrows}
\newcommand{\muHom}[1][]{\mathrm{Hom}^\mu_{\raise1.5ex\hbox to.1em{}#1}}
\newcommand{\Hom}[1][]{\mathrm{Hom}_{\raise1.5ex\hbox to.1em{}#1}}
\newcommand{\RHom}[1][]{\RR\mathrm{Hom}_{\raise1.5ex\hbox to.1em{}#1}}
\newcommand{\Ext}[2][]{\mathrm{Ext}_{\raise1.5ex\hbox to.1em{}#1}^{#2}}
\renewcommand{\hom}[1][]{{\mathscr{H}\mspace{-4mu}om}_{\raise1.5ex\hbox to.1em{}#1}}
\newcommand{\rhom}[1][]{{\RR\mathscr{H}\mspace{-3mu}om}_{\raise1.5ex\hbox to.1em{}#1}}
\newcommand{\rhomc}[1][]
{{\mathscr{H}\mspace{-3mu}om}^*_{\raise1.5ex\hbox to.1em{}#1}}
\newcommand{\ext}[2][]{{\mathscr{E}xt}_{\raise1.5ex\hbox to.1em{}#1}^{#2}}
\newcommand{\Tor}[2][]{\mathrm{Tor}^{\raise1.5ex\hbox to.1em{}#1}_{#2}}
\newcommand{\tens}[1][]{\mathbin{\otimes_{\raise1.5ex\hbox to-.1em{}{#1}}}}
\newcommand{\Endo}[1][]{\mathrm{End}_{\raise1.5ex\hbox to.1em{}#1}}
\newcommand{\Aut}[1][]{\mathrm{Aut}_{\raise1.5ex\hbox to.1em{}#1}}
\newcommand{\sect}{\Gamma}
\newcommand{\rsect}{\mathrm{R}\Gamma}
\newcommand{\Cov}{\mathrm{Cov}}
\newcommand{\oim}[1]{{#1}_*}
\newcommand{\eim}[1]{{#1}_!}
\newcommand{\roim}[1]{\RR{#1}_*}
\newcommand{\reim}[1]{\RR{#1}_!}
\newcommand{\opb}[1]{#1^{-1}}
\newcommand{\opbsal}[1]{#1^{-1}_{\rm sal}}
\newcommand{\oimsal}[1]{#1_{\rm sal*}}
\newcommand{\opbdag}[1]{#1^{\dag}}
\newcommand{\opbddag}[1]{#1^{\ddag}}
\newcommand{\epb}[1]{#1^{!}}
\newcommand{\oeim}[1]{{#1}_{*!}}
\newcommand{\eqdot}{\mathbin{:=}}
\newcommand{\cl}{\colon}
\newcommand{\scbul}{{\,\raise.4ex\hbox{$\scriptscriptstyle\bullet$}\,}}
\newcommand{\tw}[1]{\widetilde{#1}}
\newcommand{\ol}{\overline}
\newcommand{\bl}{\bigl(}
\newcommand{\br}{\bigr)}
\newcommand{\lp}{{\rm(}}
\newcommand{\rp}{{\rm)}}
\newcommand{\SP}{\rm SP}
\newcommand{\Cc}{{\C\text{-c}}}
\newcommand{\Rc}{{\R\text{-c}}}
\newcommand{\ba}{\begin{array}}
\newcommand{\ea}{\end{array}}
\newcommand{\bnum}{\begin{enumerate}[{\rm(i)}]}
\newcommand{\enum}{\end{enumerate}}
\newcommand{\banum}{\begin{enumerate}[{\rm(a)}]}
\newcommand{\eanum}{\end{enumerate}}
\newcommand{\eq}{\begin{eqnarray}}
\newcommand{\eneq}{\end{eqnarray}}
\newcommand{\eqn}{\begin{eqnarray*}}
\newcommand{\eneqn}{\end{eqnarray*}}
\nc{\Proof}{\begin{proof}}
\nc{\QED}{\end{proof}}
\def\rop{{\rm op}}
\def\Op{{\rm Op}}
\def\holreg{{\rm hol reg}}
\def\sa{{\rm sa}}
\def\sb{{\rm sb}}
\def\sal{{\rm sal}}
\def\sbl{{\rm sbl}}
\def\Msa{{M_{\sa}}}
\def\Msb{{M_{\sb}}}
\def\Msal{{M_{\sal}}}
\def\Msbl{{M_{\sbl}}}
\def\Nsal{{N_{\sal}}}
\def\Nsa{{N_{\sa}}}
\def\Nsb{{N_{\sb}}}
\def\Nsal{{N_{\sal}}}
\def\Nsbl{{N_{\sbl}}}
\def\Nsalf{{N^f_{\sal}}}
\def\Nf{{N^f}}
\def\Lh{{L^h}}
\def\Lg{{L^g}}
\def\Lsa{{L_{\sa}}}
\def\Xsa{{X_{\sa}}}
\def\Xsal{{X_{\sal}}}
\def\olXsa{{\ol X_{\sa}}}
\def\olXsal{{\ol X_{\sal}}}
\def\XRsal{{X^\R_{\sal}}}
\def\Usa{U_{\sa}}
\def\rhosa{\rho_{\rm sa}}
\def\rhosal{\rho_{\sal}}
\def\rhosl{\rho_{\rm sl}}
\def\fsbl{f_{\sbl}}
\DeclareMathOperator{\id}{id}
\newcommand{\Der}[1][]{\mathsf{D}^{#1}}
\newcommand{\Derb}{\Der[\mathrm{b}]}
\newcommand{\RD}{\mathrm{D}}
\newcommand{\rb}{{\mathrm b}}
\newcommand{\ub}{{\rm ub}}
\newcommand{\mon}{\Lambda}
\newcommand{\Psh}{\operatorname{PSh}}
\newcommand{\Fct}{\operatorname{Fct}}
\newcommand{\for}{\mathit{for}}
\newcommand{\Int}{{\rm Int}}
\newcommand{\Sol}{{\rm Sol}}
\newcommand{\dT}{{\dot{T}}}
\newcommand{\indlim}[1][]{\mathop{\varinjlim}\limits_{#1}}
\newcommand{\sindlim}[1][]{\smash{\mathop{\varinjlim}\limits_{#1}}\,}
\newcommand{\prolim}[1][]{\mathop{\varprojlim}\limits_{#1}}
\newcommand{\inddlim}[1][]{\mathop{``{\varinjlim}"}\limits_{#1}}
\nc{\eps}{\varepsilon}
\nc{\hs}{\hspace*}
\nc{\nn}{\nonumber}
\nc{\tM}{\widetilde{M}}
\nc{\h}{\mathbf{h}}
\nc{\tf}{\tilde{f}}
\nc{\codim}{\on{codim}}
\nc{\lh}{\mathscr{H}}
\nc{\bwr}{\mbox{\large{$\wr$}}}
\nc{\dTi}{\dT^{*,\mathrm{in}}}
\nc{\Cd}{\mathrm{C}}
\nc{\BX}[1][]{\shb_{#1\vert X}}
\nc{\BXsa}[1][]{\shb_{#1\vert \Xsa}}
\newcommand{\Dbt}{{\cal D} b^\tp}
\newcommand{\Db}{{\cal D} b}
\nc{\Fwhat}{\mathrm{F}_{\star}}
\nc{\Fi}{\mathrm{F}}
\nc{\Ftwo}{\mathrm{F}_{2}}
\nc{\Finf}{\mathrm{F}_{{\small \mspace{-4mu}\infty}}}
\nc{\Finfsa}{\mathrm{F}_{{\small \mspace{-4mu}\infty,\sa}}}
\nc{\RHFinf}{\mathrm{RHF}_{{\small \mspace{-4mu}\infty}}}
\nc{\RHFinfsa}{\mathrm{RHF}_{{\small \mspace{-4mu}\infty,\sa}}}
\newcommand{\tp}{{\rm tp}}
\newcommand{\Cinft}{{\shc}^{\infty,\tp}}
\newcommand{\Cinftt}{{\shc}^{\infty,{\tp\, st}}}
\newcommand{\Cinf}{\shc^\infty}
\newcommand{\Cin}[1][]{{\shc}^{\infty,{#1}}}
\newcommand{\Ot}{{\sho}^{\tp}}
\newcommand{\Ott}{{\sho}^{\tp\, st}}
\newcommand{\Ga}[1][]{{\shc}^{\infty,{\rm gev}(#1)}}
\newcommand{\Gb}[1][]{{\shc}^{\infty,{\rm gev}\{#1\}}}
\newcommand{\Gc}{{\shc}^{\infty,{\rm gev\, st}}}
\newcommand{\GG}{{\shc}^{\infty,{\rm gev}}}
\newcommand{\OGa}[1][]{{\sho}^{{\rm gev}(#1)}}
\newcommand{\OGb}[1][]{{\sho}^{{\rm gev}\{#1\}}}
\newcommand{\OG}{{\sho}^{\rm gev}}
\newcommand{\OGc}{{\sho}^{\rm gev\, st}}
\newenvironment{rouge}
{\relax\color{red}}
{\hspace*{.3ex}\relax}
\newcommand{\ber}{\begin{rouge}}
\newcommand{\er}{\end{rouge}}
\newenvironment{bleu}
{\relax\color{blue}}
{\hspace*{.3ex}\relax}
\newcommand{\beb}{\begin{bleu}}
\newcommand{\eb}{\end{bleu}}
\begin{document}
\author{St{\'e}phane  Guillermou  and Pierre Schapira}
\title{Construction of sheaves on the subanalytic site}
\maketitle
\tableofcontents

\chapter*{Introduction}
Let $M$ be a  real analytic manifold. The Grothendieck subanalytic topology on $M$, denoted $\Msa$, and the morphism of sites
 $\rhosa\cl M\to \Msa$, were introduced in~\cite{KS01}. 
Recall that the objects of the site  $\Msa$ are the relatively compact subanalytic 
open subsets of $M$ and the coverings are, roughly speaking,  the finite coverings. 
In loc.\ cit.\ the authors use this topology to construct new sheaves which would have no meaning on the usual topology, such as the sheaf 
$\Cinft_\Msa$ of $\Cinf$-functions with temperate growth  and the sheaf 
$\Dbt_\Msa$ of temperate distributions. 
On a complex manifold $X$, using the Dolbeault complexes, they constructed the sheaf $\Ot_\Xsa$ (in the derived sense) of holomorphic functions with temperate growth.
The last sheaf  is implicitly used in the solution of  the Riemann-Hilbert problem by Kashiwara~\cites{Ka80,Ka84} and is also extremely important in the study of  irregular holonomic $\shd$-modules (see~\cite{KS03}*{\S~7}).

\smallskip
In this paper, we shall modify the preceding construction in order to obtain sheaves 
of $\Cinf$-functions with a given  growth at the boundary. For example, functions 
whose growth at the boundary is bounded by a given  power of the distance (temperate growth of order $s\geq0$), or by an exponential of a given power 
of the distance (Gevrey growth of order $s>1$), as well as their holomorphic counterparts. 
For that purpose, we have to refine the subanalytic topology and we introduce what we call the linear subanalytic topology, denoted 
$\Msal$. 

Let us describe the contents of this paper with some details.

\smallskip
In \textbf{Chapter~\ref{ShvSa1}} we construct the linear subanalytic topology on $M$. Denoting by $\Op_\Msa$ the category of open relatively compact subanalytic subsets of $M$, the  presite underlying the site $\Msal$ is the same as for $\Msa$, namely $\Op_\Msa$, 
but the coverings are the linear coverings. Roughly speaking, a finite family $\{U_i\}_{i\in I}$ is a linear covering of their union $U$  if there is a constant $C$ such that the distance of any $x\in M$ to
$M\setminus U$ is bounded by $C$-times the maximum of the distance
of $x$ to $M\setminus U_i$ ($i\in I$). (See Definition~\ref{def:regsit1}.) In this chapter, we also prove some technical results on linear coverings that we shall need in the course of the paper. 

{\bf Chapter~\ref{ShvSa2}}. Let $\cor$ be a commutative unital Noetherian ring with finite global dimension. One easily shows that a
presheaf $F$ of $\cor$-modules on $\Msal$ is a sheaf as soon as, for any open sets $U_1$ and $U_2$ 
such that $\{U_1,U_2\}$ is a linear covering of $U_1\cup U_2$,
the Mayer-Vietoris sequence 
\eq\label{MV0}
&&0\to F(U_1\cup U_2)\to F(U_1)\oplus F(U_2)\to F(U_1\cap U_2)
\eneq
 is exact. Moreover, if for any such a covering, the sequence 
 \eq\label{MV00}
&&0\to F(U_1\cup U_2)\to F(U_1)\oplus F(U_2)\to F(U_1\cap U_2)\to 0
\eneq
is exact, then the sheaf $F$ is $\sect$-acyclic, that is,  $\rsect(U;F)$  is concentrated in degree $0$ 
for all $U\in\Op_\Msa$. 

There is a natural morphism of sites 
$\rhosal\cl\Msa\to\Msal$ and we shall prove the two results  below (see
Theorems~\ref{th:rightadj} and~\ref{th:Lipbnd}):\\
(1) the functor $\roim{\rhosal}\cl\RD^+(\cor_\Msa)\to \RD^+(\cor_\Msal)$ admits a right adjoint $\epb{\rhosal}$,\\
(2) if  $U$ has a Lipschitz boundary, then the object $\roim{\rhosal}\cor_U$
is concentrated in degree $0$.

Therefore,  if a presheaf $F$ on $\Msa$ has the property that the Mayer-Vietoris
sequences~\eqref{MV00} are exact, it follows 
that $\rsect(U;\epb{\rhosal}F)$ is concentrated in degree $0$ and is isomorphic
to $F(U)$ for any $U$ with Lipschitz boundary. In other words, to a presheaf on $\Msa$ satisfying a natural condition, we are able to associate an object of the derived category of sheaves on $\Msa$ which has the same sections as $F$ on any Lipschitz open set. This construction is in particular used by Gilles Lebeau~\cite{Le14} 
who obtains for $s\leq0$ the ``Sobolev sheaves $\shh^s_\Msa$'', objects of $\RD^+(\C_\Msa)$ with the property that if $U\in\Op_\Msa$ has a Lipschitz boundary, then $\rsect(U;\shh^s_\Msa)$ is concentrated in degree $0$ and coincides with the classical Sobolev space $H^s(U)$. 

The fact that Sobolev sheaves are objects of derived categories and are not concentrated in degree $0$ shows that when dealing with spaces of functions or distributions defined on open subsets which are not regular (more precisely, which have not a Lipschitz boundary), it is natural to replace the notion of a space by that of a complex of spaces.

\smallskip
In {\bf Chapter~\ref{ShvSa3}}, we briefly study the natural operations on the linear subanalytic sites. 
The main difficulty is that a morphism $f\cl M\to N$ of real analytic manifolds does not induce a morphism
of the linear subanalytic sites. This forces us to treat  separately the direct or inverse images of sheaves for closed embeddings and for submersive maps. 

\smallskip
In {\bf Chapter~\ref{ShvSa4}} we construct some sheaves on $\Msal$.
We construct the sheaf $\Cin[s]_\Msal$   of
$\Cinf$-functions with growth of  order $s\geq0$ at the boundary and the sheaves 
$\Ga[s]_\Msal$ and  $\Gb[s]_\Msal$ of $\Cinf$-functions 
 with Gevrey growth of type $s>1$ at the boundary. 
 By using a refined cut-off lemma (which follows from a refined partition of unity due to H\"ormander~\cite{Ho83}),  we prove that these sheaves are
 $\sect$-acyclic. Applying the functor $\epb{\rhosal}$, we get 
new sheaves (in the derived sense) on $\Msa$ whose sections on open sets with
Lipschitz boundaries are concentrated in degree $0$. 
Then, on a complex manifold $X$, by considering the Dolbeault complexes of 
the sheaves of $\Cinf$-functions  considered  above, we obtain new sheaves of holomorphic functions  with various growth.

As already mentioned,  Sobolev sheaves are treated in a separate paper by G.~Lebeau in~\cite{Le14}. 
 
Finally, in {\bf Chapter~\ref{ShvSa5}}, we apply these results to endow the sheaf
$\Ot_\Xsa$  with a filtration (in the derived sense) that we call the $L^\infty$-filtration. 

Denote by $\Fi\shd_\Msa$ the sheaf $\shd_\Msa\eqdot\eim{\rhosa}\shd_M$ of differential operators
on $\Msa$, endowed with its natural filtration and denote by $\Fi\shd_\Msal$ the sheaf 
$\shd_\Msal\eqdot\oim{\rhosal}\shd_\Msa$ endowed with its natural filtration. 
For $\sht=M,\Msa,\Msal$, the category $\md[\Fi\shd_\sht]$ of filtered $\shd$-modules on $\sht$ is quasi-abelian 
in the sense of~\cite{Sn99} and its derived category $\RD^+(\Fi\shd_\sht)$ is well-defined.
We shall use here the recent results of~\cite{SSn13} which give an easy description of these derived categories
and we construct a right adjoint $\epb{\rhosal}$ to the derived functor 
$\roim{\rhosal}\cl \RD^+(\Fi\shd_\Msa)\to \RD^+(\Fi\shd_\Msal)$. 

By considering  the sheaves $\Cin[s]_\Msal$ ($s\geq0$) we obtain the 
filtered sheaf   $\Finf\Cinft_\Msal$.
Then, on a complex manifold $X$, by considering the Dolbeault complex of 
this filtered sheaf, we obtain the filtration $\Finf \Ot_\Xsa$ on the sheaf $\Ot_\Xsa$.

Recall now the Riemann-Hilbert correspondence. Let $\shm$ be a regular 
holonomic $\shd_X$-module and let $G\eqdot\rhom[\shd_X](\shm,\sho_X)$ be the perverse sheaf of its holomorphic solutions. Kashiwara's theorem of~\cite{Ka84} may be formulated by saying that the natural morphism 
$\shm\to\opb{\rhosa}\rhom(G,\Ot_\Xsa)$ is an isomorphism. 
Replacing the sheaf $\Ot_\Xsa$ with its filtered version 
$\Finf\Ot_\Xsa$, we define the filtered Riemann-Hilbert functors $\RHFinfsa$ and 
$\RHFinf$ by the formulas 
\eqn
\RHFinfsa\cl\RD_{\holreg}^+(\shd_X)&\to&\RD^+(\Fi\shd_\Xsa),\\
\hspace{5ex}\shm&\mapsto&\Fi\rhom(\Sol(\shm), \Finf\Ot_\Xsa),\\
\RHFinf= \opb{\rhosa}\RHFinfsa\cl \RD_{\holreg}^+(\shd_X)&\to&\RD^+(\Fi\shd_X)
\eneqn
and we prove that  the composition
\eqn
&& \Derb_{\holreg}(\shd_X)\to[ \RHFinf]\RD^+(\Fi\shd_X)\to[\for] \RD^+(\shd_X)
\eneqn 
is isomorphic to the identity functor.
In other words, any regular holonomic $\shd_X$-module $\shm$ can be
{\em functorially} endowed  with a filtration $\Finf\shm$, in the derived sense. 

We also briefly introduce an $L^2$-filtration better suited to apply H\"or\-man\-der's theory (see~\cite{Ho65}) 
and present some open problems.

\vspace{1.ex}\noindent
{\bf Acknowledgments}\\
An important part of this paper has been written during two stays of the  authors at
the Research Institute for 
Mathematical Sciences at Kyoto University in 2011 and 2012 and we wish to
thank this institute for its 
hospitality. During our stays we had, as usual, extremely enlightening
discussions with Masaki Kashiwara and we warmly thank him here. 

We have also been very much stimulated by the interest of Gilles Lebeau for
sheafifying the classical Sobolev spaces
and it is a pleasure to thank him here. 

Finally  Theorem~\ref{th:ParuTh} plays an essential role in the whole paper and we are extremely grateful to  Adam Parusinski who has given a proof of this result. 

\chapter{Subanalytic topologies}\label{ShvSa1}

\section{Linear coverings}

\subsubsection*{Notations and conventions}
We shall mainly follow the notations of~\cite{KS90,KS01} and~\cite{KS06}. 

In this paper, unless otherwise specified, a manifold means a real analytic manifold.
We shall freely use the theory of subanalytic sets, due to Gabrielov and Hironaka, after the pioneering work of Lojasiewicz. A short presentation of this theory may be found in~\cite{BM88}.

For a subset $A$ in a topological space $X$, $\ol A$ denotes its closure,
$\Int\, A$ its interior and $\partial A$ its boundary,
$\partial A=\ol A\setminus \Int\, A$.

Recall that given two metric spaces $(X,d_X)$ and $(Y,d_Y)$, a  function $f\cl X\to Y$ is Lipschitz if there exists a constant $C\geq0$ such that 
$d_Y(f(x),f(x'))\leq C\cdot d_X(x,x')$ for all $x,x'\in X$.

\eq\label{eq:distonM}
&&\left\{\parbox{60ex}{
All along this paper, if $M$ is a real analytic manifold, we choose a distance $d_M$ on $M$ such that, for any $x\in M$ and any local
chart $(U,\phi\cl U \hookrightarrow \R^n)$ around $x$, there exists a
neighborhood of $x$ over which $d_M$ is Lipschitz equivalent to the pull-back
of the Euclidean distance by $\phi$. If there is no risk of confusion, we write $d$ instead of $d_M$.
}\right.
\eneq
In the following, we will adopt the convention
\eq\label{eq:dist-au-vide}
d(x,\emptyset) = D_M + 1, \qquad \text{for all $x\in M$},
\eneq
where $D_M = \sup \{d(y,z);\; y,z\in M\}$.  In this way we avoid distinguishing
the special case where $M = \bigcup_{i\in I}U_i$ in~\eqref{eq:resit} below
(which can happen if $M$ is compact). 

\subsubsection*{The site $\Msa$}

The subanalytic topology was introduced in~\cite{KS01}.

Let $M$ be a real analytic manifold and denote by $\Op_{\Msa}$ the category of relatively compact 
subanalytic open subsets of 
$M$, the morphisms being the inclusion morphisms. Recall that one endows $\Op_{\Msa}$ with a
 Grothendieck topology by saying that a 
family $\{U_i\}_{i\in I}$ of objects of $\Op_{\Msa}$  is a covering of $U\in\Op_{\Msa}$ if $U_i\subset U$ for
 all $i\in I$ and there exists a finite subset $J\subset I$ such that
 $\bigcup_{j\in J}U_j=U$.
It follows from the theory of subanalytic sets that in this situation there exist a constant $C>0$ and a positive integer $N$ such that  
\eq\label{eq:resitsuba}
&&d(x,M\setminus U)^N\leq C\cdot(\max_{j\in J}  d(x,M \setminus U_j)).
\eneq

One shall be aware that if $U$ is an open subset of $M$, we may endow it with the subanalytic topology $\Usa$,
but this topology does not coincide in general with the topology induced by $M$.

We denote by $\rhosa\cl M\to\Msa$ (or simply $\rho$) the natural morphism of
sites.

\subsubsection*{The site $\Msal$}

\begin{definition}\label{def:regsit1}
Let $\{U_i\}_{i\in I}$ be a finite family in $\Op_{\Msa}$.
We say that this family is $1$-regularly situated 
if there is a constant $C$ such that for any $x\in M$
\eq\label{eq:resit}
&&d(x,M\setminus\bigcup_{i\in I}U_i)\leq C\cdot
\max_{i\in I}  d(x,M \setminus U_i).
\eneq
\end{definition}
Of course, this definition does not depend on the choice of the distance $d$. 

When $M = \R^n$ and $U \subset \R^n$ we have $d(x,M\setminus U) =
d(x,\partial U)$, for all $x\in U$.  In general we have the following comparison
result.
\begin{lemma}\label{lem:distbord}
Let $U\in \Op_{\Msa}$ be such that $\partial U$ is non empty (that is, $U$ is
not a union of connected components of $M$).  Then there exists $C>0$ such that
for all $x\in U$ we have
$$
d(x,M\setminus U) \leq d(x,\partial U) \leq C\, d(x,M\setminus U).
$$
\end{lemma}
\begin{proof}
The first inequality is clear and we prove the second one.  If it is false,
there exist $x_n \in U$, $n\in \N$, such that $d(x_n,\partial U)/ d(x_n,
M\setminus U) \to[n\to \infty] \infty$.  Since $\ol{U}$ is compact, up to taking
a subsequence we may assume that $x_n$ converges to a point $x \in \ol U$.  We
see easily that $x\in \partial U$.  We take a chart around $x$ as
in~\eqref{eq:distonM}.  Since $d_{\R^n}(y,\partial U) = d_{\R^n}(y, M\setminus
U)$ for $y$ in the chart near $x$, we can not have $d(x_n,\partial U)/ d(x_n,
M\setminus U) \to[n\to \infty] \infty$, which proves the result.
\end{proof}

\begin{example}
Let $U_1,U_2 \in \Op_{\Msa}$ be two disjoint open sets. We prove that
$\{U_1,U_2\}$ is $1$-regularly situated.  We set $U = U_1 \cup U_2$.  We argue
as in the proof of Lemma~\ref{lem:distbord} and assume by contradiction that
there exists a sequence $x_n \in U$, $n\in \N$, such that $d(x_n, M\setminus U)
/ \max_{i=1,2}\{d(x_n,M\setminus U_i)\}$ converges to $\infty$.  We may as well
assume $x_n \in U_1$ for all $n$.  Up to taking a subsequence we may assume that
$x_n$ converges to a point $x \in \ol{U_1}$.  We see that $x\in \partial U_1$.
We take a chart around $x$ as in~\eqref{eq:distonM}.  Then, for $n\gg 0$,
$d_{\R^d}(x_n, M\setminus U_1)$ is realized by a point $y_n \in \partial U_1$.
Since $U_2 \cap \partial U_1 = \emptyset$ we have in fact $y_n\in M\setminus U$.
Hence $d_{\R^d}(x_n,M\setminus U_1) = d_{\R^d}(x_n, M\setminus U)$.  Since $d$
is Lipschitz equivalent to $d_{\R^d}$, the quotient $d(x_n, M\setminus U) /
\max_{i=1,2}\{d(x_n,M\setminus U_i)\}$ remains bounded and we have a
contradiction.
\end{example}

\begin{example}
On $\R^2$ with coordinates $(x_1,x_2)$ consider the open sets:
\eqn
&&U_1=\{(x_1,x_2);\; x_2>-x_1^2,\, x_1>0\},\\
&&U_2=\{(x_1,x_2);\; x_2<x_1^2,\, x_1>0\},\\
&&U_3=\{(x_1,x_2);\; x_1>-x_2^2,\, x_2>0\}.
\eneqn
Then $\{U_1,U_2\}$ is not $1$-regularly situated. Indeed, set $W\eqdot U_1\cup U_2=\{x_1>0\}$. 
Then, if $x=(x_1,0), x_1>0$, 
$d(x,\R^2\setminus W)=x_1$ and $d(x,\R^2\setminus U_i)$ ($i=1,2$) is less that $x_1^2$. \\
On the other hand $\{U_1,U_3\}$ is $1$-regularly situated.
Indeed,  
\eqn
&&d(x,\R^2\setminus (U_1\cup U_3))
\leq \sqrt 2  \max(d(x,\R^2\setminus U_1),d(x,\R^2\setminus U_3)).
\eneqn
\end{example}

\begin{definition}\label{def:linearcov}
A linear covering of $U$ is a small family $\{U_i\}_{i\in I}$ of objects of $\Op_{\Msa}$ such that 
$U_i\subset U$ for all $i\in I$ and 
\eq\label{eq:lcoverings} 
&&\left\{\parbox{60ex}{
there exists a finite subset $I_0\subset I$ such that the  family $\{U_i\}_{i\in I_0}$ 
is $1$-regularly situated and $\bigcup_{i\in I_0}U_i=U$.
}\right.\eneq
\end{definition}
Let $\{U_i\}_{i\in I}$ and $\{V_j\}_{j\in J}$ be two families of objects of $\Op_\Msa$. Recall that one says that  
$\{U_i\}_{i\in I}$ is a refinement of $\{V_j\}_{j\in J}$ if for any $i\in I$,
there exists $j\in J$
 with $U_i\subset V_j$. 

\begin{proposition}\label{pro:linearcov=top}
The family of linear coverings satisfies the axioms
of Grothendieck topologies below \lp see~{\rm \cite[\S~16.1]{KS06}}\rp.
\\
{\rm COV1} $\{U\}$ is a covering of $U$, for any $U\in \Op_{\Msa}$. 
\\
{\rm COV2} If a covering $\{U_i\}_{i\in I}$  of $U$ is a refinement of a family 
$\{V_j\}_{j\in J}$ in $\Op_{\Msa}$ with $V_j\subset U$ for all $j\in J$, then $\{V_j\}_{j\in J}$ is a covering of $U$.  
\\
{\rm COV3} If $V\subset U$ are in $\Op_{\Msa}$ and $\{U_i\}_{i\in I}$ is a covering
of $U$, then $\{V\cap U_i\}_{i\in I}$ is a covering of $V$.  
\\
{\rm COV4} If $\{U_i\}_{i\in I}$ is a covering of $U$ and 
$\{V_j\}_{j\in J}$ is a small family in $\Op_\Msa$ with $V_j\subset U$ such that
$\{U_i \cap V_j\}_{j\in J}$ is a covering of $U_i$ for all $i\in I$, then
$\{V_j\}_{j\in J}$ is a covering of $U$.
\end{proposition}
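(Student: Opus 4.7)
The plan is to verify each axiom COV1--COV4 separately, in each case producing an explicit finite subfamily $J_0$ realizing the $1$-regular condition of Definition~\ref{def:regsit1}, starting from the finite subfamily $I_0$ given by the hypothesis. Throughout, I will use two elementary facts: (a) if $A\subset B$ in $M$, then $d(x,B)\leq d(x,A)$; and (b) $d(x,A\cup B)=\min(d(x,A),d(x,B))$, so $d(x,M\setminus(W\cap W'))=\min(d(x,M\setminus W),d(x,M\setminus W'))$.

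For COV1, the singleton $\{U\}$ is $1$-regularly situated with constant $C=1$. For COV2, given $I_0\subset I$ finite satisfying~\eqref{eq:resit} with constant $C$ for $\{U_i\}_{i\in I_0}$, I would choose for each $i\in I_0$ an index $j(i)\in J$ with $U_i\subset V_{j(i)}$ and set $J_0=\{j(i);\;i\in I_0\}$; then $\bigcup_{j\in J_0}V_j=U$ (using $V_j\subset U$), and the inclusion $U_i\subset V_{j(i)}$ gives $d(x,M\setminus U_i)\leq d(x,M\setminus V_{j(i)})$, so~\eqref{eq:resit} for $\{V_j\}_{j\in J_0}$ follows with the same constant $C$. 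For COV4, for each $i\in I_0$ pick a finite $J_i\subset J$ which makes $\{U_i\cap V_j\}_{j\in J_i}$ a $1$-regular cover of $U_i$ with constant $C_i$, and set $J_0=\bigcup_{i\in I_0}J_i$; using $d(x,M\setminus(U_i\cap V_j))\leq d(x,M\setminus V_j)$ and chaining the two $1$-regular inequalities yields~\eqref{eq:resit} for $\{V_j\}_{j\in J_0}$ with constant $C\cdot\max_{i\in I_0}C_i$ (finite because $I_0$ is finite).

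The step that requires genuine care is COV3. With $V\subset U$ and $\{U_i\}_{i\in I_0}$ a $1$-regular cover of $U$, I would take the same index set $I_0$ for the family $\{V\cap U_i\}$; covering is clear. For the estimate, pick an index $i^\ast\in I_0$ realizing $\max_{i\in I_0}d(x,M\setminus U_i)$ and split into two cases. If $d(x,M\setminus V)\geq d(x,M\setminus U_{i^\ast})$, then by (b) we have $d(x,M\setminus(V\cap U_{i^\ast}))=d(x,M\setminus U_{i^\ast})$, and combining with $d(x,M\setminus V)\leq d(x,M\setminus U)\leq C\,d(x,M\setminus U_{i^\ast})$ (the first inequality by (a) since $M\setminus U\subset M\setminus V$) gives $d(x,M\setminus V)\leq C\max_i d(x,M\setminus(V\cap U_i))$. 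If instead $d(x,M\setminus V)<d(x,M\setminus U_{i^\ast})$, then $d(x,M\setminus(V\cap U_{i^\ast}))=d(x,M\setminus V)$ and the bound is immediate with constant $1$. Either way~\eqref{eq:resit} holds with the constant $C$ of the original covering.

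The main obstacle is COV3, since a direct substitution does not give the inequality and one is forced into the case split above; once that is handled, the remaining axioms are straightforward bookkeeping. Note finally that the convention~\eqref{eq:dist-au-vide} ensures all the distances above are well defined even when one of the complements is empty, so there is nothing further to check in the boundary case $M=\bigcup U_i$.
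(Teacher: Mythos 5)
Your proof is correct and follows essentially the same route as the paper's: COV1 is trivial, COV2 and COV4 are the identical chaining of inequalities via a choice function and the composed constant $C\cdot\max_{i}C_i$, and COV3 rests on the same dichotomy according to whether intersecting with $V$ shrinks the distance to the complement, yielding the constant $\max(C,1)$. Your COV3 is marginally cleaner in that the identity $d(x,M\setminus(W\cap W'))=\min\bigl(d(x,M\setminus W),d(x,M\setminus W')\bigr)$ replaces the paper's choice of a point $y$ realizing $d(x,M\setminus(V\cap U_{i_0}))$, but the underlying case split is the same.
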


\begin{proof} 
We shall use the obvious fact stating that for two subsets $A\subset B$ in $M$, 
we have $d(x,M\setminus A)\leq d(x, M\setminus B)$.

\medskip\noindent
COV1 is trivial. 

\vspace{0.2ex}\noindent
COV2 Let $I_0\subset I$ be as in~\eqref{eq:lcoverings}.
Let $\sigma\cl I\to J$ be such that $U_i \subset V_{\sigma(i)}$, for all $i\in I$.
Then, for all $x\in U_i$ we have
$d(x,M\setminus  U_i) \leq d(x,M\setminus  V_{\sigma(i)})$.
It follows that $\sigma(I_0)$ satisfies~\eqref{eq:lcoverings} with respect
to $\{V_j\}_{j\in J}$.

\vspace{0.2ex}\noindent
COV3 Let $I_0\subset I$ be as in~\eqref{eq:lcoverings} and let $C$ be the
constant in~\eqref{eq:resit}.  Let $x$ be a given point in $V\cap U$.
We have $d(x,M\setminus (V\cap U)) \leq d(x,M\setminus U)$. We distinguish two
cases.
\\
(a) We assume that $d(x,M\setminus (V\cap U_i)) = d(x,M\setminus U_i)$, for all
$i\in I_0$. Then we clearly have
$d(x,M\setminus (V\cap U)) \leq C \max_{i\in I_0} d(x,M\setminus (V\cap U_i))$
and $I_0$ satisfies~\eqref{eq:lcoverings} with respect to $\{V\cap U_i\}_{i\in
  I}$.

\vspace{0.1ex}\noindent
(b) We assume $d(x,M\setminus  (V\cap U_{i_0})) < d(x,M\setminus  U_{i_0})$
for some $i_0\in I_0$.
We choose $y\in M\setminus (V\cap U_{i_0})$ such that
$d(x,y) =d(x,M\setminus (V\cap U_{i_0}))$. Then we have
$d(x,y) < d(x,M\setminus U_{i_0})$. We deduce that $y\in U_{i_0}$ and then that
$y\in M\setminus V$.  Hence $y \in M\setminus (V\cap U)$ and $d(x, M\setminus
(V\cap U)) \leq d(x,y)$. Then
\begin{align*}
d(x,M\setminus  (V\cap U)) &\leq d(x,M\setminus  (V\cap U_{i_0})) \\
&\leq \max_{i\in I_0}  d(x,M\setminus  (V\cap U_i)).
\end{align*}
We obtain~\eqref{eq:resit} for the family $\{V\cap U_i\}_{i\in I_0}$ with $C=1$.

\vspace{0.2ex}\noindent
COV4 Let $I_0\subset I$ be as in~\eqref{eq:lcoverings}  and let $C$ be the
constant in~\eqref{eq:resit}.  
For each $i\in I_0$ let $J_i \subset J$ satisfy~\eqref{eq:lcoverings} with
respect to $U_i$ for the family $\{U_{i} \cap V_j\}_{j\in J}$ and let $C_i$ be
the corresponding constant.  We set $J_0 = \bigcup_{i\in I_0} J_i$ and
$B = \max \{ C \cdot C_i$; $i\in I_0\}$. Then we have
\begin{align*}
d(x,M\setminus  U)  & \leq  C \max_{i\in I_0}  d(x,M\setminus  U_i)  \\
&\leq  C \max_{i\in I_0}   (C_i \max_{ j\in J_i}  d(x,M\setminus  (U_i\cap V_j))  )  \\
&\leq  B \max_{i\in I_0}   \; \max_{ j\in J_i}  d(x,M\setminus  V_j)   \\
& \leq B \max_{j\in J_0} d(x,M\setminus  V_j)  ,
\end{align*}
which proves that $J_0$ satisfies~\eqref{eq:lcoverings} with respect to
$\{V_j\}_{j\in J}$.
\end{proof}

As a particular case of COV4, we get 
\begin{corollary}
If $\{U_i\}_{i\in I}$ is a linear
covering of $U\in\Op_{\Msa}$ and $I=\bigsqcup_{\alpha\in A}I_\alpha$ is a
partition of $I$, then setting
$U_\alpha\eqdot\bigcup_{i\in I_\alpha}U_i$, $\{U_\alpha\}_{\alpha\in A}$ is a
linear covering of $U$.
\end{corollary}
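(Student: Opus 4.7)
The plan is to deduce the corollary as a direct application of axiom COV4 from Proposition~\ref{pro:linearcov=top}. I would take the given linear covering $\{U_i\}_{i\in I}$ of $U$ as the ``initial'' covering in the hypothesis of COV4, and the family $\{U_\alpha\}_{\alpha\in A}$ (whose linear-covering property is to be established) as the family $\{V_j\}_{j\in J}$. Each $U_\alpha$ is contained in $U$, since $U_\alpha=\bigcup_{i\in I_\alpha}U_i$ and $U_i\subset U$ for every $i\in I$.

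The only thing to check is the refinement hypothesis of COV4: for every $i\in I$, the family $\{U_i\cap U_\alpha\}_{\alpha\in A}$ must be a linear covering of $U_i$. Because $I=\bigsqcup_{\alpha\in A}I_\alpha$ is a disjoint union, there is a unique $\alpha_0\in A$ with $i\in I_{\alpha_0}$, and for this index one has $U_i\subset U_{\alpha_0}$, so $U_i\cap U_{\alpha_0}=U_i$. Taking the finite subset $\{\alpha_0\}\subset A$, the union $\bigcup_{\alpha\in\{\alpha_0\}}(U_i\cap U_\alpha)$ equals $U_i$, and inequality~\eqref{eq:resit} is trivially satisfied with constant $C=1$. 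This exhibits $\{U_i\cap U_\alpha\}_{\alpha\in A}$ as a linear covering of $U_i$, and applying COV4 then yields that $\{U_\alpha\}_{\alpha\in A}$ is a linear covering of $U$, which is exactly the assertion of the corollary.

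There is essentially no obstacle in this argument; the corollary is a genuine special case of COV4 once one notices that each $i$ lies in a unique block of the partition, so that the refinement condition holds trivially. The only point worth keeping in mind is that $\{U_\alpha\}_{\alpha\in A}$ is required to be a small family in $\Op_{\Msa}$, i.e.\ each $U_\alpha$ has to be an open relatively compact subanalytic subset of $M$. This is an implicit assumption of the statement, since for infinite $I_\alpha$ a union of relatively compact subanalytic sets need not itself be subanalytic; under this standing assumption the proof reduces to the elementary verification outlined above.
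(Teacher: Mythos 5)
Your proof is correct and takes exactly the route the paper intends: the corollary is presented there as an immediate particular case of COV4, with no further argument given, and your verification of the refinement hypothesis (each $U_i$ lies in the unique block $U_{\alpha_0}$ with $i\in I_{\alpha_0}$, so $\{U_i\cap U_\alpha\}_{\alpha\in A}$ trivially covers $U_i$ with constant $C=1$) is precisely the detail being left to the reader. Your closing caveat about $U_\alpha$ needing to belong to $\Op_{\Msa}$ is a sensible observation that the paper leaves implicit.
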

The notion of a linear covering is of local nature (in the usual topology). More
precisely, we have:

\begin{proposition}\label{pro:covolV}
Let $V\in\Op_\Msa$ and let $\{U_i\}_{i\in I}$ be a finite covering of $\ol V$
in $\Msa$. Then $\{V\cap U_i\}_{i\in I}$ is a linear covering of $V$.
\end{proposition}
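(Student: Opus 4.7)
The plan is to reduce everything to a Lebesgue-number style argument on the compact set $\ol V$. Since $V\in\Op_\Msa$ is relatively compact, $\ol V$ is compact; and since $\ol V\subset\bigcup_{i\in I}U_i$ with each $U_i$ open in $M$, the function
\[
f\cl \ol V\to\R, \qquad f(x)\eqdot \max_{i\in I} d(x,M\setminus U_i)
\]
is continuous and strictly positive (for each $x\in\ol V$, one has $x\in U_{i_0}$ for some $i_0$, so $d(x,M\setminus U_{i_0})>0$; when $M\setminus U_{i_0}=\emptyset$ we use the convention~\eqref{eq:dist-au-vide}). Hence compactness yields a constant $\delta>0$ with $f(x)\geq\delta$ for all $x\in\ol V$.

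Next I verify the defining conditions of Definition~\ref{def:linearcov} with $I_0=I$. First, $\bigcup_{i\in I}(V\cap U_i)=V$ follows from $V\subset\ol V\subset\bigcup_i U_i$; each $V\cap U_i$ lies in $\Op_\Msa$ as an intersection of two relatively compact subanalytic opens. It remains to produce a constant $C$ such that~\eqref{eq:resit} holds for the family $\{V\cap U_i\}_{i\in I}$. For $x\notin V$ we have $x\in M\setminus V\subset M\setminus(V\cap U_i)$, so both sides of~\eqref{eq:resit} vanish; the only nontrivial case is $x\in V$.

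For $x\in V$, observe $M\setminus(V\cap U_i)=(M\setminus V)\cup(M\setminus U_i)$, hence
\[
d(x,M\setminus(V\cap U_i))=\min\bl d(x,M\setminus V),\, d(x,M\setminus U_i)\br.
\]
Set $a(x)\eqdot d(x,M\setminus V)$. Since $x\in V\subset\ol V$, the step above gives an index $i_0\in I$ with $d(x,M\setminus U_{i_0})\geq\delta$, so
\[
\max_{i\in I} d(x,M\setminus(V\cap U_i))\;\geq\;\min(a(x),\delta).
\]
Because $V$ is relatively compact, $a(x)$ is bounded on $V$ by some $A_0<\infty$ (taking $A_0=D_M+1$ in the degenerate case $M\setminus V=\emptyset$ covered by~\eqref{eq:dist-au-vide}). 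Choosing $C\eqdot\max(1,A_0/\delta)$, the two subcases $a(x)\leq\delta$ and $a(x)>\delta$ both yield $a(x)\leq C\min(a(x),\delta)$, which proves~\eqref{eq:resit}.

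The only genuinely nontrivial step is the compactness argument producing the uniform lower bound $\delta$; everything afterwards is an elementary case split. The distance convention~\eqref{eq:dist-au-vide} must be kept in mind to handle the edge case $V=M$ (forcing $M$ compact) and the possibility that some $U_i$ equals $M$, but in both situations the above estimates go through unchanged.
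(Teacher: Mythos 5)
Your proof is correct, but it is not the route the paper takes. You verify \eqref{eq:resit} for the family $\{V\cap U_i\}_{i\in I}$ directly: a Lebesgue-number argument on the compact set $\ol V$ gives a uniform $\delta>0$ with $\max_i d(x,M\setminus U_i)\geq\delta$, and the identity $d(x,(M\setminus V)\cup(M\setminus U_i))=\min\bl d(x,M\setminus V),d(x,M\setminus U_i)\br$ together with the boundedness of $d(\cdot,M\setminus V)$ on the relatively compact $V$ closes the estimate. The paper instead sets $U=\bigcup_iU_i$, chooses an auxiliary $W\in\Op_\Msa$ which is a neighborhood of $\partial U$ with $V\cap W=\emptyset$ (possible since $\ol V$ and $\partial U$ are disjoint compacta), proves that the enlarged family $\{W\}\cup\{U_i\}_{i\in I}$ is a linear covering of $W\cup U$ -- via a compactness argument on the set $Z=\set{x\in M}{d(x,M\setminus(W\cup U))\geq d(x,U)}$ -- and only then intersects with $V$ and invokes axiom COV3. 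The two arguments are powered by the same quantitative fact (a strictly positive uniform lower bound for $\max_i d(\cdot,M\setminus U_i)$ on a compact set sitting inside the union), but your version is shorter and more self-contained: it needs neither the collar $W$ nor the reduction through COV3. What the paper's detour buys is the intermediate statement that $\{W\}\cup\{U_i\}_{i\in I}$ linearly covers $W\cup U$, i.e.\ a linear covering assertion about the original $U_i$'s themselves rather than their traces on $V$, which is slightly more information. Your handling of the edge cases (the convention \eqref{eq:dist-au-vide} when $M\setminus V$ or $M\setminus U_i$ is empty, and the trivial case $x\notin V$) is careful and complete.
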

\begin{proof}
Set $U=\bigcup_iU_i$ and let $W\in\Op_\Msa$ be a neighborhood of the boundary
$\partial U$ such that $V\cap W=\emptyset$. Let us prove that the family
$\{W, \{U_i\}_{i\in I}\}$ is a linear covering of $W\cup U$.
We set $f(x) = \max\{ d(x,M\setminus W), d(x,M\setminus U_i), i\in I\}$ and
$Z = \{x\in M$; $d(x,M \setminus (W\cup U)) \geq d(x,U)\}$.  Then $Z$ is a
compact subset of $W\cup U$.  Hence there exists $\varepsilon>0$ such that
$f(x)>\varepsilon$ for all $x \in Z$.

We also see that $\ol U \subset Z$. Hence $f(x) = d(x,M\setminus W)$ for
$x\not\in Z$. Moreover, for a given $x\not\in Z$ we have
$d(x,M\setminus W) \leq d(x,M \setminus (W\cup U)) < d(x,U)$ by definition of
$Z$. Hence a given $y\in M\setminus W$ realizing $d(x,M\setminus W)$ can not
belong to $U$ and we obtain $d(x,M\setminus (W\cup U)) = d(x,M\setminus W)$.
Finally $d(x,M\setminus (W\cup U)) = f(x)$ for $x\not\in Z$.

Now we deduce that $d(x,M\setminus (W\cup U)) \leq C f(x)$ for some $C>0$ and
for all $x\in M$, that is, $\{W, \{U_i\}_{i\in I}\}$ is a linear covering of
$W\cup U$.

Taking the intersection with $V$ we obtain by COV3 that
$\{V \cap U_i\}_{i\in I}$ is a linear covering of $V$.
\end{proof}

\begin{corollary}\label{cor:1covlocal}
Let $\{U_i\}_{i\in I}$ and $\{B_j\}_{j\in J}$ be two finite families in
$\Op_{\Msa}$. We set $U=\bigcup_i U_i$ and we assume that
$\ol U \subset \bigcup_j B_j$.
Then $\{U_i\}_{i\in I}$ is a linear covering of $U$ if and only if
$\{U_i\cap B_j\}_{i\in I}$ is a linear covering of $U\cap B_j$ for all
$j\in J$.
\end{corollary}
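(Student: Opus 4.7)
The plan is to treat the two implications separately and to reduce everything to formal consequences of the axioms of Proposition~\ref{pro:linearcov=top} together with Proposition~\ref{pro:covolV}. No new geometric estimate should be needed; in particular, I would not re-open the distance inequality~\eqref{eq:resit}.

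The direction ($\Rightarrow$) should be essentially immediate from COV3. Assuming $\{U_i\}_{i\in I}$ is a linear covering of $U$, fix $j\in J$ and set $V \seteq U\cap B_j \in \Op_\Msa$; then $V\subset U$, so COV3 gives that $\{V\cap U_i\}_{i\in I}$ is a linear covering of $V$. Since $U_i\subset U$, one has $V\cap U_i = B_j \cap U_i$, which yields the desired statement.

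For the direction ($\Leftarrow$), my plan is to first produce, from the hypothesis $\ol U\subset \bigcup_j B_j$, a linear covering of $U$ by the $U\cap B_j$. This is exactly the output of Proposition~\ref{pro:covolV} applied with $V=U$ and the finite family $\{B_j\}_{j\in J}$ covering $\ol U$. I then feed this linear covering into COV4, taking the family $\{U_i\}_{i\in I}$ as the refining family (legitimate because each $U_i\subset U$). The local hypothesis of COV4 asks that, for every $j$, the family $\{(U\cap B_j)\cap U_i\}_{i\in I}$ be a linear covering of $U\cap B_j$; using $U_i\subset U$ this simplifies to $\{B_j\cap U_i\}_{i\in I}$, which is exactly our assumption. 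COV4 then concludes that $\{U_i\}_{i\in I}$ is a linear covering of $U$.

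The only point that requires a moment of care — and, in my view, the main (and rather mild) obstacle — is matching the datum of Proposition~\ref{pro:covolV} (``$\{B_j\}_{j\in J}$ is a finite covering of $\ol V$ in $\Msa$'') with the hypothesis of the corollary ($\ol U\subset\bigcup_j B_j$); once this identification is made, both implications are purely axiomatic and no further work is required.
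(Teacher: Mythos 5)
Your proposal is correct and coincides with the paper's own proof: the forward direction is COV3 applied to $U\cap B_j\subset U$, and the converse is Proposition~\ref{pro:covolV} (with $V=U$, giving that $\{U\cap B_j\}_{j\in J}$ is a linear covering of $U$) followed by COV4 with $\{U_i\}_{i\in I}$ as the refining family. The identification you worry about at the end is indeed harmless, since a finite covering of $\ol U$ in $\Msa$ in the sense of Proposition~\ref{pro:covolV} is exactly a finite family with $\ol U\subset\bigcup_j B_j$.
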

\begin{proof}
(i) Assume that $\{U_i\}_i$ is a linear covering of $U$. Applying COV3 to
$B_j\cap U\subset U$ we get that the family $\{U_i\cap B_j\}_{i\in I}$ is a
linear covering of $U\cap B_j$ for all $j\in J$.

\vspace{0.3ex}\noindent
(ii) Assume that the family $\{U_i\cap B_j\}_{i\in I}$ is a linear covering of
$U\cap B_j$ for all $j\in J$.  By Proposition~\ref{pro:covolV} the family
$\{U \cap B_j\}_{j\in J}$ is a linear covering of $U$.
Hence the result follows from COV4.
\end{proof}

\begin{definition}\label{def:linsasite}
\banum
\item
The  linear subanalytic site  $\Msal$ is the presite $\Msa$ endowed with the
Grothendieck topology for which the coverings are the linear coverings given by
Definition~\ref{def:linearcov}.
\item
We denote by $\rhosal\cl \Msa\to\Msal$ and by $\rhosl\cl M\to\Msal$ the natural morphisms of sites. 
\enum
\end{definition}
The morphisms of sites constructed above are summarized by the diagram
\eqn
&&\xymatrix{
M\ar[r]^-{\rhosa}\ar[rd]_-{\rhosl}&\Msa\ar[d]^-{\rhosal}\\
&\Msal.
}\eneqn
\begin{remark}\label{rem:bilip_homeo} 
  Let $M$ and $N$ be two real analytic manifolds and let $f\cl M \to N$ be a
  topological isomorphism such that both $f$ and $\opb{f}$ are subanalytic
  Lipschitz maps.  Then $\opb{f}\cl \Op_\Msa \to \Op_{N_{\rm sa}}$ induces an
  isomorphism of sites $N_{\rm sal} \isoto \Msal$.
\end{remark}

\section{Regular coverings}

We shall also use the following:
\begin{definition}\label{def:regularcov}
Let $U\in\Op_\Msa$. A regular covering of $U$ is a sequence
$\{U_i\}_{i\in [1,N]}$ with $1\leq N\in\N$ such that
$U = \bigcup_{i\in [1,N]} U_i$ and, for all $1\leq k\leq N$,
$\{U_i\}_{i\in [1,k]}$ is a linear covering of $\bigcup_{1\leq i\leq k}U_i$.
\end{definition}

We will use the following recipe to turn an arbitrary covering into a linear
covering by a slight enlargement of the open subsets.  For an open subset $U$
of $M$, an arbitrary subset $V \subset U$ and $\varepsilon>0$ we set 
\eq\label{eq:enlarge_to_covA}
V^{\varepsilon,U} = \{ x\in M; \; d(x,V) < \varepsilon\, d(x,M\setminus U) \}.
\eneq
Then $V^{\varepsilon,U}$ is an open subset of $U$.
If the distance $d$ is a subanalytic function on $M\times M$,
$U \in\Op_{\Msa}$ and $V$ is a subanalytic subset, then $V^{\varepsilon,U}$
also belongs to $\Op_{\Msa}$.
We see easily that $(U\cap \ol V) \subset V^{\varepsilon,U} \subset U$.

\begin{lemma}\label{lem:enlarge_to_cov}
We assume that the distance $d$ is a subanalytic function on $M\times M$. 
Let $U\in \Op_{\Msa}$ and let $V \subset U$ be a subanalytic subset.
Let $0<\varepsilon$ and $0<\delta<1$.
We set $\varepsilon'= \frac{\varepsilon + \delta}{1-\delta}$. Then
\bnum
\item for any $x\in V^{\varepsilon,U}$ and $y\in M$ such that
 $$
d(x,y) < \delta\, d(x,M\setminus U) \text{ or }
d(x,y) < \delta\, d(y,M\setminus U),
$$
 we have $d(y,V) < \varepsilon' d(y,M\setminus U)$, that is, $y\in V^{\varepsilon',U}$,
\item for any $x\in V^{\varepsilon,U}$ we have $d(x,M\setminus V^{\varepsilon',U})
 \geq \delta\, d(x,M\setminus U)$,
\item $\{U\setminus \ol V,V^{\varepsilon',U}\}$ is a linear  covering of $U$.
\enum
\end{lemma}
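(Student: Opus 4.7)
I plan to prove the three parts in order, with (i) doing the real computational work and (ii), (iii) following as easy corollaries.

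For (i), I will fix $x \in V^{\varepsilon,U}$ and a point $y$ satisfying one of the two hypotheses, and bound $d(y,V)$ by the triangle inequality $d(y,V) \le d(x,y) + d(x,V)$ combined with the defining inequality $d(x,V) < \varepsilon\, d(x, M\setminus U)$. To convert back to a bound of the form $\varepsilon'\, d(y, M\setminus U)$ I will use triangle-inequality comparisons between $d(x, M\setminus U)$ and $d(y, M\setminus U)$. Under the first hypothesis $d(x,y) < \delta\, d(x, M\setminus U)$, I get $d(y, M\setminus U) \ge (1-\delta)\, d(x, M\setminus U)$ and then $d(y,V) < (\varepsilon+\delta)\, d(x, M\setminus U) \le \varepsilon'\, d(y, M\setminus U)$. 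Under the second hypothesis $d(x,y) < \delta\, d(y, M\setminus U)$, I get $d(x, M\setminus U) < (1+\delta)\, d(y, M\setminus U)$, and therefore $d(y,V) < \bigl(\delta + \varepsilon(1+\delta)\bigr)\, d(y, M\setminus U)$; the desired bound then reduces to the elementary algebraic inequality $\delta + \varepsilon(1+\delta) \le \tfrac{\varepsilon+\delta}{1-\delta}$, which rearranges to $\delta^2(1+\varepsilon) \ge 0$.

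Part (ii) will be an immediate restatement of the first case of (i): every $y$ with $d(x,y) < \delta\, d(x, M\setminus U)$ satisfies $y \in V^{\varepsilon',U}$, so no such $y$ can lie in $M\setminus V^{\varepsilon',U}$, which gives $d(x, M\setminus V^{\varepsilon',U}) \ge \delta\, d(x, M\setminus U)$.

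For (iii), I will first verify $(U\setminus \ol V) \cup V^{\varepsilon',U} = U$: the inclusion $V^{\varepsilon',U} \subset U$ is built into the definition, while any $x \in U \cap \ol V$ satisfies $d(x,V)=0 < \varepsilon'\, d(x, M\setminus U)$ and therefore lies in $V^{\varepsilon',U}$. Both members are subanalytic and relatively compact (using subanalyticity of $d$ for $V^{\varepsilon',U}$), hence in $\Op_\Msa$. For the linear-covering inequality I compute $d(x, M\setminus (U\setminus \ol V)) = \min\bigl(d(x, M\setminus U), d(x, V)\bigr)$ and split on whether $x \in V^{\varepsilon,U}$: if yes, (ii) gives $d(x, M\setminus V^{\varepsilon',U}) \ge \delta\, d(x, M\setminus U)$; if no, then $d(x,V) \ge \varepsilon\, d(x, M\setminus U)$, hence $\min\bigl(d(x, M\setminus U), d(x,V)\bigr) \ge \min(1,\varepsilon)\, d(x, M\setminus U)$. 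Either way the maximum of the two complement-distances is at least $\min(\delta, \varepsilon, 1)\, d(x, M\setminus U)$, giving a linear covering with constant $C = 1/\min(\delta, \varepsilon, 1)$. The main obstacle is the algebraic inequality hidden in the second case of (i); everything else is bookkeeping with the triangle inequality.
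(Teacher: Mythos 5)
Your proof is correct and follows essentially the same route as the paper's: triangle-inequality comparisons of $d(x,M\setminus U)$ and $d(y,M\setminus U)$ for (i), reading (ii) off the first case of (i), and the same two-case split on membership in $V^{\varepsilon,U}$ for (iii), yielding the constant $\min(\delta,\varepsilon)$. The only cosmetic difference is in (i), where the paper merges the two hypotheses into the single bound $d(x,M\setminus U) < (1-\delta)^{-1}\,d(y,M\setminus U)$ (using $1+\delta < (1-\delta)^{-1}$) and so runs one computation instead of your separate algebraic check $\delta+\varepsilon(1+\delta)\leq\varepsilon'$, which you correctly reduce to $\delta^{2}(1+\varepsilon)\geq 0$.
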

We remark that any $\varepsilon'>0$ can be written
$\varepsilon'= \frac{\varepsilon + \delta}{1-\delta}$ with
$\varepsilon, \delta$ as in the lemma.
\begin{proof}
(i) The triangular inequality
$d(x,M\setminus U) \leq d(x,y) + d(y,M\setminus U)$ implies
$$
\begin{cases}
  d(x,M\setminus U) < (1-\delta)^{-1} d(y,M\setminus U), & \text{if }
d(x,y) < \delta \, d(x,M\setminus U), \\
d(x,M\setminus U) < (1+\delta) d(y,M\setminus U), & \text{if }
d(x,y) < \delta \, d(y,M\setminus U).
\end{cases}
$$
Since $1+\delta < (1-\delta)^{-1}$ we obtain in both cases
\eq\label{eq:points_proches}
d(x,M\setminus U) < (1-\delta)^{-1} d(y,M\setminus U).
\eneq
In particular we have in both cases $d(x,y) < \delta (1-\delta)^{-1} d(y,M\setminus U)$.
Now the definition of $V^{\varepsilon,U}$ implies
\begin{align*}
d(y,V) &\leq d(x,y) + d(x,V) \\
&< \delta (1-\delta)^{-1}  d(y,M\setminus U) +  \varepsilon\, d(x,M\setminus U) \\
& < (\varepsilon + \delta)(1-\delta)^{-1} d(y,M\setminus U),
\end{align*}
where the last inequality follows from~\eqref{eq:points_proches}.

\medskip\noindent
(ii) By (i), if a point $y\in M$ does not belong to $ V^{\varepsilon',U}$,
we have $d(x,y) \geq \delta\, d(x,M\setminus U)$.
This gives~(ii).

\medskip\noindent
(iii) Since $d$ is subanalytic, the open subset $ V^{\varepsilon',U}$ is 
subanalytic. We also see easily that
$U = (U\setminus \ol V) \cup V^{\varepsilon',U}$.
Now let $x\in M$.

\vspace{0.2ex}\noindent
(a) If $x\not\in V^{\varepsilon,U}$, then~\eqref{eq:enlarge_to_covA} gives
$d(x,V) \geq \varepsilon\, d(x,M\setminus U)$.
Since $d(x, M \setminus (U\setminus \ol V)) = \min \{ d(x,M\setminus U),
d(x,V)\}$, we deduce 
$d(x, M \setminus (U\setminus \ol V))
 \geq \min\{\varepsilon,1\} d(x,M\setminus U)$.
 
\vspace{0.2ex}\noindent
(b) If $x\in V^{\varepsilon,U}$, then~(ii) gives $d(x,M\setminus V^{\varepsilon',U})
 \geq \delta\, d(x,M\setminus U)$.
 
\vspace{0.2ex}\noindent
We obtain in both cases
$$
\max \{ d(x, M \setminus (U\setminus \ol V)) , 
d(x,M\setminus V^{\varepsilon',U}) \}
\geq C d(x,M\setminus U),
$$
where $C = \min \{ \delta, \varepsilon \}$.
This proves~(iii).
\end{proof}

Lemma~\ref{lem:prelim_Horm2} below will be used later to obtain subsets
satisfying the hypothesis of Lemma~\ref{le:Ho0}. We will prove it 
by using Lemma~\ref{lem:enlarge_to_cov} as follows.
Let $U_1,U_2\in\Op_\Msa$ and let $U=U_1\cup U_2$.  For $\varepsilon>0$ we set,
using Notation~\eqref{eq:enlarge_to_covA},
\eq
\label{eq:defUeps1}
&& U^\varepsilon_1 = (U_1 \setminus U_2)^{\varepsilon,U_1} 
 = \{x\in U_1;\; d(x,U_1\setminus U_2) 
 < \varepsilon\, d(x,M\setminus U_1) \},  \\
\label{eq:defUeps2}
&& U^\varepsilon_2 = (U_2 \setminus U_1)^{\varepsilon,U_2}
 = \{x\in U_2;\; d(x,U_2\setminus U_1) 
 < \varepsilon\, d(x,M\setminus U_2) \} .
\eneq

\begin{lemma}\label{lem:prelim_Horm}
\bnum
\item For $i=1,2$ and for any $\varepsilon>0$, the pair
$\{U^\varepsilon_i, U_1\cap U_2\}$ is a linear covering of $U_i$.
\item For any $\varepsilon, \varepsilon'>0$ such that
 $\varepsilon\varepsilon' < 1$, we have
$\ol{U^\varepsilon_1} \cap \ol{U^{\varepsilon'}_2} \cap U =\emptyset$.
\item Let $\varepsilon >0$, $0<\delta<1$ and set
$\varepsilon' = \frac{\varepsilon+\delta}{1-\delta}$,
$\varepsilon'' = \frac{\varepsilon'+\delta}{1-\delta}$. We assume
$\varepsilon \varepsilon'' < 1$. Then, for any $x\in M$,
$$
\begin{cases}
  d(x, U^\varepsilon_2) \geq \delta \, d(x,M\setminus U_1) 
& \text{if } x\in U^{\varepsilon'}_1,\\
d(x, U^{\varepsilon}_1) \geq \delta \, d(x,M\setminus U_1) 
 & \text{if } x\not\in U^{\varepsilon'}_1 .
\end{cases}
$$
\enum
\end{lemma}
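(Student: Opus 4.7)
My plan is to derive all three parts directly from Lemma~\ref{lem:enlarge_to_cov}, together with the elementary observation $U_i\setminus U_{3-i}\subset M\setminus U_{3-i}$, which gives the inequality $d(x,M\setminus U_{3-i})\leq d(x,U_i\setminus U_{3-i})$.

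For part~(i), I would apply Lemma~\ref{lem:enlarge_to_cov}(iii) with $U=U_i$ and $V=U_i\setminus U_{3-i}$, yielding that $\{U_i\setminus\ol{U_i\setminus U_{3-i}},\,U_i^\varepsilon\}$ is a linear covering of $U_i$. Any $x\in U_i\setminus\ol{U_i\setminus U_{3-i}}$ admits a neighborhood inside $U_{3-i}$, hence lies in $U_1\cap U_2$; thus $U_i\setminus\ol{U_i\setminus U_{3-i}}\subset U_1\cap U_2\subset U_i$, and axiom COV2 upgrades this to the desired linear covering $\{U_1\cap U_2,\,U_i^\varepsilon\}$.

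For part~(ii), I would argue by contradiction: if $x\in\ol{U_1^\varepsilon}\cap\ol{U_2^{\varepsilon'}}\cap U$, then passing to the limit in the strict inequalities defining $U_1^\varepsilon$ and $U_2^{\varepsilon'}$, using continuity of $d(\cdot,A)$, gives
\[
d(x,U_1\setminus U_2)\leq\varepsilon\,d(x,M\setminus U_1),\qquad d(x,U_2\setminus U_1)\leq\varepsilon'\,d(x,M\setminus U_2).
\]
Combining with the preliminary inequality yields the chain $d(x,M\setminus U_1)\leq d(x,U_2\setminus U_1)\leq\varepsilon'\,d(x,M\setminus U_2)\leq\varepsilon'\,d(x,U_1\setminus U_2)\leq\varepsilon\varepsilon'\,d(x,M\setminus U_1)$. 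Since $x\in U_1\cup U_2$, at least one of $d(x,M\setminus U_1)$, $d(x,M\setminus U_2)$ is positive (the symmetric chain handling the $U_2$ case), contradicting $\varepsilon\varepsilon'<1$.

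For part~(iii), both cases reduce to Lemma~\ref{lem:enlarge_to_cov}(i). When $x\notin U_1^{\varepsilon'}$, the bound is trivial if $x\notin U_1$; otherwise any $y\in U_1^\varepsilon$ with $d(x,y)<\delta\,d(x,M\setminus U_1)$ would satisfy the hypothesis of Lemma~\ref{lem:enlarge_to_cov}(i) (with $V=U_1\setminus U_2$, $U=U_1$, parameters $\varepsilon,\delta$), forcing $x\in U_1^{\varepsilon'}$, a contradiction. When $x\in U_1^{\varepsilon'}$ (so automatically $x\in U_1$), any $y\in U_2^\varepsilon$ with $d(x,y)<\delta\,d(x,M\setminus U_1)$ satisfies, by the same lemma applied now with parameters $\varepsilon',\delta$, the conclusion $y\in U_1^{\varepsilon''}$; but then $y\in U_1^{\varepsilon''}\cap U_2^\varepsilon\cap U$, which is empty by part~(ii) applied to the pair $(\varepsilon'',\varepsilon)$ since $\varepsilon\varepsilon''<1$. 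I expect the main obstacle to be nothing deep but rather bookkeeping: keeping straight which instance of Lemma~\ref{lem:enlarge_to_cov}(i) is invoked in each subcase, with which parameters, and ensuring the product condition $\varepsilon\varepsilon''<1$ lines up correctly with the hypothesis of part~(ii).
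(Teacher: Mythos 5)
Your proposal is correct and follows essentially the same route as the paper: part (i) via Lemma~\ref{lem:enlarge_to_cov}~(iii) together with the identity $U_i\setminus\ol{U_i\setminus U_{3-i}}=U_1\cap U_2$, part (ii) via the same closed chain of inequalities forcing $d(x,M\setminus U_1)=d(x,M\setminus U_2)=0$, and part (iii) by the same combination of Lemma~\ref{lem:enlarge_to_cov} with part (ii) applied to the pair $(\varepsilon'',\varepsilon)$. The only cosmetic difference is that in part (iii) you rederive the needed distance bound from Lemma~\ref{lem:enlarge_to_cov}~(i) where the paper cites its part (ii), which is itself an immediate consequence of (i), so the arguments coincide.
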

\begin{proof}
(i) By symmetry we can assume $i=1$. By Lemma~\ref{lem:enlarge_to_cov},
the pair $\{U_1\setminus \ol{(U_1 \setminus U_2)}, U^\varepsilon_1\}$ is a linear
covering of $U_1$. Since $U_2$ is open we have
$U_1\setminus \ol{(U_1 \setminus U_2)} = U_1\cap U_2$ and~(i) follows.

\medskip\noindent
(ii) We have
\eqn
&&\ol{U^\varepsilon_1} \cap U \subset\{x\in U;\; d(x,U_1\setminus U_2) \leq \varepsilon\, d(x,M\setminus U_1) \},\\
&&\ol{U^{\varepsilon'}_2} \cap U \subset\{x\in U;\; d(x,U_2\setminus U_1) \leq \varepsilon' \, d(x,M\setminus U_2) \}.
\eneqn
We remark that $d(x,M\setminus U_2) \leq d(x,U_1\setminus U_2)$ and
$d(x,M\setminus U_1) \leq d(x,U_2\setminus U_1)$ for any $x\in M$.
Let $x\in \ol{U^\varepsilon_1} \cap \ol{U^{\varepsilon'}_2}\cap U$ and set
$d_1= d(x,U_2\setminus U_1)$, $d_2 = d(x,U_1\setminus U_2)$.
We deduce $d_i \leq \varepsilon\varepsilon' d_i$, for $i=1,2$.
Since $\varepsilon\varepsilon' <1$ we obtain $d_1=d_2=0$.
Hence $x\not\in U_1$ and $x\not\in U_2$.
Since $U=U_1\cup U_2$, this proves~(ii).

\medskip\noindent
(iii) By Lemma~\ref{lem:enlarge_to_cov}~(ii), we have
$d(x, M\setminus U^{\varepsilon''}_1) \geq \delta \, d(x,M\setminus U_1)$
for any $x\in U^{\varepsilon'}_1$.
By~(ii) we have $U^{\varepsilon}_2 \subset M\setminus U^{\varepsilon''}_1$
and the first inequality follows.

By Lemma~\ref{lem:enlarge_to_cov}~(i), if $x\not\in U^{\varepsilon'}_1$ and
$z\in U^{\varepsilon}_1$, then $d(x,z) \geq \delta\, d(x, M\setminus U_1)$.
This gives the second inequality.
\end{proof}

\begin{lemma}\label{lem:prelim_Horm2}
Let $U_1, U_2 \in \Op_{\Msa}$ and set $U=U_1 \cup U_2$.
We assume that $\{U_1,U_2\}$ is a linear covering of $U$.
Then there exist $U'_i \subset U_i$, $i=1,2$, and $C>0$ such that
\bnum
\item $\{U'_i, U_1\cap U_2\}$ is a linear covering of $U_i$ \lp$i=1,2$\rp,
\item $\ol{U'_1} \cap \ol{U'_2} \cap U = \emptyset$,
\item setting $Z_i = (M\setminus U) \cup \ol{U'_i}$, we have
$Z_1\cap Z_2 = M\setminus U$ and
$$
d(x, Z_1\cap Z_2) \leq C(d(x,Z_1)+d(x,Z_2)),
\quad\text{ for any $x\in M$.}
$$
\enum
\end{lemma}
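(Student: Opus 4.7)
The plan is to set $U'_i = U^\varepsilon_i$ from~\eqref{eq:defUeps1}--\eqref{eq:defUeps2} for suitably small $\varepsilon,\delta\in(0,1)$, and then read off (i)--(iii) from Lemma~\ref{lem:prelim_Horm} combined with the linear covering hypothesis. Part (i) is exactly Lemma~\ref{lem:prelim_Horm}(i). For (ii), choosing $\varepsilon<1$ so that $\varepsilon^2<1$, Lemma~\ref{lem:prelim_Horm}(ii) yields $\ol{U'_1}\cap\ol{U'_2}\cap U=\emptyset$. The set-theoretic identity $Z_1\cap Z_2=(M\setminus U)\cup(\ol{U'_1}\cap\ol{U'_2})$ then gives $Z_1\cap Z_2=M\setminus U$, so the remaining content of (iii) is the Lipschitz-type estimate $d(x,M\setminus U)\leq C(d(x,Z_1)+d(x,Z_2))$.

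For that estimate I split on location. Since $d(x,Z_i)=\min\{d(x,M\setminus U),\,d(x,U'_i)\}$, the inequality holds trivially with $C=1$ whenever some $d(x,Z_i)$ already equals $d(x,M\setminus U)$. It therefore suffices to treat $x$ for which $d(x,Z_i)=d(x,U'_i)$ for both $i$. I fix $\delta\in(0,1)$, define $\varepsilon',\varepsilon''$ as in Lemma~\ref{lem:prelim_Horm}(iii), and require in addition $\varepsilon\varepsilon''<1$. The dichotomy in that lemma shows that in either branch one of $d(x,U'_1)$ or $d(x,U'_2)$ is $\geq \delta\,d(x,M\setminus U_1)$, hence
\[
d(x,U'_1)+d(x,U'_2)\geq \delta\,d(x,M\setminus U_1).
\]
Applying the same lemma after swapping the roles of $U_1$ and $U_2$ (which merely permutes $U'_1$ and $U'_2$, and leaves the conditions on $\varepsilon,\delta$ unchanged), the same sum also majorizes $\delta\,d(x,M\setminus U_2)$. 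The linear covering hypothesis on $\{U_1,U_2\}$ furnishes a constant $C_0>0$ with $d(x,M\setminus U)\leq C_0\max_i d(x,M\setminus U_i)$, and stringing these inequalities together produces (iii) with $C=\max\{1,C_0/\delta\}$.

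The only real obstacle is coordinating the choice of $\varepsilon$ and $\delta$: I need $\varepsilon<1$ for (ii) and simultaneously $\varepsilon\varepsilon''<1$ with $\varepsilon'=(\varepsilon+\delta)/(1-\delta)$ and $\varepsilon''=(\varepsilon'+\delta)/(1-\delta)$ for (iii). Since $\varepsilon''\to 0$ as $(\varepsilon,\delta)\to(0,0)$, both constraints are satisfied for $\varepsilon,\delta$ sufficiently small (e.g.\ $\varepsilon=\delta=1/10$), so this is a parameter check rather than a substantive difficulty.
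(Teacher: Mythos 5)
Your proposal is correct and follows essentially the same route as the paper: take $U'_i=U^\varepsilon_i$, read (i) and (ii) off Lemma~\ref{lem:prelim_Horm}, and obtain the estimate in (iii) from the dichotomy of Lemma~\ref{lem:prelim_Horm}~(iii) applied symmetrically in $U_1$ and $U_2$ together with the linear-covering constant. The paper simply fixes $\varepsilon=\delta=1/3$ (so $\varepsilon''=2$ and $\varepsilon\varepsilon''=2/3<1$) instead of your generic small parameters, and phrases the case split via points $x_i$ realizing $d(x,Z_i)$ rather than via $d(x,Z_i)=\min\{d(x,M\setminus U),d(x,U'_i)\}$; these are cosmetic differences.
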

\begin{proof}
We set $\varepsilon = \delta = 1/3$, 
$\varepsilon' = \frac{\varepsilon+\delta}{1-\delta} = 1$ and
$\varepsilon'' = \frac{\varepsilon'+\delta}{1-\delta} = 2$.
Using the notations~\eqref{eq:defUeps1} and~\eqref{eq:defUeps2}
we set $U'_i = U^\varepsilon_i$, $i=1,2$.

\medskip\noindent
(i) and (ii) are given by Lemma~\ref{lem:prelim_Horm}~(i) and (ii).

\medskip\noindent
(iii) The equality $Z_1\cap Z_2 = M\setminus U$ follows from~(ii).
Let $C'$ be the constant in~\eqref{eq:resit} for the family $\{U_1,U_2\}$.
We set $C_1 = \max\{1,\delta^{-1} C'\}$.
Let $x\in M$ and let $x_i \in Z_i$ be such that $d(x,x_i) = d(x,Z_i)$.
By the definition of $Z_1$, if $x_1 \not\in \ol{U'_1}$, then
$x_1 \in M\setminus U$. Hence $d(x,Z_1)= d(x,M\setminus U)$ and the
inequality in~(iii) is clear.

Hence we can assume $x_1 \in  \ol{U'_1}$ and also $x_2 \in  \ol{U'_2}$ by
symmetry. Then we have
$d(x,Z_1)+d(x,Z_2) = d(x, U^\varepsilon_1) + d(x, U^{\varepsilon}_2)$.
Since $\varepsilon \varepsilon'' = 2/3 < 1$, Lemma~\ref{lem:prelim_Horm}~(iii)
gives
$d(x, U^\varepsilon_1) + d(x, U^{\varepsilon}_2) \geq \delta \, d(x,M\setminus U_1)$.
The same holds with $M\setminus U_1$ replaced by $M\setminus U_2$
and~\eqref{eq:resit} gives
$$
d(x, U^\varepsilon_1) + d(x, U^{\varepsilon}_2)
 \geq \delta \, \max_{i=1,2} \{ d(x,M\setminus U_i) \}
\geq  C_1^{-1} d(x,M\setminus U),
$$
so that~(iii) holds with $C=C_1$.
\end{proof}

\begin{lemma}\label{lem:shrink_to_cov}
We assume that the distance $d$ is a subanalytic function on $M\times M$.
Let $\{U_i\}_{i= 1}^N$ be a $1$-regularly situated family in $\Op_{\Msa}$
and let $C\geq 1$ be a constant satisfying~\eqref{eq:resit}.
We choose $D>C$ and $1>\varepsilon>0$ such that $\varepsilon D < 1-\varepsilon$.
We define $U^0_i, V_i, U'_i \in \Op_{\Msa}$ inductively
on $i$ by $U^0_1 = V_1 = U'_1 =  U_1$ and
\begin{align*}
U^0_i & = \{ x\in U_i ; \; d(x, M \setminus (U_i \cup V_{i-1})) 
< D \, d(x,M \setminus U_i)  \} , \\
V_i &= V_{i-1} \cup U^0_i,  \\
U'_i &= (U^0_i)^{\varepsilon,V_i}  \quad
\text{\lp using the notation~\eqref{eq:enlarge_to_covA}\rp.}
\end{align*}
Then $V_N = \bigcup_{i= 1}^N U_i$ and, for all $k = 1,\ldots,N$, we have
$U'_k \subset U_k$, $V_k = \bigcup_{i= 1}^k U'_i$ and
$\{U'_i\}_{i= 1}^k$ is a $1$-regularly situated family in $\Op_{\Msa}$.
\end{lemma}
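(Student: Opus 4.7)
I verify the four assertions by induction on $k$, using Lemma~\ref{lem:enlarge_to_cov} and the covering axioms {\rm COV1}--{\rm COV4} of Proposition~\ref{pro:linearcov=top}. Assuming inductively that $V_{k-1}\in\Op_\Msa$ and $V_{k-1}=\bigcup_{i<k}U'_i$, all of $V_k$, $U_k^0$, $U'_k$ belong to $\Op_\Msa$ (using that $d$ is subanalytic). From $U_k^0 \subset V_k$ and $V_k$ open one has $U_k^0 \subset (U_k^0)^{\varepsilon, V_k} = U'_k \subset V_k$; combined with $V_k = V_{k-1} \cup U_k^0$, this yields $V_k = \bigcup_{i \leq k} U'_i$.

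For the inclusion $U'_k \subset U_k$, suppose for contradiction $x \in U'_k \setminus U_k$. Then $d(x, U_k^0) < \varepsilon\, d(x, M \setminus V_k)$, and $x \in V_{k-1}$ with $x \in M\setminus U_k$. For any $y \in U_k^0$, the defining inequality $d(y, M \setminus (U_k \cup V_{k-1})) < D\,d(y, M \setminus U_k)$ together with $d(y, M \setminus U_k) \leq d(y,x)$ (using $x\in M\setminus U_k$) produces $z \in M\setminus(U_k \cup V_{k-1}) \subset M\setminus V_k$ with $d(y,z) < D\, d(y,x)$, whence
\[
d(x, M \setminus V_k) \leq d(x,y) + d(y,z) < (1+D)\, d(x,y).
\]
Taking the infimum over $y \in U_k^0$ yields $d(x, U_k^0) \geq (1+D)^{-1}\, d(x, M \setminus V_k)$, contradicting $\varepsilon(1+D) < 1$, which follows from $\varepsilon D < 1 - \varepsilon$.

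For $V_N = \bigcup_i U_i$, the inclusion $\subset$ is immediate. For $\supset$, given $x \in \bigcup_i U_i$ pick $j$ achieving $\max_{i \in [1,N]} d(x, M \setminus U_i) > 0$, so $x \in U_j$. Since $U_j \cup V_{j-1} \subset \bigcup_i U_i$, the 1-regular situation of $\{U_i\}_{i=1}^N$ with constant $C$ together with $D > C$ give
\[
d(x, M \setminus (U_j \cup V_{j-1})) \leq d(x, M \setminus \textstyle\bigcup_i U_i) \leq C\, d(x, M \setminus U_j) < D\, d(x, M \setminus U_j),
\]
so $x \in U_j^0 \subset V_j \subset V_N$.

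The main obstacle is proving the 1-regular situation of $\{U'_i\}_{i \leq k}$; the plan is to bootstrap from the pair case via the covering axioms. Lemma~\ref{lem:enlarge_to_cov}~(iii) applied with $V = U_k^0$ and $U = V_k$ shows that $\{V_k \setminus \overline{U_k^0},\, U'_k\}$ is a linear covering of $V_k$. Since $V_k \setminus \overline{U_k^0} \subset V_{k-1} \subset V_k$, axiom {\rm COV2} upgrades this to the statement that $\{V_{k-1},\, U'_k\}$ is a linear covering of $V_k$. By the inductive hypothesis $\{U'_i\}_{i \leq k-1}$ is a linear covering of $V_{k-1}$, and $\{U'_k\}$ trivially covers $U'_k$; so axiom {\rm COV4} applied to $\{V_{k-1}, U'_k\}$ and the family $\{U'_i\}_{i \leq k}$ shows that $\{U'_i\}_{i \leq k}$ is a linear covering of $V_k$. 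Since it is finite, this is equivalent to the sought 1-regular situation.
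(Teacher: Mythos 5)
Your proof is correct and follows essentially the same route as the paper's: the same induction, the same inclusion $U_k^0\subset U'_k\subset V_k$ giving $V_k=\bigcup_{i\le k}U'_i$, the same choice of the index realizing the maximum for $V_N=\bigcup_iU_i$, the same combination of Lemma~\ref{lem:enlarge_to_cov}~(iii) with COV2 and COV4 for the $1$-regular situation, and the same triangle-inequality computation (run by contradiction rather than directly) for $U'_k\subset U_k$. One minor caveat: when you extract a witness $z\in M\setminus(U_k\cup V_{k-1})$ this set can be empty if $M$ is compact, which is exactly the degenerate case the convention~\eqref{eq:dist-au-vide} is designed to absorb; the paper sidesteps it by comparing distances through the inclusion $M\setminus(U_k\cup V_{k-1})\subset M\setminus V_k$, i.e.\ $d(y,M\setminus V_k)\le d(y,M\setminus(U_k\cup V_{k-1}))$, which remains valid under that convention, and your argument is repaired the same way (or by a separate one-line check in the empty case).
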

\begin{proof}
(i) Let us prove that $U'_k \subset U_k$.
Let $x\in U'_k$ and let us show that $x\in U_k$.
By~\eqref{eq:enlarge_to_covA} we have
$x\in V_k$ and there exists $y\in U^0_k$
such that $d(x,y) < \varepsilon\, d(x,M\setminus V_k)$.
We deduce $d(x,y) < \varepsilon (d(x,y) + d(y,M\setminus V_k))$ and then
\eq\label{eq:shrink_to_cov1}
&& d(x,y) < (\varepsilon/(1-\varepsilon)) \, d(y,M\setminus V_k).
\eneq
On the other hand we have $U^0_k \subset U_k$, hence
$V_k \subset U_k \cup V_{k-1}$. Since $y\in U^0_k$ we deduce
\eq\label{eq:shrink_to_cov2}
&& d(y, M \setminus V_k ) \leq d(y, M \setminus (U_k \cup V_{k-1})) 
< D \, d(y,M \setminus U_k) .
\eneq
The inequalities~\eqref{eq:shrink_to_cov1}, \eqref{eq:shrink_to_cov2} and the
hypothesis on $D$ and $\varepsilon$ give $d(x,y) < d(y,M\setminus U_k)$. Hence
$x\in U_k$.

\medskip\noindent
(ii) We have $V_i = V_{i-1} \cup U^0_i$. Hence Lemma~\ref{lem:enlarge_to_cov}
implies that $\{V_{i-1}, U'_i\}$ is a covering of $V_i$ in $\Msa$. 
 Let us prove the last part of the lemma by induction on $k$. We immediately obtain that
$V_k = \bigcup_{i= 1}^k U'_i$. 
Moreover,  $\{V_{k-1}, U'_k\}$ being a covering of $V_k$, 
we get by using COV4 that, for all $k = 1,\ldots,N$,
$\{U'_i\}_{i= 1}^k$ is a $1$-regularly situated family in $\Op_{\Msa}$.

\medskip\noindent
(iii) It remains to prove that $V_N = \bigcup_{i= 1}^N U_i$.
It is clear that $V_k \subset \bigcup_{i= 1}^N U_i$, for all $k = 1,\ldots,N$.
Let $x\in \bigcup_{i= 1}^N U_i$. Since $\{U_i\}_{i= 1}^N$ is $1$-regularly
situated, there exists $i_0$ such that
$d(x,M\setminus \bigcup_{i= 1}^N U_i)\leq C \, d(x,M \setminus U_{i_0})$.
In particular $x \in U_{i_0}$ and moreover
$d(x, M \setminus (U_{i_0} \cup V_{i_0-1}))
\leq C \, d(x,M \setminus U_{i_0})< D \, d(x,M \setminus U_{i_0})$.
Therefore $x\in U^0_{i_0}$. By definition $U^0_{i_0} \subset V_{i_0} \subset V_N$.
Hence $x\in V_N$ and we obtain $V_N = \bigcup_{i= 1}^N U_i$.
\end{proof}

In particular, we have proved:

\begin{proposition}\label{pro:exregcov}
Let $U\in\Op_\Msa$. Then for any linear covering $\{U_i\}_{i\in I}$ of $U$  
there exists a refinement which is  a regular covering of $U$.
\end{proposition}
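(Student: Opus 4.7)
The plan is to reduce the statement to a direct application of Lemma~\ref{lem:shrink_to_cov}. First, by Definition~\ref{def:linearcov}, any linear covering $\{U_i\}_{i\in I}$ of $U$ already contains a finite subfamily $\{U_i\}_{i\in I_0}$ that is $1$-regularly situated and whose union is $U$. Since any refinement of $\{U_i\}_{i\in I_0}$ is automatically a refinement of $\{U_i\}_{i\in I}$, we may assume from the start that $I=\{1,\dots,N\}$ and that the family is $1$-regularly situated, with some constant $C\geq 1$ as in~\eqref{eq:resit}.

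Next, to invoke Lemma~\ref{lem:shrink_to_cov} we need to know that the distance $d$ is a subanalytic function on $M\times M$. As remarked just after Definition~\ref{def:regsit1}, the notion of linear (hence of regular) covering does not depend on the choice of distance in the equivalence class fixed by~\eqref{eq:distonM}, so we may freely replace $d$ by any Lipschitz-equivalent subanalytic distance; the existence of such a distance is exactly the content of Theorem~\ref{th:ParuTh} of Parusinski acknowledged in the introduction.

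With these reductions made, choose constants $D>C$ and $0<\varepsilon<1$ with $\varepsilon D<1-\varepsilon$, and apply Lemma~\ref{lem:shrink_to_cov} to the ordered family $\{U_i\}_{i=1}^N$. It produces subanalytic open sets $U'_i\subset U_i$ satisfying $\bigcup_{i=1}^N U'_i=\bigcup_{i=1}^N U_i=U$ and, for every $1\leq k\leq N$, the family $\{U'_i\}_{i=1}^k$ is $1$-regularly situated, hence a linear covering of $\bigcup_{i=1}^k U'_i$. By Definition~\ref{def:regularcov}, $\{U'_i\}_{i=1}^N$ is therefore a regular covering of $U$, and the inclusions $U'_i\subset U_i$ show it is a refinement of the original covering.

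The only real obstacle is the subanalyticity hypothesis on $d$ required by Lemma~\ref{lem:shrink_to_cov}; once Theorem~\ref{th:ParuTh} is granted, everything else is a direct application of that lemma combined with the definitions. Without it, one would have to do the construction locally and patch by means of Corollary~\ref{cor:1covlocal}, which would be considerably more delicate.
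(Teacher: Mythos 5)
Your proof follows exactly the paper's route: the authors state Proposition~\ref{pro:exregcov} immediately after Lemma~\ref{lem:shrink_to_cov} with the words ``In particular, we have proved'', and your reduction to a finite $1$-regularly situated subfamily followed by an application of that lemma is precisely the intended argument. One correction, however: the existence of a subanalytic distance Lipschitz-equivalent to $d_M$ is \emph{not} the content of Theorem~\ref{th:ParuTh} --- that theorem asserts the existence of a refinement in $\Msal$ by open sets $V_j$ with $H^k(V_j;\cor_M)\simeq 0$ for $k>0$, and has nothing to do with distances. The fact you actually need is more elementary: since $U$ is relatively compact, one may embed a neighborhood of $\ol U$ analytically into some $\R^N$ and restrict the Euclidean distance, which is a subanalytic function and is Lipschitz-equivalent to $d_M$ on the relevant compact set; the invariance of the notion of $1$-regularly situated family under such a change of distance is the remark made just after Definition~\ref{def:regsit1}. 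With that citation repaired, your argument is complete and coincides with the paper's.
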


 \chapter{Sheaves on subanalytic topologies}\label{ShvSa2}

\section{Sheaves}

\subsubsection*{Usual notations}
We shall mainly follow the notations of~\cite{KS90,KS01} and~\cite{KS06}.

In this paper, we denote by $\cor$  a commutative unital {\em Noetherian} ring with finite global dimension.
Unless otherwise specified, a manifold means a real analytic manifold.

If $\shc$ is an additive category, we denote by $\RC(\shc)$ the additive category of complexes in $\shc$.
For $*=+,-,\rb$ we also consider the full additive subcategory $\RC^*(\shc)$
of $\RC(\shc)$ consisting of complexes bounded from below 
(resp. from above, resp. bounded) and $\RC^\ub(\shc)$ means $\RC(\shc)$ (``$\ub$'' stands for ``unbounded''). 
If  $\shc$ is an abelian category, we denote by $\RD(\shc)$  its  
derived category and similarly with $\RD^*(\shc)$ for  $*=+,-,\rb,\ub$.

For a site $\sht$, we denote by $\Psh(\cor_\sht)$ and $\md[\cor_\sht]$ the abelian categories of presheaves and sheaves of $\cor$-modules on $\sht$. We denote  by $\iota\cl\md[\cor_\sht]\to\Psh(\cor_\sht)$ the forgetful functor
and by $(\scbul)^a$ its left adjoint, the functor which associates a sheaf to a presheaf. Note that in practice we shall often not write 
 $\iota$.
Recall that $\md[\cor_\sht]$  is a Grothendieck category and, in particular, has enough injectives.
We write $\RD^*(\cor_\sht)$ instead of $\RD^*(\md[\cor_\sht])$ ($*=+,-,\rb,\ub$).

For a site $\sht$, we will often use the following well-known fact. For any $F\in \RD(\cor_\sht)$
and any $i\in \Z$, the cohomology sheaf $H^i(F)$ is the sheaf associated with
the presheaf $U\mapsto H^i(U;F)$. In particular, if $H^i(U;F) = 0$ for all
$U\in \sht$, then $H^i(F) \simeq  0$.

For an object $U$ of $\sht$, recall that there is a sheaf naturally attached to
$U$ (see {\em e.g.}~\cite[\S~17.6]{KS06}). We shall denote it here by
$\cor_{U\sht}$ or simply $\cor_U$ if there is no risk of confusion. This is the
sheaf associated with the presheaf (see loc.\ cit.\ Lemma~17.6.11):
\eqn
&&V\mapsto\oplus_{V\to U}\cor.
\eneqn
The functor ``associated sheaf'' is exact. If follows that, if $V\to U$ is a
monomorphism in $\sht$, then the natural morphism $\cor_{V\sht}\to\cor_{U\sht}$
also is a monomorphism.

\subsubsection*{Sheaves on $M$ and $\Msa$}

We shall mainly use the subanalytic topology introduced in~\cite{KS01}. In loc.\ cit.,
sheaves on the subanalytic topology  are studied in the more general framework of indsheaves.
We  refer to~\cite{Pr08}  for a direct and more elementary treatment of subanalytic sheaves.

Recall that  $\rhosa\cl M\to\Msa$ denotes the natural morphism of sites. 
The functor $\oim{\rhosa}$ is left exact and  its  left adjoint $\opb{\rhosa}$ is exact. Hence, we 
have the pairs of adjoint functors
\eq\label{eq:fctrho0}
\xymatrix{
\md[\cor_M]\ar@<.5ex>[r]^{\oim{\rhosa}}&\md[\cor_\Msa]\ar@<.5ex>[l]^{\opb{\rhosa}},
}
\quad
\xymatrix{
\Derb(\cor_M)\ar@<.5ex>[r]^{\roim{\rhosa}}&\Derb(\cor_\Msa).\ar@<.5ex>[l]^{\opb{\rhosa}}
}
\eneq
The functor $\oim{\rhosa}$  is fully faithful and
$\opb{\rhosa}\oim{\rhosa}\simeq\id$. Moreover,
$\opb{\rhosa}\roim{\rhosa}\simeq\id$  and $\roim{\rhosa}$ in~\eqref{eq:fctrho0} is fully faithful.

The functor $\opb{\rhosa}$ also admits a left adjoint functor
$\eim{\rhosa}$. For $F\in\md[\cor_M]$, $\eim{\rhosa}F$ is the sheaf on $\Msa$
associated with the presheaf $U\mapsto F(\ol U)$.  The functor $\eim{\rhosa}$
is exact, fully faithful and commutes with tensor products.

\begin{proposition}\label{pro:sectrhoUF00}
Let  $U\in\Op_\Msa$ and let $F\in\md[\cor_M]$. Then
\eqn
&&\rsect(U;\roim{\rhosa}F)\simeq\rsect(U;F).
\eneqn
\end{proposition}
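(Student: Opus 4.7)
The plan is to reduce the statement to a Yoneda-type adjunction computation, using that $\opb{\rhosa}$ is exact and is left adjoint to $\roim{\rhosa}$. First I would rewrite both sides as $\RHom$ out of the representable sheaf attached to $U$. On $\Msa$, for any $G\in \RD^+(\cor_\Msa)$ we have $\rsect(U;G)\simeq \RHom[\cor_\Msa](\cor_{U\Msa},G)$, and similarly on $M$ we have $\rsect(U;F)\simeq \RHom[\cor_M](\cor_{U},F)$, where $\cor_U$ denotes the usual sheaf on $M$ associated to the open subset $U$.

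Next I would identify $\opb{\rhosa}\cor_{U\Msa}\simeq \cor_{U}$. The cleanest way is by Yoneda: for any $G\in \md[\cor_M]$ the chain of isomorphisms
\begin{align*}
\Hom[\cor_M](\opb{\rhosa}\cor_{U\Msa},G)
&\simeq \Hom[\cor_\Msa](\cor_{U\Msa},\oim{\rhosa}G)\\
&\simeq (\oim{\rhosa}G)(U)\\
&= G(U)\simeq \Hom[\cor_M](\cor_U,G)
\end{align*}
forces $\opb{\rhosa}\cor_{U\Msa}\simeq \cor_U$. The first equality uses the sheaf of sections formula for a representable sheaf on a site (already recalled in the paper via $V\mapsto \oplus_{V\to U}\cor$); the second uses the definition $\oim{\rhosa}G(V)=G(V)$ for $V\in\Op_\Msa$.

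Finally, since $\opb{\rhosa}$ is exact, the adjunction $(\opb{\rhosa},\roim{\rhosa})$ extends to the derived level, giving
\[
\rsect(U;\roim{\rhosa}F)
\simeq \RHom[\cor_\Msa](\cor_{U\Msa},\roim{\rhosa}F)
\simeq \RHom[\cor_M](\opb{\rhosa}\cor_{U\Msa},F)
\simeq \RHom[\cor_M](\cor_U,F)
\simeq \rsect(U;F),
\]
which is the desired isomorphism. No serious obstacle is expected; the only point requiring care is the identification $\opb{\rhosa}\cor_{U\Msa}\simeq \cor_U$, which I would justify precisely by the Yoneda argument above rather than by any explicit sheafification computation.
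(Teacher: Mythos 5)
Your proof is correct and follows essentially the same route as the paper's: both sides are rewritten as $\RHom$ out of the representable sheaf $\cor_U$, and the result follows by the adjunction $(\opb{\rhosa},\roim{\rhosa})$ together with the identification $\opb{\rhosa}\cor_{U\Msa}\simeq\cor_{UM}$. The paper simply asserts this last identification, whereas you supply the (correct) Yoneda justification for it, which is a welcome addition rather than a deviation.
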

\begin{proof}
This follows from $\rsect(U;G)\simeq \RHom(\cor_U,G)$ for $G\in\md[\cor_\sht]$
($\sht=M$ or $\sht=\Msa)$ and by adjunction since
$\opb{\rhosa}\cor_{U\Msa}\simeq\cor_{UM}$.
\end{proof}
Also note that the functor $\oim{\rhosa}$ admitting an exact left adjoint functor, it sends injective objects of 
$\md[\cor_M]$ to injective objects of $\md[\cor_\Msa]$.

One denotes by $\mdrc[\cor_M]$ the category
of $\R$-constructible sheaves on $M$. One denotes by 
$\Derb_\Rc(\cor_M)$ the full triangulated subcategory of $\Derb(\cor_M)$
consisting of objects with $\R$-constructible cohomologies.

Recall that  $\oim{\rhosa}$ is  exact when restricted to the subcategory $\mdrc[\cor_M]$. 
Hence we shall consider this last category both as a full subcategory of $\md[\cor_M]$ and a full subcategory of $\md[\cor_\Msa]$.

For $U\in\Op_\Msa$  we have the sheaf $\cor_{U\Msa}\simeq\oim{\rhosa}\cor_{UM}$ on $\Msa$ 
that we simply denote by $\cor_U$. 

\subsubsection*{Sheaves on $M$ and $\Msal$}

Recall Definition~\ref{def:linsasite}.
 The functor $\oim{\rhosal}$ is left exact and  its  left adjoint $\opb{\rhosal}$ is exact 
since the presites underlying the sites $\Msa$ and $\Msal$ are the same (see~\cite[Th.~17.5.2]{KS06}). Hence, we 
have the pairs of adjoint functors
\eq\label{eq:fctrho00}
\xymatrix{
\md[\cor_\Msa]\ar@<.5ex>[r]^{\oim{\rhosal}}&\md[\cor_\Msal]\ar@<.5ex>[l]^{\opb{\rhosal}},
}
\quad
\xymatrix{
\RD^+(\cor_\Msa)\ar@<.5ex>[r]^{\roim{\rhosal}}&\RD^+(\cor_\Msal). \ar@<.5ex>[l]^{\opb{\rhosal}}
}
\eneq
\begin{lemma}\label{le:rhoff}
The functor $\oim{\rhosal}$ in~\eqref{eq:fctrho00} is fully faithful and
$\opb{\rhosal}\oim{\rhosal}\simeq\id$. Moreover,
$\opb{\rhosal}\roim{\rhosal}\simeq\id$  and $\roim{\rhosal}$ in~\eqref{eq:fctrho00} is fully faithful.
\end{lemma}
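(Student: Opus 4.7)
The plan is to reduce the derived assertion to the non-derived one via injective resolutions, and then to deduce both the isomorphism of functors and the fully faithfulness of $\roim{\rhosal}$ from a single counit computation, in close parallel with the analogous statement for $\rhosa$ discussed earlier in the chapter.

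First I would establish the non-derived part. The essential observation is that $\Msa$ and $\Msal$ share the same underlying presite $\Op_\Msa$; what differs is only the choice of coverings, and every linear covering admits, by Definition~\ref{def:linearcov}, a finite $1$-regularly situated subfamily covering the same open set, which is in particular a subanalytic covering. Hence the $\Msa$-topology is finer than the $\Msal$-topology. This places us in the setting of \cite[Th.~17.5.2]{KS06}: for any $F\in\md[\cor_\Msa]$, the underlying presheaf $\iota F$, being a sheaf for the finer topology $\Msa$, is automatically a sheaf for the coarser topology $\Msal$, and $\oim{\rhosal} F$ is just this presheaf viewed as an $\Msal$-sheaf. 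Consequently, for any $G\in\md[\cor_\Msal]$, the sheaf $\opb{\rhosal} G$ is the $\Msa$-sheafification of $\iota G$. Taking $G = \oim{\rhosal} F$ yields $\opb{\rhosal}\oim{\rhosal} F \simeq F$, since $F$ is already an $\Msa$-sheaf. The counit of the adjunction $\opb{\rhosal}\dashv\oim{\rhosal}$ being an isomorphism is equivalent to $\oim{\rhosal}$ being fully faithful.

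For the derived statement, I would use that $\md[\cor_\Msa]$ is a Grothendieck category (as recalled in the text) and hence has enough injectives, so $\roim{\rhosal}$ is computed by injective resolutions: if $F\to I^\bullet$ is such a resolution, then $\roim{\rhosal} F \simeq \oim{\rhosal} I^\bullet$. Since $\opb{\rhosal}$ is exact, applying it term-by-term and using the non-derived identity just proved gives
$$\opb{\rhosal}\roim{\rhosal} F \simeq \opb{\rhosal}\oim{\rhosal} I^\bullet \simeq I^\bullet \simeq F.$$
Fully faithfulness then follows by the standard argument: for $F,F'\in\RD^+(\cor_\Msa)$, the adjunction $\opb{\rhosal}\dashv\roim{\rhosal}$ yields
$$\Hom(\roim{\rhosal} F, \roim{\rhosal} F') \simeq \Hom(\opb{\rhosal}\roim{\rhosal} F, F') \simeq \Hom(F,F').$$

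There is essentially no obstacle here: the whole lemma is a formal consequence of the fact that $\Msa$ and $\Msal$ share the same underlying presite together with the inclusion of linear coverings into subanalytic coverings. The only point requiring any care is bookkeeping of directions, namely that the topology $\Msa$ is indeed the finer one so that the morphism of sites $\rhosal\cl\Msa\to\Msal$ behaves as expected; this is immediate from Definition~\ref{def:linearcov} and the paragraph introducing $\Op_\Msa$-coverings.
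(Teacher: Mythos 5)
Your proposal is correct and follows essentially the same route as the paper: the non-derived isomorphism comes from the fact that the two sites share the presite $\Op_\Msa$, so $\opb{\rhosal}\oim{\rhosal}F$ is the $\Msa$-sheafification of the presheaf $U\mapsto F(U)$, which is already a sheaf; the derived statement then follows from the exactness of $\opb{\rhosal}$ (the paper phrases this as ``$\opb{\rhosal}\roim{\rhosal}$ is the derived functor of $\opb{\rhosal}\oim{\rhosal}$'', which is exactly your injective-resolution computation), and full faithfulness is the standard adjunction consequence in both cases.
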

\begin{proof}
(i) By its definition, $\opb{\rhosal}\oim{\rhosal}F$ is the sheaf associated
with the presheaf
$U\mapsto (\oim{\rhosal}F)(U)\simeq F(U)$ and this presheaf is already a sheaf.

\vspace{2ex}\noindent
(ii) Since $\opb{\rhosal}$ is exact, $\opb{\rhosal}\roim{\rhosal}$ is the derived
functor of $\opb{\rhosal}\oim{\rhosal}$.
\end{proof}

In the sequel, if $K$ is a compact subset of $M$, we set for a sheaf $F$
on $\Msa$ or $\Msal$: 
\eqn
&&\sect(K;F)\eqdot\indlim[K\subset U]\sect(U;F),\quad U\in\Op_\Msa.
\eneqn

\begin{lemma}\label{le:secttK}
Let $F\in\md[\cor_\Msal]$. For $K$ compact in $M$,  we have the natural isomorphisms
\eqn
&&\sect(K;F)\isoto \sect(K;\opb{\rhosal}F)\isoto \sect(K;\opb{\rhosl}F).
\eneqn
\end{lemma}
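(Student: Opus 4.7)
Both isomorphisms express a single principle: for a compact set $K$, the three sites $M$, $\Msa$, $\Msal$ have equivalent coverings near $K$, so the sheafifications implicit in $\opb{\rhosal}$ and $\opb{\rhosa}$ do not affect sections once we pass to the colimit $\indlim[K\subset U]$. The maps in the lemma come from the adjunction units $F\to \oim{\rhosal}\opb{\rhosal}F$ and $\opb{\rhosal}F\to \oim{\rhosa}\opb{\rhosa}\opb{\rhosal}F=\oim{\rhosa}\opb{\rhosl}F$ (using $\rhosl=\rhosal\circ\rhosa$), which for each $U\in\Op_\Msa$ supply canonical morphisms $F(U)\to(\opb{\rhosal}F)(U)\to(\opb{\rhosl}F)(U)$; taking $\indlim[K\subset U]$ yields the two maps to be shown to be isomorphisms.

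For the first isomorphism, the decisive geometric input is Proposition \ref{pro:covolV}: given any finite $\Msa$-covering $\{U_i\}_{i\in I}$ of some $U\in\Op_\Msa$ and any $V\in\Op_\Msa$ with $K\subset V$ and $\ol V\subset U$, the restricted family $\{V\cap U_i\}_{i\in I}$ is a linear covering of $V$. For injectivity, if $s\in F(U)$ vanishes in the $\Msa$-sheafification $(\opb{\rhosal}F)(U)$, then after a refinement we obtain a finite $\Msa$-cover $\{U_i\}$ of $U$ on which $s$ restricts to zero; shrinking to $V$ as above turns this into vanishing on a linear cover of $V$, and the $\Msal$-sheaf axiom for $F$ yields $s|_V=0$. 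For surjectivity, recall that the first plus construction produces a separated presheaf, so $\opb{\rhosal}F$ coincides with $F^{+}$ on the $\Msa$-site and a section over $U$ is represented by an $\Msa$-matching family $(s_i)\in\prod F(U_i)$ on some finite cover $\{U_i\}$ of $U$; restricting to $V$ gives a matching family on the linear covering $\{V\cap U_i\}$, which the $\Msal$-sheaf axiom for $F$ glues into an element of $F(V)$ mapping to the prescribed section.

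The second isomorphism is proved by the same recipe applied to $\rhosa:M\to\Msa$, the role of Proposition \ref{pro:covolV} being played by the elementary fact that every open covering of a $U\in\Op_\Msa$ with $K\subset U$ admits, after restriction to a $V\in\Op_\Msa$ with $K\subset V$ and $\ol V\subset U$, a finite refinement by relatively compact subanalytic opens (i.e.\ an $\Msa$-covering of $V$), obtained by extracting a finite subcover of the compact $\ol V$ and refining each member by small subanalytic neighborhoods; one also uses that $\Op_\Msa$-neighborhoods of $K$ are cofinal among open neighborhoods of $K$ in $M$. The main obstacle in both parts is tracking the sheafification procedure in terms of explicit matching families and ensuring compatibility with the plus construction; the iterated application of the shrink-to-linear-cover trick (respectively, shrink-to-subanalytic-cover trick) ensures that the restricted covers satisfy the hypotheses of the relevant sheaf axiom at every refinement stage, so that the $\Msal$-sheaf property of $F$ (respectively, the $\Msa$-sheaf property of $\opb{\rhosal}F$) glues the matching data into a genuine section on $V$, making the colimit over $K\subset V\subset U$ absorb the sheafification.
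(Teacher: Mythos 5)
Your argument is correct and takes essentially the same route as the paper: the first isomorphism is exactly what Proposition~\ref{pro:covolV} is invoked for, and your self-contained treatment of the second reproduces the content of the cited result \cite[Prop.~6.6.2]{KS01} via $\opb{\rhosl}\simeq\opb{\rhosa}\opb{\rhosal}$. One small imprecision: a sheaf on $\Msal$ need not be separated as a presheaf on $\Msa$ (cf.\ Example~\ref{exa:germcusp}), so $\opb{\rhosal}F$ is $F^{++}$ rather than $F^{+}$; this is harmless because the iterated shrink-to-linear-cover step you already describe absorbs the second plus construction in the colimit over neighborhoods of $K$.
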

\begin{proof}
The first isomorphism follows from Proposition~\ref{pro:covolV}.
The second one from~\cite[Prop.~6.6.2]{KS01} since $\opb{\rhosl}\simeq\opb{\rhosa}\opb{\rhosal}$.
\end{proof}
The next result is analogue to~\cite[Prop.~6.6.2]{KS01}. 

\begin{proposition}\label{pro:662}
Let $F\in\md[\cor_\Msal]$. For $U$ open in $M$,  we have the natural isomorphism
\eqn
&&\sect(U;\opb{\rhosl}F)\simeq\prolim[V\subset\subset U] \sect(V;F),\, V\in\Op_\Msa.
\eneqn
\end{proposition}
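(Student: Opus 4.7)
The approach combines the factorization $\rhosl=\rhosal\circ\rhosa$ with the cited analogue~\cite[Prop.~6.6.2]{KS01} and Lemma~\ref{le:secttK}. Since $\opb{\rhosl}\simeq\opb{\rhosa}\circ\opb{\rhosal}$, applying~\cite[Prop.~6.6.2]{KS01} to the $\Msa$-sheaf $\opb{\rhosal}F$ reduces the claim to producing a natural isomorphism
\eqn
&&\prolim[V\subset\subset U](\opb{\rhosal}F)(V)\simeq\prolim[V\subset\subset U]F(V),
\eneqn
where $V$ runs over $\Op_\Msa$. The canonical maps $F(V)\to(\opb{\rhosal}F)(V)$ coming from $F\to\oim{\rhosal}\opb{\rhosal}F$ are not isomorphisms for a single $V$, so the comparison really must happen in the limit.

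To build it, I will index by the directed set $J$ of pairs $(V_1,V_2)\in\Op_\Msa\times\Op_\Msa$ with $\ol{V_1}\subset V_2$ and $\ol{V_2}\subset U$. Both projections $(V_1,V_2)\mapsto V_i$ are cofinal in $\{V\in\Op_\Msa;\;V\subset\subset U\}$: given $V\subset\subset U$, shrinking produces an element $(V',V)\in J$ while enlarging to $V'\supset\ol V$ with $\ol{V'}\subset U$ produces $(V,V')\in J$. Hence the two pro-limits rewrite as $\prolim[J]F(V_2)$ and $\prolim[J](\opb{\rhosal}F)(V_1)$ respectively. For each pair the inclusion $\ol{V_1}\subset V_2$ yields the coprojection $F(V_2)\to\sect(\ol{V_1};F)$ into the inductive limit; Lemma~\ref{le:secttK} identifies $\sect(\ol{V_1};F)\isoto\sect(\ol{V_1};\opb{\rhosal}F)$; and the universal property of the colimit provides $\sect(\ol{V_1};\opb{\rhosal}F)\to(\opb{\rhosal}F)(V_1)$, since every neighborhood of $\ol{V_1}$ contains $V_1$. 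Composing these three arrows gives a morphism
\eqn
&&\alpha_{V_1,V_2}\cl F(V_2)\to(\opb{\rhosal}F)(V_1),
\eneqn
natural in $(V_1,V_2)$, inducing $\alpha$ between the two pro-limits. Reversing the isomorphism in Lemma~\ref{le:secttK} gives an analogous $\beta$ in the opposite direction.

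The main step, and the place where care is required, is to check that $\beta\circ\alpha=\id$ and $\alpha\circ\beta=\id$. I would verify this by refining to triples $V_0\subset\subset V_1\subset\subset V_2\subset\subset U$ in $\Op_\Msa$ and tracking $t\in F(V_2)$. Using that the forward isomorphism of Lemma~\ref{le:secttK} is induced by the canonical $F\to\oim{\rhosal}\opb{\rhosal}F$, a direct unraveling shows $\alpha_{V_1,V_2}(t)\in(\opb{\rhosal}F)(V_1)$ is the canonical image of $t|_{V_1}$; applying $\beta_{V_0,V_1}$ to this then sends it back, via the inverse isomorphism applied to the class of $t|_{V_1}$ itself, to $t|_{V_0}\in F(V_0)$. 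Since the projection $(V_0,V_1,V_2)\mapsto V_0$ is still cofinal in $\{V\subset\subset U\}$, the composition $\beta\circ\alpha$ collapses to the identity on $\prolim_{V\subset\subset U}F(V)$; a symmetric argument handles $\alpha\circ\beta$, completing the proof.
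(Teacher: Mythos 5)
Your proof is correct and follows essentially the same route as the paper's: both rest on Lemma~\ref{le:secttK} together with the interleaving (cofinality) of the systems $\{V\}$ and $\{\ol V\}$ for $V\subset\subset U$. The paper merely compresses this into the chain $\sect(U;\opb{\rhosl}F)\simeq\prolim[V]\sect(\ol V;\opb{\rhosl}F)\simeq\prolim[V]\sect(\ol V;F)\simeq\prolim[V]\sect(V;F)$, whereas you first invoke~\cite[Prop.~6.6.2]{KS01} for $\opb{\rhosal}F$ and then carry out the interleaving explicitly.
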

\begin{proof}
We have the chain of isomorphisms, the second one following from 
Lemma~\ref{le:secttK}: 
\eqn
&\sect(U;\opb{\rhosl}F)
 \simeq \prolim[V\subset\subset U] \sect(\ol V;\opb{\rhosl}F)
\simeq \prolim[V\subset\subset U] \sect(\ol V;F)
\simeq \prolim[V\subset\subset U] \sect(V;F).
\eneqn
\end{proof}
The next result is analogue to~\cite[Prop.~6.6.3,~6.6.4]{KS01}. Since the proof of loc.\ cit.\ 
extends to our situation with the help of Proposition~\ref{pro:662}, we do not repeat it.

\begin{proposition}\label{pro:663}
The functor $\opb{\rhosl}$ admits a left adjoint that we denote by $\eim{\rhosl}$.
For $F\in\md[\cor_M]$,   $\eim{\rhosl}F$ is the sheaf on $\Msal$ associated with the presheaf 
$U\mapsto F(\ol U)$. The functor $\eim{\rhosl}$ is
exact and fully faithful. 
\end{proposition}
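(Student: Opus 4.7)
The plan is to adapt the proof of~\cite[Prop.~6.6.3,~6.6.4]{KS01} to the linear subanalytic topology, substituting Proposition~\ref{pro:662} for the analogous formula used in loc.\ cit. First I would define $\eim{\rhosl}F$ as the $\Msal$-sheaf associated with the presheaf $P_F\cl U\mapsto F(\ol U)= \indlim[W\supset\ol U]F(W)$ on $\Op_\Msa$, where $W$ ranges over open neighborhoods of $\ol U$ in $M$, and verify the adjunction $\Hom[\md[\cor_\Msal]](\eim{\rhosl}F,G)\simeq \Hom[\md[\cor_M]](F,\opb{\rhosl}G)$ directly. Since $G$ is already an $\Msal$-sheaf, the universal property of sheafification identifies the left-hand side with the set of presheaf morphisms $P_F\to G$, that is, natural families $(\psi_U\cl F(\ol U)\to G(U))_{U\in\Op_\Msa}$. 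By Proposition~\ref{pro:662}, $\opb{\rhosl}G(W)\simeq \prolim[V\sset W]G(V)$ for $W$ open in $M$, so the right-hand side amounts to a natural family $(\phi_{V,W}\cl F(W)\to G(V))$ indexed by pairs $V\in\Op_\Msa$, $W$ open, $V\sset W$. For fixed $V$, the maps $\phi_{V,W}$ as $W$ varies through open neighborhoods of $\ol V$ form a cocone on the filtered diagram defining $F(\ol V)$, and hence determine a unique $\psi_V\cl F(\ol V)\to G(V)$; this assignment is mutually inverse and natural in $F$ and $G$.

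Exactness of $\eim{\rhosl}$ is then formal: as a left adjoint it preserves colimits and is thus right exact; for left exactness, the functor $F\mapsto F(\ol U)$ is a filtered colimit of the left exact functors $F\mapsto F(W)$ indexed by $W\supset\ol U$, hence itself left exact, and sheafification is exact.

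The identification $\eim{\rhosl}F\simeq \oim{\rhosal}\eim{\rhosa}F$ is the subtlest point. Since linear coverings are in particular $\Msa$-coverings, every $\Msa$-sheaf is automatically an $\Msal$-sheaf, so $\oim{\rhosal}\eim{\rhosa}F$ is an $\Msal$-sheaf receiving the composition $P_F\to \eim{\rhosa}F$; the universal property of $\Msal$-sheafification then yields a canonical comparison $\eim{\rhosl}F\to \oim{\rhosal}\eim{\rhosa}F$. To prove this morphism is an isomorphism, the key step is to show that the $\Msal$-sheafification of $P_F$ already satisfies the $\Msa$-sheaf axiom. I would establish this by running the \v{C}ech computation for an arbitrary finite $\Msa$-covering of $U\in\Op_\Msa$ and using the refinement techniques of Lemmas~\ref{lem:enlarge_to_cov}, \ref{lem:prelim_Horm2}, and~\ref{lem:shrink_to_cov}, together with Proposition~\ref{pro:exregcov}, to reduce to a regular linear covering; the required Mayer--Vietoris relations for $P_F$ on the original covering then follow from those on the refined one by exploiting the sheaf structure of $F$ on $M$ at the level of the closures. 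This last verification is the main obstacle; the adjunction, exactness, and existence statements are essentially formal consequences once Proposition~\ref{pro:662} is in hand.
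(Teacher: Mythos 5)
Your treatment of the adjunction and of the exactness of $\eim{\rhosl}$ is correct and is exactly the intended adaptation of the argument of \cite[Prop.~6.6.3, 6.6.4]{KS01} via Proposition~\ref{pro:662}: the rearrangement of the double projective limit over pairs $V\sset W$ works because $V\in\Op_\Msa$ is relatively compact, and exactness follows from the left exactness of $F\mapsto F(\ol U)=\indlim_{W\supset\ol U}F(W)$ together with the exactness of sheafification and the right exactness of a left adjoint. You are also right that the only non-formal point is the isomorphism $\eim{\rhosl}\simeq\oim{\rhosal}\circ\eim{\rhosa}$, and that it reduces to showing that the $\Msal$-sheafification of $P_F\cl U\mapsto F(\ol U)$ is already a sheaf on $\Msa$.

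However, the mechanism you propose for that last step does not work as stated. You want to take an arbitrary finite covering of $U$ in $\Msa$ and ``reduce to a regular linear covering'' using Lemmas~\ref{lem:enlarge_to_cov}, \ref{lem:prelim_Horm2}, \ref{lem:shrink_to_cov} and Proposition~\ref{pro:exregcov}. But a general covering of $U$ in $\Msa$ admits \emph{no} refinement which is a linear covering of $U$: the pair $\{U_1,U_2\}$ of Example~\ref{exa:U12aa} is a covering in $\Msa$ with no linear refinement (this is precisely why $R^1\oim{\rhosal}\cor_{U_{12}}\not=0$ there, and why the sheaf $N$ of Example~\ref{exa:germcusp} is a nonzero $\Msal$-sheaf with vanishing $\Msa$-sheafification). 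All the lemmas you invoke take a family that is \emph{already} $1$-regularly situated, or a single subset of a fixed open set, as input; none of them converts an arbitrary covering in $\Msa$ into a linear one, and Proposition~\ref{pro:exregcov} explicitly starts from a linear covering. The ingredient that actually makes $P_F$ blind to the difference between the two topologies is the compactness of $\ol U$: a section of $P_F$ over $U_i$ is a section of $F$ on an honest open neighborhood $W_i$ of $\ol{U_i}$ in $M$, the $W_i$ form an open covering of the compact set $\ol U$, and by Proposition~\ref{pro:covolV} (equivalently Lemma~\ref{le:secttK}) the traces $U\cap W_i'$ of slightly shrunk neighborhoods give a \emph{linear} covering of $U$. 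This is the step your sketch is missing, and even with it there remains a point to address: the cocycle condition for $(s_i)$ only gives agreement of $\tilde s_i$ and $\tilde s_j$ near $\ol{U_i\cap U_j}$, while $\ol{U\cap W_i'\cap W_j'}$ may be strictly larger than $\ol{U_i\cap U_j}$ (it can meet $\ol{U_i}\cap\ol{U_j}\setminus\ol{U_i\cap U_j}$), so the transferred family is not automatically a cocycle for the new covering and the overlaps must be controlled before one can conclude.
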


\subsubsection*{Sheaves on $\Msa$ and $\Msal$}

\begin{proposition}\label{pro:corUsasal}
Let $U\in\Op_\Msa$. Then we have $\oim{\rhosal}\cor_{U\Msa}\simeq\cor_{U\Msal}$ 
and $\opb{\rhosal}\cor_{U\Msal}\simeq\cor_{U\Msa}$.
\end{proposition}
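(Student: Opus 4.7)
My plan is to reduce everything to the elementary observation that the tautological presheaf
\[
P_U \colon V \longmapsto \textstyle\bigoplus_{V\to U}\cor,
\]
which on $\Op_\Msa$ equals $\cor$ if $V\subset U$ and $0$ otherwise (morphisms in $\Op_\Msa$ being inclusions), is already a sheaf for the subanalytic topology and, \emph{a fortiori}, for the coarser linear topology. To verify this for $\Msa$, I would pick a finite subanalytic covering $\{V_i\}_{i\in I}$ of $V$ and distinguish two cases. If $V\subset U$, then all $V_i$ and all intersections lie in $U$, the \v{C}ech diagram reduces to identities on copies of $\cor$, and the equalizer is the diagonal $\cor=P_U(V)$. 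If $V\not\subset U$, the hypothesis $\bigcup_i V_i=V$ forces some $V_j\not\subset U$, so $s_j=0$ in any compatible family $(s_i)$; then for each $i$ either $V_i\not\subset U$, forcing $s_i=0$ directly, or $V_i\subset U$, in which case $V_i\cap V_j\subset U$ and the compatibility equation $s_i=s_i|_{V_i\cap V_j}=s_j|_{V_i\cap V_j}=0$ forces $s_i=0$ as well. The equalizer is therefore $0=P_U(V)$.

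Granting this, both $\cor_{U\Msa}$ and $\cor_{U\Msal}$, defined as the respective sheafifications of $P_U$, coincide with $P_U$ as presheaves on the common category $\Op_\Msa$.

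For the first isomorphism I would note that the underlying functor of the site morphism $\rhosal$ is the identity on $\Op_\Msa$, so $\oim{\rhosal}$ leaves the underlying presheaf unchanged; moreover every $\Msa$-sheaf is automatically an $\Msal$-sheaf, since there are fewer coverings to test. Thus $\oim{\rhosal}\cor_{U\Msa}$ has underlying presheaf $P_U$, which is exactly $\cor_{U\Msal}$.

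For the second isomorphism the cleanest route is Yoneda: for any $G\in\md[\cor_\Msa]$,
\[
\Hom[\cor_\Msa](\opb{\rhosal}\cor_{U\Msal},G)
\simeq \Hom[\cor_\Msal](\cor_{U\Msal},\oim{\rhosal}G)
\simeq (\oim{\rhosal}G)(U)
= G(U)
\simeq \Hom[\cor_\Msa](\cor_{U\Msa},G),
\]
using the $(\opb{\rhosal},\oim{\rhosal})$ adjunction, the universal property of the sheaves attached to $U$ (twice), and the fact that $\oim{\rhosal}$ preserves values on objects of $\Op_\Msa$. There is no real obstacle here: the entire proposition rests on the elementary observation that $P_U$ is already a sheaf on $\Msa$, which is precisely what makes the ``representable'' sheaves invariant under the change from the subanalytic to the linear subanalytic topology.
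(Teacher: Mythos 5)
Your description of $\cor_{U\Msa}$ and $\cor_{U\Msal}$ is incorrect, and this is where the proof breaks down. The presheaf $P_U$ is not a sheaf: $\emptyset$ is an object of $\Op_\Msa$, the inclusion $\emptyset\subset U$ gives $P_U(\emptyset)=\cor$, whereas a sheaf of modules must satisfy $F(\emptyset)=0$ (condition (i) of Proposition~\ref{pro:MV0}; the empty family is a covering of $\emptyset$). This is not a cosmetic defect that sheafification repairs only at $\emptyset$: once the value at $\emptyset$ is forced to be $0$, the \v{C}ech condition you verified fails for disjoint coverings. Concretely, if $V=V_1\sqcup V_2\subset U$ with $V_1,V_2$ nonempty, disjoint and open, Mayer--Vietoris forces $\cor_{U\Msa}(V)\simeq\cor_{U\Msa}(V_1)\oplus\cor_{U\Msa}(V_2)\simeq\cor^{2}$, while $P_U(V)=\cor$. (This is consistent with the identification $\cor_{U\Msa}\simeq\oim{\rhosa}\cor_{UM}$ recalled in the paper, which gives $\cor_{U\Msa}(V)\simeq\cor^{\pi_0(V)}$ for $V\subset U$.) Your equalizer computation is internally consistent only because you kept $P_U(\emptyset)=\cor$, which is exactly what a sheaf cannot do.

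Consequently the first isomorphism is not established. Your correct observations --- that $\oim{\rhosal}$ leaves the underlying presheaf unchanged and that every $\Msa$-sheaf is an $\Msal$-sheaf --- only produce a canonical morphism $\cor_{U\Msal}\to\oim{\rhosal}\cor_{U\Msa}$ via the universal property of $\Msal$-sheafification. The substance of the statement is that this morphism is an isomorphism, i.e.\ that sheafifying $P_U$ with respect to the coarser linear topology already yields an $\Msa$-sheaf; since $\Msal$ has strictly fewer coverings this is not automatic and requires an argument (for instance, that the $\Msa$-coverings needed to compute the sheafification can be replaced by linear ones), which is precisely what the paper imports from the proof of~\cite[Prop.~6.3.1]{KS01}. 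By contrast, your Yoneda computation for the second isomorphism is correct and self-contained: it uses only the adjunction, the representing property $\Hom[\cor_\sht](\cor_{U\sht},G)\simeq G(U)$, and $(\oim{\rhosal}G)(U)=G(U)$, and is equivalent to the paper's derivation from Lemma~\ref{le:rhoff} combined with the first isomorphism.
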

\begin{proof}
The proof of~\cite[Prop.~6.3.1]{KS01} gives the first isomorphism  without any
changes other than notational. 
The second isomorphism follows by Lemma~\ref{le:rhoff}.
\end{proof}

\begin{proposition}\label{pro:sectrhoUF}
Let  $U\in\Op_\Msa$ and let $F\in\md[\cor_\Msa]$. Then
\eqn
&&\rsect(U;\roim{\rhosal}F)\simeq\rsect(U;F).
\eneqn
\end{proposition}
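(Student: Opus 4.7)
My plan is to imitate the proof of Proposition~\ref{pro:sectrhoUF00}, using the adjunction between $\opb{\rhosal}$ and $\roim{\rhosal}$ together with Proposition~\ref{pro:corUsasal}.

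First, for any $G\in\md[\cor_\sht]$ on a site $\sht$ and any $U$ in the underlying presite, we have the standard identification $\rsect(U;G)\simeq\RHom[\cor_\sht](\cor_{U\sht},G)$, which passes to the derived category. Applied to $\sht=\Msal$ and $G=\roim{\rhosal}F$, this gives
\[
\rsect(U;\roim{\rhosal}F)\simeq\RHom[\cor_\Msal](\cor_{U\Msal},\roim{\rhosal}F).
\]

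Next, since $\opb{\rhosal}$ is exact, its right adjoint $\oim{\rhosal}$ sends injectives to injectives, so the adjunction at the level of abelian categories derives to yield
\[
\RHom[\cor_\Msal](\cor_{U\Msal},\roim{\rhosal}F)\simeq\RHom[\cor_\Msa](\opb{\rhosal}\cor_{U\Msal},F).
\]
By Proposition~\ref{pro:corUsasal} one has $\opb{\rhosal}\cor_{U\Msal}\simeq\cor_{U\Msa}$, hence the right-hand side is $\RHom[\cor_\Msa](\cor_{U\Msa},F)\simeq\rsect(U;F)$, which closes the chain.

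There is essentially no obstacle: the only nontrivial ingredient is Proposition~\ref{pro:corUsasal}, which has already been established. The remaining work is formal manipulation of adjunctions, parallel to the proof of Proposition~\ref{pro:sectrhoUF00} with the pair $(\rhosa, \Msa)$ replaced throughout by $(\rhosal,\Msal)$.
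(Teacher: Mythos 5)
Your proposal is correct and follows exactly the route the paper takes: the paper's proof of Proposition~\ref{pro:sectrhoUF} is stated to "go as for Proposition~\ref{pro:sectrhoUF00}", i.e.\ the identification $\rsect(U;G)\simeq\RHom(\cor_U,G)$, the derived adjunction between $\opb{\rhosal}$ and $\roim{\rhosal}$, and the isomorphism $\opb{\rhosal}\cor_{U\Msal}\simeq\cor_{U\Msa}$ of Proposition~\ref{pro:corUsasal}. Nothing to add.
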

The proof goes as for Proposition~\ref{pro:sectrhoUF00}.

In the sequel we shall simply denote by $\cor_U$ the sheaf $\cor_{U\sht}$ for $\sht=\Msa$ or $\sht=\Msal$. 

\begin{proposition}\label{pro:MV0}
Let $\sht$ be either the site $\Msa$ or the site $\Msal$. Then a presheaf $F$
is a sheaf if and only if it satisfies:
\bnum
\item
$F(\emptyset)=0$,
\item
for any $U_1,U_2\in\Op_\Msa$ such that $\{U_1,U_2\}$ is a covering of
$U_1\cup U_2$,  the sequence
$0\to F(U_1\cup U_2)\to F(U_1)\oplus F(U_2)\to F(U_1\cap U_2)$ is exact.
\enum
\end{proposition}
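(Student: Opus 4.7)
The forward direction is immediate: (i) is the sheaf axiom specialised to the empty cover of $\emptyset$, and (ii) is the sheaf axiom for the two-element cover $\{U_1,U_2\}$ of $U_1\cup U_2$.

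For the converse, I first reduce an arbitrary cover $\{U_i\}_{i\in I}$ of $U$ to a finite subcover using the definition of the two topologies: any cover in $\Msa$ admits a finite subcover by definition, and any linear cover in $\Msal$ contains a finite $1$-regularly situated subfamily by~\eqref{eq:lcoverings}. Given such a finite subfamily $\{U_i\}_{i\in I_0}$, the induced cover $\{U_i\cap U_j\}_{j\in I_0}$ of each $U_i$ ($i\in I$) is again a cover of the same type by COV3, so a standard separated-and-gluing argument propagates the sheaf axiom from finite subcovers to the full family $\{U_i\}_{i\in I}$. In the $\Msal$ case, Proposition~\ref{pro:exregcov}---together with the explicit construction in Lemma~\ref{lem:shrink_to_cov}, which does not enlarge the cardinality---further reduces the task to verifying the sheaf axiom for finite regular coverings.

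The core argument is then an induction on the length $N$ of a finite covering $\{U_1,\ldots,U_N\}$ of $U$ that in the $\Msal$ case is taken to be regular. The case $N=1$ is trivial and $N=2$ is hypothesis (ii). For $N\geq 3$, set $V=U_1\cup\cdots\cup U_{N-1}$. The pair $\{V,U_N\}$ is a covering of $U$: trivially so in $\Msa$, and in $\Msal$ by the corollary following Proposition~\ref{pro:linearcov=top}, applied to the partition $\{1,\ldots,N-1\}\sqcup\{N\}$. Hypothesis (ii) then yields the Mayer--Vietoris exact sequence
\[
0\to F(U)\to F(V)\oplus F(U_N)\to F(V\cap U_N).
\]
Furthermore, $\{U_i\}_{i<N}$ is a covering of $V$ of length $N-1$ of the same regular type (initial segments of regular covers are regular by Definition~\ref{def:regularcov}), and by COV3 $\{U_i\cap U_N\}_{i<N}$ is a covering of $V\cap U_N$ of size at most $N-1$, which in the $\Msal$ case admits a regular refinement of length at most $N-1$ via Proposition~\ref{pro:exregcov}. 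The inductive hypothesis thus supplies the sheaf exact sequences for $V$ and for $V\cap U_N$, and a routine diagram chase combines them with the Mayer--Vietoris sequence above to produce the full sheaf exact sequence for $\{U_i\}_{i=1}^N$ covering $U$. The principal subtlety lies precisely in this bookkeeping in the $\Msal$ case, namely in ensuring that the intersection cover (only guaranteed to be linear, a priori, by COV3) remains within the scope of the inductive hypothesis; everything else is either elementary or a formal diagram chase.
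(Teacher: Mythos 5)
Your proof is correct and follows essentially the same route as the paper: both reduce an arbitrary linear covering to a finite regular one via Proposition~\ref{pro:exregcov}, run the Mayer--Vietoris induction on the length of a regular covering, and transfer the conclusion back to the original family by a separatedness/refinement argument. The only difference is that the paper delegates the induction to the proof of \cite[Prop.~6.4.1]{KS01}, whereas you spell it out and correctly isolate the one genuine subtlety in the $\Msal$ case --- that the intersection covering $\{U_i\cap U_N\}_{i<N}$ of $V\cap U_N$ is only guaranteed to be linear, so it must be refined again and only the separatedness part of the inductive hypothesis is invoked there.
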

Of course, if $\sht=\Msa$, $\{U_1,U_2\}$ is always a covering of $U_1\cup U_2$.
\begin{proof}
In the case of the site $\Msa$ this is Proposition~6.4.1 of~\cite{KS01}.
Let $F$ be a presheaf on $\Msal$ such that (i) and (ii) are satisfied and let
us prove that $F$ is a sheaf. Let $U\in \Op_\Msa$ and let
$\{U_i\}_{i\in I}$ be a linear covering of $U$.
By Proposition~\ref{pro:exregcov} we can find a finite refinement
$\{V_j\}_{j\in J}$ of $\{U_i\}_{i\in I}$ which is a regular covering of $U$.
We choose $\sigma\cl J\to I$ such that $V_j \subset U_{\sigma(j)}$
for all $j\in J$ and we consider the commutative diagram
\begin{equation}\label{eq:diag_raf_recouv}
\vcenter{\xymatrix{
0 \ar[r] & F(U) \ar[r]^-u\ar@{=}[d] & \bigoplus_{i\in I} F(U_i) \ar[r]^-v\ar[d]^a
 & \bigoplus_{i,j \in I} F(U_{ij}) \ar[d]^b  \\
0 \ar[r] & F(U) \ar[r]
& \bigoplus_{k\in J} F(V_k) \ar[r] & \bigoplus_{k,l \in J} F(V_{kl}), }}
\end{equation}
where $a$ and $b$ are defined as follows.
For $s = \{s_i\}_{i\in I} \in \bigoplus_{i\in I} F(U_i)$, we set
$a(s) = \{t_k\}_{k\in J} \in \bigoplus_{k\in J} F(V_k)$ where
$t_k = s_{\sigma(k)}|_{V_k}$. In the same way we set
$b( \{s_{ij}\}_{i,j \in I}) = \{s_{\sigma(k) \sigma(l)}|_{V_{kl}}\}_{k,l \in J}$.
The proof of~\cite[Prop.~6.4.1]{KS01} applies to a regular covering in $\Msal$
and we deduce that the bottom row of the diagram~\eqref{eq:diag_raf_recouv} is
exact.
It follows immediately that $\ker u = 0$. This proves that $F$ is a separated
presheaf.

It remains to prove that $\ker v = \im u$. Let 
$s = \{s_i\}_{i\in I} \in \bigoplus_{i\in I} F(U_i)$ be such that $v(s) = 0$.
By the exactness of the bottom row we can find $t\in F(U)$ such that
$a(u(t) - s) =0$.
Let us check that $t|_{U_i} = s_i$ for any given $i\in I$.  The
family $\{U_i\cap V_k\}_{k\in J}$ is a covering of $U_i$ in $\Msal$.
Since $F$ is separated it is enough to see that
$t|_{U_i \cap V_k} = s_i|_{U_i\cap V_k}$ for all $k\in J$. Setting $W = U_i \cap V_k$,
we have
$$
t|_W = s_{\sigma(k)}|_W = (s_{\sigma(k)}|_{U_i \cap U_{\sigma(k)}})|_W
= (s_i|_{U_i \cap U_{\sigma(k)}})|_W = s_i|_W,
$$
where the first equality follows from $a(u(t) - s) =0$ and the third one
from $v(s) = 0$.
\end{proof} 

\begin{lemma}\label{le:sectUindlim}
Let $\sht$ be either the site $\Msa$ or the site $\Msal$. 
Let $U\in\Op_\Msa$ and let $\{F_i\}_{i\in I}$ be an inductive system in
$\md[\cor_\sht]$ indexed by a small filtrant category $I$. Then
\eq\label{eq:rsectoplus}
&&  \indlim[i] \sect(U;F_i)\isoto\sect(U;\indlim[i]F_i).
\eneq
\end{lemma}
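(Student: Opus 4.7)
The plan is to show that the presheaf $G$ defined by $G(U) \eqdot \indlim[i] \sect(U;F_i)$ is already a sheaf on $\sht$, so that no sheafification is needed when computing the filtered colimit $\indlim[i]F_i$ in $\md[\cor_\sht]$; the conclusion then drops out by evaluating at $U$. Since $\sht$ is either $\Msa$ or $\Msal$, the sheaf condition can be checked via the criterion of Proposition~\ref{pro:MV0}: it suffices to verify that $G(\emptyset)=0$ and that for any $U_1,U_2\in\Op_\Msa$ forming a covering of $U_1\cup U_2$ in $\sht$, the Mayer-Vietoris sequence
\eqn
0\to G(U_1\cup U_2)\to G(U_1)\oplus G(U_2)\to G(U_1\cap U_2)
\eneqn
is exact.

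First I would check condition (i): since each $F_i$ is a sheaf, $F_i(\emptyset)=0$, and a filtrant inductive limit of zero modules is zero, so $G(\emptyset)=0$. Then for condition (ii), I would fix such a covering $\{U_1,U_2\}$ and observe that for every $i\in I$ the Mayer-Vietoris sequence associated to $F_i$ is exact by Proposition~\ref{pro:MV0} (applied to $F_i$). Since filtrant inductive limits are exact in the category of $\cor$-modules (they commute with finite limits and arbitrary colimits), taking $\indlim[i]$ term by term yields the corresponding exact Mayer-Vietoris sequence for $G$.

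Having verified (i) and (ii), Proposition~\ref{pro:MV0} yields that $G$ is a sheaf on $\sht$. Consequently, the canonical map from $G$ to its sheafification is an isomorphism, and this sheafification is by construction $\indlim[i]F_i$ in $\md[\cor_\sht]$. Evaluating on $U$ then gives
\eqn
\indlim[i]\sect(U;F_i)=G(U)\simeq\sect\bl U;\indlim[i]F_i\br,
\eneqn
which is the asserted isomorphism~\eqref{eq:rsectoplus}.

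There is no real obstacle in this argument; the mild subtlety is that on the site $\Msal$ one does not a priori know that filtered colimits of sheaves are computed presheaf-wise (as would be automatic on a topological space), but Proposition~\ref{pro:MV0} reduces the sheaf axiom to a finite (two-term) exactness condition, which commutes with filtrant $\indlim$ in $\md[\cor]$. The same proof works verbatim for both $\sht=\Msa$ and $\sht=\Msal$, the only difference being the class of pairs $\{U_1,U_2\}$ to which condition (ii) must be applied.
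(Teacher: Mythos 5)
You prove the statement correctly, but by a genuinely different route from the paper's. You check that the presheaf $U\mapsto\indlim[i]\sect(U;F_i)$ satisfies the two conditions of Proposition~\ref{pro:MV0} --- each $F_i$ satisfies them because it is a sheaf, and small filtrant inductive limits are exact in $\md[\cor]$, hence preserve the kernel condition (ii) --- so this presheaf is already a sheaf and therefore coincides with $\indlim[i]F_i$; evaluating at $U$ gives~\eqref{eq:rsectoplus}. The paper argues instead on the side of the sheafification: setting $\sect(\shs;F)=\ker\bl\prod_jF(U_j)\tto\prod_{j,k}F(U_j\cap U_k)\br$ for a covering $\shs=\{U_j\}_j$, it notes that the presheaf inductive limit $\inddlim[i]F_i$ is separated, so that a single plus-construction gives $\sect(U;\indlim[i]F_i)\simeq\indlim[\shs\in\Cov(U)]\sect(\shs;\inddlim[i]F_i)$, and then interchanges the two filtrant inductive limits, using that $\shs$ may be taken finite so that $\sect(\shs;\scbul)$ is a finite limit and commutes with filtrant $\indlim$ in $\md[\cor]$. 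Both arguments turn on the same feature of $\Msa$ and $\Msal$, namely that every covering admits a finite refinement: you import it through Proposition~\ref{pro:MV0} (whose proof on $\Msal$ already required regular refinements), while the paper invokes it directly. Your version yields the slightly stronger and reusable fact that filtrant inductive limits of sheaves on these sites are computed presheaf-wise; the paper's stays closer to the general formalism of sections of an associated sheaf. One minor quibble with your closing remark: presheaf-wise computation of filtered colimits is not automatic on a topological space either, except over quasi-compact open sets; the operative point is, as you correctly identify, the finiteness built into the coverings of these sites.
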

This kind of results is well-known from the specialists (see {\em e.g.}~\cite{KS01,EP10}) 
but for the reader's convenience, we give a proof.

\begin{proof}
For a covering $\shs=\{U_j\}_j$ of $U$ set 
\eqn
&&\sect(\shs;F)\eqdot\ker\bl \prod_iF(U_i)\tto\prod_{ij}F(U_i\cap U_j)\br.
\eneqn
Denote by $\inddlim$ the inductive limit in the category of presheaves and
recall that $\indlim[i]F_i$ is the sheaf associated with $\inddlim[i]F_i$. The
presheaf $\inddlim[i]F_i$ is separated.  Denote by $\Cov(U)$ the family of
coverings of $U$ in $\sht$ ordered as follows. For $\shs_1$ and $\shs_2$ in
$\Cov(U)$, $\shs_1\preceq\shs_2$ if $\shs_1$ is a refinement of $\shs_2$. Then
$\Cov(U)^\rop$ is filtrant and
\eqn
\sect(U;\indlim[i]F_i)
&\simeq&\indlim[\shs\in \Cov(U)]\sect(\shs;\inddlim[i]F_i)\\
&\simeq&\indlim[\shs]\indlim[i]\sect(\shs;F_i)\\
&\simeq&\indlim[i]\indlim[\shs]\sect(\shs;F_i)\simeq \indlim[i]\sect(U;F_i).
\eneqn
Here, the second isomorphism follows from the fact that we may assume that the
covering $\shs$ is finite.
\end{proof}
\begin{example}\label{exa:germcusp}
Let $M=\R^2$ endowed with coordinates $x=(x_1,x_2)$. For $\varepsilon, A>0$ we
define the subanalytic open subset
\eq\label{eq:defUAeps}
& U_{A,\varepsilon} = \{ x; \;  0 < x_1 < \varepsilon, \,
-A x_1^2 < x_2 < A x_1^2\}.
\eneq
We define a presheaf $F$ on $\Msal$ by setting, for any $V\in \Op_\Msa$,
$$
F(V) =\begin{cases}
\cor & \text{if for any $A>0$, there exists $\varepsilon>0$ such that 
   $U_{A,\varepsilon}  \subset V$,} \\
0 & \text{otherwise.}
\end{cases}
$$
The restriction map $F(V) \to F(V')$, for $V'\subset V$, is $\id_\cor$ if
$F(V') = \cor$. We prove that $F$ is sheaf in~(iii) below after the
preliminary remarks~(i) and~(ii).

\medskip\noindent
(i) For given $A, \varepsilon_0>0$ we have
$d( (\varepsilon,0), M\setminus U_{A,\varepsilon_0})
\geq (A/4) \varepsilon^2 $, for any $\varepsilon > 0$ small enough.
In particular, if $F(V) = \cor$, then
\begin{equation}\label{eq:germcusp1}
d((\varepsilon,0), M\setminus V) / \varepsilon^2 \to +\infty
\quad \text{when $\varepsilon\to 0$.}
\end{equation}

\noindent
(ii) Let us assume that there exist $A>0$ and a sequence $\{\varepsilon_n\}$,
$n\in\N$, such that $\varepsilon_n > 0$, $\varepsilon_n\to 0$ when
$n\to \infty$ and $V$ contains the closed balls
$B((\varepsilon_n,0), A \varepsilon_n^2)$ for all $n\in \N$.  Then there exists
$\varepsilon>0$ such that $V$ contains $\ol{U_{A,\varepsilon}} \setminus \{0\}$.

Before we prove this claim we translate the conclusion in terms of sheaf
theory (in the usual site $\R^2$). Let $p\cl \R^2 \to \R$ be the projection
$(x_1,x_2) \mapsto x_1$.  Then, for $x_1>0$, the set
$\opb{p}(x_1) \cap V \cap \ol{U_{A,\varepsilon}}$ is a finite disjoint union of
intervals, say $I_1,\ldots,I_N$.
If $\opb{p}(x_1) \cap V$ contains $\opb{p}(x_1) \cap \ol{U_{A,\varepsilon}}$,
then $N=1$, $I_1$ is closed and $\rsect(\R;\cor_{I_1}) = \cor$.  In the other
case none of these $I_1,\ldots,I_N$ is closed and $H^0(\R; \cor_{I_j})=0$, for
all $j=1,\ldots,N$.
By the base change formula we deduce that $V$ contains
$\ol{U_{A,\varepsilon}} \setminus \{0\}$ if and only if
$\roim{p}(\cor_{V\cap \ol{U_{A,\varepsilon}}}) |_{]0,\varepsilon]}
\simeq \cor_{]0,\varepsilon]}$.

We remark that, for $\varepsilon < 1$, we have
$\roim{p}(\cor_{V\cap \ol{U_{A,\varepsilon}}}) |_{]0,\varepsilon]}
\simeq \roim{p}(\cor_{V\cap \ol{U_{A,1}}}) |_{]0,\varepsilon]}$.
The sheaf $\roim{p}(\cor_{V\cap \ol{U_{A,1}}})$ is  constructible. Hence it is
constant on $]0,\varepsilon]$ for $\varepsilon>0$ small enough. Since
$(\roim{p}(\cor_{V\cap \ol{U_{A,1}}}))_{\varepsilon_n} \simeq \cor$ by
hypothesis, the conclusion follows.

\medskip\noindent
(iii) Now we check that $F$ is a sheaf on $\Msal$ with the criterion
of Proposition~\ref{pro:MV0}. Let $U,U_1,U_2\in\Op_\Msa$ such that
$\{U_1,U_2\}$ is a covering of $U$.

\smallskip\noindent
(iii-a) Let us prove that $F(U) \to F(U_1) \oplus F(U_2)$ is injective.
So we assume that $F(U) =\cor$ (otherwise this is obvious) and we prove
that $F(U_1) = \cor$ or $F(U_2) = \cor$.
Let $A>0$. By~\eqref{eq:germcusp1} and~\eqref{eq:resit} there exists
$\varepsilon_0>0$ such that
$$
\max \{ d((\varepsilon,0), M\setminus U_1),d((\varepsilon,0), M\setminus U_2) \}
\geq A \varepsilon^2 ,
\quad \text{for all $\varepsilon \in ]0,\varepsilon_0[$.}
$$
Hence, for any integer $n\geq 1$, the ball $B((1/n,0), A/n^2)$ is included
in $U_1$ or $U_2$. One of $U_1$ or $U_2$ must contain infinitely many such
balls. By~(ii) we deduce that it contains $U_{A,\varepsilon_A}$, for some
$\varepsilon_A>0$.  When $A$ runs over $\N$ we deduce that one of $U_1$ or
$U_2$ contains infinitely many sets of the type $U_{A,\varepsilon_A}$,
$A\in\N$.  Hence $F(U_1) = \cor$ or $F(U_2) = \cor$.

\smallskip\noindent
(iii-b) Now we prove that the kernel of $F(U_1) \oplus F(U_2) \to F(U_{12})$ is
$F(U)$.  We see easily that the only case where this kernel could be bigger
than $F(U)$ is $F(U_1) = F(U_2) = \cor$ and $F(U_{12}) = 0$. In this case,
for any $A>0$, there exist $\varepsilon_1, \varepsilon_2>0$ such that
$U_{A,\varepsilon_1} \subset U_1$ and $U_{A,\varepsilon_2} \subset U_2$.  This
gives $U_{A,\min\{\varepsilon_1, \varepsilon_2\}} \subset U_{12}$ which
contradicts $F(U_{12}) = 0$.

\medskip\noindent
(iv) By the definition of $F$ we have a natural morphism
$u\cl F\to \oim{\rhosal} \cor_{\{0\}}$ which is surjective.  We can see that
$\opb{\rhosal}(u)$ is an isomorphism. We define $N \in \md[\cor_\Msal]$ by the
exact sequence
\begin{equation}\label{eq:defN_germcusp}
  0 \to N \to F \to \oim{\rhosal} \cor_{\{0\}} \to 0 .
\end{equation}
Then $\opb{\rhosal}N \simeq 0$ but $N\not=0$. More precisely, for
$V\in \Op_\Msa$, we have $N(V)=0$ if $0\in V$ and $N(V) \isoto F(V)$ if
$0\not\in V$.
\end{example}

\section{$\sect$-acyclic sheaves}
\subsubsection*{\v{C}ech  complexes}
In this subsection, $\sht$ denotes either the site $\Msa$ or the site $\Msal$.

For a finite set $I$ and a family of open subsets $\{U_i\}_{i\in I}$ we set for $\emptyset\not=J\subset I$,
\eqn
&&U_J\eqdot\bigcap_{j\in J}U_j.
\eneqn

\begin{lemma}\label{le:Cechexact}
Let $\sht$ be either the site $\Msa$ or the site $\Msal$. Let 
 $\{U_1,U_2\}$ be a covering of $U_1\cup U_2$. Then the sequence 
\eq\label{eq:MV1}
&&0\to \cor_{U_{12}}\to\cor_{U_1}\oplus\cor_{U_2}\to\cor_{U_1\cup U_2}\to 0
\eneq
is exact.
\end{lemma}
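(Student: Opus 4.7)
My plan is to reduce the statement to Proposition~\ref{pro:MV0} by applying the contravariant Yoneda-type functor $\Hom_{\cor_\sht}(-,F)$, which translates the section-level Mayer--Vietoris condition into exactness of the sequence of sheaves. The proof splits naturally into verifying injectivity on the left and right-exactness at the other two terms.

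For injectivity of $\alpha\cl\cor_{U_{12}}\to\cor_{U_1}\oplus\cor_{U_2}$: this morphism is $(\iota_1,-\iota_2)$ where $\iota_k\cl\cor_{U_{12}}\to\cor_{U_k}$ is associated to the inclusion $U_{12}\into U_k$ in $\sht$. Since each inclusion $U_{12}\into U_k$ is a monomorphism in $\sht$, the fact recalled in the excerpt (the functor ``associated sheaf'' is exact, so sheaves attached to monomorphisms are monomorphisms) gives that $\iota_1$ is a monomorphism. Composing $\alpha$ with the projection to $\cor_{U_1}$ recovers $\iota_1$, so $\alpha$ itself is a monomorphism.

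For the right half of the sequence, first observe that the composition $\beta\circ\alpha\cl\cor_{U_{12}}\to\cor_{U_1\cup U_2}$ vanishes: both $j_1\circ\iota_1$ and $j_2\circ\iota_2$ (where $j_k\cl\cor_{U_k}\to\cor_{U_1\cup U_2}$ are the natural maps) coincide with the natural map $\cor_{U_{12}}\to\cor_{U_1\cup U_2}$, so with the sign convention on $\alpha$ they cancel. Now to show $\cor_{U_1\cup U_2}$ is the cokernel of $\alpha$, I would apply $\Hom_{\cor_\sht}(-,F)$ for an arbitrary sheaf $F\in\md[\cor_\sht]$ and use that $\Hom_{\cor_\sht}(\cor_V,F)\simeq F(V)$ for any $V\in\Op_\Msa$. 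This yields the sequence
\eqn
&&0\to F(U_1\cup U_2)\to F(U_1)\oplus F(U_2)\to F(U_{12}),
\eneqn
which is exact for every sheaf $F$ by Proposition~\ref{pro:MV0}, since $\{U_1,U_2\}$ is by hypothesis a covering of $U_1\cup U_2$ in $\sht$. The universal property of the cokernel then forces $\cor_{U_1\cup U_2}\simeq\coker\alpha$ in $\md[\cor_\sht]$, so the three-term sequence is right-exact; combined with injectivity of $\alpha$, this gives the desired short exact sequence.

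I do not expect any real obstacle: the whole content is packaged into Proposition~\ref{pro:MV0} (which already unifies the cases $\sht=\Msa$ and $\sht=\Msal$) plus the Yoneda-style manipulation. The only points deserving care are the sign bookkeeping in the map $\alpha$ when checking $\beta\circ\alpha=0$, and the fact that the Yoneda argument applies identically in both sites because Proposition~\ref{pro:MV0} holds uniformly.
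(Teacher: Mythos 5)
Your proof is correct, but it takes a genuinely different route from the paper's. The paper treats the two sites asymmetrically: for $\Msa$ the exactness of~\eqref{eq:MV1} is quoted as well known, and for $\Msal$ one applies the left exact functor $\oim{\rhosal}$ to the $\Msa$-sequence (using $\oim{\rhosal}\cor_{U\Msa}\simeq\cor_{U\Msal}$ from Proposition~\ref{pro:corUsasal}) to get exactness everywhere except at the last term; the remaining epimorphism is then obtained by checking that $\Hom[\cor_\Msal](\cor_{U_1\cup U_2},F)\to\Hom[\cor_\Msal](\cor_{U_1}\oplus\cor_{U_2},F)$ is injective for every $F$, i.e.\ only the separatedness half of the sheaf axiom is used. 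You instead argue uniformly in $\sht$: the monomorphism $\cor_{U_{12}}\to\cor_{U_1}$ gives injectivity on the left, and the full Mayer--Vietoris criterion of Proposition~\ref{pro:MV0}, read through the Yoneda lemma and the identification $\Hom[\cor_\sht](\cor_V,F)\simeq F(V)$, shows that the canonical map $\coker\alpha\to\cor_{U_1\cup U_2}$ is an isomorphism, which yields exactness at the middle and right terms simultaneously. Your version is self-contained and symmetric in the two sites (it invokes neither the ``well-known'' $\Msa$ case nor Proposition~\ref{pro:corUsasal}), at the cost of using the full sheaf condition rather than just separatedness; the paper's version is shorter given those prerequisites. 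Both arguments are sound, and you correctly isolate the one step that $\Hom(-,F)$ cannot see, namely the injectivity of $\alpha$, handling it by the monomorphism fact recalled in the text.
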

\begin{proof}
The result is well-known for the site $\Msa$ and the functor $\oim{\rhosal}$ being left exact, 
it remains to show that $\cor_{U_1}\oplus\cor_{U_2}\to\cor_{U_1\cup U_2}$ is an epimorphism.
This follows from the fact that for any $F\in\md[\cor_\Msal]$, the map 
$\Hom[\cor_\Msal](\cor_{U_1\cup U_2},F)\to \Hom[\cor_\Msal](\cor_{U_1}\oplus\cor_{U_2},F)$ is a monomorphism.
\end{proof}
Consider now a finite family  $\{U_i\}_{i\in I}$ of objects of $\Op_\Msa$ and let $N\eqdot\vert I\vert$.
We choose a bijection $I=[1,N]$.
Then we have the \v{C}ech complex  in $\md[\cor_\sht]$ in which the term corresponding to $\vert J\vert=1$ is in degree $0$.
\eq\label{eq:Cech0}
&&\cor^\scbul_\shu\eqdot 
0\to \bigoplus_{J\subset I,\vert J\vert=N}\cor_{U_J}\tens e_J\to[d]\cdots 
\to[d] \bigoplus_{J\subset I,\vert J\vert=1}\cor_{U_J}\tens e_J\to0.
\eneq
Recall that $\{e_J\}_{\vert J\vert=k}$ is a basis of $\bigwedge^k\Z^N$ and the differential is defined as usual 
by sending  $\cor_{U_J}\tens e_J$ to $\bigoplus_{i\in I}\cor_{U_{J{\setminus \{i\}}}}\tens e_{i}\lfloor e_J$ using the natural morphism 
$\cor_{U_J}\to \cor_{U_{J{\setminus  \{i\}}}}$.

\begin{proposition}\label{pro:Cechexact}
Let $\sht$ be either the site $\Msa$ or the site $\Msal$. Let $U\in\Op_\Msa$ and let 
$\shu\eqdot\{U_i\}_i\in I$ be a finite covering of $U$ in $\sht$ \lp a regular covering in case $\sht=\Msal$\rp.
Then the natural morphism $\cor^\scbul_\shu\to\cor_U$ is a quasi-isomorphism. 
\end{proposition}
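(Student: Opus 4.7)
The plan is to prove this by induction on $N=|I|$, with the base cases $N=1$ (trivial) and $N=2$ (which is exactly Lemma~\ref{le:Cechexact}: the Mayer--Vietoris sequence says that the two-term complex $[\cor_{U_{12}}\to\cor_{U_1}\oplus\cor_{U_2}]$ is quasi-isomorphic to $\cor_{U_1\cup U_2}$).

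For the inductive step with $N\geq 3$, set $\shu'=\{U_1,\dots,U_{N-1}\}$, $V=\bigcup_{i<N}U_i$, and $\shw=\{U_i\cap U_N\}_{i<N}$. First I would verify that the auxiliary families are coverings of the expected sets in $\sht$, with the regularity condition preserved when $\sht=\Msal$: the family $\shu'$ is a regular covering of $V$ by the very definition of a regular covering; $\{V,U_N\}$ is the last step of that same definition, hence is a (linear) covering of $U$; and applying COV3 to the chain of linear coverings defining the regularity of $\shu'$ shows that $\shw$ is a regular covering of $V\cap U_N$. By the inductive hypothesis we then have $\cor^\scbul_{\shu'}\simeq\cor_V$ and $\cor^\scbul_\shw\simeq\cor_{V\cap U_N}$.

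The core algebraic step is to filter $\cor^\scbul_\shu$ by the condition ``$N\notin J$''. Because removing an index from $J$ cannot introduce $N$, the ``$N\notin J$'' part is a subcomplex, isomorphic to $\cor^\scbul_{\shu'}$, with quotient $K^\scbul$ indexed by $J=J'\cup\{N\}$, $J'\subset[1,N-1]$. A degree-by-degree check (including $J'=\emptyset$, which contributes $\cor_{U_N}$ in degree $0$) identifies $K^\scbul$ with the mapping cone $\mathrm{Cone}(\varepsilon\cl\cor^\scbul_\shw\to\cor_{U_N})$, where $\varepsilon$ is the augmentation $\bigoplus_i\cor_{U_i\cap U_N}\to\cor_{U_N}$ given by the sum of inclusions. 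This yields a short exact sequence of complexes
\[
0\to\cor^\scbul_{\shu'}\to\cor^\scbul_\shu\to K^\scbul\to 0.
\]
Apply Mayer--Vietoris (Lemma~\ref{le:Cechexact}) to the covering $\{V,U_N\}$ of $U$ to get $0\to\cor_{V\cap U_N}\to\cor_V\oplus\cor_{U_N}\to\cor_U\to 0$, from which the two cokernels $\cor_{U_N}/\cor_{V\cap U_N}$ and $\cor_U/\cor_V$ are canonically identified.

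To conclude, compare the above SES with the natural SES $0\to\cor_V\to\cor_U\to\cor_U/\cor_V\to 0$ via the commutative diagram
\[
\vcenter{\xymatrix@C=1.5em{
0\ar[r]&\cor^\scbul_{\shu'}\ar[r]\ar[d]&\cor^\scbul_\shu\ar[r]\ar[d]&K^\scbul\ar[r]\ar[d]&0\\
0\ar[r]&\cor_V\ar[r]&\cor_U\ar[r]&\cor_U/\cor_V\ar[r]&0,
}}
\]
whose vertical arrows are the augmentations (and for $K^\scbul$ the induced map from its degree~$0$ term $\cor_{U_N}$). The left vertical is a quasi-isomorphism by the inductive hypothesis applied to $\shu'$; the right vertical is a quasi-isomorphism because, via the identification $K^\scbul\simeq\mathrm{Cone}(\cor^\scbul_\shw\to\cor_{U_N})$, induction on $\shw$ reduces it to $\mathrm{Cone}(\cor_{V\cap U_N}\hookrightarrow\cor_{U_N})\simeq\cor_{U_N}/\cor_{V\cap U_N}\simeq\cor_U/\cor_V$. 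The five-lemma applied to the associated distinguished triangles forces the middle vertical $\cor^\scbul_\shu\to\cor_U$ to be a quasi-isomorphism.

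The main technical obstacle will be the bookkeeping in the identification $K^\scbul\simeq\mathrm{Cone}(\varepsilon)$, where one must check that the Čech differentials truncated to ``$N\in J$'' match (up to sign) the mapping-cone differential --- in particular at the boundary between degrees $-1$ and $0$, where the augmentation $\varepsilon$ appears. The only other point that requires care is the propagation of the \emph{regular} covering property through COV3 in the $\Msal$ case, which is what legitimizes the use of the inductive hypothesis on $\shw$.
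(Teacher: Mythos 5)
Your proof is correct and takes essentially the same route as the paper: induction on $N$, splitting the \v{C}ech complex according to whether $N\in J$, identifying the piece indexed by $J\ni N$ with the cone of the augmentation of the covering $\{U_i\cap U_N\}_{i<N}$ (whose regularity you rightly extract via COV3), and concluding with the Mayer--Vietoris sequence of Lemma~\ref{le:Cechexact} applied to $\{V,U_N\}$. The only cosmetic difference is that you phrase the comparison as a short exact sequence of complexes plus the five lemma, where the paper invokes the mapping-cone description of~\cite[Th.~12.4.3]{KS06} and its long exact sequence.
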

\begin{proof}
Recall that $N=\vert I\vert$. We may assume $I=[1,N]$. 
For $N=2$ this is nothing but Lemma~\ref{le:Cechexact}. We argue by induction and assume the result is proved 
for $N-1$. 
Denote by $\shu'$ the covering of $U'\eqdot \bigcup_{1\leq i\leq N-1}U_i$ by the family $\{U_i\}_{i\in [1,\dots, N-1]}$. 
Consider the subcomplex $F_1$ of $\cor^\scbul_\shu$ given by
\eq\label{eq:Cech00}
&&F_1\eqdot 
0\to \bigoplus_{N\in J\subset I,\vert J\vert=N}\cor_{U_J}\tens e_J\to[d]\cdots 
\to[d] \bigoplus_{N\in J\subset I,\vert J\vert=1}\cor_{U_J}\tens e_J\to0.
\eneq
Note that $F_1$ is isomorphic to the complex $\cor^\scbul_{\shu'\cap U_N}\to\cor_{U_N}$ where $\cor_{U_N}$ is in degree $0$
and we shall represent $F_1$ by this last complex.  
By~\cite[Th.~12.4.3]{KS06}, there is a natural morphism of complexes
\eq\label{eq:conecech1243}
&& u \cl \cor^\scbul_{\shu'}\,[-1] \to  \bl\cor^\scbul_{\shu'\cap U_N}
\to \cor_{U_N}\br 
\eneq
such that $\cor^\scbul_\shu$ is isomorphic to the mapping cone of $u$.
Hence, writing the long exact sequence associated with the mapping cone of $u$, 
we are reduced, by the induction hypothesis, to prove that the morphism
\eqn
&&\cor_{U'\cap U_N}\to\cor_{U'}\oplus\cor_{U_N}
\eneqn
is a monomorphism and its cokernel is isomorphic to $\cor_U$. 
Since $\{U',U_N\}$ is a covering of $U$, this follows from  Lemma~\ref{le:Cechexact}.
\end{proof}

\subsubsection*{Acyclic sheaves}
In this subsection, $\sht$ denotes either the site $\Msa$ or the site $\Msal$. 
In the literature, one often encounters sheaves which are $\sect(U;\scbul)$-acyclic for a given $U\in\sht$ 
but the next definition does not seem to be frequently used. 

\begin{definition}\label{def:acyclic}
Let $F\in\md[\cor_\sht]$. We say that $F$ is $\sect$-acyclic if we
have  $H^k(U;F)\simeq0$ for all $k>0$ and all $U\in\sht$. 
\end{definition}
We shall give criteria in order that a sheaf $F$ on the site $\sht$ be $\sect$-acyclic. 

Let $U\in\Op_\Msa$ and let 
$\shu\eqdot\{U_i\}_i\in I$ be a finite covering of $U$ in $\sht$ \lp a regular 
covering in case $\sht=\Msal$\rp. We denote by 
$C^\scbul(\shu;F)$ the associated \v{C}ech complex:
\eq\label{eq:Cech1}
&&C^\scbul(\shu;F)\eqdot \Hom[\cor_{\Msal}](\cor^\scbul_\shu,F).
\eneq
One can write more explicitly this complex as the complex:
\eq\label{eq:Cech11}
&&\
0\to \bigoplus_{J\subset I,\vert J\vert=1}F(U_J)\tens e_J\to[d]\cdots 
\to[d] \bigoplus_{J\subset I,\vert J\vert=N}F(U_J)\tens e_J\to0
\eneq
where the differential $d$ is obtained by sending $F(U_J)\tens e_J$ to 
$\bigoplus_{i\in I}F(U_J\cap U_{i})\tens e_{i}\wedge e_J$.

\begin{proposition}\label{pro:MV1}
Let $\sht$ be either the site $\Msa$ or the site $\Msal$ and let
$F\in\md[\cor_\sht]$.  The conditions below are equivalent.
\bnum
\item For any $\{U_1,U_2\}$ which is a covering of $U_1\cup U_2$, the sequence
  $0\to F(U_1\cup U_2)\to F(U_1)\oplus F(U_2) \to F(U_1\cap U_2) \to 0$ is exact.
\item The sheaf $F$ is $\sect$-acyclic.
\item
For any exact sequence in $\md[\cor_\sht]$
\eq\label{eq:Cech2}
&&G^\scbul \eqdot 0 \to \bigoplus_{i_0\in A_0}\cor_{U_{i_0}}
\to\cdots\to  \bigoplus_{i_N\in A_N}\cor_{U_{i_N}}\to0
\eneq
the sequence $\Hom[\cor_\sht](G^\scbul,F)$  is exact.
\item For any finite covering $\shu$ of $U$ \lp regular covering in case
  $\sht=\Msal$\rp, the morphism $F(U)\to C^\scbul(\shu;F)$ is a
  quasi-isomorphism.
\enum
\end{proposition}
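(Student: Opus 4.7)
The plan is to close the cycle of implications $(\mathrm{ii})\Rightarrow(\mathrm{iii})\Rightarrow(\mathrm{i})\Leftrightarrow(\mathrm{iv})\Rightarrow(\mathrm{ii})$. The easy directions are $(\mathrm{iv})\Rightarrow(\mathrm{i})$, which is just the $N=2$ case of the quasi-isomorphism in~(iv), and $(\mathrm{iii})\Rightarrow(\mathrm{i})$, which is obtained by applying $\Hom(\scbul,F)$ to the exact sequence~\eqref{eq:MV1} from Lemma~\ref{le:Cechexact}, a complex already of the form required by~(iii).

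For $(\mathrm{i})\Rightarrow(\mathrm{iv})$ I argue by induction on $N=|I|$, mirroring the proof of Proposition~\ref{pro:Cechexact} but applied to sections. The base $N=2$ is~(i) itself. For the inductive step, set $U'=\bigcup_{i<N}U_i$ and $\shu'=\{U_i\}_{i<N}$; the regular covering hypothesis guarantees that $\shu'$ is a regular covering of $U'$, that $\shu'\cap U_N$ is a regular covering of $U'\cap U_N$ (by COV3), and that $\{U',U_N\}$ is a linear covering of $U$ (since $d(x,M\setminus U_i)\leq d(x,M\setminus U')$ for each $i<N$). Applying $\Hom(\scbul,F)$ to the distinguished triangle~\eqref{eq:conecech1243} and using the induction hypothesis for $\shu'$ and $\shu'\cap U_N$, the Mayer--Vietoris short exact sequence~(i) applied to $\{U',U_N\}$ yields the desired quasi-isomorphism $F(U)\to C^\bullet(\shu;F)$.

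For $(\mathrm{ii})\Rightarrow(\mathrm{iii})$ one uses $\mathrm{Ext}^j(\cor_V,F)\simeq H^j(V;F)$, so~(ii) gives $\mathrm{Ext}^j(\cor_V,F)=0$ for $j>0$, and hence $\mathrm{Ext}^j(G^k,F)=0$. Splitting $G^\bullet$ along its cocycle subsheaves $Z^k=\ker(G^k\to G^{k+1})$ into short exact sequences $0\to Z^k\to G^k\to Z^{k+1}\to 0$, dimension-shifting in the long $\mathrm{Ext}$-sequences propagates the vanishing $\mathrm{Ext}^{>0}(-,F)=0$ from the $G^k$ to the $Z^k$. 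Each short sequence then stays exact under $\Hom(\scbul,F)$, and splicing these gives the exactness of $\Hom(G^\bullet,F)$.

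The main work is $(\mathrm{iv})\Rightarrow(\mathrm{ii})$. The key step is to show $H^1(V;F)=0$ for every $V\in\Op_\Msa$: an element of $H^1(V;F)\simeq\mathrm{Ext}^1(\cor_V,F)$ corresponds to an extension $0\to F\to E\to\cor_V\to 0$, and since $E\to\cor_V$ is an epimorphism of sheaves on $\sht$, there is a covering $\shv=\{V_i\}_i$ of $V$ in $\sht$ (which in case $\sht=\Msal$ we refine to a regular covering by Proposition~\ref{pro:exregcov}) together with lifts $s_i\in E(V_i)$ of $1_{\cor_V}|_{V_i}$; the differences $c_{ij}=s_i-s_j\in F(V_{ij})$ form a \v{C}ech $1$-cocycle which is a coboundary by~(iv) applied to $F$ on $\shv$, and adjusting the $s_i$ by the primitive yields a global splitting $s\in E(V)$. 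For higher $k$ I induct on $k$: embed $F\hookrightarrow I$ into an injective and set $G=I/F$; the vanishing $H^1(V;F)=0$ gives the sectionwise short exact sequence $0\to F(V)\to I(V)\to G(V)\to 0$, which combined with the Mayer--Vietoris short exact sequences for $F$ (from $(\mathrm{iv})\Rightarrow(\mathrm{i})$) and for $I$ (which is $\Gamma$-acyclic) and the $3\times 3$-lemma gives~(i) for $G$, hence~(iv) for $G$ by the already-proven $(\mathrm{i})\Rightarrow(\mathrm{iv})$. The inductive hypothesis then yields $H^{k-1}(V;G)=0$, and the long exact sequence from $0\to F\to I\to G\to 0$ concludes $H^k(V;F)\simeq H^{k-1}(V;G)=0$. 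The main obstacle is precisely this base case $H^1=0$, since it requires carefully combining the local-lifting property of sheaf epimorphisms in the site with the \v{C}ech acyclicity hypothesis~(iv); once this is in hand the induction runs smoothly.
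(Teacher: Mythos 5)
Your proof is correct, but it routes the equivalences differently from the paper and, in particular, attacks the hard implication ``Mayer--Vietoris implies $\sect$-acyclic'' from the opposite end. The paper's cycle is (i)$\Rightarrow$(ii)$\Rightarrow$(iii)$\Rightarrow$(iv)$\Rightarrow$(i): it proves (i)$\Rightarrow$(ii) directly, first showing by an explicit gluing induction along a regular covering that $\sect(U;\scbul)$ is exact on short exact sequences whose kernel satisfies (i), then showing that the class of sheaves satisfying (i) is $\sect(U;\scbul)$-injective (it contains the injectives and is stable under the relevant quotients, via a double-complex argument); (iii) is then deduced from (ii) with an injective resolution and another double complex, and (iv) falls out of (iii) because Proposition~\ref{pro:Cechexact} exhibits the augmented \v{C}ech complex as an exact sequence of the form~\eqref{eq:Cech2}. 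You instead prove (iv)$\Rightarrow$(ii) by the classical Cartan-type criterion --- representing a class in $H^1(V;F)\simeq\Ext{1}(\cor_V,F)$ by an extension, splitting it via a \v{C}ech $1$-cocycle that (iv) makes a coboundary, then dimension-shifting through $G=I/F$ with the nine lemma --- and you prove (i)$\Rightarrow$(iv) by transporting the induction of Proposition~\ref{pro:Cechexact} to the level of sections; your verifications that $\shu'\cap U_N$ is again a regular covering (COV3) and that $\{U',U_N\}$, with $U'=\bigcup_{i<N}U_i$, is a linear covering of $U$ (COV2) are exactly the points that make that induction run. Both routes are sound: the paper's avoids \v{C}ech-cocycle manipulations and the identification of $H^1$ with extension classes at the price of a slightly delicate section-gluing induction, while yours isolates the \v{C}ech-to-derived comparison as the single nontrivial input, which is arguably more transparent. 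Two small points of precision you should add: in (ii)$\Rightarrow$(iii) the dimension shift on the cocycle subsheaves $Z^k$ must be run from the right end of the complex, where $Z^N\simeq G^N$ (starting from $Z^0=0$ one does not obtain $\Ext{1}(Z^k,F)=0$, which is what the splicing requires); and in (iv)$\Rightarrow$(i) one should observe that a two-element linear covering is automatically a regular covering, so that hypothesis (iv) does apply to it when $\sht=\Msal$.
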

\begin{proof}
(i)$\Rightarrow$(ii)~(a) Let $U\in\Op_\Msa$.  Let us first show that for any
exact sequence of sheaves $0\to F\to[\phi] F' \to[\psi] F''\to0$ and any
$U\in\Op_\Msa$, the sequence $0\to F(U)\to F'(U)\to F''(U)\to0$ is exact.  Let
$s''\in F''(U)$. By the exactness of the sequence of sheaves, there exists a
finite covering $U=\bigcup_{i=1}^NU_i$ and $s'_i\in F'(U_i)$ such that
$\psi(s'_i)= s''\vert_{U_i}$.  In case $\sht=\Msal$, we may assume that the
covering is regular by Proposition~\ref {pro:exregcov}.  For $k=1,\ldots,N$, we
set $V_k = \bigcup_{i=1}^kU_i$.  Let us prove by induction on $k$ that there
exists $t'_k \in F'(V_k)$ such that $\psi(t'_k)= s''\vert_{V_k}$.
Starting with $t'_1 = s'_1$ we assume that we have found $t'_k$.
Since our covering is regular, $\{V_k,U_{k+1}\}$ is a covering of $V_{k+1}$.
We set for short $W = V_k\cap U_{k+1}$.
We have $\psi(t'_k|_W) = \psi(s'_{k+1}|_W)$.
Hence there exists $s \in F(W)$ such that $\phi(s) = t'_k|_W -s'_{k+1}|_W$.
By hypothesis~(i) there exists $s_V \in F(V_k)$ and $s_U\in F(U_{k+1})$
such that $s = s_V|_W - s_U|_W$. Setting $t'_V = t'_k - \phi(s_V)$
and $s'_U = s'_{k+1} - \phi(s_U)$ we obtain $t'_U|_W = s'_V|_W$ and we can glue
$t'_U|_W$ and $s'_V|_W$ into $t'_{k+1} \in F'(V_{k+1})$.
We check easily that $\psi(t'_{k+1})= s''\vert_{V_{k+1}}$ and the induction
proceeds. 

\vspace{2ex}\noindent
(i)$\Rightarrow$(ii)~(b) 
 Denote by $\shj$ the full additive subcategory of $\md[\cor_\sht]$ consisting of sheaves 
satisfying the condition (i). We shall show that the category $\shj$  is $\sect(U;\scbul)$-injective for all $U\in\Op_\Msa$.
The category $\shj$ contains the injective sheaves.
By the first part of the proof, it thus remains to show that,
for any short exact sequence of sheaves
$F^\scbul\eqdot 0\to F'\to F\to F''\to 0$,
if  both $F'$ and $F$ belong to $\shj$, then $F''$ belongs to $\shj$. 

Let $U_1,U_2$ as in (i) and  denote by $\cor_{\shu}^\scbul$ the exact sequence  
$0\to\cor_{U_1\cap U_2}\to\cor_{U_1}\oplus\cor_{U_2}\to\cor_{U_1\cup U_2}\to 0$. 
Consider the double complex $\Hom[\cor_\sht](\cor_{\shu}^\scbul,F^\scbul)$.
By the preceding result all rows and columns except at most one
(either one row or one column depending how one writes the double complex) are exact. It follows that
 the double complex is exact.

\vspace{2ex}\noindent
(ii)$\Rightarrow$(iii) Consider an injective resolution $I^\scbul$ of $F$, that is, a complex $I^\scbul$ 
of injective sheaves such that the sequence $I^{\scbul,+}\eqdot 0\to F\to
I^\scbul$ is exact. The hypothesis implies that
$\sect(W;I^{\scbul,+})$ remains exact for all $W\in\Op_\Msa$. Then the argument goes as in the proof of 
(i)$\Rightarrow$(ii)~(b).
Recall that  $G^\scbul$ denotes the complex 
of~\eqref{eq:Cech2} and consider the double complex $\Hom[\cor_\sht](G^\scbul,I^{\scbul,+})$. Then all its rows 
and columns except one (either one row or one column depending how one writes the double complex) 
will be exact. It follows that all rows and columns are exact.

\vspace{2ex}\noindent
(iii)$\Rightarrow$(iv) follows from Proposition~\ref{pro:Cechexact}.

\vspace{2ex}\noindent
(iv)$\Rightarrow$(i) is obvious.
\end{proof}

\begin{corollary}\label{cor:limit-Gam-acyc}
Let $\sht$ be either the site $\Msa$ or the site $\Msal$.
A small filtrant inductive limit of $\sect$-acyclic sheaves is $\sect$-acyclic.
\end{corollary}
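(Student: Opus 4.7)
The plan is to verify criterion (i) of Proposition~\ref{pro:MV1} for the filtrant inductive limit, using the exchange formula of Lemma~\ref{le:sectUindlim}.

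Let $\{F_i\}_{i\in I}$ be a small filtrant inductive system of $\sect$-acyclic sheaves in $\md[\cor_\sht]$, and set $F\eqdot\indlim[i]F_i$. Let $\{U_1,U_2\}$ be a covering of $U_1\cup U_2$ in $\sht$. By Proposition~\ref{pro:MV1}~(i) applied to each $F_i$, the sequence
\eqn
&&0\to F_i(U_1\cup U_2)\to F_i(U_1)\oplus F_i(U_2)\to F_i(U_1\cap U_2)\to 0
\eneqn
is exact. Since filtrant inductive limits are exact in the category of $\cor$-modules, passing to the limit over $i\in I$ preserves the exactness.

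By Lemma~\ref{le:sectUindlim} applied to each of the open sets $U_1$, $U_2$, $U_1\cup U_2$, $U_1\cap U_2$, we have $\indlim[i]F_i(V)\isoto F(V)$ for $V=U_1,U_2,U_1\cup U_2,U_1\cap U_2$. Combining this with the exactness of the limit, we deduce that the sequence
\eqn
&&0\to F(U_1\cup U_2)\to F(U_1)\oplus F(U_2)\to F(U_1\cap U_2)\to 0
\eneqn
is exact. Hence $F$ satisfies condition~(i) of Proposition~\ref{pro:MV1}, so $F$ is $\sect$-acyclic.

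There is essentially no obstacle here: the argument reduces to the fact that filtrant colimits of $\cor$-modules are exact, together with the commutation of sections with such colimits given by Lemma~\ref{le:sectUindlim}. The only subtlety, which is already absorbed by the preceding lemma, is that sections on an object $U\in\Op_\Msa$ of the site $\Msal$ commute with filtrant colimits thanks to the fact that we may restrict attention to finite coverings in the Čech computation of sections.
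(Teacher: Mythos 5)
Your proof is correct and follows exactly the paper's own argument: one checks criterion~(i) of Proposition~\ref{pro:MV1} for the colimit, using the exactness of small filtrant inductive limits in $\md[\cor]$ together with the commutation of $\sect(U;\scbul)$ with such limits provided by Lemma~\ref{le:sectUindlim}. Nothing is missing.
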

\begin{proof}
Since small filtrant inductive limits are exact in $\md[\cor]$, the family of sheaves satisfying 
condition~(i) of Proposition~\ref{pro:MV1} is stable by such limits by Lemma~\ref{le:sectUindlim}.
\end{proof}

\begin{definition}\label{def:flabby}
Let $\sht$ be either the site $\Msa$ or the site $\Msal$. 
One says that $F\in\md[\cor_\sht]$ is flabby if for any $U$ and $V$ in $\Op_\Msa$ with
 $V\subset U$, the natural morphism $F(U)\to F(V)$ is surjective. 
\end{definition}

\begin{lemma}\label{le:flabby}
Let $\sht$ be either the site $\Msa$ or the site $\Msal$.
\bnum
\item
Injective sheaves are flabby.
\item
Flabby sheaves are $\sect$-acyclic. 
\item
The category of flabby sheaves is stable by small filtrant inductive limits.
\enum
\end{lemma}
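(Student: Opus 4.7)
\textbf{Plan for the proof of Lemma~\ref{le:flabby}.}

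For part (i), my strategy is to rewrite flabbiness as a surjectivity statement on $\Hom$ groups. Since $V\subset U$ is a monomorphism in $\Op_\Msa$, the remark recalled earlier implies that the induced morphism $\cor_V\to\cor_U$ is a monomorphism in $\md[\cor_\sht]$. Using the identification $F(W)\simeq\Hom[\cor_\sht](\cor_W,F)$ for $W\in\Op_\Msa$, the defining property of an injective object $I$ gives an epimorphism $\Hom[\cor_\sht](\cor_U,I)\epito\Hom[\cor_\sht](\cor_V,I)$, which is exactly the required surjectivity $I(U)\epito I(V)$.

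For part (ii), I would reduce $\sect$-acyclicity to the Mayer-Vietoris criterion of Proposition~\ref{pro:MV1}~(i). Let $\{U_1,U_2\}$ be a covering of $U_1\cup U_2$ in $\sht$. Since $F$ is a sheaf, Proposition~\ref{pro:MV0} already yields exactness of
\begin{equation*}
0\to F(U_1\cup U_2)\to F(U_1)\oplus F(U_2)\to F(U_1\cap U_2),
\end{equation*}
so the only thing left is surjectivity of the last arrow. Given $s\in F(U_1\cap U_2)$, flabbiness applied to $U_1\cap U_2\subset U_1$ produces $\tw s\in F(U_1)$ restricting to $s$; then the pair $(\tw s,0)\in F(U_1)\oplus F(U_2)$ maps to $s$. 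This verifies condition (i) of Proposition~\ref{pro:MV1} and hence yields $\sect$-acyclicity.

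For part (iii), the key is Lemma~\ref{le:sectUindlim}, valid on both $\Msa$ and $\Msal$: for any $U\in\Op_\Msa$ and any small filtrant inductive system $\{F_i\}_i$ in $\md[\cor_\sht]$ one has $\sect(U;\indlim[i]F_i)\simeq\indlim[i]\sect(U;F_i)$. If each $F_i$ is flabby and $V\subset U$ are in $\Op_\Msa$, then each $F_i(U)\to F_i(V)$ is surjective; since small filtrant inductive limits are exact in $\md[\cor]$, the limit map $(\indlim[i]F_i)(U)\to(\indlim[i]F_i)(V)$ remains surjective.

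I do not anticipate any serious obstacle: all three statements are formal consequences of previously established results (the Hom-description of sections, the Mayer-Vietoris criterion from Proposition~\ref{pro:MV1}, and the commutation of sections with filtrant inductive limits from Lemma~\ref{le:sectUindlim}). The only subtle point is that for $\sht=\Msal$ one must not confuse flabbiness with the stronger condition that arbitrary restrictions be surjective for every linear covering; here Definition~\ref{def:flabby} already takes the form best suited to invoking the two-set Mayer-Vietoris criterion, which is precisely what makes the argument in (ii) go through.
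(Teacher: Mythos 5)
Your proposal is correct and follows essentially the same route as the paper: (i) is the injectivity of $F$ applied to the monomorphism $\cor_V\to\cor_U$, (ii) is the verification of condition (i) of Proposition~\ref{pro:MV1} via surjectivity of $F(U_1)\to F(U_1\cap U_2)$, and (iii) is the combination of Lemma~\ref{le:sectUindlim} with exactness of small filtrant inductive limits, exactly as in Corollary~\ref{cor:limit-Gam-acyc}. No gaps.
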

\begin{proof}
(i) Let $F$ be an injective sheaf and  let  $U$ and $V$ in $\Op_\Msa$ with $V\subset U$. 
Recall that the sequence $0\to\cor_V\to\cor_U$ is exact. Applying the functor 
$\Hom[\cor_\sht](\scbul,F)$ we get the result.

\vspace{2ex}\noindent
(ii) If $F\in\md[\cor_\sht]$ is flabby then it satisfies condition~(i) of Proposition~\ref{pro:MV1}. 

\vspace{2ex}\noindent
(iii) The proof of Corollary~\ref{cor:limit-Gam-acyc}
also works in this case. 
\end{proof}

\section{The functor $\epb{\rhosal}$}

In this section we make an essential use of the Brown representability
theorem (see for example~\cite[Th~14.3.1]{KS06}).

\subsubsection*{Direct sums in derived categories}
In this subsection, we state and prove some elementary results that we shall
need, some of them being well-known from the specialists.

\begin{lemma}\label{le:oplustau}
Let $\shc$ be a Grothendieck category and let $d\in\Z$. Then the cohomology 
functor $H^d$ and the truncation functors  
$\tau^{\leq d}$ and $\tau^{\geq d}$  commute with small direct sums in 
$\RD(\shc)$. In other words, if $\{F_i\}_{i\in I}$ is a small family of objects of $\RD(\shc)$, then
\eq\label{eq:oplustau}
&&\bigoplus_i\tau^{\leq d}F_i\isoto\tau^{\leq d}(\bigoplus_iF_i)
\eneq
and similarly with $\tau^{\geq d}$ and  $H^d$.
\end{lemma}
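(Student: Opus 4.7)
The plan is to reduce the statement to the well-known fact that a Grothendieck category satisfies the axiom AB4, so that small direct sums are exact and commute with kernels, cokernels and cohomology at the chain level.

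First I would recall that $\shc$, being a Grothendieck category, is AB5, and in particular AB4: for any small set $I$ the functor $\bigoplus_{i\in I}\cl \shc^I \to \shc$ is exact. As a consequence, for any small family of complexes $\{F_i\}_{i\in I}$ in $\RC(\shc)$, the termwise direct sum satisfies $H^n(\bigoplus_i F_i)\simeq \bigoplus_i H^n(F_i)$ for all $n$. In particular, the termwise direct sum of quasi-isomorphisms is a quasi-isomorphism, so $\bigoplus_i$ descends to a functor $\RD(\shc)^I\to \RD(\shc)$. A standard argument, verifying the universal property on representatives, then shows that this functor is left adjoint to the diagonal and hence computes the coproduct of $\{F_i\}$ in $\RD(\shc)$.

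Given this, the three isomorphisms become tautologies at the chain level. Writing $F\seteq\bigoplus_iF_i$ with differential $\bigoplus_i d_{F_i}$, the complex $\tau^{\leq d}F$ vanishes in degrees $>d$, agrees with $\bigoplus_i F_i^k = \bigoplus_i (\tau^{\leq d}F_i)^k$ in degrees $k<d$, and in degree $d$ equals
\[
\ker\bl \textstyle\bigoplus_i d^d_{F_i}\br = \bigoplus_i \ker d^d_{F_i} = \bigoplus_i (\tau^{\leq d}F_i)^d
\]
by AB4. Hence $\tau^{\leq d}F \simeq \bigoplus_i \tau^{\leq d}F_i$ already in $\RC(\shc)$, and \textit{a fortiori} in $\RD(\shc)$. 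The argument for $\tau^{\geq d}$ is identical, invoking the fact that cokernels also commute with direct sums. Finally,
\[
H^d(F) = \ker\bl \textstyle\bigoplus_i d_{F_i}^d\br\,/\,\im\bl \textstyle\bigoplus_i d_{F_i}^{d-1}\br \simeq \bigoplus_i H^d(F_i)
\]
by the same AB4 argument (alternatively, $H^d \simeq \tau^{\geq d}\tau^{\leq d}$ and one combines the previous two cases).

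The only non-formal step is the identification of the termwise direct sum with the coproduct in $\RD(\shc)$; I expect this to be the main (mild) obstacle, but it reduces, as indicated above, to the exactness of direct sums in $\shc$, i.e.\ to AB4.
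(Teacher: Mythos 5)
Your proof is correct, but it takes a more chain-level route than the paper. The paper first disposes of the $H^d$ case by citing the fact that in the derived category of a Grothendieck category small direct sums exist and the cohomology functors commute with them (\cite[Prop.~10.2.8, Prop.~14.1.1]{KS06}), and then deduces the truncation statements purely cohomologically: the canonical morphism $\bigoplus_i\tau^{\leq d}F_i\to\tau^{\leq d}(\bigoplus_iF_i)$ is an isomorphism because, for any $Y$, $H^j(\tau^{\leq d}Y)$ is either $0$ or $H^j(Y)$, so the $H^d$ case already settles it. You instead re-derive the content of the cited result --- that coproducts in $\RD(\shc)$ are computed termwise on representatives, which is indeed the only non-formal step and does rest on AB4 (plus the usual localization or K-injective machinery for the universal property, which you correctly flag as ``standard'') --- and then verify the isomorphisms directly in $\RC(\shc)$, using that the exact functor $\bigoplus_i$ preserves kernels and that cokernels commute with all colimits. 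Your approach is more self-contained and makes the mechanism explicit; the paper's is shorter once the KS06 citation is granted, and its step for the truncations avoids any chain-level manipulation. The one point you leave implicit, and should perhaps note, is that your chain-level isomorphism coincides with the canonical morphism obtained from functoriality of $\tau^{\leq d}$ and the universal property of the coproduct; this is a routine check since both are induced by the inclusions $F_i\to\bigoplus_jF_j$.
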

\begin{proof}
(i) The case of $H^d$ follows from~\cite[Prop.~10.2.8, Prop.~14.1.1]{KS06}. 

\vspace{0.2ex}\noindent
(ii) The morphism in~\eqref{eq:oplustau} is well-defined and it is enough to check that it 
induces an isomorphism on the cohomology. This follows from (i) since for any object $Y\in\RD(\shc)$, 
$H^j(\tau^{\leq d}Y)$ is either $0$ or $H^j(Y)$.  
\end{proof}

\begin{lemma}\label{le:comp1}
Let $\shc$ and $\shc'$ be two Grothendieck categories and let
$\rho\cl\shc\to\shc'$ be a left exact functor. Let $I$ be a small
category. Assume 
\bnum
\item
$I$ is either filtrant or discrete,
\item
$\rho$  commutes with inductive limits indexed by $I$,
\item
inductive limits indexed by $I$  
of injective objects in $\shc$ are acyclic for the functor $\rho$.
\enum
Then for all $j\in\Z$, the functor 
$R^j\rho\cl \shc\to\shc'$ commutes with inductive limits indexed by $I$.
\end{lemma}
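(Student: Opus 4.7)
The natural strategy is to compute $R^j\rho$ on $\indlim_i F_i$ via a $\rho$-acyclic resolution obtained as the inductive limit of injective resolutions of each $F_i$.

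First I would choose, functorially in $i\in I$, an injective resolution $F_i \to I_i^\bullet$ in $\shc$. This is possible because a Grothendieck category admits functorial injective resolutions (by the standard small-object argument applied to a generator), so we obtain a functor $I^\bullet \cl I \to \RC^+(\shc)$ together with a natural quasi-isomorphism $F_\bullet \to I_\bullet^\bullet$. Set $F \eqdot \indlim_i F_i$ and $J^\bullet \eqdot \indlim_i I_i^\bullet$.

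Next I would verify that $F \to J^\bullet$ is a resolution. When $I$ is filtrant this follows from the axiom (AB5) in the Grothendieck category $\shc$, since filtrant inductive limits are exact and therefore commute with the cohomology functors $H^k$; when $I$ is discrete (so $\indlim_i$ is a small direct sum) the same conclusion follows from Lemma~\ref{le:oplustau} applied to $\shc$. In both cases we obtain $H^0(J^\bullet) \simeq F$ and $H^k(J^\bullet) = 0$ for $k \neq 0$.

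Now hypothesis (iii) says that each component $J^k = \indlim_i I_i^k$ is $\rho$-acyclic (being an inductive limit of injectives indexed by $I$). Hence $J^\bullet$ is a $\rho$-acyclic resolution of $F$, and therefore
\eqn
&& R^j\rho(F) \simeq H^j(\rho(J^\bullet)).
\eneqn
By hypothesis (ii), $\rho$ commutes with $\indlim_i$, so $\rho(J^\bullet) \simeq \indlim_i \rho(I_i^\bullet)$ as complexes in $\shc'$. Applying once more the commutation of $H^j$ with $\indlim_i$ (by AB5 in $\shc'$ in the filtrant case, by Lemma~\ref{le:oplustau} in the discrete case), we obtain
\eqn
&& H^j(\rho(J^\bullet)) \simeq \indlim_i H^j(\rho(I_i^\bullet)) \simeq \indlim_i R^j\rho(F_i),
\eneqn
which gives the required isomorphism $R^j\rho(\indlim_i F_i) \isoto \indlim_i R^j\rho(F_i)$. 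One should check that this isomorphism coincides with the canonical morphism, which is straightforward from the naturality of the construction.

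The main obstacle is the initial functorial choice of injective resolutions, which requires the Grothendieck hypothesis on $\shc$; once that is in place, the rest of the argument is a formal manipulation combining the acyclicity hypothesis (iii), the commutation hypothesis (ii), and the exactness of $\indlim_i$ in both $\shc$ and $\shc'$.
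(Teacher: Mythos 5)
Your proposal is correct and follows essentially the same route as the paper's proof: both construct a functorial injective resolution indexed by $I$ (the paper invokes~\cite[Cor.~9.6.6]{KS06} for the functorial embedding into injectives), pass to the inductive limit to obtain a resolution of $\indlim_i F_i$ by $\rho$-acyclic objects via hypothesis~(iii), and then conclude using hypothesis~(ii) together with the commutation of $H^j$ with filtrant inductive limits and direct sums. No gaps.
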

\begin{proof}
Let $\alpha\cl I \to\shc$ be a functor. 
Denote by $\shi$ the full additive subcategory of  $\shc$ consisting of injective objects.
It follows for example from~\cite[Cor.~9.6.6]{KS06} that there exists a 
functor $\psi\cl I\to\shi$ and 
a morphism of functors $\alpha\to \psi$ such that for each $i\in I$,
$\alpha(i)\to\psi(i)$ is a monomorphism.
Therefore one can construct a functor $\Psi\cl I\to \RC^+(\shi)$ 
and a morphism of functor 
 $\alpha\to \Psi$ such that for each $i\in I$, $\alpha(i)\to\Psi(i)$ is a quasi-isomorphism.
 Set   $X_i=\alpha(i)$ and  $G_i^\scbul=\Psi(i)$. 
We get a qis $X_i\to G_i^\scbul$, hence a qis 
\eqn
&&\indlim[i]X_i\to\indlim[i]G_i^\scbul.
\eneqn
On the other hand, we have
\eqn
\indlim[i] R^j\rho(X_i)&\simeq& \indlim[i] H^j(\rho(G^\scbul_i))\\
&\simeq& H^j\rho(\indlim[i]G^\scbul_i)
\eneqn
where the second isomorphism follows from the fact that $H^j$ commutes with
direct sums and with filtrant inductive limits.
Then the result follows from hypothesis~(iii).
\end{proof}

\begin{lemma}\label{le:comp20}
We make the same hypothesis as in Lemma~\ref{le:comp1}.
Let $-\infty<a\leq b<\infty$, let $I$ be a small set and let $X_i\in\RD^{[a,b]}(\shc)$.  
Then
\eq\label{eq:Crsectoplus2}
&& \bigoplus_i R\rho(X_i)\isoto R\rho(\bigoplus_iX_i).
\eneq
\end{lemma}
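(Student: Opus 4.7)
The plan is induction on $\ell = b - a$, reducing via the standard truncation triangle to the single-degree case already handled by Lemma~\ref{le:comp1}. The comparison morphism in question, $\bigoplus_i R\rho(X_i) \to R\rho(\bigoplus_i X_i)$, arises from the inclusions $X_j \to \bigoplus_i X_i$ and the universal property of coproducts in $\RD^+(\shc')$.

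For the base case $\ell = 0$, after a shift I may assume that each $X_i$ is an object of $\shc$ concentrated in degree zero. It suffices then to check that the comparison is an isomorphism on every cohomology object $H^j$. By Lemma~\ref{le:oplustau}, $H^j$ commutes with small direct sums in $\RD(\shc')$, so the left-hand side becomes $\bigoplus_i R^j\rho(X_i)$ and the right-hand side becomes $R^j\rho(\bigoplus_i X_i)$; Lemma~\ref{le:comp1}, applied to the discrete category $I$ (whose hypotheses are exactly those inherited here), identifies the two canonically.

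For the inductive step $\ell \geq 1$, I would apply the truncation triangle
$$\tau^{\leq a} X_i \to X_i \to \tau^{>a} X_i \to[+1],$$
in which $\tau^{\leq a} X_i \simeq H^a(X_i)[-a] \in \RD^{[a,a]}(\shc)$ and $\tau^{>a} X_i \in \RD^{[a+1,b]}(\shc)$. Summing over $i$ yields a distinguished triangle in $\RD(\shc)$, since in any triangulated category with small coproducts the coproduct of distinguished triangles is distinguished; Lemma~\ref{le:oplustau} further guarantees that the outer terms remain in $\RD^{[a,a]}(\shc)$ and $\RD^{[a+1,b]}(\shc)$ respectively. Applying $R\rho$ and comparing with the termwise direct sum of $R\rho$-images produces a morphism of distinguished triangles in $\RD^+(\shc')$ whose outer two vertical maps are isomorphisms by the induction hypothesis. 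The triangulated five lemma then forces the middle map to be an isomorphism as well.

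The only real content sits in the base case, which rests on Lemma~\ref{le:comp1} together with the compatibility between cohomology and small direct sums (Lemma~\ref{le:oplustau}). The boundedness of $[a,b]$ is used precisely to terminate the induction in finitely many steps; an unbounded version would be considerably more delicate, since the formation of $R\rho$ on an unbounded complex no longer factors through finitely many truncations.
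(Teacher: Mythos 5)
Your argument is correct and follows the same route as the paper: the base case $b=a$ is exactly Lemma~\ref{le:comp1} applied to the discrete index set, and the inductive step uses the truncation triangle $H^a(X_i)[-a]\to X_i\to\tau^{\geq a+1}X_i$ together with the five lemma, which is precisely the paper's (much terser) proof. The details you supply — commutation of $H^j$ with direct sums via Lemma~\ref{le:oplustau}, the fact that a coproduct of distinguished triangles is distinguished — are the ones the paper leaves implicit.
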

\begin{proof}
The morphism in~\eqref{eq:Crsectoplus2} is well-defined and we have to prove it is an isomorphism.
If $b=a$, the result follows from Lemma~\ref{le:comp1}. The general case 
is deduced by induction on $b-a$ by considering the distinguished triangles
\eqn
&&  H^a(X_i)\,[-a]\to X_i\to \tau^{\geq a+1}X_i\to[+1].
\eneqn
\end{proof}

\begin{proposition}\label{pro:rhooplus}
Let $\shc$ and $\shc'$ be two Grothendieck categories and let
$\rho\cl\shc\to\shc'$ be a left exact functor. Assume that 
\banum
\item
$\rho$ has finite cohomological dimension,
\item
$\rho$ commutes with small direct sums,
\item
small direct sums of injective objects in $\shc$ are acyclic for
the functor $\rho$.
\eanum
Then 
\bnum
\item
the functor $R\rho\cl \RD(\shc)\to\RD(\shc')$ commutes with small direct sums,
\item
the functor  $R\rho\cl \RD(\shc)\to\RD(\shc')$ admits a right adjoint
 $\epb{\rho}\cl\RD(\shc')\to\RD(\shc)$,
\item
the functor $\epb{\rho}$ induces a functor $\epb{\rho}\cl\RD^+(\shc')\to\RD^+(\shc)$.
\enum
\end{proposition}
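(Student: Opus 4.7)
The strategy is to establish the three assertions in order: first~(i), using the cohomological dimension hypothesis to reduce to the bounded case handled by Lemma~\ref{le:comp20}; then~(ii), as an immediate consequence of~(i) together with the Brown representability theorem; and finally~(iii), by combining the adjunction with a standard $t$-structure argument. The main obstacle is~(i) for unbounded complexes, which is the only step where the cohomological dimension and hypothesis~(c) both play an essential role.

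For~(i), let $\{X_i\}_{i\in I}$ be a small family in $\RD(\shc)$. The canonical morphism $u\cl \bigoplus_i R\rho(X_i) \to R\rho(\bigoplus_i X_i)$ is well-defined, and by the definition of $\RD$ it suffices to show that $H^d(u)$ is an isomorphism for every $d\in\Z$. Let $n$ denote the cohomological dimension of $\rho$ and fix integers $k'\leq d-n-1$ and $k\geq d$. The reduction step consists in checking that, for any $Y\in\RD(\shc)$, the natural morphism
\eqn
&&H^d(R\rho(\tau^{\geq k'}\tau^{\leq k}Y)) \to H^d(R\rho(Y))
\eneqn
is an isomorphism. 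This follows from the two truncation triangles, using that $R\rho$ is left $t$-exact (since $\rho$ is left exact) and that $R\rho(\RD^{\leq \ell})\subset \RD^{\leq \ell+n}$ by hypothesis~(a); together these force the cohomology terms $H^d(R\rho(\tau^{>k}Y))$ and $H^d(R\rho(\tau^{<k'}Y))$ (and the neighbouring shifts) to vanish in the relevant long exact sequences. Applying this reduction both to $Y=\bigoplus_iX_i$ and to each $Y=X_i$, commuting truncations and $H^d$ with $\bigoplus$ via Lemma~\ref{le:oplustau}, and then invoking Lemma~\ref{le:comp20} on the bounded family $\{\tau^{\geq k'}\tau^{\leq k}X_i\}_i$ (where hypothesis~(c) enters), one concludes that $H^d(u)$ is an isomorphism.

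Assertion~(ii) is then almost formal. Since $R\rho$ is a triangulated functor commuting with small direct sums by~(i), and since $\RD(\shc)$ is the derived category of a Grothendieck category, the Brown representability theorem~\cite[Th.~14.3.1]{KS06} produces a right adjoint $\epb{\rho}\cl \RD(\shc')\to\RD(\shc)$.

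For~(iii), let $Y\in\RD^{\geq a}(\shc')$ and let us show that $\epb{\rho}(Y)\in\RD^{\geq a-n}(\shc)$. By the standard $t$-structure characterization, it suffices to prove that $\Hom[\RD(\shc)](X,\epb{\rho}(Y))=0$ for every $X\in\RD^{\leq a-n-1}(\shc)$. By adjunction this equals $\Hom[\RD(\shc')](R\rho(X),Y)$; since $R\rho(\RD^{\leq a-n-1})\subset \RD^{\leq a-1}$ by the cohomological dimension bound and $Y\in\RD^{\geq a}$, this Hom vanishes, as required. Since $\RD^+(\shc')=\bigcup_a\RD^{\geq a}(\shc')$, this yields the claimed restriction $\epb{\rho}\cl\RD^+(\shc')\to\RD^+(\shc)$.
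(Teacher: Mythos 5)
Your proposal is correct and follows essentially the same route as the paper: part~(i) by the same truncation reduction to the bounded case of Lemma~\ref{le:comp20} using Lemma~\ref{le:oplustau}, part~(ii) by Brown representability, and part~(iii) by the adjunction--orthogonality argument that the paper isolates as Lemma~\ref{le:adjfcohdim}. The only cosmetic difference is that you inline that last lemma's proof and phrase the truncation step via the two long exact sequences rather than the paper's direct isomorphisms $H^kR\rho(X)\simeq H^kR\rho(\tau^{\leq k}\tau^{\geq j-d-1}X)$; the content is identical.
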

\begin{proof}
(i) Let $\{X_i\}_{i\in I}$ be a family of objects of $\RD(\shc)$.
It is enough to check that  the natural morphism in $\RD(\shc')$
\eq\label{eq:comp1}
&&\bigoplus_{i\in I}R\rho (X_i)\to R\rho(\bigoplus_{i\in I}X_i)
\eneq
 induces an isomorphism on the cohomology groups. 
 Assume that $\rho$ has cohomological dimension $\leq d$.
For $X\in \RD(\shc)$ and for $j\in\Z$, we have 
\eqn
&&\tau^{\geq j}R\rho (X)\simeq \tau^{\geq j}R\rho(\tau^{\geq j-d-1}X).
\eneqn
The functor $\rho$ being left exact we get for $k\geq j$:
\eq\label{eq:CHktruncated}
&& H^kR\rho (X)\simeq H^kR\rho(\tau^{\leq k}\tau^{\geq j-d-1}X).
\eneq
We have the sequence of isomorphisms:
\eqn
H^kR\rho(\bigoplus_iX_i)&\simeq& H^kR\rho(\tau^{\leq k}\tau^{\geq j-d-1}\bigoplus_i X_i)\\
&\simeq& H^kR\rho(\bigoplus_i\tau^{\leq k}\tau^{\geq j-d-1}X_i)\\
&\simeq& \bigoplus_iH^kR\rho(\tau^{\leq k}\tau^{\geq j-d-1}X_i)\\
&\simeq& \bigoplus_iH^kR\rho(X_i).
\eneqn
The first and last isomorphisms follow from~\eqref{eq:CHktruncated}.\\
The second isomorphism follows from Lemma~\ref{le:oplustau}.\\
The third isomorphism follows from Lemma~\ref{le:comp20}. 

\vspace{0.3ex}\noindent
(ii) follows from (i) and  the Brown representability theorem (see for example~\cite[Th~14.3.1]{KS06}).

\vspace{1ex}\noindent
(iii) This follows from hypothesis~(a) and (the well-known) Lemma~\ref{le:adjfcohdim} below.
\end{proof}

\begin{lemma}\label{le:adjfcohdim}
Let $\rho\cl\shc\to\shc'$ be a left exact functor between two Grothendieck categories.
Assume that $\rho\cl \RD(\shc)\to \RD(\shc')$ admits a right adjoint 
$\epb{\rho}\cl  \RD(\shc')\to \RD(\shc)$
and assume moreover that $\rho$ has finite cohomological dimension. 
Then the functor $\epb{\rho}$ sends $\RD^+(\shc')$ to $\RD^+(\shc)$.
\end{lemma}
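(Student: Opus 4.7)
The plan is to combine the adjunction with the orthogonality that characterises the standard $t$-structure on the derived categories. First I would record the consequence of the finite cohomological dimension hypothesis: if $\rho$ has cohomological dimension $\leq d$, then $R\rho$ sends $\RD^{\leq p}(\shc)$ into $\RD^{\leq p+d}(\shc')$ for every $p\in\Z$. For $X$ bounded the statement is the standard hypercohomology estimate; the general case reduces to this one by noting that, for each fixed $k>p+d$, $H^k(R\rho X)$ is unchanged when $X$ is replaced by $\tau^{\geq k-d-1}X$, which is bounded.

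Next I would use the following standard characterisation of the $t$-structure: an object $Z\in\RD(\shc)$ belongs to $\RD^{\geq m}(\shc)$ if and only if $\Hom[\RD(\shc)](X,Z)=0$ for every $X\in\RD^{\leq m-1}(\shc)$. The nontrivial implication is seen by applying $\Hom[\RD(\shc)](\tau^{\leq m-1}Z,\scbul)$ to the truncation triangle $\tau^{\leq m-1}Z\to Z\to\tau^{\geq m}Z\to[+1]$, which forces $\id_{\tau^{\leq m-1}Z}=0$, hence $\tau^{\leq m-1}Z\simeq 0$.

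With these two ingredients in hand the lemma is immediate. Pick $Y\in\RD^{\geq n}(\shc')$ and set $m=n-d$. To show $\epb{\rho}Y\in\RD^{\geq m}(\shc)$ it suffices, by the characterisation above, to check that $\Hom[\RD(\shc)](X,\epb{\rho}Y)=0$ for every $X\in\RD^{\leq m-1}(\shc)$. By the adjunction this Hom equals $\Hom[\RD(\shc')](R\rho X,Y)$. By the first step $R\rho X\in\RD^{\leq n-1}(\shc')$, while $Y\in\RD^{\geq n}(\shc')$, so the orthogonality of the $t$-structure on $\RD(\shc')$ gives the desired vanishing.

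Nothing in this argument is really hard; the only point requiring a moment's thought is the preservation $R\rho(\RD^{\leq p}(\shc))\subset\RD^{\leq p+d}(\shc')$ in the unbounded-above case, which is precisely where the finite cohomological dimension hypothesis is used.
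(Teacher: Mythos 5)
Your proof is correct and follows essentially the same route as the paper's: adjunction, the estimate $R\rho(\RD^{\leq p}(\shc))\subset\RD^{\leq p+d}(\shc')$ coming from the finite cohomological dimension, and the characterisation of $\RD^{\geq m}$ as a right orthogonal. The paper merely states the last two ingredients without the truncation-triangle details you supply, so your write-up is a slightly more explicit version of the same argument.
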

\begin{proof}
By the hypothesis, we have for $X\in\RD(\shc)$ and $Y\in\RD(\shc')$
\eqn
&&\Hom[\RD(\shc')](\rho(X),Y)\simeq \Hom[\RD(\shc)](X,\epb{\rho}(Y)).
\eneqn
Assume that the cohomological dimension of the functor $\rho$ is $\leq r$.
Let $Y\in\RD^{\geq0}(\shc')$.  Then $\Hom[\RD(\shc)](X,\epb{\rho}(Y))\simeq 0$
for all $X\in\RD^{<-r}(\shc)$. This means that $\epb{\rho}(Y)$ belongs to the
right orthogonal to $\RD^{<-r}(\shc)$ and this implies that
$\epb{\rho}(Y)\in\RD^{\geq-r}(\shc')$.
\end{proof}

\subsubsection*{The functor $\rsect(U;\scbul)$}

\begin{lemma}\label{le:HjsectUindlim}
Let $\sht$ be either the site $\Msa$ or the site $\Msal$ and let
 $U\in\Op_\Msa$. Let $I$ be a small filtrant category and $\alpha\cl I \to\md[\cor_\sht]$ 
a functor. Set for short $F_i=\alpha(i)$.  Then for any $j\in \Z$
\eq\label{eq:Hjsectoplus}
&& \indlim[i]H^j\rsect(U;F_i)\isoto  H^j\rsect(U;\indlim[i]F_i).
\eneq
\end{lemma}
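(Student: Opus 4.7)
The plan is to apply Lemma~\ref{le:comp1} with $\rho = \sect(U;\scbul)\cl \md[\cor_\sht]\to\md[\cor]$ and the given small filtrant category $I$, so that the conclusion $R^j\rho \simeq H^j\rsect(U;\scbul)$ commutes with inductive limits indexed by $I$ yields exactly~\eqref{eq:Hjsectoplus}.

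It remains to verify the three hypotheses of Lemma~\ref{le:comp1}. Hypothesis~(i) ($I$ filtrant) is part of the assumption. Hypothesis~(ii), namely that $\sect(U;\scbul)$ commutes with small filtrant inductive limits, is precisely Lemma~\ref{le:sectUindlim}.

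The only point that requires a short argument is hypothesis~(iii): a small filtrant inductive limit of injective objects of $\md[\cor_\sht]$ is $\sect(U;\scbul)$-acyclic. For this I would chain together the three parts of Lemma~\ref{le:flabby}: any injective sheaf is flabby by~(i); flabbiness is preserved by small filtrant inductive limits by~(iii); and flabby sheaves are $\sect$-acyclic by~(ii), so in particular $\sect(U;\scbul)$-acyclic. This is the only step with real content, but it is short.

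Putting everything together, Lemma~\ref{le:comp1} applies and gives
\begin{equation*}
\indlim[i] R^j\sect(U;F_i)\isoto R^j\sect(U;\indlim[i]F_i),
\end{equation*}
which is the desired isomorphism~\eqref{eq:Hjsectoplus}.
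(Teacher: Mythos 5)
Your proof is correct and follows exactly the paper's own argument: apply Lemma~\ref{le:comp1} to $\rho=\sect(U;\scbul)$, using Lemma~\ref{le:sectUindlim} for commutation with filtrant inductive limits and Lemma~\ref{le:flabby} for the acyclicity of such limits of injectives. Your explicit chaining of the three parts of Lemma~\ref{le:flabby} is just a spelled-out version of what the paper cites in one stroke.
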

\begin{proof}
By Lemma~\ref{le:sectUindlim}, the functor $\sect(U;\scbul)$  commutes with small filtrant inductive limits and such limits of injective objects are $\sect(U;\scbul)$-acyclic by Lemma~\ref{le:flabby}. 
Hence, we may apply  Lemma~\ref{le:comp1}.
\end{proof}

\begin{proposition}\label{pro:sectUfd}
Let  $U\in\Op_\Msa$.
The functor $\sect(U;\scbul)\cl\md[\cor_\Msa]\to\md[\cor]$ has cohomological dimension $\leq\dim M$.
\end{proposition}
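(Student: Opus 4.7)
The plan is to reduce the statement to the classical Grothendieck dimension theorem for paracompact manifolds. The bridge consists of two ingredients: every subanalytic sheaf is a small filtrant inductive limit of sheaves of the form $\oim{\rhosa} G$ with $G\in\mdrc[\cor_M]$, and Lemma~\ref{le:HjsectUindlim} allows us to commute such inductive limits past the functor $H^j\sect(U;\scbul)$.

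Concretely, I would appeal to the structure theorem for subanalytic sheaves established in \cite{KS01} (see also \cite{Pr08}) to write an arbitrary $F\in\md[\cor_\Msa]$ as $F\simeq\indlim[i]\oim{\rhosa} G_i$ for some small filtrant system $\{G_i\}_{i\in I}$ with $G_i\in\mdrc[\cor_M]$. Lemma~\ref{le:HjsectUindlim} then yields
\[
H^j\sect(U;F)\simeq\indlim[i] H^j\sect(U;\oim{\rhosa} G_i),
\]
so it suffices to treat the case $F=\oim{\rhosa} G$ for $G\in\mdrc[\cor_M]$. Since $\oim{\rhosa}$ is exact on $\mdrc[\cor_M]$ (as recalled just before Proposition~\ref{pro:sectrhoUF00}), one has $\roim{\rhosa} G\simeq\oim{\rhosa} G$, and Proposition~\ref{pro:sectrhoUF00} gives $\rsect(U;\oim{\rhosa} G)\simeq\rsect(U;G)$, the right-hand side being computed on the usual topological space $M$.

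At this stage the problem reduces to the vanishing $H^j(U;G)=0$ for $G\in\mdrc[\cor_M]$ and $j>\dim M$, which is precisely the classical Grothendieck vanishing theorem applied to the paracompact Hausdorff open subset $U$ of $M$, whose topological covering dimension is at most $\dim M$. The only non-formal input is the filtrant-limit representation of subanalytic sheaves in the first step; once this is granted (it is available from \cite{KS01}, and for this paper it is really the key fact to invoke), the rest is a direct combination of the lemmas already proved above.
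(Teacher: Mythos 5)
Your proof is correct and follows essentially the same route as the paper: the authors likewise write an arbitrary $F\in\md[\cor_\Msa]$ as a small filtrant inductive limit of $\R$-constructible sheaves, invoke Lemma~\ref{le:HjsectUindlim} to commute $H^j\rsect(U;\scbul)$ past the limit, and use the known vanishing $H^j\rsect(U;G)\simeq 0$ for $G\in\mdrc[\cor_M]$ and $j>\dim M$. You merely spell out the constructible case in more detail (via Proposition~\ref{pro:sectrhoUF00} and the classical vanishing theorem) where the paper simply quotes it as known.
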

\begin{proof}
We know that if $F\in\mdrc[\cor_M]$, then $H^j\rsect(U;F)\simeq 0$ for $j>\dim M$. 
Since any $F\in\md[\cor_\Msa]$ is a small filtrant inductive limit 
of constructible sheaves, the result follows from Lemma~\ref{le:HjsectUindlim}.
\end{proof}

\begin{corollary}\label{cor:rhodimf}
Let $\shj$ be the subcategory of $\md[\cor_\Msa]$ consisting of sheaves which are $\sect$-acyclic. 
For any $F\in\md[\cor_\Msa]$, there exists an exact sequence
$0\to F\to F^0\to\cdots\to F^{n}\to0$
where $n=\dim M$ and the $F^j$'s belong to $\shj$.
\end{corollary}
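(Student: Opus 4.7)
My plan is to build the resolution by truncating an injective resolution at length $n=\dim M$, exploiting that the global sections functor has finite cohomological dimension on each $U\in\Op_\Msa$.

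\textbf{Step 1 (injective resolution and acyclicity of injectives).} Since $\md[\cor_\Msa]$ is a Grothendieck category, it has enough injectives, so I can pick an injective resolution
\[
0\to F\to I^0\to I^1\to I^2\to\cdots
\]
in $\md[\cor_\Msa]$. By Lemma~\ref{le:flabby}~(i) every $I^j$ is flabby, and by Lemma~\ref{le:flabby}~(ii) flabby sheaves are $\sect$-acyclic.

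\textbf{Step 2 (dimension shift on the kernels).} Let $Z^0\eqdot F$ and $Z^j\eqdot\ker(I^j\to I^{j+1})$ for $j\geq1$. The resolution breaks into short exact sequences
\[
0\to Z^j\to I^j\to Z^{j+1}\to 0,\qquad j\geq0.
\]
For any $U\in\Op_\Msa$, the long exact sequence for $\rsect(U;\scbul)$ together with the $\sect$-acyclicity of $I^j$ yields, for $k\geq1$,
\[
H^k(U;Z^{j+1})\simeq H^{k+1}(U;Z^j).
\]
Iterating, $H^k(U;Z^n)\simeq H^{k+n}(U;F)$ for every $k\geq1$. By Proposition~\ref{pro:sectUfd} the functor $\sect(U;\scbul)$ has cohomological dimension $\leq n=\dim M$, so $H^{k+n}(U;F)=0$ for $k\geq1$. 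Hence $Z^n$ is $\sect$-acyclic, i.e.\ $Z^n\in\shj$.

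\textbf{Step 3 (truncation).} Set $F^j\eqdot I^j$ for $0\leq j\leq n-1$ and $F^n\eqdot Z^n$. Each $F^j$ with $j<n$ is injective hence in $\shj$, and $F^n\in\shj$ by Step~2. The truncated sequence
\[
0\to F\to F^0\to F^1\to\cdots\to F^{n-1}\to F^n\to 0
\]
is exact: exactness at $F,F^0,\dots,F^{n-2}$ is inherited from the injective resolution, and exactness at $F^{n-1}$ and $F^n$ comes from the short exact sequence $0\to Z^{n-1}\to I^{n-1}\to Z^n\to 0$, which shows both that $\ker(F^{n-1}\to F^n)$ equals the image of $F^{n-2}\to F^{n-1}$ and that $F^{n-1}\to F^n$ is surjective.

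There is no real obstacle here once Proposition~\ref{pro:sectUfd} is in hand: the whole argument is the classical truncation trick for resolutions in a category with bounded cohomological dimension, and the only ingredient specific to the subanalytic site is the $\sect$-acyclicity of flabby (in particular injective) sheaves established in Lemma~\ref{le:flabby}.
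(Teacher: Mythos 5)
Your proof is correct and follows exactly the paper's argument: truncate an injective resolution at length $n$, replace the last term by the kernel $\ker d^n$, and use Proposition~\ref{pro:sectUfd} via dimension shifting to see that this kernel is $\sect$-acyclic. The paper states this in one line; you have merely spelled out the standard details.
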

\begin{proof}
Consider a resolution $0\to F\to I^0\to[d^0] I^1\to\cdots$ with the $I^j$'s injective and define  
$F^j=I^j$ for $j\leq n-1$, $F^j=0$ for $j>n$ and $F^{n}=\ker d^{n}$. 
It follows from Proposition~\ref{pro:sectUfd} that $F^{n}$ is $\sect$-acyclic. 
\end{proof}

\begin{proposition}\label{pro:sectUoplus}
Let $I$ be a small set and let $F_i\in\RD(\cor_\Msa)$ \lp$i\in I$\rp.
For  $U\in\Op_\Msa$, we have the natural isomorphism
\eq\label{eq:brown2}
&&\bigoplus_{i\in I}\rsect(U;F_i)\isoto \rsect(U;\bigoplus_{i\in I}F_i)\mbox{ in }\RD(\cor).
\eneq
\end{proposition}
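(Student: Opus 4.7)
My plan is to deduce the statement directly from Proposition~\ref{pro:rhooplus} applied to the left exact functor $\rho = \sect(U;\scbul) \cl \md[\cor_\Msa] \to \md[\cor]$. The three hypotheses (a), (b), (c) of that proposition are all within reach thanks to results already established in this section, so the proof should be short.

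First, hypothesis~(a) that $\rho$ has finite cohomological dimension is exactly Proposition~\ref{pro:sectUfd}: the bound is $\dim M$. For hypothesis~(b), I would write an arbitrary small direct sum $\bigoplus_{i\in I}F_i$ as the small filtrant inductive limit, indexed by the finite subsets $J\subset I$, of the finite direct sums $\bigoplus_{i\in J}F_i$. Finite direct sums are finite products in the abelian category $\md[\cor_\Msa]$ and are therefore preserved by the left exact functor $\sect(U;\scbul)$; filtrant inductive limits are preserved by Lemma~\ref{le:sectUindlim}. Combining the two gives that $\sect(U;\scbul)$ commutes with arbitrary small direct sums.

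For hypothesis~(c), I would argue similarly: if $\{I_i\}_{i\in I}$ is a small family of injective sheaves on $\Msa$, then each finite partial sum $\bigoplus_{i\in J}I_i$ is again injective, hence flabby by Lemma~\ref{le:flabby}(i). The direct sum $\bigoplus_{i\in I}I_i$ is the filtrant inductive limit of these finite partial sums, and Lemma~\ref{le:flabby}(iii) says that the category of flabby sheaves is stable under small filtrant inductive limits. Thus $\bigoplus_{i\in I}I_i$ is flabby, and Lemma~\ref{le:flabby}(ii) ensures it is $\sect$-acyclic (in particular, $\sect(U;\scbul)$-acyclic for any fixed $U$).

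Having verified the three hypotheses, Proposition~\ref{pro:rhooplus}(i) delivers the desired isomorphism~\eqref{eq:brown2} in $\RD(\cor)$. The only step that requires thought is~(c), but even there the work has been done upstream in Lemma~\ref{le:flabby}; the genuine content of this section has been concentrated in Proposition~\ref{pro:sectUfd} (cohomological dimension) and Lemma~\ref{le:sectUindlim} (compatibility with filtrant colimits), and the present proposition is essentially a packaging statement.
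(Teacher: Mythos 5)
Your proof is correct and follows essentially the same route as the paper: both verify the three hypotheses of Proposition~\ref{pro:rhooplus} for $\sect(U;\scbul)$ using Proposition~\ref{pro:sectUfd} for the cohomological dimension, Lemma~\ref{le:sectUindlim} for commutation with direct sums, and Lemma~\ref{le:flabby} for the acyclicity of direct sums of injectives. The only difference is that you spell out the reduction of small direct sums to filtrant colimits of finite sums, which the paper leaves implicit.
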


\begin{proof}
The functor $\sect(U;\scbul)$ has finite cohomological dimension by Proposition~\ref{pro:sectUfd}, it commutes with small direct sums by Lemma~\ref{le:sectUindlim} and inductive limits of injective objects are $\sect(U;\scbul)$-acyclic
by Lemma~\ref{le:flabby}.
Hence, we may apply Proposition~\ref{pro:rhooplus}.
\end{proof}

\subsubsection*{The functor $\roim{\rhosal}$}
\begin{lemma}\label{le:rhodimfB}
Let $\shj$ be the subcategory of $\md[\cor_\Msa]$ consisting of sheaves which are $\sect$-acyclic. 
The category $\shj$ is $\oim{\rhosal}$-injective \lp see {\rm \cite[Cor.~13.3.8]{KS06}}\rp.
\end{lemma}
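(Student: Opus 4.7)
The plan is to verify the three standard conditions ensuring that a full additive subcategory is injective with respect to a left exact functor (cf.\ \cite[Cor.~13.3.8]{KS06}): $\shj$ is cogenerating in $\md[\cor_\Msa]$; $\shj$ is stable by cokernels of monomorphisms internal to $\shj$; and every short exact sequence with all three terms in $\shj$ remains exact after applying $\oim{\rhosal}$.

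The first condition is immediate from Lemma~\ref{le:flabby}: every object of $\md[\cor_\Msa]$ embeds into an injective sheaf, and injectives are $\sect$-acyclic, hence lie in $\shj$. For the second condition, consider an exact sequence $0\to F'\to F\to F''\to 0$ with $F',F\in\shj$. For any $U\in\Op_\Msa$ the long exact cohomology sequence of $\sect(U;\scbul)$ yields
\eqn
&&\cdots\to H^k(U;F)\to H^k(U;F'')\to H^{k+1}(U;F')\to\cdots.
\eneqn
Since $F$ and $F'$ are $\sect$-acyclic, both flanking groups vanish for $k\geq 1$, so $H^k(U;F'')=0$ and thus $F''\in\shj$.

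For the third condition, take a short exact sequence $0\to F'\to F\to F''\to 0$ with all three terms in $\shj$ and apply $\oim{\rhosal}$. The functor is left exact, so it remains only to show that $\oim{\rhosal}F\to\oim{\rhosal}F''$ is an epimorphism in $\md[\cor_\Msal]$. The key observation is that $\oim{\rhosal}$ acts on sections by $(\oim{\rhosal}G)(V)=G(V)$ for every $V\in\Op_\Msa$, because the presites underlying $\Msa$ and $\Msal$ coincide. Since $F'\in\shj$, the long exact sequence above at degree $0$ gives a short exact sequence $0\to F'(V)\to F(V)\to F''(V)\to 0$ for every $V\in\Op_\Msa$; equivalently, the morphism of presheaves $\oim{\rhosal}F\to\oim{\rhosal}F''$ is already surjective at the presheaf level on $\Op_\Msa$. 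A fortiori the associated sheaf morphism on $\Msal$ is an epimorphism, which proves the claim.

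The main (but mild) subtlety is that, in contrast with the situation on $\Msa$, surjectivity of a morphism of sheaves on $\Msal$ cannot be checked locally on arbitrary subanalytic coverings — one only has linear coverings at disposal. It is precisely the $\sect$-acyclicity of $F'$ that circumvents this difficulty by upgrading the sheaf-epimorphism on $\Msa$ to a presheaf-epimorphism on the common underlying presite, from which the linear-topology version follows for free.
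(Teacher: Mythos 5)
Your proof is correct and follows essentially the same route as the paper: the closure of $\shj$ under cokernels of monomorphisms via the long exact sequence, and, for the exactness of $0\to \oim{\rhosal}F'\to \oim{\rhosal}F\to \oim{\rhosal}F''\to 0$, the observation that $(\oim{\rhosal}G)(V)=G(V)$ on the common underlying presite combined with $H^1(V;F')=0$ to get surjectivity section-wise. The paper states this more tersely (and leaves the cogenerating condition implicit, as you also could, since injectives are flabby hence $\sect$-acyclic), but the substance is identical.
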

\begin{proof}
Let $0\to F'\to F\to F''\to 0$ be an exact sequence in $\md[\cor_\Msa]$.

\noindent
(i) We see easily that if both $F'$ and $F$ belong to $\shj$, then $F''$
belongs to $\shj$. 

\noindent
(ii) It remains to prove that if $F'\in\shj$, then the sequence 
$0\to \oim{\rhosal}F'\to \oim{\rhosal}F \to \oim{\rhosal}F''\to 0$ is exact. Let
$U\in\Op_\Msa$.  By Proposition~\ref{pro:sectrhoUF} and the hypothesis, the
sequence $0\to \oim{\rhosal}F'(U)\to \oim{\rhosal}F(U)\to \oim{\rhosal}F''(U)\to 0$ is
exact.
\end{proof}

Applying Corollary~\ref{cor:rhodimf}, we get: 

\begin{proposition}\label{pro:cohdimrho}
The functor $\oim{\rhosal}$ has cohomological dimension $\leq \dim M$.
\end{proposition}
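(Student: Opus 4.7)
The plan is to combine directly the two results that immediately precede the statement. By Lemma~\ref{le:rhodimfB}, the subcategory $\shj \subset \md[\cor_\Msa]$ of $\sect$-acyclic sheaves is $\oim{\rhosal}$-injective in the sense of~\cite[Cor.~13.3.8]{KS06}. By Corollary~\ref{cor:rhodimf}, every object $F\in\md[\cor_\Msa]$ admits a finite resolution
\[
0 \to F \to F^0 \to F^1 \to \cdots \to F^{n} \to 0
\]
with $n = \dim M$ and each $F^j \in \shj$. So the plan is simply to compute $R\oim{\rhosal}F$ using this resolution: since the $F^j$ are $\oim{\rhosal}$-acyclic, $R\oim{\rhosal}F$ is represented by the complex $\oim{\rhosal}F^0 \to \cdots \to \oim{\rhosal}F^{n}$ concentrated in degrees $0,\ldots,n$, and hence $R^j\oim{\rhosal}F = 0$ for $j > n = \dim M$.

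More concretely, I would take any injective resolution $0\to F \to I^0 \to I^1 \to \cdots$, truncate it by setting $G^j = I^j$ for $j<n$, $G^{n} = \ker d^{n}$ (exactly as in the proof of Corollary~\ref{cor:rhodimf}), and note that the truncation $G^{n}$ is $\sect$-acyclic by Proposition~\ref{pro:sectUfd}. Since $\shj$ is $\oim{\rhosal}$-injective, the complex $\oim{\rhosal}G^\scbul$ computes $R\oim{\rhosal}F$ and is zero above degree $n$.

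There is no real obstacle here — the work has already been done. The only point to verify is that Lemma~\ref{le:rhodimfB} genuinely supplies what~\cite[Cor.~13.3.8]{KS06} requires (namely that $\shj$ is stable under the two-out-of-three property in short exact sequences and that $\oim{\rhosal}$ sends short exact sequences with leftmost term in $\shj$ to short exact sequences), which is exactly its statement; and that the resolution produced in Corollary~\ref{cor:rhodimf} has length $\dim M$, which follows from Proposition~\ref{pro:sectUfd} bounding the cohomological dimension of $\sect(U;\scbul)$ by $\dim M$ uniformly in $U\in\Op_\Msa$.
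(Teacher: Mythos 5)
Your argument is exactly the paper's: the authors derive the proposition by combining Lemma~\ref{le:rhodimfB} (the $\sect$-acyclic sheaves form a $\oim{\rhosal}$-injective subcategory) with Corollary~\ref{cor:rhodimf} (every sheaf has a resolution of length $\dim M$ by such sheaves), which is precisely your plan. The proposal is correct and complete.
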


\begin{proposition}\label{pro:sectUoplusB}
Let $I$ be a small set and let $F_i\in\RD(\cor_\Msa)$ \lp$i\in I$\rp.
We have the natural isomorphism
\eq\label{eq:brown3}
&&\bigoplus_{i\in I}\roim{\rhosal}F_i\isoto \roim{\rhosal}(\bigoplus_{i\in I}F_i)\mbox{ in }\RD(\cor_\Msal).
\eneq
\end{proposition}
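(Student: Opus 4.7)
The plan is to reduce the assertion to Proposition~\ref{pro:rhooplus} applied to the left exact functor $\rho = \oim{\rhosal} \cl \md[\cor_\Msa] \to \md[\cor_\Msal]$. Thus I need to verify the three hypotheses (a), (b), (c) of that proposition.

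For (a), the finite cohomological dimension is exactly Proposition~\ref{pro:cohdimrho}.

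For (b), I need to show that the non-derived functor $\oim{\rhosal}$ commutes with small direct sums. Since any small direct sum is a small filtrant inductive limit of its finite subsums, it suffices to evaluate on sections. For $U\in\Op_\Msa$ and a family $\{F_i\}_{i\in I}$ in $\md[\cor_\Msa]$, Lemma~\ref{le:sectUindlim} (applied on both sites) together with Proposition~\ref{pro:sectrhoUF} gives
\[
\bl\oim{\rhosal}(\bigoplus_i F_i)\br(U) = (\bigoplus_iF_i)(U) \simeq \bigoplus_i F_i(U) \simeq \bigoplus_i(\oim{\rhosal}F_i)(U) \simeq \bl\bigoplus_i\oim{\rhosal}F_i\br(U).
\]
Here the outer equalities use the definition $\oim{\rhosal}G(U)=G(U)$ for $U\in\Op_\Msa$, and the middle isomorphisms are instances of Lemma~\ref{le:sectUindlim}. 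Since objects of $\Op_\Msa$ form a generating family for both sites $\Msa$ and $\Msal$, the natural morphism $\bigoplus_i\oim{\rhosal}F_i \to \oim{\rhosal}(\bigoplus_iF_i)$ is an isomorphism.

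For (c), I argue along the chain: injective $\Rightarrow$ flabby $\Rightarrow$ $\sect$-acyclic, by Lemma~\ref{le:flabby}~(i)--(ii); and small (filtrant, hence also arbitrary) direct sums of flabby sheaves are flabby by Lemma~\ref{le:flabby}~(iii). Thus a direct sum $\bigoplus_i I_i$ of injective objects of $\md[\cor_\Msa]$ is $\sect$-acyclic, hence belongs to the subcategory $\shj$ of Lemma~\ref{le:rhodimfB}. Since $\shj$ is $\oim{\rhosal}$-injective, such a direct sum is $\oim{\rhosal}$-acyclic.

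With (a), (b), (c) checked, Proposition~\ref{pro:rhooplus}~(i) applied to $\rho=\oim{\rhosal}$ yields the isomorphism~\eqref{eq:brown3}. The one step requiring genuine care is (b): the derived-category commutation with direct sums is a consequence of the non-derived commutation plus Lemma~\ref{le:comp20}, but the non-derived statement itself relies on the fact that sections over a subanalytic open $U$ commute with filtrant colimits on both the subanalytic and the linear subanalytic sites, and this is where Lemma~\ref{le:sectUindlim} (based on Proposition~\ref{pro:exregcov} for the $\Msal$ case) is essential.
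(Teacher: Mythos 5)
Your proof is correct and follows the same route as the paper: the paper's own argument simply invokes Proposition~\ref{pro:cohdimrho} for the cohomological dimension, Lemma~\ref{le:sectUindlim} for commutation with direct sums, and Lemmas~\ref{le:rhodimfB} and~\ref{le:flabby} for the acyclicity of direct sums of injectives, and then applies Proposition~\ref{pro:rhooplus}~(i). You have merely spelled out the section-wise verification and the flabbiness chain in more detail than the paper does.
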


\begin{proof}
By Proposition~\ref{pro:cohdimrho}, the functor $\oim{\rhosal}$ has finite cohomological dimension and  by Lemma~\ref{le:sectUindlim}
it commutes with small direct sums. Moreover,  inductive limits of injective objects are $\oim{\rhosal}$-acyclic
by Lemmas~\ref{le:rhodimfB} and~\ref{le:flabby}.
Hence, we may apply Proposition~\ref{pro:rhooplus}~(i).
\end{proof}

\begin{theorem}\label{th:rightadj}
\bnum
\item
The functor $\roim{\rhosal}\cl\RD(\cor_\Msa)\to\RD(\cor_\Msal)$ admits a right adjoint
 $\epb{\rhosal}\cl\RD(\cor_\Msal)\to\RD(\cor_\Msa)$.
\item
The functor $\epb{\rhosal}$ induces a functor $\epb{\rhosal}\cl\RD^+(\cor_\Msal)\to\RD^+(\cor_\Msa)$.
\enum
\end{theorem}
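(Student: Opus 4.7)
The plan is to derive this theorem as a direct application of the general existence/boundedness result Proposition~\ref{pro:rhooplus} to the functor $\rho = \oim{\rhosal}\cl\md[\cor_\Msa]\to\md[\cor_\Msal]$. All the ingredients needed have been assembled in the preceding subsections, so the work reduces to recording the verifications.

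First, I would observe that $\md[\cor_\Msa]$ and $\md[\cor_\Msal]$ are Grothendieck categories (being categories of sheaves of $\cor$-modules on a site), and that $\oim{\rhosal}$ is left exact, so the general framework of Proposition~\ref{pro:rhooplus} applies. It then suffices to check the three hypotheses (a), (b), (c) of that proposition for $\rho = \oim{\rhosal}$.

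For hypothesis (a), the finite cohomological dimension of $\oim{\rhosal}$ is exactly the content of Proposition~\ref{pro:cohdimrho} (the bound is $\dim M$). For hypothesis (b), that $\oim{\rhosal}$ commutes with small direct sums, I would argue section-wise: for $U\in \Op_\Msa$ and a family $\{F_i\}_{i\in I}$ in $\md[\cor_\Msa]$, Proposition~\ref{pro:sectrhoUF} gives $(\oim{\rhosal}F_i)(U) \simeq F_i(U)$, and Lemma~\ref{le:sectUindlim} (applied to the discrete filtrant category formed from $I$, or directly to a direct sum which is a filtrant colimit of finite direct sums) shows that sections on $U$ commute with such colimits on both sites, whence the claim. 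For hypothesis (c), I would invoke Lemma~\ref{le:rhodimfB}, which identifies the $\oim{\rhosal}$-injective subcategory with the $\sect$-acyclic sheaves, together with Lemma~\ref{le:flabby}~(i) and (iii) and Corollary~\ref{cor:limit-Gam-acyc}: injective sheaves are flabby, flabby sheaves are $\sect$-acyclic, and small direct sums (being filtrant colimits of finite sums) of flabby sheaves are flabby, hence $\sect$-acyclic, hence $\oim{\rhosal}$-acyclic.

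With the three hypotheses verified, Proposition~\ref{pro:rhooplus}~(ii) immediately produces the right adjoint $\epb{\rhosal}\cl \RD(\cor_\Msal)\to \RD(\cor_\Msa)$, proving (i), and Proposition~\ref{pro:rhooplus}~(iii) then yields the restriction $\epb{\rhosal}\cl \RD^+(\cor_\Msal)\to \RD^+(\cor_\Msa)$, proving (ii). There is no real obstacle here beyond bookkeeping; the main conceptual work has been done in building the $\sect$-acyclic machinery and proving that it is $\oim{\rhosal}$-injective and stable under the relevant colimits. If anything, the one point that needs a moment of care is hypothesis~(b): one should be sure that the direct-sum statement is not merely a filtrant inductive limit statement but also covers the arbitrary (small) discrete case, and this is precisely why Lemma~\ref{le:sectUindlim} is formulated for small filtrant categories and why Corollary~\ref{cor:limit-Gam-acyc} and Lemma~\ref{le:flabby}~(iii) are needed in the form stated.
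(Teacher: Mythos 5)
Your proposal is correct and follows essentially the same route as the paper: the paper's proof of Theorem~\ref{th:rightadj} invokes Propositions~\ref{pro:sectUoplusB} and~\ref{pro:cohdimrho} and concludes ``as in Proposition~\ref{pro:rhooplus}'', and Proposition~\ref{pro:sectUoplusB} is itself proved by verifying exactly the three hypotheses you list (Proposition~\ref{pro:cohdimrho} for the cohomological dimension, Lemma~\ref{le:sectUindlim} for commutation with direct sums, and Lemmas~\ref{le:rhodimfB} and~\ref{le:flabby} for acyclicity of direct sums of injectives). Your extra care about the discrete-versus-filtrant point for hypothesis~(b) --- a small direct sum being a filtrant colimit of finite direct sums --- is exactly the right way to reconcile the statements, and nothing further is needed.
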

\begin{proof}
These results follow from Propositions~\ref{pro:sectUoplusB}  and~\ref{pro:cohdimrho},  as in 
Proposition~\ref{pro:rhooplus}.
\end{proof}
\begin{corollary}\label{cor:rightadj}
One has an isomorphism of functors on $\RD^+(\cor_\Msa)$:
\eq
&&\id\isoto\epb{\rhosal}\roim{\rhosal}.
\eneq
\end{corollary}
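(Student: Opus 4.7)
The plan is to deduce this as a formal consequence of the fact, already established in Lemma~\ref{le:rhoff}, that $\roim{\rhosal}$ is fully faithful, combined with the adjunction provided by Theorem~\ref{th:rightadj}.

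First I would recall the standard categorical fact: given an adjoint pair $L \dashv R$ with $L\cl \shc \to \shd$ and $R\cl \shd \to \shc$, the functor $L$ is fully faithful if and only if the unit $\eta\cl \id_\shc \to RL$ is an isomorphism (this follows from the zig-zag identities together with Yoneda: for $X,Y\in\shc$ one has $\Hom[\shc](X, RLY)\simeq\Hom[\shd](LX,LY)$, and this is $\Hom[\shc](X,Y)$ precisely when $L$ is fully faithful, so $\eta_Y$ becomes invertible).

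Next I would specialize this to our situation. By Theorem~\ref{th:rightadj}, the pair $(\roim{\rhosal}, \epb{\rhosal})$ is adjoint between $\RD^+(\cor_\Msa)$ and $\RD^+(\cor_\Msal)$. By Lemma~\ref{le:rhoff}, the left adjoint $\roim{\rhosal}\cl \RD^+(\cor_\Msa) \to \RD^+(\cor_\Msal)$ is fully faithful. Hence the unit of adjunction
\[
\id \to \epb{\rhosal}\roim{\rhosal}
\]
is an isomorphism of functors on $\RD^+(\cor_\Msa)$, which is the desired statement.

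There is essentially no obstacle: all the work has been done in the earlier lemmas (full faithfulness of $\roim{\rhosal}$ and existence of the right adjoint $\epb{\rhosal}$), and the corollary is a purely formal consequence. The only point worth spelling out in the proof text itself is the abstract nonsense equivalence between fully faithfulness of the left adjoint and invertibility of the unit.
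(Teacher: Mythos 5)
Your proposal is correct and follows exactly the paper's own argument: the paper also deduces the corollary from the adjunction $(\roim{\rhosal},\epb{\rhosal})$ of Theorem~\ref{th:rightadj} together with the full faithfulness of $\roim{\rhosal}$ from Lemma~\ref{le:rhoff}. You merely spell out the standard fact that a left adjoint is fully faithful if and only if the unit is an isomorphism, which the paper leaves implicit.
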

\begin{proof}
This follows from the fact that  $(\roim{\rhosal},\epb{\rhosal})$ is  a pair of adjoint functors
and that $\roim{\rhosal}$ is fully faithful by Lemma~\ref{le:rhoff}.
\end{proof}

\begin{remark}
(i) We don't know if the category $\Msal$ has finite flabby dimension. 
We don't even know if for any $F\in\Derb(\cor_\Msal)$ and any $U\in\Op_\Msa$, we have $\rsect(U;F)\in\Derb(\cor)$.

\vspace{0.3ex}\noindent
(ii) We don't know if the functor  $\epb{\rhosal}\cl\RD^+(\cor_\Msal)\to\RD^+(\cor_\Msa)$ constructed in Theorem~\ref{th:rightadj} induces a functor 
$\epb{\rhosal}\cl\Derb(\cor_\Msal)\to\Derb(\cor_\Msa)$.
\end{remark}

\section{Open sets with Lipschitz boundaries}

\subsubsection*{Normal cones and Lipschitz boundaries}

In this paragraph $\R^n$ is equipped with coordinates $(x',x_n)$,
$x'\in\R^{n-1}$, $x_n\in\R$.

\begin{definition}\label{def:lipschitzbd}
We say that $U\in\Op_\Msa$ has Lipschitz boundary  or simply that $U$ is
Lipschitz if, for any $x\in \partial U$, there exist an open
neighborhood $V$ of $x$ and a bi-Lipschitz subanalytic homeomorphism
$\psi\cl V\isoto W$ with $W$ an open subset of $\R^n$ such that
$\psi(V\cap U)=W\cap\{x_n>0\}$.
\end{definition}

\begin{remark}\label{rem:localLip}
(i) The property of being Lipschitz is local and thus the preceding
definition extends to subanalytic but not necessarily relatively compact open
subsets of $M$.

\vspace{0.2ex}\noindent
(ii) If $U_i$  is Lipschitz in $M_i$ ($i=1,2$) then $U_1\times U_2$ is Lipschitz in $M_1\times M_2$. 

\vspace{0.2ex}\noindent 
(iii) If $U$ is Lipschitz and $x\in \partial U$, there exist a constant $C>0$
and a sequence $\{y_n\}_{n\in \N}$, $y_n\in U$, such that $d(y_n,x) \to 0$ and
$d(y_n,x) \leq C d(y_n, \partial U)$, for all $n \in \N$ (in the notations of
the definition, assume $\psi(x) = (x',0)$ and set $y_n = \opb{\psi}(x',1/n)$).
\end{remark}

\begin{example}
(i) Lemma~\ref{le:gopenlipschitz} below will provide many examples of Lipschitz open sets.

\vspace{0.2ex}\noindent
(ii) Let $(x,y)$ denotes the coordinates on $\R^2$.  Using~(iii) of
Remark~\ref{rem:localLip} we see that the open set $U=\{(x,y);0<y<x^2\}$ is not
Lipschitz.
\end{example}

\begin{lemma}\label{le:lipsimpliesvdef}
Let $U\in\Op_\Msa$.
We assume that, for any $x\in \partial U$, there exist an open neighborhood $V$
of $x$ and a bi-analytic isomorphism $\psi\cl V\isoto W$ with $W$ an open
subset of $\R^n$ such that $\psi(V\cap U)= W\cap\{(x',x_n);\; x_n>\phi(x')\}$
for a Lipschitz subanalytic function $\phi$. Then $U$ is Lipschitz.
\end{lemma}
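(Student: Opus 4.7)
The plan is to reduce to the definition by composing the given bi-analytic chart with an explicit subanalytic bi-Lipschitz transformation that straightens the graph of $\phi$ into the hyperplane $\{x_n=0\}$.

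First I would fix $x\in\partial U$ together with the bi-analytic chart $\psi\cl V\isoto W$ and the Lipschitz subanalytic function $\phi$ provided by the hypothesis, with $\psi(V\cap U) = W\cap\{(x',x_n);\; x_n>\phi(x')\}$. After shrinking $V$ (and hence $W$) I may assume that $V$ and $W$ are relatively compact. Since $\psi$ is bi-analytic and its differential is continuous and invertible on the compact closure of $V$, the map $\psi$ is in fact bi-Lipschitz, and of course subanalytic, on this shrunk neighborhood. Thus the only obstruction to satisfying Definition~\ref{def:lipschitzbd} directly is the presence of the graph of $\phi$ instead of the hyperplane $\{x_n=0\}$.

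Next I would introduce the shear map
\eqn
&&\Phi\cl \R^n\to\R^n, \quad \Phi(x',x_n) = (x',x_n - \phi(x')).
\eneqn
Since $\phi$ is subanalytic, $\Phi$ is a subanalytic homeomorphism (its inverse is $(x',y_n)\mapsto (x',y_n+\phi(x'))$), and since $\phi$ is Lipschitz, both $\Phi$ and $\Phi^{-1}$ are Lipschitz: indeed, if $L$ is a Lipschitz constant for $\phi$, then $\|\Phi(x)-\Phi(y)\|\leq (1+L)\|x-y\|$ and similarly for $\Phi^{-1}$. In particular $\Phi$ restricts to a bi-Lipschitz subanalytic homeomorphism from $W$ onto the open subanalytic subset $W'\eqdot \Phi(W)$ of $\R^n$.

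Finally I would consider the composition $\psi'\eqdot \Phi\circ\psi\cl V\isoto W'$. It is a composition of two bi-Lipschitz subanalytic homeomorphisms, hence is itself a bi-Lipschitz subanalytic homeomorphism. Moreover
\eqn
&&\psi'(V\cap U) = \Phi\bigl(W\cap\{x_n>\phi(x')\}\bigr) = W'\cap\{y_n>0\},
\eneqn
which is exactly the condition required in Definition~\ref{def:lipschitzbd}. Since $x\in\partial U$ was arbitrary, this proves that $U$ is Lipschitz. The only mildly delicate point is the initial reduction that a bi-analytic map is locally bi-Lipschitz, but this is immediate from the boundedness of the Jacobian and its inverse on a relatively compact neighborhood.
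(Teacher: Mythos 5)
Your proof is correct and follows essentially the same route as the paper's: composing the given chart with the shear $(x',x_n)\mapsto(x',x_n-\phi(x'))$, which is a bi-Lipschitz subanalytic homeomorphism precisely because $\phi$ is Lipschitz and subanalytic. The only addition is your explicit justification that the bi-analytic chart is itself bi-Lipschitz after shrinking to a relatively compact neighborhood, a point the paper leaves implicit.
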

\begin{proof}
We define $\psi_1 \cl \R^n \to \R^n$, $(x',x_n) \mapsto (x', x_n-\phi(x'))$.
Then $\psi_1$ is a bi-Lipschitz subanalytic homeomorphism and we have
$(\psi_1 \circ \psi) (V\cap U) = \psi_1(W) \cap \{x_n>0\}$. Hence $U$ is
Lipschitz.
\end{proof}

\begin{lemma}\label{le:gopenlipschitz}
Let $\BBV$ be a vector space and let $\gamma$ be a proper closed convex cone
with non empty interior. Let $U \in \Op_{\BBV_{\rm sa}}$.
Then the open set $U+\gamma$ has Lipschitz boundary.
\end{lemma}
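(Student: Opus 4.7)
The plan is to realize $U+\gamma$ globally as the strict epigraph of a Lipschitz subanalytic function in suitable linear coordinates and then apply Lemma~\ref{le:lipsimpliesvdef}. Since $\gamma$ has non-empty interior, I would choose $v_0\in \Int \gamma$ and pick linear coordinates on $\BBV\simeq \R^n$ with $v_0 = e_n$. Because $e_n\in \Int \gamma$ and $\gamma$ is a closed convex cone, there exists $c>0$ such that the solid cone $\Gamma_c \eqdot \{(x',x_n);\;x_n\geq c|x'|\}$ is contained in $\gamma$. In particular $\R_{\geq 0}\,e_n \subset \gamma$, so $U+\gamma$ is stable under non-negative vertical translations.

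Define $\phi\cl \R^{n-1}\to \R\cup\{+\infty\}$ by $\phi(x') = \inf\{t\in \R;\;(x',t)\in U+\gamma\}$. Assuming $U\neq\emptyset$ (the empty case is trivial), any $(u',u_n)\in U$ yields $(y',u_n+c|y'-u'|)\in U+\gamma$ for every $y'\in\R^{n-1}$, so $\phi$ is everywhere finite. Vertical stability together with the openness of $U+\gamma$ gives
\[
U+\gamma = \{(x',t)\in\R^n;\;t>\phi(x')\}.
\]
The inclusion $\Gamma_c\subset \gamma$ yields the Lipschitz estimate: if $(x',t)\in U+\gamma$, then $(y',t+c|y'-x'|) = (x',t)+(y'-x',c|y'-x'|)\in U+\gamma$, so $\phi(y')\leq \phi(x')+c|y'-x'|$, and the symmetric inequality follows by swapping $x'$ and $y'$. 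Hence $\phi$ is $c$-Lipschitz on $\R^{n-1}$.

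It remains to check that $\phi$ is subanalytic. Granting (as is standard in this context) that the cone $\gamma$ is subanalytic, the set $U+\gamma$ is locally subanalytic: for any bounded open $V\subset \BBV$ one has $V\cap (U+\gamma) = V\cap (\ol U + (\gamma\cap \ol B_R))$ for $R$ large enough, and the right-hand side is the image of a relatively compact subanalytic set under a proper subanalytic map. Since subanalyticity is a local property, $U+\gamma$ is subanalytic, and so is $\phi$, as the vertical infimum function of a subanalytic set. To conclude, at an arbitrary boundary point $(x'_0,\phi(x'_0))$ I would apply Lemma~\ref{le:lipsimpliesvdef} with $V$ a neighborhood of $(x'_0,\phi(x'_0))$ in $\BBV$ and $\psi$ the (bi-analytic) linear change of coordinates, giving $\psi(V\cap(U+\gamma)) = \psi(V)\cap\{x_n>\phi(x')\}$ with $\phi$ Lipschitz subanalytic. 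The main technical point is the subanalyticity of $U+\gamma$ in unbounded regions, which rests on the local nature of subanalyticity combined with subanalyticity of $\gamma$.
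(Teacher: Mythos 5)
Your argument is essentially the paper's: in linear coordinates adapted to an interior vector of $\gamma$, one writes $U+\gamma$ as the strict epigraph of a function $\phi$ that is Lipschitz because $U+\gamma$ is stable under translation by a solid subcone of $\gamma$, and one concludes by Lemma~\ref{le:lipsimpliesvdef} (the paper normalizes the coordinates so that the Lipschitz constant is $1$, and in fact does not discuss the subanalyticity of $\phi$, which you treat more carefully). One small slip in that extra step: the identity $V\cap(U+\gamma)=V\cap(\ol{U}+(\gamma\cap\ol{B}_R))$ is false in general (the lowest point of an open ball $U$ lies in $\ol{U}+\gamma$ but not in $U+\gamma$ when $\gamma$ points upward); simply drop the closure, since $U\times(\gamma\cap\ol{B}_R)$ is already a relatively compact subanalytic set whose image under the addition map is subanalytic.
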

\begin{proof}
Let $p \in \partial(U+\gamma)$.  We identify $\BBV$ with $\R^n$ so that $p$ is
the origin and $\gamma$ contains the cone
$\gamma_0 = \{ (x',x_n);\; x_n > \| x' \| \}$.
We have in particular
\begin{equation}\label{eq:gopenlip1}
\gamma_0 \subset (U+\gamma) \subset (\R^n \setminus (-\gamma_0)).
\end{equation}
For $x'\in \R^{n-1}$ we set $l_{x'} = (U+\gamma) \cap (\{x'\} \times \R)$.
Then $l_{x'} = l_{x'} + [0,+\infty[$.  By~\eqref{eq:gopenlip1} we also have
$l_{x'} \not=\emptyset$ and $l_{x'} \not= \R$.  Hence we can write
$l_{x'} = ]\phi(x'),+\infty[$, for a well-defined function
$\phi \cl \R^{n-1} \to \R$.

Let us prove that $\phi$ is Lipschitz.  Let $x'\in \R^{n-1}$ and let us set
$q = (x',\phi(x')) \in \partial(U+\gamma)$.  We have the similar inclusion
as~\eqref{eq:gopenlip1},
$(q+\gamma_0) \subset (U+\gamma) \subset (\R^n \setminus (q-\gamma_0))$.
Hence $\partial (U+\gamma)  \subset
(\R^n \setminus ((q+\gamma_0) \cup (q-\gamma_0)))$.
For any $y'\in \R^{n-1}$ we have $(y',\phi(y')) \in \partial(U+\gamma)$
and the last inclusion translates into
$| \phi(y') - \phi(x') | \leq \| y' - x' \|$.  Hence $\phi$ is Lipschitz and
$U+\gamma$ is Lipschitz by Lemma~\ref{le:lipsimpliesvdef}.
\end{proof}

We refer to~\cite[Def~4.1.1]{KS90} for the
definition of the normal cone $C(A,B)$ associated with two subsets $A$ and $B$ of $M$.

\begin{definition}\lp See~{\rm\cite[\S~5.3]{KS90}.}\rp\,
Let $S$ be a subset of $M$. 
The strict normal cone $N_x(S)$  and the conormal cone $N^*_x(S)$ of $S$ at
$x\in M$ as well as the strict normal cone $N(S)$  and the conormal cone
$N^*(S)$ of $S$ are given by
\eqn
&&N_x(S)= T_xM\setminus C(M\setminus S,S),\mbox{ an open cone in $T_xM$},\\
&&N^*_x(S)=N_x(S)^\circ \mbox{ (where ${}^\circ$ denotes the polar cone), }\\
&&N(S)=\bigcup_{x\in M}N_x(S),\mbox{ an open convex cone in $TM$},\\
&&N^*(S)=\bigcup_{x\in M}N^*_x(S).
\eneqn
\end{definition}
By loc.\ cit. Prop.~5.3.7, we have: 

\begin{lemma}\label{le:conecond}
Let $U$ be an open subset of $M$ and let $x\in\partial U$. Then the conditions below are equivalent:
\bnum
\item
$N_x(U)$ is non empty,
\item
$N_y(U)$ is non empty for all $y$ in a neighborhood of $x$,
\item 
$N^*_x(U)$ is contained in a closed convex proper cone with non empty interior in $T^*_xM$, 
\item
there exists a local chart in a neighborhood of $x$ 
such that identifying $M$ with an open subset of $\BBV$, there exists 
a  closed convex proper cone with non empty interior $\gamma$ in $\BBV$  such that 
$U$ is $\gamma$-open in an open  neighborhood $W$ of $x$, that is, 
\eqn
&&W\cap((U\cap W)+\gamma)\subset U.
\eneqn
\enum
\end{lemma}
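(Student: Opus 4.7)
The proof plan centers on three observations: polar duality between (i) and (iii), the openness of the total normal cone as a function of the basepoint for (i)~$\Leftrightarrow$~(ii), and the geometric content of Proposition~5.3.7 of~\cite{KS90} for (i)~$\Leftrightarrow$~(iv).

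First, I would handle (i)~$\Leftrightarrow$~(iii) by polar duality, using that $N_x(U)$ is an open convex cone in $T_xM$ and that $N^*_x(U)$ is its polar. If $N_x(U)\neq\emptyset$, pick a non-zero $v\in N_x(U)$; since $N_x(U)$ is open, one can choose a small closed convex proper cone $\delta$ with non-empty interior so that $\delta\setminus\{0\}\subset N_x(U)$. Then $N^*_x(U)\subset\delta^\circ$, and the standard fact that a closed convex cone is proper with non-empty interior if and only if its polar is, shows that $\delta^\circ$ is a closed convex proper cone with non-empty interior. Conversely, if $N^*_x(U)\subset\sigma$ for some closed convex proper cone $\sigma$ with non-empty interior, then $N_x(U)\supset\Int(\sigma^\circ)\neq\emptyset$.

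Next, (i)~$\Rightarrow$~(ii) follows from the openness of the total normal cone $N(U)\subset TM$ asserted in~\cite[Prop.~5.3.7]{KS90}: if $v\in N_x(U)$ with $v\neq 0$, then a neighborhood of $(x,v)$ in $TM$ is contained in $N(U)$, so $N_y(U)\neq\emptyset$ for every $y$ near $x$. The converse (ii)~$\Rightarrow$~(i) is trivial.

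Finally, (i)~$\Leftrightarrow$~(iv) carries the substantial geometric content. The easy direction (iv)~$\Rightarrow$~(i) proceeds by unwinding definitions: the $\gamma$-openness of $U$ in $W$ directly yields $\Int(\gamma)\subset N_x(U)$, and $\Int(\gamma)\neq\emptyset$ by assumption. For (i)~$\Rightarrow$~(iv), pick a non-zero $v\in N_x(U)$ and, in a local chart identifying $M$ near $x$ with an open subset of a vector space $\BBV$, choose a small closed convex proper cone $\gamma$ with non-empty interior so that $v\in\Int(\gamma)$ and $\gamma\setminus\{0\}\subset N_x(U)$. Proposition~5.3.7 of~\cite{KS90} then characterizes the inclusion $\gamma\setminus\{0\}\subset N_x(U)$ by the existence of an open neighborhood $W$ of $x$ such that $W\cap((U\cap W)+\gamma)\subset U$, which is precisely the $\gamma$-openness condition in~(iv). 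The main obstacle is this last step: converting the pointwise condition $\gamma\setminus\{0\}\subset N_x(U)$ into the uniform neighborhood inclusion $W\cap((U\cap W)+\gamma)\subset U$, which is exactly the content of the cited result of~\cite{KS90} and is not elementary without it.
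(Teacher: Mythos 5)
Your proposal is correct and takes essentially the same route as the paper, which gives no argument of its own and simply records the lemma as a consequence of~\cite[Prop.~5.3.7]{KS90}. The additional details you supply (polar duality for (i)$\Leftrightarrow$(iii), openness of $N(U)$ in $TM$ for (i)$\Leftrightarrow$(ii), and the $\gamma$-openness characterization for (i)$\Leftrightarrow$(iv)) are exactly the content of that cited proposition and of standard facts about open convex cones, so there is nothing to add.
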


\begin{definition}\label{def:conecond}
We shall say that an open subset $U$ of $M$ satisfies a cone condition 
if for any $x\in\partial U$, $N_x(U)$ is non empty.
\end{definition}

By Lemmas~\ref{le:gopenlipschitz} and~\ref{le:conecond} we have:
\begin{proposition}
Let $U\in\Op_\Msa$. If $U$ satisfies a cone condition, then $U$ is Lipschitz.
\end{proposition}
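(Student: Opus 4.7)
The plan is to reduce the statement to Lemma~\ref{le:gopenlipschitz} via the alternative formulation of the cone condition provided by Lemma~\ref{le:conecond}(iv). By Remark~\ref{rem:localLip}(i), being Lipschitz is a local condition, so it suffices to produce, for each $x \in \partial U$, a bi-Lipschitz subanalytic chart near $x$ which straightens $\partial U$ to a hyperplane.

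Fix such an $x$. Using Lemma~\ref{le:conecond}(iv), I choose a local chart identifying a neighborhood of $x$ in $M$ with an open subset of a vector space $\BBV$, a closed convex proper cone $\gamma \subset \BBV$ with non-empty interior, and an open subanalytic neighborhood $W$ of $x$ such that $W \cap ((U \cap W) + \gamma) \subset U$. Set $\tilde U \eqdot (U \cap W) + \gamma$. Because $0 \in \gamma$, the inclusion $U \cap W \subset \tilde U$ is automatic and, intersecting with $W$, the $\gamma$-openness gives the reverse inclusion; this yields the key identity $\tilde U \cap W = U \cap W$. Taking closures in $W$ then gives $\partial U \cap W = \partial \tilde U \cap W$, so in particular $x \in \partial \tilde U$.

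By Lemma~\ref{le:gopenlipschitz}, $\tilde U$ has Lipschitz boundary. Hence there exist an open neighborhood $V \subset W$ of $x$ and a bi-Lipschitz subanalytic homeomorphism $\psi \cl V \isoto W'$, with $W'$ an open subset of $\R^n$, such that $\psi(V \cap \tilde U) = W' \cap \{x_n>0\}$. Since $V \subset W$, the identity $\tilde U \cap W = U \cap W$ gives $V \cap U = V \cap \tilde U$, so after composing with the original local chart, $\psi$ furnishes the required bi-Lipschitz subanalytic chart straightening $U$ at $x$.

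The one technical wrinkle, and the main obstacle, is that Lemma~\ref{le:gopenlipschitz} is stated for $U \in \Op_{\BBV_{\rm sa}}$ whereas our $\tilde U$ is in general unbounded. This is handled by Remark~\ref{rem:localLip}(i): the proof of Lemma~\ref{le:gopenlipschitz} is purely local at each point of $\partial(U+\gamma)$ and makes no use of relative compactness, so its conclusion extends verbatim to our subanalytic $\tilde U$. Aside from this bookkeeping point, all the work is done by Lemmas~\ref{le:conecond} and~\ref{le:gopenlipschitz}.
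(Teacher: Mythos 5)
Your proof is correct and follows exactly the route the paper intends (the paper gives no further detail than ``By Lemmas~\ref{le:gopenlipschitz} and~\ref{le:conecond}''), and your verification that $\tilde U\cap W=U\cap W$ and hence $\partial\tilde U\cap W=\partial U\cap W$ is the right way to glue the two lemmas together. The ``technical wrinkle'' you flag is actually a non-issue: Lemma~\ref{le:gopenlipschitz} is applied with input $U\cap W\in\Op_{\BBV_{\rm sa}}$ (relatively compact once $W$ is chosen so), and its conclusion is already stated for the unbounded set $(U\cap W)+\gamma$, with ``Lipschitz'' understood for non relatively compact open sets via Remark~\ref{rem:localLip}~(i).
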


\begin{remark}
One shall be aware that our definition of being Lipschitz differs from that of
Lebeau in~\cite{Le14}. By Lemma~\ref{le:lipsimpliesvdef}, if $U$ is Lipschitz in
Lebeau's sense, then it is Lipschitz in our sense.
\end{remark}

\subsubsection*{A vanishing theorem}
The next theorem is a key result  for this paper and its proof is due to A.~Parusinski~\cite{Pa14}. 

\begin{theorem}\label{th:ParuTh}{\rm (A.~Parusinski)}
Let $V\in\Op_{\Msa}$. Then there exists a finite covering $V=\bigcup_{j\in J}V_j$ with $V_j\in\Op_\Msa$ such that 
the family $\{V_j\}_{j\in J}$ is a covering of $V$ in $\Msal$ and moreover
$H^k(V_j;\cor_M)\simeq 0$ for all $k>0$ and all $j\in J$.
\end{theorem}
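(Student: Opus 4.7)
The plan is to produce the covering from a suitably refined subanalytic triangulation of a compact subanalytic neighborhood of $\overline V$, compatible with $\overline V$ and $\partial V$. By the Łojasiewicz–Hironaka subanalytic triangulation theorem, such a triangulation exists; the essential strengthening needed is that it be \emph{Lipschitz} in the sense of Parusinski, meaning that each closed simplex is embedded by a bi-Lipschitz subanalytic map of a standard simplex. This is the content that I would appeal to, and where the reference to Parusinski is used.

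Given such a Lipschitz triangulation $T$, for each vertex $v \in V$ I would define $V_v$ to be the open star $\mathrm{st}(v,T) \cap V$. Each $V_v$ is subanalytic, open, relatively compact, and (via the bi-Lipschitz model) homeomorphic to an open star in a Euclidean simplicial complex, hence contractible. Consequently $H^k(V_v;\cor_M) = 0$ for all $k>0$, which settles the cohomological vanishing. Since $T$ is compatible with $V$ and $\partial V$, every point of $V$ lies in the relative interior of some open simplex all of whose vertices belong to $\overline V$; by choosing the triangulation so that boundary vertices are ``pushed in'' (or by replacing $V_v$ with $\mathrm{st}(v,T)\cap V$ for vertices $v$ lying in $V$ rather than on $\partial V$, after a small subdivision), the family $\{V_v\}$ covers $V$.

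To establish the linear covering condition, I would work locally: near a boundary point $p \in \partial V$, transport the situation via the bi-Lipschitz chart to a Euclidean open star configuration in a standard simplicial complex. In the Euclidean model, the star of a vertex is a cone-like region and the estimate
$$d(x, M\setminus V) \leq C \max_v d(x, M\setminus V_v)$$
reduces to an elementary estimate comparable to the estimate in Lemma~\ref{le:gopenlipschitz}: for a point $x$ close to $\partial V$, the distance to the complement of $V$ is realized along some direction, and this direction points into at least one of the stars meeting $x$, so its distance to the complement of that star is at least a fixed fraction of $d(x, M\setminus V)$. The bi-Lipschitz equivalence transports this estimate from the model to the manifold, with the constant $C$ absorbing the Lipschitz constants. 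By compactness of $\overline V$ one extracts a single constant valid on all of $M$.

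The main obstacle is precisely this last step: in an arbitrary subanalytic triangulation, the open stars can become arbitrarily ``thin'' or ``pinched'' near $\partial V$, and the naive distance estimate fails — this is exactly the phenomenon illustrated by the cusp of Example~\ref{exa:germcusp}. The linear covering inequality is a quantitative statement of bounded geometry near the boundary, and it is \emph{only} available because Parusinski's theorem supplies a triangulation with uniformly controlled Lipschitz constants. Once Parusinski's Lipschitz triangulation is invoked, the rest (contractibility of stars, cohomological vanishing, and the distance estimate in the Euclidean model) is fairly routine.
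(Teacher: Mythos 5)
The paper contains no proof of this theorem to compare yours with: it is quoted as an external result whose proof is due to A.~Parusinski~\cite{Pa14} (``Regular covers of open relatively compact subanalytic sets''), and the acknowledgments make clear that the authors regard it as a nontrivial input they did not prove themselves. Judged on its own terms, your argument has a genuine gap at its first step. A subanalytic triangulation of a neighbourhood of $\ol V$, compatible with $\ol V$ and $\partial V$, in which every \emph{closed} simplex is the image of a standard simplex under a subanalytic bi-Lipschitz embedding, does not exist in general. Take $V=U_{12}=\{x\in B_R;\ x_1>0,\ |x_2|<x_1^2\}$, the cusp of Example~\ref{exa:U12aa} (see also Example~\ref{exa:germcusp}). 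In such a triangulation $0$ must be a vertex and some open $2$-simplex $\sigma\subset V$ must have $0$ as a vertex; then $\ol\sigma\subset\ol V$, so at distance $r$ from the apex $\ol\sigma$ contains no ball of radius larger than $O(r^2)$. But a $K$-bi-Lipschitz image of a standard $2$-simplex contains, at distance $r$ from any of its vertices, balls of radius $\geq cr$ (push the incenter direction through the map and use invariance of domain to identify the topological boundary of the image with the image of the boundary). This contradiction shows no simplex can reach the apex, so the triangulation cannot be compatible with $\partial V$. This is not a repairable technicality: the failure of bi-Lipschitz linearization at cuspidal boundary points is exactly the phenomenon the linear topology is built to detect, and a bi-Lipschitz triangulation theorem of the strength you invoke would make most of the paper unnecessary.

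Note also that the statement to be proved is strictly weaker than what a bi-Lipschitz triangulation would deliver: the pieces $V_j$ only need to be $\cor$-acyclic and to satisfy the $1$-regular separation inequality~\eqref{eq:resit}; they need not be bi-Lipschitz to stars or balls, and indeed they must be allowed to have cuspidal geometry (the cusp $U_{12}$ above is itself contractible, so for that $V$ the trivial covering $\{V\}$ already works). The parts of your argument that are sound --- acyclicity of open stars after a barycentric subdivision, transport of distance estimates through bi-Lipschitz charts as in Remark~\ref{rem:bilip_homeo}, and $1$-regularity of piecewise-linear coverings --- all live downstream of the nonexistent triangulation. A correct proof has to produce cohomologically trivial pieces together with the quantitative separation estimate directly from a subanalytic decomposition with Lipschitz bounds (L-regular cells in the sense of Kurdyka--Parusinski, whose model cells are precisely cusps and horns, not simplices); that is the actual content of \cite{Pa14}, and it cannot be obtained from classical triangulation theorems plus a routine local estimate.
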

Recall that one denotes by $\rhosal\cl\Msa\to\Msal$ the natural morphism of sites.

\begin{lemma}\label{le:cormsal0}
We have $\roim{\rhosal}\cor_\Msa\simeq\cor_\Msal$. 
\end{lemma}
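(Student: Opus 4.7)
The plan is to construct the natural morphism $\cor_\Msal \to \roim{\rhosal}\cor_\Msa$ by adjunction and then verify it is an isomorphism in $\RD^+(\cor_\Msal)$ by computing the cohomology sheaves of the right-hand side separately. The morphism will come from the unit of the adjunction $(\opb{\rhosal}, \roim{\rhosal})$ together with the identification $\opb{\rhosal}\cor_\Msal \simeq \cor_\Msa$, which holds because $\opb{\rhosal}$ is exact and respects the underlying presite, so it sends the constant sheaf to the constant sheaf.

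For the degree-zero piece I would use $H^0(\roim{\rhosal}\cor_\Msa) \simeq \oim{\rhosal}\cor_\Msa$ and check directly that this agrees with $\cor_\Msal$: both sheaves send $V \in \Op_\Msa$ to $\Gamma(V;\cor_M)$, so the embedding $\oim{\rhosal}$ identifies them.

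For $k > 0$, the goal is $H^k(\roim{\rhosal}\cor_\Msa) \simeq 0$. This cohomology sheaf is the $\Msal$-sheafification of the presheaf $V \mapsto H^k(V;\roim{\rhosal}\cor_\Msa)$, which by Proposition~\ref{pro:sectrhoUF} equals $V \mapsto H^k(V;\cor_\Msa)$. Using that $\cor_M$ is $\R$-constructible so $\cor_\Msa \simeq \roim{\rhosa}\cor_M$, and then Proposition~\ref{pro:sectrhoUF00}, this presheaf further simplifies to $V \mapsto H^k(V;\cor_M)$. Now I invoke Theorem~\ref{th:ParuTh}: for each $V \in \Op_\Msa$ there is a linear covering $\{V_j\}_{j\in J}$ of $V$ with $H^k(V_j;\cor_M) = 0$ for all $k>0$. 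Any local section of the presheaf therefore restricts to zero on an $\Msal$-covering, so its $\Msal$-sheafification vanishes.

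The hard part is really the use of Theorem~\ref{th:ParuTh}; without acyclic linear coverings the higher cohomology sheaves of $\roim{\rhosa}\cor_M$ would not sheafify to zero in $\Msal$ and the statement would fail. The remaining ingredients — the identification of constant sheaves under the two direct image functors, the presheaf description of cohomology sheaves, and the equality $\cor_\Msa \simeq \roim{\rhosa}\cor_M$ coming from $\R$-constructibility — are routine combinations of the formal properties already established earlier in the excerpt.
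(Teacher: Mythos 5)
Your proof is correct and follows essentially the same route as the paper: the cohomology sheaf $H^k(\roim{\rhosal}\cor_\Msa)$ is the $\Msal$-sheafification of the presheaf $V\mapsto H^k(V;\cor_\Msa)\simeq H^k(V;\cor_M)$, which vanishes for $k>0$ because Theorem~\ref{th:ParuTh} provides linear coverings by acyclic open sets. The paper's proof is just a terser version of this, leaving the degree-zero identification $\oim{\rhosal}\cor_\Msa\simeq\cor_\Msal$ implicit, which you spell out.
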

\begin{proof}
The sheaf $H^k(\roim{\rhosal}\cor_\Msa)$ is the sheaf associated with the presheaf
$U\mapsto H^k(U;\cor_\Msa)$. This sheaf if zero for $k>0$ by
Theorem~\ref{th:ParuTh}.
\end{proof}

\begin{lemma}\label{le:cormsal1}
Let $M=\R^n$ and set $U = {}]0,+\infty[ \times \R^{n-1}$.
Then we have $\roim{\rhosal}\cor_U \simeq\cor_U$. 
\end{lemma}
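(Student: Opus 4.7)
The plan is to show $R^k\oim{\rhosal}\cor_U=0$ for $k>0$. As in Lemma~\ref{le:cormsal0}, the sheaf $H^k(\roim{\rhosal}\cor_U)$ on $\Msal$ is the one associated with the presheaf $V\mapsto H^k(V;\cor_U)$ on $\Op_\Msa$ (ordinary sheaf cohomology on $M$, via Proposition~\ref{pro:sectrhoUF00}). So it suffices to find, for every $V\in\Op_\Msa$ and every $k>0$, a linear covering $\{V_\alpha\}$ of $V$ in $\Msal$ with $H^k(V_\alpha;\cor_U)=0$ for all $\alpha$.

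Setting $Z=M\setminus U=\{x_1\leq 0\}$, the short exact sequence $0\to\cor_U\to\cor_M\to\cor_Z\to 0$ on $M$ provides, for any open $W\subset M$, the long exact sequence
\eqn
&&\cdots\to H^{k-1}(W\cap Z;\cor)\to H^k(W;\cor_U)\to H^k(W;\cor)\to H^k(W\cap Z;\cor)\to\cdots
\eneqn
Hence $H^k(V_\alpha;\cor_U)=0$ for all $k>0$ follows from: (a) $V_\alpha$ is $\cor$-acyclic, (b) $V_\alpha\cap Z$ is $\cor$-acyclic, and (c) $H^0(V_\alpha;\cor)\to H^0(V_\alpha\cap Z;\cor)$ is surjective. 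The objective is thus to produce a linear covering with these three properties.

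First apply Theorem~\ref{th:ParuTh} to $V$ to obtain a linear covering $\{V_j\}$ of $V$ with each $V_j$ $\cor$-acyclic, settling (a). Three cases arise for each $V_j$. If $V_j\cap Z=\emptyset$, then (b) and (c) are trivial. If $V_j\subset\{x_1<0\}$, then $\cor_U\vert_{V_j}=0$, so $H^k(V_j;\cor_U)=0$ trivially. The substantive case is $V_j$ crossing $\partial U$. There, apply Lemma~\ref{lem:enlarge_to_cov}(iii) to the pair $(V_j,\,V_j\cap U)$ to produce a linear covering $\{V_j\cap U^-,\,V_j^+\}$ of $V_j$, where $V_j^+=(V_j\cap U)^{\epsilon',V_j}$ is a Lipschitz enlargement of $V_j\cap U$ inside $V_j$ containing $V_j\cap\ol U$. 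The piece $V_j\cap U^-$ falls into the easy second case.

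The main obstacle will be the further refinement of $V_j^+$ into a linear covering whose pieces satisfy (a)--(c); the delicate point is the $\cor$-acyclicity (b) of the ``collar'' $V_\alpha\cap Z$. The plan is to iterate Theorem~\ref{th:ParuTh} and Lemma~\ref{lem:enlarge_to_cov}---applying Parusinski's theorem to $V_j^+$ and then again splitting each crossing subpiece via the same enlargement procedure---and to exploit the product structure $M=\R\times\R^{n-1}$: the thin collar $V_\alpha^+\cap Z$ deformation-retracts along the $x_1$-direction onto its trace $V_\alpha^+\cap\partial U$, which is an open subanalytic subset of the hyperplane $\partial U\cong\R^{n-1}$, whose $\cor$-acyclicity follows from Theorem~\ref{th:ParuTh} applied in one dimension lower. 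Assembling the refinements via COV4 (Proposition~\ref{pro:linearcov=top}) yields the desired linear covering of $V$.
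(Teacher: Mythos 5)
Your reduction to finding, for each $V\in\Op_\Msa$, a linear covering whose pieces $V_\alpha$ satisfy $H^k(V_\alpha;\cor_U)=0$ for $k>0$ is the right starting point, and the easy cases ($V_\alpha\subset U$ handled by Theorem~\ref{th:ParuTh}, $V_\alpha\cap\ol U=\emptyset$ trivial) are fine. But there is a genuine gap exactly where you flag ``the main obstacle'': you never actually produce a boundary-crossing piece with vanishing $\cor_U$-cohomology, and the plan you sketch does not close up. Applying Theorem~\ref{th:ParuTh} to $V_j^+$ and then re-enlarging each crossing subpiece just reproduces the same problem one level down --- the iteration has no base case. Moreover the two claims it rests on are unjustified: first, there is no reason the collar $V_\alpha\cap Z$ of your enlargement $(V_j\cap U)^{\varepsilon',V_j}=\{x\in M;\ d(x,V_j\cap U)<\varepsilon'\,d(x,M\setminus V_j)\}$ retracts along the $x_1$-direction onto its trace on $\partial U$ --- this set is not star-shaped in the $x_1$-direction in general; second, even granting the retraction, the trace is an arbitrary subanalytic open subset of $\R^{n-1}$ and need not be acyclic --- Theorem~\ref{th:ParuTh} produces a \emph{covering} by acyclic pieces, not acyclicity of a given set, and you would still have to lift that $(n-1)$-dimensional covering to a linear covering of $V_\alpha$ in $M$. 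You also never address your condition (c), i.e.\ the surjectivity of $H^0(V_\alpha;\cor)\to H^0(V_\alpha\cap Z;\cor)$, which amounts to a connectedness constraint on the collar.

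The paper's proof avoids all of this by enlarging in the opposite direction. Set $V_{<0}=V\cap(]-\infty,0[\times\R^{n-1})$ and $V'=V_{<0}^{1,V}=\{x\in V;\ d(x,V\setminus U)<d(x,M\setminus V)\}$; Lemma~\ref{lem:enlarge_to_cov} gives that $\{V',V\cap U\}$ is a linear covering of $V$. The piece $V\cap U$ lies inside $U$, where $\cor_U$ restricts to the constant sheaf, so Theorem~\ref{th:ParuTh} disposes of it. For $V'$ one shows directly that $\rsect(V';\cor_U)\simeq0$ in \emph{all} degrees: since $d(x,V\setminus U)\geq x_1$ for $x_1\geq0$, one gets the explicit description $V'\cap\ol U=\{x=(x_1,x')\in V;\ x_1\geq0 \text{ and } \ol{B(x,x_1)}\subset V\}$, which \emph{is} star-shaped in the $x_1$-direction toward $\partial U$; hence the fibers of the projection $q\cl\R^n\to\{0\}\times\R^{n-1}$ meet $V'\cap U$ in single half-open intervals $]0,d[$ sitting inside $]c,d[$, whose $\cor_U$-cohomology vanishes, and the base change formula gives $\roim{q}\rsect_{V'}\cor_U\simeq0$. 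No iteration, no Mayer--Vietoris against $\cor_Z$, and no lower-dimensional acyclicity statement is needed.
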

\begin{proof}
(i) The sheaf $H^k(\roim{\rhosal}\cor_U)$ is the sheaf associated with the
presheaf $V\mapsto H^k(V;\cor_U)$.
Hence it is enough to show that any $V\in\Op_{\Msa}$ admits a finite covering
$V=\bigcup_{j\in J}V_j$ in $\Msal$ such that $H^k(V_j;\cor_U)\simeq 0$ for all
$k>0$.
 We assume that the distance $d$ is a subanalytic function. 
Let us set $V_{<0} = V \cap (]-\infty,0[ \times \R^{n-1})$ and
$V' = V_{<0}^{1,V}$, where we use the notation~\eqref{eq:enlarge_to_covA}
with $\varepsilon=1$. In our case we can write~\eqref{eq:enlarge_to_covA}
as follows
$$
V' = \{x\in V; \; d(x, V \setminus U) < d(x, M\setminus V)\} .
$$
This is a subanalytic open subset of $V$.  By Lemma~\ref{lem:enlarge_to_cov}
we have
\eq\label{eq:cov_of_V}
\text{$\{V',V\cap U\}$ is a covering of $V$ in $\Msal$.}
\eneq

\medskip\noindent
(ii) Let us prove that $\rsect(V'; \cor_U) \simeq 0$.
We denote by $(x_1,x')$ the coordinates   on $M=\R^n$. For $x=(x_1,x')$ with 
$x_1\geq 0$, we have $d(x, V \setminus U) \geq d(x, M \setminus U) = x_1$.
If $(x_1,x') \in V'$ we obtain $d(x, M \setminus V) > x_1$, hence 
$\ol{B(x,x_1)} \subset V$,
where $B(x,x_1)$ is the ball with center $x$ and
radius $x_1$. This proves that $V'\cap \ol U$ is contained in the right
hand side of the following equality 
\eq\label{eq:descrVprime}
& V'\cap \ol U = \{ x= (x_1,x') \in V; \, x_1\geq 0 \text{ and }
\ol{B(x,x_1)} \subset V\} 
\eneq
and the reverse inclusion is easily checked.
It follows that, if $(x_1,x') \in V'\cap \ol U$, then
$(y_1,x') \in V'\cap \ol U$, for all $y_1\in [0,x_1]$.
Let $q\cl \R^n \to \{0\}\times\R^{n-1}$ be the projection. We deduce:

\smallskip
(a) $q$ maps $V'\cap \ol U$ onto $V\cap \partial U$,

(b) $\opb{q}(x) \cap V'\cap U$ is an open interval,
for any $x=(0,x') \in V\cap \partial U$.

\smallskip\noindent
For any $c<0<d$ we have $\rsect(]c,d[; \cor_{]0,d[}) \simeq 0$.
Hence (a) and (b) give $\roim{q}\rsect_{V'}\cor_U \simeq 0$,
by the base change formula, and we obtain
$\rsect(V'; \cor_U) \simeq \rsect(\R^{n-1}; \roim{q}\rsect_{V'}\cor_U)\simeq 0$.

\medskip\noindent
(iii) By Theorem~\ref{th:ParuTh} we can choose a finite covering of
$V\cap U$ in $\Msal$, say $\{W_j\}_{j\in J}$, such that
$H^k(W_j;\cor_U)\simeq 0$ for all $k>0$.
By~\eqref{eq:cov_of_V} the family $\{V', \{W_j\}_{j\in J}\}$ is a covering of
$V$ in $\Msal$.
By~(ii) this covering satisfies the required condition in~(i), which proves
the result. 
\end{proof}

We need to extend Definition~\ref{def:lipschitzbd}. 
\begin{definition}\label{def:lipschitzbd2}
We say that $U\in\Op_\Msa$ is weakly Lipschitz if for each $x\in M$ there exists a neighborhood 
$V\in\Op_\Msa$ of $x$, a finite set $I$ and $U_i\in\Op_\Msa$, $i\in I$, such that $U\cap V=\bigcup_{i\in I}U_i$ and
\eq\label{hyp:wlip}
&&\left\{\parbox{60ex}{
for all $\emptyset\not=J\subset I$, the set $U_J=\bigcap_{j\in J}U_j$ is a disjoint union of Lipschitz open sets.
}\right.\eneq
\end{definition}
By its definition,  the property of being weakly Lipschitz is local on $M$. 

\begin{example}
The open subset $U = \R^2 \setminus \{0\}$ of $\R^2$ is not
Lipschitz but it is weakly Lipschitz: setting $U_\pm = \{(x,y)\in \R^2$; $\pm y>-|x|\}$ we have $U=U_+ \cup U_-$ and $U_+$, $U_-$, $U_+\cap U_-$ are disjoint
unions of Lipschitz open subsets.
\end{example}

\begin{proposition}\label{exa:Uminusa}
Let $U\in\Op_\Msa$ and consider a finite family of smooth submanifolds $\{Z_i\}_{i\in I}$, closed in a 
neighborhood of $\ol U$. Set $Z=\bigcup_{i\in I}Z_i$. 
Assume that 
\banum
\item
$U$ is Lipschitz, 
\item
$Z_i\cap Z_j\cap\partial U=\emptyset$ for $i\not= j$, 
$\partial U$ is smooth in a neighborhood of 
$Z\cap\partial U$ and the intersection is transversal,
\item
in a neighborhood of each point of $Z\cap U$ there exist a local coordinate system $(x_1,\dots,x_n)$ and for each $i\in I$,
a subset $I_i$ of $\{1,\dots,n\}$ such that  $Z_i=\bigcap_{j\in I_i}\{x;x_j=0\}$.
\eanum
Then $U\setminus Z$ is  weakly Lipschitz. 
\end{proposition}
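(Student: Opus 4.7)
The property of being weakly Lipschitz is local in $M$, so I fix $x \in M$ and seek a relatively compact subanalytic open neighborhood $V$ of $x$ together with a finite family $\{U_i\}_{i \in I}$ of subanalytic open subsets satisfying $(U \setminus Z) \cap V = \bigcup_{i} U_i$ and such that each nonempty intersection $U_J = \bigcap_{j \in J} U_j$ is a disjoint union of Lipschitz open sets. Three cases are immediate: if $x \notin \overline U$, take $I = \emptyset$; if $x \in U \setminus Z$, take $V$ a small ball inside $U \setminus Z$ and $U_1 = V$; if $x \in \partial U \setminus Z$, I shrink $V$ so that $V \cap Z = \emptyset$, and by (a) the set $U_1 := U \cap V$ is Lipschitz for $V$ small enough. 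Only the cases $x \in \partial U \cap Z$ and $x \in U \cap Z$ require work.

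In the case $x \in \partial U \cap Z$, condition (b) forces a unique $Z_{i_0}$ to contain $x$, and $\partial U$ is analytic and transverse to $Z_{i_0}$ at $x$. I therefore choose real-analytic coordinates $(x_1, \ldots, x_n)$ centered at $x$ in which $U = \{x_n > 0\}$ and $Z_{i_0} = \{x_1 = \cdots = x_k = 0\}$ with $k < n$, and take $V$ a small coordinate box disjoint from every other $Z_j$. Then $(U \setminus Z) \cap V = V \cap \{x_n>0,\ (x_1,\ldots,x_k) \neq 0\}$, which is covered by the $2k$ pieces
\[
U_{j,\varepsilon} := V \cap \{x_n > 0,\ \varepsilon x_j > 0\}, \qquad 1 \leq j \leq k,\ \varepsilon \in \{\pm 1\}.
\]
Each $U_{j,\varepsilon}$ satisfies a cone condition (its strict normal cone at any boundary point contains the open orthant defined by the strict inequalities), hence is Lipschitz by the proposition following Lemma~\ref{le:conecond}; every intersection $U_J$ is either empty (when two indices $(j,\varepsilon)$ and $(j,-\varepsilon)$ occur together) or is a polyhedral set of the same type, again Lipschitz.

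In the case $x \in U \cap Z$, I take a small coordinate box $V \subset U$ in the chart provided by (c), so that each $Z_i$ meeting $V$ equals $V \cap \bigcap_{j \in I_i} \{x_j = 0\}$. The elementary distribution identity
\[
(U \setminus Z) \cap V = V \cap \bigcap_i \bigcup_{j \in I_i} \{x_j \neq 0\}
= \bigcup_{(\sigma,\varepsilon)} U_{\sigma,\varepsilon}, \qquad
U_{\sigma,\varepsilon} := V \cap \bigcap_i \{\varepsilon(i) x_{\sigma(i)} > 0\},
\]
where $\sigma$ ranges over choice functions with $\sigma(i) \in I_i$ and $\varepsilon(i) \in \{\pm 1\}$, yields a finite covering by polyhedral open sets, each of which satisfies a cone condition and is therefore Lipschitz, and whose finite intersections are again either empty or polyhedral of the same type. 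The main obstacle is precisely this bookkeeping: one must verify that the distribution formula carves out $(U \setminus Z) \cap V$ exactly (not overshooting into $Z$), and that every intersection of the coordinate-halfspace pieces remains a polyhedral open satisfying a cone condition, so that the Lipschitz criterion applies uniformly to all $U_J$.
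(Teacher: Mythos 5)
Your proof is correct and follows essentially the same route as the paper's: straighten $\partial U$ and the relevant $Z_{i_0}$ near a boundary point (using (b)), use the coordinates of (c) at an interior point, and cover $U\setminus Z$ by coordinate half-space/orthant pieces whose finite intersections satisfy a cone condition and are hence Lipschitz. The only cosmetic difference is that you split $\{x_j\neq 0\}$ into its two signed halves from the start, whereas the paper keeps $U_j=\{x_n>0,\,x_j\neq 0\}$ and invokes the ``disjoint union of Lipschitz open sets'' clause of Definition~\ref{def:lipschitzbd2}.
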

\begin{proof}
Since the property of being weakly Lipschitz is local on $M$, it is enough to prove the result in a neighborhood of each point $p\in\ol U$. 

\vspace{0.3ex}\noindent
(i) Assume $p\in\partial U$. We choose a local coordinate system  $(x_1,\dots,x_n)$ centered at $p$ such that $U=\{x;x_n>0\}$ and $Z=\{x;x_1=\dots=x_r=0\}$ (with $r<n$). 
For $1\leq i\leq r$, define $U_i=\{x;x_n>0, x_i\not=0\}$. Then the family $\{U_i\}_{i=1,\dots,r}$ satisfies~\eqref{hyp:wlip}.

\vspace{0.3ex}\noindent
(ii) Assume $p\in U$. We choose a local coordinate system $(x_1,\dots,x_n)$ such
that $Z_i=\bigcap_{j_i\in I_i}\{x;x_{j_i}=0\}$. For each $j_i\in I_i$ and
$\varepsilon_i=\pm 1$, define $U_i^{\varepsilon_i,j_i}=\{x;\; \varepsilon_i\,
x_{j_i}>0\}$.  Set $A = \prod_{i\in I} (\{\pm 1\} \times I_i)$ and for $\alpha
\in A$ set $U_\alpha=\bigcap_{i\in I}U_i^{\alpha_i}$. Then the family
$\{U_\alpha\}_{\alpha \in A}$ satisfies~\eqref{hyp:wlip}.  
\end{proof}

\begin{theorem}\label{th:Lipbnd}
Let $U\in\Op_\Msa$ and assume that $U$ is  weakly  Lipschitz. Then
\bnum
\item
$\roim{\rhosal}\cor_{U\Msa} \simeq \oim{\rhosal}\cor_{U\Msa} \simeq \cor_{U\Msal}$
is concentrated in degree zero.
\item
For  $F\in\Derb(\cor_\Msal)$, one has $\rsect(U;\epb{\rhosal}F)\simeq\rsect(U;F)$.
\item
Let $F\in\md[\cor_\Msal]$ and assume that $F$ is $\sect$-acyclic. 
Then $\rsect(U;\epb{\rhosal}F)$ is concentrated in degree $0$ and is isomorphic to $F(U)$.
\enum
\end{theorem}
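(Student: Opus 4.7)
The substance is in (i); parts (ii) and (iii) then follow formally. Granting (i), the adjunction $(\roim{\rhosal},\epb{\rhosal})$ yields
\begin{align*}
\rsect(U;\epb{\rhosal}F)
&\simeq\RHom[\cor_\Msal](\roim{\rhosal}\cor_{U\Msa},F) \\
&\simeq\RHom[\cor_\Msal](\cor_{U\Msal},F)
\simeq\rsect(U;F),
\end{align*}
which is (ii); and if $F\in\md[\cor_\Msal]$ is $\sect$-acyclic then $\rsect(U;F)\simeq F(U)$, giving (iii). So everything rests on (i).

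For (i), Proposition~\ref{pro:corUsasal} already gives $\oim{\rhosal}\cor_{U\Msa}\simeq\cor_{U\Msal}$, so the point is the vanishing $R^k\oim{\rhosal}\cor_{U\Msa}=0$ for $k\geq 1$. This sheaf on $\Msal$ is the sheafification of the presheaf $V\mapsto H^k(V;\cor_U)$, so it suffices to show that for every $V\in\Op_\Msa$ there is a linear covering $\{V_\alpha\}_\alpha$ of $V$ in $\Msal$ with $H^k(V_\alpha;\cor_U)=0$ for all $k\geq 1$ and all $\alpha$.

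I would first prove this claim when $U$ is Lipschitz. Cover $\ol V$ in the usual topology by finitely many $W_0,W_1,\dots,W_r\in\Op_\Msa$ with $W_0\cap\partial U=\emptyset$ and, for each $j\geq 1$, $W_j$ contained in the domain of a bi-Lipschitz subanalytic chart $\psi_j\cl\wt W_j\isoto\hat W_j\subset\R^n$ satisfying $\psi_j(\wt W_j\cap U)=\hat W_j\cap\{x_n>0\}$. Proposition~\ref{pro:covolV} then turns $\{V\cap W_j\}_j$ into a linear covering of $V$ in $\Msal$. On $V\cap W_0$, where $U$ has no boundary, the sheaf $\cor_U$ is locally constant, so applying Theorem~\ref{th:ParuTh} to the two open pieces $V\cap W_0\cap U$ and $V\cap W_0\setminus\ol U$ produces a linear refinement with vanishing positive cohomology. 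On each $V\cap W_j$, $j\geq 1$, Remark~\ref{rem:bilip_homeo} upgrades $\psi_j$ into an isomorphism of linear subanalytic sites sending $\cor_U$ to $\cor_{\hat W_j\cap\{x_n>0\}}$, so the construction in the proof of Lemma~\ref{le:cormsal1}, which works equally well for any subanalytic open of $\R^n$, pulls back through $\psi_j^{-1}$ to the required refinement. Assembling everything via COV3--COV4 of Proposition~\ref{pro:linearcov=top} settles the Lipschitz case.

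To reduce the weakly Lipschitz case, Definition~\ref{def:lipschitzbd2} lets us cover $\ol V$ by finitely many neighborhoods $W_p$ on each of which $U\cap W_p=\bigcup_{i\in I_p}U_i^p$ with every $U_J^p=\bigcap_{j\in J}U_j^p$ a finite disjoint union of Lipschitz opens $L_\ell^{J,p}\in\Op_\Msa$. Proposition~\ref{pro:covolV} and COV4 reduce the problem to a single $W_p$ and hence to finitely many Lipschitz pieces $L_\ell^{J,p}$; applying the Lipschitz case to each of them and taking a common refinement (legitimate by COV3--COV4) produces one linear covering $\{V_\alpha\}$ of $V\cap W_p$ on which $H^k(V_\alpha;\cor_{L_\ell^{J,p}})=0$, hence $H^k(V_\alpha;\cor_{U_J^p})=0$, for all $k\geq 1$ and all $J,\ell$. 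A straightforward induction on $|I_p|$, using at each step the Mayer--Vietoris long exact sequence coming from the short exact sequence~\eqref{eq:MV1}, then propagates the vanishing from the intersections $U_J^p$ to their union $U\cap V\cap W_p$, completing~(i). The main delicacy lies in the Lipschitz step: one has to verify that the cover-construction of Lemma~\ref{le:cormsal1}, formulated globally on $\R^n$, actually survives pull-back through a bi-Lipschitz subanalytic chart defined only on a subanalytic open, which is exactly what Remark~\ref{rem:bilip_homeo} guarantees.
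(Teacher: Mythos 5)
Your proof is correct and follows essentially the same route as the paper: parts (ii) and (iii) by adjunction from (i), the Lipschitz case of (i) by localizing, transporting through bi-Lipschitz charts via Remark~\ref{rem:bilip_homeo} and invoking Lemma~\ref{le:cormsal1} (with Theorem~\ref{th:ParuTh} handling the region away from $\partial U$, a detail the paper leaves implicit), and the weakly Lipschitz case by reduction to the Lipschitz pieces of Definition~\ref{def:lipschitzbd2}. The only cosmetic difference is that the paper packages your Mayer--Vietoris induction on $|I_p|$ as a single \v{C}ech resolution $0\to L_r\to\cdots\to L_0\to\cor_U\to 0$ with each $L_i$ a finite sum of $\cor_V$'s for $V$ Lipschitz; the two arguments are equivalent.
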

Note that the result in (i) is local and it is not necessary to assume here that $U$ is relatively compact. 
\begin{proof}
(i)--(a) First we assume that $U$ is Lipschitz. 
The first isomorphism  is a local problem. Hence, by Remark~\ref{rem:bilip_homeo} and by the
definition of ``Lipschitz boundary'' the first isomorphism follows from
Lemma~\ref{le:cormsal1}. The second isomorphism is given in
Proposition~\ref{pro:corUsasal}.

\vspace{0.2ex}\noindent 
(i)--(b) The first isomorphism  is a local problem and we may assume that
$U$ has a covering by open sets $U_i$ as in 
Definition~\ref{def:lipschitzbd2}.
By using the \v{C}ech resolution associated with this covering, we find an exact
sequence of sheaves in $\md[\cor_\Msa]$:
\eqn
&&0\to L_r\to\cdots\to L_0\to\cor_U\to 0
\eneqn
where each $L_i$ is a finite sum of sheaves isomorphic to $\cor_V$ for some $V\in\Op_\Msa$ with $V$ Lipschitz.
Therefore, $\roim{\rhosal}L_i$ is concentrated in degree $0$ by (i)--(a) and the result follows.

\medskip\noindent
(ii) follows from (i) and the adjunction between $\roim{\rhosal}$ and $\epb{\rhosal}$.

\medskip\noindent
(iii) follows from (ii).
\end{proof}

\begin{example}
\label{exa:U12aa}
Let $M=\R^2$ endowed with coordinates $x=(x_1,x_2)$. Let $R>0$ and denote by
$B_R$ the open Euclidian ball with center $0$ and radius $R$. Consider the
subanalytic sets:
\eqn
&U_1=\{x\in B_R; x_1>0,x_2<x_1^2\},\quad U_2=\{x\in B_R; x_1>0,x_2>-x_1^2\},\\
&U_{12}=U_1\cap U_2,\quad U=U_1\cup U_2=\{x \in B_R;x_1>0\}.
\eneqn
Note that $\{U_1,U_2\}$ is a covering of $U$ in $\Msa$ but not in $\Msal$.
Denote for short by $\rho\cl\Msa\to\Msal$ the morphism $\rhosal$. 
We have the distinguished triangle in $\Derb(\cor_\Msal)$:
\eq\label{eq:dtU12a}
&\roim{\rho}\cor_{U_{12}}\to\roim{\rho} \cor_{U_1}\oplus\roim{\rho}\cor_{U_2}
\to\roim{\rho}\cor_{U}\to[+1].
\eneq
Since $U_1,U_2$ and $U$ are Lipschitz, $\roim{\rho}\cor_V$ is concentrated in
degree $0$ for $V=U_1,U_2,U$. It follows that $\roim{\rho}\cor_{U_{12}}$ is
concentrated in degrees $0$ and $1$.
Hence, we have the  distinguished triangle
\eq\label{eq:dtU12b}
&& \oim{\rho}\cor_{U_{12}}\to \roim{\rho}\cor_{U_{12}}\to R^1\oim{\rho}\cor_{U_{12}}[-1]\to[+1].
\eneq
Let us prove that $R^1\oim{\rho}\cor_{U_{12}}$ is isomorphic to the sheaf $N$
introduced in~\eqref{eq:defN_germcusp}.  We easily see that there exists a
natural morphism $\cor_U \to N$ which is surjective. Hence we have to prove
that the sequence
$$
\cor_{U_1} \oplus \cor_{U_2} \to \cor_U \to N 
$$
is exact. This reduces to the following assertion: if $V \in\Op_\Msa$ satisfies
$V\subset U$ and $N(V) = 0$, then $\{V\cap U_1, V\cap U_2\}$ is a linear
covering of $V$. We prove this claim now.

\smallskip\noindent
Let $V\subset U$ be such that $N(V) = 0$. By the definition of $N$, there
exists $A>0$ such that $U_{A,\varepsilon} \not\subset V$ for all
$\varepsilon>0$, where $U_{A,\varepsilon}$ is defined in~\eqref{eq:defUAeps}.
Hence there exists a sequence $\{(x_{1,n},x_{2,n})\}_{n\in \N}$ such that
$x_{1,n}>0$, $x_{1,n}\to 0$ when $n\to \infty$, $|x_{2,n}| < A x_{1,n}^2$ and
$(x_{1,n},x_{2,n}) \not\in V$, for all $n\in\N$.  We define
$f(x) = d((x,0), M\setminus V)$, for $x\in\R$.  Then $f$ is a continuous
subanalytic function and $f(x_{1,n}) < A x_{1,n}^2$, for all $n\in\N$.
The set $\{x \in ]0,1[; f (x) < Ax^2 \}$ is subanalytic and relatively compact,
hence it is a finite disjoint union of points (but it is open) and intervals.
Since it contains a sequence converging to $0$, it must contain some interval
$]0,x_0[$.  We have then $f(x) \leq A x^2$ for all $x\in ]0,x_0[$.  We deduce,
for any $(x_1,x_2)\in \R^2$ with $x_1\in ]0,x_0[$,
\begin{equation}\label{eq:U12aa1}
d((x_1,x_2), M\setminus V)
\leq |x_2| + d((x_1,0), M\setminus V) \leq |x_2| + Ax_1^2.  
\end{equation}
On the other hand we can find  $B>0$ such that, for any $(x_1,x_2) \in U$,
\begin{equation}\label{eq:U12aa2}
\max \{d((x_1,x_2) , M\setminus U_1),  d((x_1,x_2) , M\setminus U_2) \}
\geq |x_2| + B x_1^2.
\end{equation}
We deduce easily from~\eqref{eq:U12aa1} and~\eqref{eq:U12aa2} that
$\{V\cap U_1, V\cap U_2\}$ is a linear covering of $V$.
\end{example}

\chapter{Operations on sheaves}\label{ShvSa3}

All along this chapter, we follow Convention~\eqref{eq:distonM}.

In this chapter we study the natural operations on sheaves for the linear subanalytic topology.
 In particular, given a morphism of real analytic manifolds, 
 our aim is to define inverse and direct images for sheaves on the linear subanalytic topology. We are not able to do it in general (see Remark~\ref{rem:embedsubm}) and we shall distinguish the case of a closed embedding and the case of a submersion.

\section{Tensor product  and internal hom}
Since $\Msal$ is a site, the category $\md[\cor_\Msal]$ admits a tensor product, denoted
$\scbul\tens\scbul$ 
and an internal hom, denoted $\hom$, and these functors admit left and right derived
functors, 
respectively. For more details, we refer to~\cite[\S~18.2]{KS06}.

\section{Operations for closed embeddings}
\subsubsection*{$f$-regular open sets}
In this section, $f\cl M\into N$ will be a closed embedding.  We identify $M$
with a subset of $N$. We assume for simplicity that $d_M$ is the restriction of
$d_N$ to $M$ and we write $d$ for $d_M$ or $d_N$. We also keep the
convention~\eqref{eq:dist-au-vide} for $d(x,\emptyset)$.

\begin{definition}\label{def:fregular}
Let $V\in\Op_\Nsa$. We say that $V$ is $f$-regular if 
there exists $C>0$ such that
\eq\label{eq:fregular}
&& d(x,M\setminus M\cap V) \leq C \, d(x,N\setminus V)
\quad\mbox{  for all $x\in M$.}
\eneq
\end{definition}

\begin{itemize}
\item The property of being $f$-regular is local on $M$. More precisely, if
  $M=\bigcup_{i\in I}U_i$ is an open covering and $V\in\Op_\Nsa$ is
  $f\vert_{U_i}$-regular for each $i\in I$, then $V$ is $f$-regular.
\item If $V$ and $W$ belong to $\Op_\Nsa$ with $\opb{f}(V)=\opb{f}(W)$,
  $V\subset W$ and $V$ is $f$-regular, then $W$ is $f$ regular.
\end{itemize}

\begin{lemma}\label{le:interfregular}
Let  $f\cl M\into N$ be a closed embedding.
The family $\{V\in\Op_\Nsa;$ $V$ is $f$-regular$\}$
is stable by finite intersections.
\end{lemma}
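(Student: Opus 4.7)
The plan is to reduce to the case of two sets by induction, and for the base case to use the elementary identity $d(x, A \cup B) = \min\{d(x,A), d(x,B)\}$ applied to the complements. Empty intersection gives $N$ which is trivially $f$-regular by our convention~\eqref{eq:dist-au-vide}, so only the binary case matters.

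So let $V_1, V_2 \in \Op_\Nsa$ be $f$-regular, with constants $C_1, C_2 > 0$ satisfying~\eqref{eq:fregular}. I would first note that $V_1 \cap V_2$ is again a relatively compact subanalytic open subset, hence lies in $\Op_\Nsa$. Then, for $x \in M$, I would write
\[
  M \setminus \bigl(M \cap (V_1 \cap V_2)\bigr)
    = \bigl(M \setminus (M \cap V_1)\bigr) \cup \bigl(M \setminus (M \cap V_2)\bigr),
\]
and similarly
\[
  N \setminus (V_1 \cap V_2) = (N \setminus V_1) \cup (N \setminus V_2),
\]
so that taking distances yields
\[
  d\bigl(x, M \setminus M \cap (V_1 \cap V_2)\bigr)
    = \min_{i=1,2} d(x, M \setminus M \cap V_i), \qquad
  d\bigl(x, N \setminus (V_1 \cap V_2)\bigr)
    = \min_{i=1,2} d(x, N \setminus V_i).
\]

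Setting $C = \max(C_1, C_2)$, the $f$-regularity of $V_i$ gives $d(x, M \setminus M \cap V_i) \leq C \, d(x, N \setminus V_i)$ for $i=1,2$, and taking the minimum over $i$ preserves this inequality (since $\min(a_1,a_2) \leq a_i \leq C b_i$ for each $i$ implies $\min(a_1,a_2) \leq C \min(b_1,b_2)$). This is exactly~\eqref{eq:fregular} for $V_1 \cap V_2$ with constant $C$, so $V_1 \cap V_2$ is $f$-regular. An easy induction on the number of sets handles arbitrary finite intersections.

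There is no real obstacle here; the argument is a purely formal computation with distances to complements, and the only thing to be a bit careful about is checking that the passage to the minimum preserves the inequality. The convention~\eqref{eq:dist-au-vide} ensures the statement remains correct in degenerate situations (for instance, when $M \cap V_i = \emptyset$).
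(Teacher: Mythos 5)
Your proof is correct and follows essentially the same route as the paper's: both reduce to two sets, use the identity $d(x,F_1\cup F_2)=\min(d(x,F_1),d(x,F_2))$ applied to the complements, and conclude with the constant $C=\max(C_1,C_2)$ via the same chain of inequalities. The extra remarks on the empty intersection and on membership in $\Op_{N_{\sa}}$ are harmless additions.
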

\begin{proof}
We shall use the obvious fact which asserts that for two closed sets $F_1$ and
$F_2$ in a metric space, 
\eqn
&&d(x,F_1\cup F_2)=\inf(d(x,F_1),d(x,F_2)). 
\eneqn
Let $V_1$ and $V_2$ be two $f$-regular objects of $\Op_\Nsa$ and let $C_1$ and
$C_2$ be the corresponding constants as in~\eqref{eq:fregular}.
Let $x\in M$. We have
\eqn
d(x,M\setminus (M\cap V_1\cap V_2))
& = & \inf_i \,d(x,M\setminus (M\cap V_i) ) \\
&\leq& \inf_i (C_i \cdot d(x,N\setminus V_i)) \\
&\leq& (\max_i \, C_i) \cdot ( \inf_i \, d(x,N\setminus V_i) ) \\
&=& (\max_i \, C_i) \cdot d(x,N\setminus (V_1\cap V_2)).
\eneqn 
\end{proof}

\begin{lemma}\label{le:coveroim}
Let  $f\cl M\into N$ be a closed embedding and let
 $U \in \Op_\Msa$.
Then there exists $V \in \Op_\Nsa$ such that $V$ is $f$-regular
and $M\cap V = U$.
\end{lemma}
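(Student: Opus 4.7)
The plan is to construct $V$ by gluing local pieces built in subanalytic coordinate charts that split the pair $(N,M)$.

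Since $U$ is relatively compact, $\ol U$ is compact in $N$, so one can cover it by finitely many relatively compact subanalytic open subsets $\Omega_1,\ldots,\Omega_r$ of $N$, each equipped with a subanalytic bi-Lipschitz chart $\phi_i\cl\Omega_i\isoto W_i\subset\R^n$ satisfying $\phi_i(\Omega_i\cap M)=W_i\cap(\R^k\times\{0\})$, $k=\dim M$. By refining the cover I may also assume the Euclidean distance on $W_i$ is Lipschitz-equivalent under $\phi_i$ to the restriction of $d_N$, and that the closure in $N$ of each $U\cap\Omega_i$ is contained in $\Omega_i$. In each chart, setting $U_i\eqdot\phi_i(U\cap\Omega_i)\subset\R^k\times\{0\}$, I define
\eqn
&&\tw V_i=\bigl\{(x',x'')\in W_i;\; x'\in U_i,\ \|x''\|<\epsilon\cdot d_E(x',\R^k\setminus U_i)\bigr\}
\eneqn
for a small fixed $\epsilon>0$, $d_E$ being the Euclidean distance. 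Since distances to subanalytic sets are subanalytic, $\tw V_i$ is a relatively compact subanalytic open subset of $W_i$ with $\tw V_i\cap(\R^k\times\{0\})=U_i$. Setting $V_i\eqdot\opb{\phi_i}(\tw V_i)$ and $V\eqdot\bigcup_{i=1}^r V_i\in\Op_\Nsa$, the identity $V\cap M=\bigcup_i(U\cap\Omega_i)=U$ is immediate from $\ol U\subset\bigcup_i\Omega_i$.

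For $f$-regularity, I would use the fact that this property is local on $M$ (noted after Definition~\ref{def:fregular}): the inequality~\eqref{eq:fregular} is automatic for $x\in M\setminus\ol U$, so it suffices to verify it on each $\Omega_i\cap M$. An elementary two-case argument in the Euclidean chart (splitting on whether a minimizing $(y',y'')\in W_i\setminus\tw V_i$ satisfies $\|x'-y'\|\geq\tfrac12\,d_E(x',\R^k\setminus U_i)$) gives
\eqn
&&d_E\bigl((x',0),W_i\setminus\tw V_i\bigr)\geq\tfrac{\epsilon}{2}\,d_E\bigl(x',\R^k\setminus U_i\bigr),\qquad(x',0)\in U_i.
\eneqn
Combining this with the decomposition $N\setminus V_i=(\Omega_i\setminus V_i)\cup(N\setminus\Omega_i)$, the uniform lower bound $d_N(x,N\setminus\Omega_i)\geq\delta_i>0$ from the choice of cover, the inclusion $V\supset V_i$, and the Lipschitz equivalence of $d_E$ with $d_N$ on $\Omega_i$, one obtains~\eqref{eq:fregular} on $\Omega_i\cap M$ with a constant uniform over the finite cover.

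The main obstacle is this last verification: the bound $(\epsilon/2)\,d_E(x',\R^k\setminus U_i)$ dominates $d_N(x,N\setminus V)$ only near $\partial U_i$, and it must be combined with the bound coming from $d_N(x,N\setminus\Omega_i)$ away from $\partial U_i$. These combine cleanly thanks to compactness of $\ol U$ and the local Lipschitz equivalence built into~\eqref{eq:distonM}.
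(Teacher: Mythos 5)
Your route is genuinely different from the paper's, and much heavier than it needs to be. The paper's proof is a two-line global construction: pick any $V_0\in\Op_\Nsa$ with $\ol U\subset V_0$ and set $V=(V_0\setminus M)\cup U$. This $V$ is open (because $U$ is open in the closed submanifold $M$ and $\ol U\subset V_0$), subanalytic, relatively compact, and satisfies $M\cap V=U$; moreover $N\setminus V=(N\setminus V_0)\cup(M\setminus U)$, so
$d(x,N\setminus V)\geq\min\{d(x,N\setminus V_0),\,d(x,M\setminus U)\}\geq\min\{\delta,\,d(x,M\setminus U)\}$
with $\delta=\inf_{x\in U}d(x,N\setminus V_0)>0$, which gives~\eqref{eq:fregular} with $C=\max\{1,D/\delta\}$, $D=\sup_{x\in M}d(x,M\setminus U)$. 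The point you miss is that there is no need to make $V$ thin near $\partial U$: one may keep $V$ fat away from $M$ and only has to excise $M\setminus U$ itself. Your chart-by-chart horn construction buys nothing here except a harder verification.

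Beyond being unnecessarily complicated, your write-up contains one false reduction. You claim that, after refining the cover, ``the closure in $N$ of each $U\cap\Omega_i$ is contained in $\Omega_i$.'' If $U$ is connected and not contained in a single chart, this is impossible: $U\cap\Omega_i$ is then a nonempty proper open subset of the connected set $U$, hence has a boundary point lying in $U$, and that point belongs to $\ol{U\cap\Omega_i}\setminus\Omega_i$. You use this assumption to obtain the uniform lower bound $d_N(x,N\setminus\Omega_i)\geq\delta_i$ for $x\in U\cap\Omega_i$, and without it your final estimate fails exactly for $x$ near $\partial\Omega_i$ but far from $M\setminus U$: there $d(x,N\setminus V_i)$ is small for reasons unrelated to $U$, while $d(x,M\setminus U)$ is large. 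The same issue affects the comparison of $d_E(x',\R^k\setminus U_i)$ with $d_N(x,M\setminus U)$, since $\R^k\setminus U_i$ contains points corresponding to the edge of the chart rather than to $M\setminus U$. The standard repair is to fix a second finite cover $\Omega_i'$ of $\ol U$ with $\ol{\Omega_i'}\subset\Omega_i$ and, for a given $x\in U$, to run your estimate with an index $i$ such that $x\in\Omega_i'$, still using $d(x,N\setminus V)\geq d(x,N\setminus V_i)$. With that repair your argument does go through, but you should state the reduction correctly.
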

\begin{proof}
We choose $V_0 \in \Op_\Nsa$ such that $\ol U \subset V_0$.
We set
$$
\delta = \inf\{ d(x,N\setminus V_0);\; x\in U\}
$$
and $V = (V_0 \setminus (V_0\cap M) ) \cup U$. We have $\delta>0$.
Let $x\in M$ and $y\in N$ be such that $d(x,N\setminus V) = d(x,y)$.
If $y\in M$, then $d(x,N\setminus V) = d(x,M\setminus U)$.
If $y\not\in M$, then $d(x,N\setminus V) = d(x,N\setminus V_0) \geq \delta$.
In any case we have $d(x,N\setminus V) \geq  \min\{d(x,M\setminus U),\delta\}$.
Hence~\eqref{eq:fregular} is satisfied with $C = \max\{1, D/\delta\}$,
where $D = \max\{d(x,M\setminus U);\; x\in M\} < \infty$.
\end{proof}

\begin{lemma}\label{lem:freg-subcov}
Let  $f\cl M\into N$ be a closed embedding.
Let $V\in \Op_\Nsal$ be an $f$-regular open set and let $\{V_i\}_{i\in I}$ be a
linear covering of $V$, that is, a covering in $ \Op_\Nsal$.  Then there exists
a refinement $\{W_j\}_{j\in J}$ of $\{V_i\}_{i\in I}$ such that 
 $\{W_j\}_{j\in J}$ is a linear covering of $V$ and $W_j$ is
$f$-regular for all $j\in J$. We can even choose $J=I$ and $W_i\subset V_i$,
for all $i\in I$.
\end{lemma}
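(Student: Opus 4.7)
The strategy is to construct each $W_i$ by shrinking $V_i$ to excise the regions near $V\setminus V_i$ that are comparable in size to $d(\cdot, N\setminus V)$; these are precisely the ``cuspy'' regions that may prevent $V_i$ from being $f$-regular. The construction is modeled on the enlargement formula~\eqref{eq:enlarge_to_covA}, using $V\setminus V_i$ as the set to avoid and $d(\cdot, N\setminus V)$ as the ambient scale. I expect the $f$-regularity of $V$ and the 1-regularity of $\{V_i\}$ to combine so that $\{W_i\}$ still linearly covers $V$ and each $W_i$ is $f$-regular.

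I would begin by observing that $\{V_i\cap M\}_{i\in I}$ is a linear covering of $V\cap M$ in $\Msal$: for $x\in M$, denoting by $C_V$ the $f$-regularity constant of $V$ and by $C$ a constant witnessing the 1-regularity of $\{V_i\}$, one has
\[
d(x, M\setminus(V\cap M)) \le C_V\cdot d(x, N\setminus V) \le C\cdot C_V\cdot \max_i d(x, N\setminus V_i) \le C\cdot C_V\cdot \max_i d(x, M\setminus(V_i\cap M)).
\]
Then, for a parameter $\varepsilon\in(0,1/C)$ to be fixed, I would define
\[
W_i = \{y\in V_i\,;\, d(y, V\setminus V_i) > \varepsilon\cdot d(y, N\setminus V)\}, \qquad i\in I.
\]
Each $W_i$ is an open subset of $V_i$, and is subanalytic under the standing assumption that $d$ is a subanalytic function on $N\times N$ (or otherwise locally after reducing to that case).

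For the linear covering property, given $y\in V$ the 1-regularity of $\{V_i\}$ yields some $i$ with $d(y, N\setminus V_i)\ge C^{-1}\cdot d(y, N\setminus V)$; since $V\setminus V_i\subset N\setminus V_i$ this forces $y\in W_i$, and the $(1+\varepsilon)$-Lipschitz property of the defining function $g_i(z) := d(z, V\setminus V_i)-\varepsilon\cdot d(z, N\setminus V)$ yields a parallel lower bound on $d(y, N\setminus W_i)$ proportional to $d(y, N\setminus V)$. For the $f$-regularity of $W_i$, fixing $x\in M\cap W_i$, the defining inequality together with $N\setminus V_i = (V\setminus V_i)\cup(N\setminus V)$ implies $d(x, N\setminus V_i)\ge\varepsilon\cdot d(x, N\setminus V)$, and the $f$-regularity of $V$ then gives
\[
d(x, M\setminus(M\cap V_i)) \le d(x, M\setminus V) \le C_V\cdot d(x, N\setminus V) \le (C_V/\varepsilon)\cdot d(x, N\setminus V_i).
\]
This controls $x$ far from the auxiliary locus $\{g_i=0\}$; for $x$ close to this locus, the Lipschitz property of $g_i|_M$ supplies a nearby point of $M\cap V_i\setminus W_i \subset M\setminus(M\cap W_i)$, which matches the corresponding lower bound on $d(x, N\setminus W_i)$ obtained by splitting $N\setminus W_i = (N\setminus V_i)\cup\{z\in V_i\,;\, g_i(z)\le 0\}$. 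The hard part will be this last case analysis, which must uniformly handle the regimes where $d(x, V\setminus V_i)$ is or is not comparable to $d(x, N\setminus V)$ and where $x$ is or is not close to the auxiliary boundary $\{g_i=0\}$.
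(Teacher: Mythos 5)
Your construction differs from the paper's in a way that creates a real gap at the $f$-regularity step. You shrink $V_i$ everywhere in $N$, cutting along the locus $\{g_i=0\}$ where $g_i(z)=d(z,V\setminus V_i)-\varepsilon\, d(z,N\setminus V)$. The problematic case is exactly the one you defer to at the end: $x\in M\cap W_i$ with $d(x,N\setminus W_i)$ realized by a point $z\in V_i$, $z\notin M$, with $g_i(z)\le 0$. You claim that ``the Lipschitz property of $g_i|_M$ supplies a nearby point of $M\cap V_i\setminus W_i$'', but Lipschitz continuity gives the opposite inequality: since $g_i$ is $(1+\varepsilon)$-Lipschitz and $g_i(x)>0$, one gets $d\bigl(x,\,M\cap\{g_i\le 0\}\bigr)\ge g_i(x)/(1+\varepsilon)$, a \emph{lower} bound on the distance to the set you need to reach, not an upper bound. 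Knowing that $g_i$ becomes nonpositive at a point $z$ off $M$ close to $x$ only tells you that $g_i(x)$ is small; it does not produce a point of $M$ where $g_i\le 0$, nor a point of $M\setminus(M\cap V_i)$, at distance $O(d(x,z))$. The set $\{g_i\le 0\}$ is a thickening of $V\setminus V_i$ of width $\varepsilon\, d(\cdot,N\setminus V)$ and can graze $M$ tangentially (for instance when a component of $V\setminus V_i$ sits at height roughly $\varepsilon\, d(\cdot,N\setminus V)$ above a point of $M$), so bounding $d(x,M\setminus(M\cap W_i))$ by a constant times $d(x,N\setminus W_i)$ in this regime would require a genuine geometric argument that the proposal does not contain.

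The paper sidesteps this entirely by a different shrinking: it sets $W_i=(V_i\setminus M)\cup\{x\in M\cap V_i;\ d(x,M\setminus(M\cap V))<D\,d(x,N\setminus V_i)\}$ with $D=2CC'$, i.e.\ it removes points of $V_i$ only \emph{along $M$} and leaves $V_i\setminus M$ untouched. Then for $x\in M\cap W_i$ the nearest point of $N\setminus W_i$ either lies outside $M$, in which case it already lies in $N\setminus V_i$ and the defining inequality of $W_i$ combined with the $f$-regularity of $V$ gives the bound directly, or it lies in $M$, in which case $d(x,M\setminus(M\cap W_i))\le d(x,N\setminus W_i)$ trivially. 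If you want to repair your approach, the essential feature to import is exactly this: only excise points of $M$ from $V_i$, so that every new nearby point of the complement that your cut creates is itself a point of $M\setminus(M\cap W_i)$.
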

\begin{proof}
Let $C$ be a constant as in~\eqref{eq:fregular}.
Let $I_0\subset I$ be a finite subset and let $C'>0$ be such that
\eq\label{eq:freg-subcov3}
d(x,N\setminus V)\leq C'\cdot\max_{i\in I_0} d(x,N \setminus V_i)
\quad\text{ for all $x\in N$}.
\eneq
Then, for any $x\in M$ we have
\eq\label{eq:freg-subcov1}
&\begin{aligned}
d(x,M\setminus (M\cap V)) &\leq C\cdot d(x,N\setminus V) \\
&\leq CC'\cdot \max_{i\in I_0}  d(x,N \setminus V_i) .
\end{aligned}
\eneq
We set $D= 2CC'$. For $i\in I_0$ we define $W_i \in \Op_\Nsal$ by
$$
W_i = (V_i \setminus M) \cup
\{x\in M\cap V_i ;\; d(x,M\setminus (M\cap V)) <  D\, d(x,N \setminus V_i) \}
$$
and for $i\in I\setminus I_0$ we set $W_i = \emptyset$.

\medskip\noindent
(i) Since $D\geq CC'$, the inequality~\eqref{eq:freg-subcov1} gives
$V = \bigcup_{i\in I_0} W_i$. Let us prove that $\{W_i\}_{i\in I_0}$ is a
linear covering of $V$.  We first prove the following claim, for given
$\varepsilon>0$, $i\in I_0$ and $x\in N$:
\eq\label{eq:freg-subcov11} &
\begin{minipage}{9cm}
if $d(x,N\setminus W_i) \leq \varepsilon d(x,N\setminus V)$, \\
then $d(x,N\setminus V_i) \leq (\varepsilon(1+\frac{C}{D}) +\frac{C}{D})
d(x,N\setminus V)$.  
\end{minipage}
\eneq
If $d(x,N\setminus W_i) = d(x,N\setminus V_i)$, the claim is obvious.
In the other case we choose $y\in N$ such that $d(x,N\setminus W_i) = d(x,y)$.
Then we have $y\in V_i \setminus W_i$. Hence $y\in M$ and the definition of
$W_i$ gives $d(y,N \setminus V_i) \leq D^{-1} d(y,M\setminus (M\cap V))$.
We deduce
\begin{align*}
d(x,N\setminus V_i) &\leq d(x,y) + d(y,N \setminus V_i) \\
&\leq d(x,y) + D^{-1} d(y,M\setminus (M\cap V)) \\
&\leq d(x,y) + CD^{-1} d(y,N\setminus V) \\
&\leq (1+CD^{-1}) d(x,y) + CD^{-1} d(x,N\setminus V) \\
&\leq (\varepsilon(1+CD^{-1}) +CD^{-1})d(x,N\setminus V) ,
\end{align*}
which proves~\eqref{eq:freg-subcov11}.

Now we prove that $\{W_i\}_{i\in I_0}$ is a linear covering of $V$.
We choose $\varepsilon$ small enough so that
$(\varepsilon(1+\frac{C}{D}) +\frac{C}{D}) < \frac{1}{C'}$
(recall that $D=2CC'$) and we prove, for all $x\in N$,
\eq\label{eq:freg-subcov2}
d(x,N\setminus V)
\leq \varepsilon^{-1}\cdot\max_{i\in I_0} d(x,N \setminus W_i).
\eneq
Indeed, if~\eqref{eq:freg-subcov2} is false, then~\eqref{eq:freg-subcov11}
implies $d(x,N\setminus V_i) < \frac{1}{C'} d(x,N\setminus V)$ for some
$x\in V$ and all $i\in I_0$. But this contradicts~\eqref{eq:freg-subcov3}.

\medskip\noindent
(ii) Let us prove that $W_i$ is $f$-regular, for any $i\in I_0$.
We remark that $W_i\setminus M = V_i \setminus M$. Hence
$d(x,N \setminus W_i) = d(x,N \setminus V_i)$ or
$d(x,N \setminus W_i) =  d(x,M\setminus(M\cap W_i))$, for all $x\in M$.
In the first case we have, assuming $x\in M\cap W_i$
(the case $x\not\in M\cap W_i$ being trivial),
\begin{multline*}
d(x,M\setminus (M\cap W_i)) \leq d(x,M\setminus (M\cap V)) \\ 
\leq  D\, d(x,N \setminus V_i) = D\, d(x,N \setminus W_i)  .
\end{multline*}
In the second case we have obviously
$d(x,M\setminus (M\cap W_i)) \leq  d(x,N \setminus W_i)$.
Hence~\eqref{eq:fregular} holds for $W_i$ with the constant $\max\{D,1\}$.
\end{proof}

Thanks to Lemma~\ref{le:interfregular}, to $f$ we can associate a new site.
\begin{definition}
Let  $f\cl M\to N$ be a closed embedding.
\bnum
\item
The presite $\Nf$ is given by $\Op_\Nf=\{V\in\Nsa; V\mbox{ is $f$-regular}\}$.
\item
The site $\Nsalf$ is the presite $\Nf$ endowed with the topology such that 
a family $\{V_i\}_{i\in I}$ of objects $\Op_{\Nf}$  is a covering of $V$ in $\Nf$ 
if it is a covering in $\Nsal$.
\enum
One denotes by $\rho_f\cl \Nsal\to \Nsalf$ the natural morphism of sites. 
\end{definition}

\begin{proposition}\label{pro:morphismNsalf}
The functor  $f^t\cl  \Op_\Nsalf\to \Op_\Msa$, $V\mapsto \opb{f}(V)$, induces a 
morphism of sites $\tw f\cl \Msal\to\Nsalf$. Moreover, this functor of sites
is left exact in the sense of~{\rm\cite[Def.~17.2.4]{KS06}}. 
\end{proposition}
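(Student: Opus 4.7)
The plan is to verify two points: $f^t\cl\Op_\Nsalf\to\Op_\Msa$, $V\mapsto M\cap V$, is well-defined and sends coverings to coverings (whence the morphism of sites $\tw f$), and $f^t$ is left exact. Well-definedness is straightforward: if $V$ is subanalytic, open and relatively compact in $N$, then $M\cap V$ is subanalytic (as the trace of a subanalytic set on the closed subanalytic submanifold $M$), open in $M$, and $\ol{M\cap V}\subset M\cap\ol V$ is compact; functoriality with respect to inclusions is immediate.

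The main step is preservation of coverings. Let $\{V_i\}_{i\in I}$ be a covering of $V$ in $\Nsalf$; by definition each $V_i$ is $f$-regular and a finite subfamily $\{V_i\}_{i\in I_0}$ is $1$-regularly situated in $N$ with union $V$. Then $M\cap V=\bigcup_{i\in I_0}(M\cap V_i)$, and for $x\in M$ I chain together
\begin{align*}
d(x,M\setminus(M\cap V)) &\leq C_V\cdot d(x,N\setminus V)\\
&\leq C_V C\cdot \max_{i\in I_0} d(x,N\setminus V_i)\\
&\leq C_V C\cdot \max_{i\in I_0} d(x,M\setminus(M\cap V_i)),
\end{align*}
where the first inequality uses the $f$-regularity of $V$, the second uses the $1$-regularity of $\{V_i\}_{i\in I_0}$ in $N$, and the third uses the trivial inclusion $M\setminus(M\cap V_i)=M\setminus V_i\subset N\setminus V_i$. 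Thus $\{M\cap V_i\}_{i\in I_0}$ is $1$-regularly situated in $M$ and covers $M\cap V$, yielding a linear covering of $f^t(V)$ in $\Msal$.

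For left exactness in the sense of~\cite[Def.~17.2.4]{KS06}, it suffices to show that $f^t$ commutes with the finite projective limits that exist in the poset-like presite $\Op_\Nsalf$, namely binary intersections. By Lemma~\ref{le:interfregular} the class of $f$-regular open sets is stable by finite intersections, so $V_1\cap V_2$ lies in $\Op_\Nsalf$ whenever $V_1,V_2$ do, and tautologically $f^t(V_1\cap V_2)=M\cap V_1\cap V_2=f^t(V_1)\cap f^t(V_2)$. The principal obstacle in the whole argument is the ``wrong-direction'' inequality $d(x,M\setminus(M\cap V))\leq C_V\,d(x,N\setminus V)$: without the $f$-regularity hypothesis on $V$ one has no control in this direction, which is precisely why the presite has been restricted to $f$-regular opens.
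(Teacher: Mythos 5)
Your verification that $f^t$ sends coverings to coverings is exactly the paper's argument: the same three-step chain of inequalities using the $f$-regularity of $V$, the $1$-regular situation of $\{V_i\}_{i\in I_0}$ in $N$, and the inclusion $M\setminus(M\cap V_i)\subset N\setminus V_i$. That part is fine.

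The left exactness step, however, has a genuine gap. Left exactness in the sense of~\cite[Def.~17.2.4]{KS06} is not reducible to ``$f^t$ commutes with the binary intersections that exist in $\Op_\Nsalf$''. Since neither presite has a terminal object (the open sets are relatively compact), the relevant condition is that for every $U\in\Op_\Msa$ the category whose objects are the inclusions $U\to \opb{f}(V)$ with $V\in\Op_\Nsalf$ is \emph{cofiltrant}. Cofiltrancy has two ingredients: given two such $V$'s one must find a third mapping into both --- this is what your appeal to Lemma~\ref{le:interfregular} provides --- and the category must be \emph{nonempty}, which you never address. Nonemptiness is precisely the assertion that every $U\in\Op_\Msa$ is contained in $M\cap V$ for some $f$-regular $V\in\Op_\Nsa$; this is the content of Lemma~\ref{le:coveroim}, and it requires an actual construction (take a neighborhood $V_0$ of $\ol U$, set $V=(V_0\setminus(V_0\cap M))\cup U$, and use the uniform lower bound $\delta=\inf\{d(x,N\setminus V_0);\,x\in U\}>0$ to verify~\eqref{eq:fregular}). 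Without this step the comma category could a priori be empty for some $U$, in which case $\opb{\tw f}$ would not be exact. You need to invoke (or reprove) Lemma~\ref{le:coveroim} to close the argument.
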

\begin{proof}
(i) Let $C$ be a constant as in~\eqref{eq:fregular}.
Let $\{V_i\}_{i\in I}$ be a covering of $V$ in $\Nsal$ and let $I_0\subset I$
be a finite subset and $C'>0$ be such that $d(y,N\setminus V)\leq
C'\cdot\max_{i\in I_0} d(y,N \setminus V_i)$ for all $y\in N$.  We deduce, for
$x\in M$,
\eqn
d(x,M\setminus M\cap V) &\leq& C\cdot d(x,N\setminus V) \\
&\leq& CC'\cdot \max_{i\in I_0}  d(x,N \setminus V_i)\\
 &\leq& CC'\cdot  \max_{i\in I_0}  d(x,M \setminus M\cap V_i).
\eneqn

\vspace{0.3ex}\noindent
(ii) We have to prove that the functor $f^t\cl \Op_\Nsalf\to\Op_\Msa$ is left
exact in the sense of~\cite[Def.~3.3.1]{KS06}, that is, for each
$U\in \Op_\Msa$, the category whose objects are the inclusions
$U\to \opb{f}(V)$ ($V\in\Op_\Nsalf$) is cofiltrant.

This category is non empty by Lemma~\ref{le:coveroim} and then it is cofiltrant
by Lemma~\ref{le:interfregular}.
\end{proof}

Hence, we have the morphisms of sites
\eq\label{diag:siteNf}
&&\ba{c}\xymatrix{
&\Nsal\ar[d]^-{\rho_f}\\
\Msal\ar[r]_-{\tw f}&\Nsalf.
}\ea\eneq

Now we consider two closed embeddings $f\cl M\to N$ and $g\cl N\to L$ of real
analytic manifolds and we set $h\eqdot g\circ f$.  We get the diagram of
presites:
\eq\label{diag:sitesNfLgLh}
&&\vcenter{\xymatrix{
\Nsa\ar[r]^-{\tw g}\ar[d]_-{\rho_f}&\Lg\ar[rdd]^-{\lambda_h}&\Lsa\ar[l]_-{\rho_g}\ar[dd]^-{\rho_h}\\
\Nf\ar[rrd]^-{\ol g}&&\\
\Msa\ar[u]^-{\tw f}\ar[rr]^-{\tw h}&& \Lg \cap \Lh ,
}}
\eneq

\noindent
where the objects of $ \Lg \cap \Lh$ are the open sets $U\in\Op_\Lsa$ which are both $g$-regular and $h$-regular,
$\ol g$ is induced by $\tw g$ and $\lambda_h$ is the obvious inclusion.
We will use the following lemma to prove that the direct images defined in the
next section are compatible with the composition.

\begin{lemma}\label{le:twgrhof1}
\bnum
\item
Let $W\in\Op_\Lh$. Then $W\cap N \in\Op_\Nf$.
\item Let $W\in \Op_\Lg$ be such that $N\cap W \in \Op_\Nf$.
Then $W\in \Op_\Lh$.
\item 
Let $W\in \Op_\Lg$ and $V\in\Op_\Nf$ be such that $V\subset N\cap W$.
Then there exists $U\in \Op_\Lg\cap \Op_\Lh$ such that $U\subset W$ and
$V \subset N\cap U$.
\enum
\end{lemma}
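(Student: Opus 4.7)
The plan is to verify each part by a direct calculation with the defining inequality of $f$-, $g$-, and $h$-regularity. Throughout, I write $C_f$, $C_g$, $C_h$ for the constants in~\eqref{eq:fregular} associated with the respective embeddings.

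For (i), I would observe that $M\cap (W\cap N) = M\cap W$ and $N\setminus (W\cap N) = N\setminus W$, so the condition that $W\cap N$ be $f$-regular reduces to the inequality
\[
d(x, M\setminus M\cap W) \leq C\, d(x, N\setminus W) \quad\text{for all } x\in M.
\]
Since $N\setminus W \subset L\setminus W$, we have $d(x,L\setminus W)\leq d(x,N\setminus W)$, and the $h$-regularity of $W$ gives the left-hand side bounded by $C_h\, d(x,L\setminus W)$. So the claim holds with $C=C_h$. That $W\cap N\in\Op_\Nsa$ is automatic since $N$ is closed subanalytic in $L$.

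For (ii), I would chain the two hypotheses. The $f$-regularity of $N\cap W$ yields, for $x\in M$, the inequality $d(x, M\setminus M\cap W) \leq C_f\, d(x, N\setminus W)$. Applying the $g$-regularity of $W$ to the point $x$ viewed as an element of $N$ gives $d(x,N\setminus W) \leq C_g\, d(x, L\setminus W)$. Combining these produces the $h$-regularity inequality for $W$ with constant $C_f C_g$.

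For (iii), the key is to choose the right candidate $U$. My proposal is to take
\[
U \;\eqdot\; (W\setminus N) \cup V .
\]
First I would check that $U\in\Op_\Lsa$: it is open in $L$ because $W\setminus N$ is open in $L$ and $V$ is open in $N$ (any point of $V$ has a small $L$-neighborhood inside $W$ whose intersection with $N$ lies in $V$), it is subanalytic as a union of subanalytic sets, and it is relatively compact since $U\subset W$. The identities $U\subset W$ and $N\cap U = V$ are immediate from $V\subset N\cap W$. In view of part (ii), to conclude that $U\in\Op_\Lg\cap\Op_\Lh$ it suffices to verify that $U$ is $g$-regular, the $h$-regularity then following automatically from $N\cap U=V\in\Op_\Nf$.

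To verify $g$-regularity of $U$, I would compute
\[
L\setminus U \;=\; (L\setminus W)\cup (N\setminus V),
\]
so that $d(y,L\setminus U)=\min\{d(y,L\setminus W),\, d(y,N\setminus V)\}$ for every $y\in L$. For $y\in N$, the inclusion $V\subset W$ gives $N\setminus V\supset N\setminus W$, hence $d(y,N\setminus V)\leq d(y,N\setminus W)\leq C_g\,d(y,L\setminus W)$ by the $g$-regularity of $W$. Combining with the trivial bound $d(y,N\setminus V)\leq d(y,N\setminus V)$, we get $d(y,N\setminus V)\leq \max(1,C_g)\,d(y,L\setminus U)$, which is precisely the $g$-regularity of $U$. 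The main obstacle here is really just finding the correct $U$; once $U=(W\setminus N)\cup V$ is written down, every verification is a short distance estimate.
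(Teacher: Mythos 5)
Your parts (i) and (ii) coincide with the paper's proof: (i) is the observation that $N\setminus W\subset L\setminus W$ reverses the distance inequality, and (ii) is the concatenation of the $f$-regularity of $N\cap W$ with the $g$-regularity of $W$ applied at points of $M\subset N$. Part (iii) is where you genuinely diverge, and your argument is correct. The paper first invokes Lemma~\ref{le:coveroim} (applied to $g$ and $V$) to produce some $U_0\in\Op_\Lg$ with $N\cap U_0=V$, and then sets $U=U_0\cap W$, which is $g$-regular by the stability under intersection (Lemma~\ref{le:interfregular}) and $h$-regular by (ii). You instead write down the explicit set $U=(W\setminus N)\cup V$ and verify $g$-regularity by hand via $L\setminus U=(L\setminus W)\cup(N\setminus V)$ and the bound $d(y,N\setminus V)\leq d(y,N\setminus W)\leq C_g\,d(y,L\setminus W)$. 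This is in fact the same shape of set as in the proof of Lemma~\ref{le:coveroim} (which uses $(V_0\setminus M)\cup U$ for an auxiliary $V_0\supset\ol U$), but applied directly to $W$ rather than to an auxiliary neighborhood; the price is that you cannot rely on a positive gap $\delta$ between $V$ and $\partial W$ and must instead use the $g$-regularity of $W$ itself in the estimate, which you do correctly. Your route is slightly more self-contained (no appeal to Lemmas~\ref{le:coveroim} and~\ref{le:interfregular}), while the paper's recycles already-proved statements; both yield the same conclusion, and your verification that $U$ is open, subanalytic, relatively compact and satisfies $N\cap U=V$ is complete.
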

\begin{proof}
(i) By hypothesis there exists $C>0$ such that
$d(x,M\setminus M\cap W) \leq C\, d(x, L \setminus W)$, for any $x\in M$.
Since $d(x, L \setminus W) \leq d(x, N \setminus N\cap W)$ we deduce~(i).

\medskip\noindent
(ii) By hypothesis there exist $C_1,C_2 >0$ such that, for any $x\in M$,
$$
d(x,M\setminus M\cap W) \leq C_1 d(x, N \setminus N\cap W)
\leq C_1 C_2 d(x, L \setminus W),
$$
which proves the result.

\medskip\noindent
(iii) By Lemma~\ref{le:coveroim} there exists $U_0 \in \Op_\Lg$ such that
$N\cap U_0= V$.  Then $U = U_0 \cap W$ is $g$-regular by
Lemma~\ref{le:interfregular} and $N\cap U = V$. Hence $U$ is also $h$-regular
by~(ii).
\end{proof}

\subsubsection*{Inverse and direct images by closed embeddings}

Let us first recall the inverse and direct images of presheaves.

\begin{notation}
(i) For a morphism $f\cl\sht_1\to\sht_2$ of presites, we denote by $\oim{f}$
and $\opbdag{f}$ the direct and inverse image functors for presheaves.

\vspace{0.3ex}\noindent
(ii) We recall that the direct image functor
$\oim{f}$ has a right  adjoint 
$f^{\ddag} \cl \Psh(\cor_{\sht_2}) \to \Psh(\cor_{\sht_1})$ 
defined as follows (see~\cite[(17.1.4)]{KS06}).
For $P\in \Psh(\cor_{\sht_2})$ and $U\in \Op_{\sht_1}$ we have
$(f^{\ddag}P)(U) = \prolim[ f^t(V)\to U] P(V)$. 
\end{notation}

\begin{lemma}\label{lem:rhofddag=faisc}
Let  $f\cl M\to N$ be a closed embedding and let $G \in \md[\cor_\Nsalf]$.
Then, using the notations of~\eqref{diag:siteNf}, we have
$\rho_f^{\ddag} G \in \md[\cor_\Nsal]$.
\end{lemma}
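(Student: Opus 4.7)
The plan is to verify the two conditions of Proposition~\ref{pro:MV0} directly for the presheaf $F\seteq\rho_f^{\ddag}G$ on $\Nsal$. Writing $\shc_U$ for the full subcategory of $\Op_\Nsalf$ consisting of those $V$ with $V\subset U$, we have by definition $F(U)=\prolim[V\in\shc_U]G(V)$. By Lemma~\ref{le:interfregular} the category $\shc_U$ is stable under finite intersections, hence cofiltrant, and it contains $\emptyset$; in particular $F(\emptyset)=G(\emptyset)=0$ since $G$ is a sheaf, which gives condition~(i).

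For condition~(ii), I fix a linear covering $\{U_1,U_2\}$ of $U=U_1\cup U_2$ in $\Nsal$ and set $U_{12}=U_1\cap U_2$. The essential input is Lemma~\ref{lem:freg-subcov}: for any $V\in\shc_U$, the pair $\{V\cap U_1,V\cap U_2\}$ is a linear covering of $V$ by COV3, and the lemma produces a refinement $\{W_1,W_2\}$ with $W_i\subset V\cap U_i$, $W_i\in\Op_\Nsalf$, such that $\{W_1,W_2\}$ is still a linear covering of $V$. Then $W_i\in\shc_{U_i}$ and $W_1\cap W_2\in\shc_{U_{12}}$ by Lemma~\ref{le:interfregular}, and the sheaf property of $G$ applied to this covering of $V$ in $\Nsalf$ yields an exact sequence
\[
0\to G(V)\to G(W_1)\oplus G(W_2)\to G(W_1\cap W_2).
\]
Injectivity of $F(U)\to F(U_1)\oplus F(U_2)$ is then immediate: if $s\in F(U)$ has $s|_{U_i}=0$, then the image of $s_V$ in $G(W_i)$ equals $(s|_{U_i})_{W_i}=0$, whence $s_V=0$ for every $V\in\shc_U$.

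For exactness in the middle, given a compatible pair $(t_1,t_2)\in F(U_1)\oplus F(U_2)$, the sections $(t_i)_{W_i}\in G(W_i)$ agree on $W_1\cap W_2\in\shc_{U_{12}}$ and so glue to a unique $s_V\in G(V)$. The main obstacle is to show that $s_V$ is independent of the chosen refinement and natural in $V\in\shc_U$, so that $\{s_V\}$ defines a genuine element $s\in F(U)$ with $s|_{U_i}=t_i$. I would handle both through the characterisation that $s_V$ is the unique element of $G(V)$ satisfying $s_V|_W=(t_i)_W$ for every $W\in\shc_{U_i}$ with $W\subset V$, $i=1,2$. Uniqueness follows from the injectivity above applied to any single refinement. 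For the existence of the characterisation, given $W\in\shc_{U_1}$ with $W\subset V$, the pair $\{W\cap W_1,W\cap W_2\}$ is a linear covering of $W$ by COV3 into $f$-regular pieces; on $W\cap W_1$ both $s_V|_W$ and $(t_1)_W$ restrict to $(t_1)_{W\cap W_1}$, while on $W\cap W_2\in\shc_{U_{12}}$ they restrict to $(t_2)_{W\cap W_2}=(t_1)_{W\cap W_2}$ by the compatibility of $(t_1,t_2)$ on $\shc_{U_{12}}$, and the sheaf axiom for $G$ on this covering of $W$ forces $s_V|_W=(t_1)_W$. Naturality of $s_V$ in $V$ is then immediate from the characterisation, completing the verification.
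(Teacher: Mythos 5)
Your argument is correct. It shares with the paper's proof the two essential ingredients --- Lemma~\ref{lem:freg-subcov}, which refines the trace on an $f$-regular $V$ of a linear covering by $f$-regular open sets, and Lemma~\ref{le:interfregular}, which keeps the relevant intersections inside $\Op_{\Nsalf}$ --- but the organization is genuinely different. The paper does not pass through Proposition~\ref{pro:MV0}: it takes an arbitrary linear covering $\{V_i\}_{i\in I}$ of $V\in\Op_{\Nsal}$ and proves exactness of the full sequence~\eqref{eq:rhofddag=faisc1} by replacing each projective limit with a limit over a cofinal system (pairs consisting of an $f$-regular $W\subset V$ and a covering of $W$ in $\Nsalf$ refining $\{W\cap V_i\}_{i\in I}$), after which exactness is read off directly from the sheaf condition for $G$ on $\Nsalf$, with no element-chasing. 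Your reduction to binary coverings makes the gluing step concrete but creates the extra obligation of showing that the sections $s_V$ obtained on each $f$-regular $V\subset U$ are compatible in $V$ and independent of the chosen refinement; your characterization of $s_V$ by its restrictions to all $W\in\shc_{U_i}$ with $W\subset V$ disposes of this correctly, the key points being that $\{W\cap W_1,W\cap W_2\}$ is a covering of $W$ in $\Nsalf$ by COV3 together with Lemma~\ref{le:interfregular}, and that $W\cap W_2\in\shc_{U_{12}}$ so the compatibility of $(t_1,t_2)$ applies there. In short: same key lemmas, but cofinality of index categories versus Mayer--Vietoris plus explicit gluing; the former handles all coverings uniformly, the latter is more elementary but requires the naturality argument you supplied.
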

\begin{proof}
We have to prove that, for any $V\in \Op_\Nsa$ and any covering of $V$ in
$\Nsal$, say $\{V_i\}_{i\in I}$, the following sequence is exact
\eq\label{eq:rhofddag=faisc1}
&& 0 \to \prolim[W\subset V]G(W)
\to \prod_{i\in I} \prolim[W_i\subset V_i]G(W_i)
\to \prod_{i,j\in I}\prolim[W_{ij}\subset V_i\cap V_j]G(W_{ij}),
\eneq
where $W$, $W_i$, $W_{ij}$ run respectively over the $f$-regular open subsets
of $V$, $V_i$, $V_i\cap V_j$.
The limit in the second term of~\eqref{eq:rhofddag=faisc1} can be replaced by
the limit over the pairs $(W,W_i)$ of $f$-regular open subsets with
$W\subset V$, $W_i\subset W\cap V_i$.
Then the family $\{W\cap V_i\}_{i\in I}$ is a covering of $W$ in $\Nsal$.
By Lemma~\ref{lem:freg-subcov} it admits a refinement $\{W'_i\}_{i\in I}$
where the $W'_i$'s are $f$-regular and $W'_i \subset V_i$.
We may as well assume that $W_i$ contains $W'_i$, for any $i\in I$.
Then $\{W_i\}_{i\in I}$ is a covering of $W$ in $\Nsalf$.
Hence the second term of~\eqref{eq:rhofddag=faisc1} can be replaced by
$$
\prolim[W\subset V] \; \prolim[\{W_i\}_{i\in I}] \; \prod_{i\in I} G(W_i),
$$
where $W$ runs over the $f$-regular open subsets of $V$ and the family
$\{W_i\}_{i\in I}$ runs over the coverings of $W$ in $\Nsalf$ such that
$W_i \subset W\cap V_i$.

Now in the third term of~\eqref{eq:rhofddag=faisc1} we may assume that $W_{ij}$
contains $W_i\cap W_j$ and the exactness of the sequence follows from the
hypothesis that $G \in \md[\cor_\Nsalf]$.
\end{proof}

\begin{definition}\label{def:opbgf}
Let $f\cl M\to N$ be a closed embedding.
We use the notations of~\eqref{diag:siteNf}.
\bnum
\item We denote by $\oimsal{f}\cl\md[\Msal]\to\md[\Nsal]$ the functor
  $\opbddag{\rho_f} \circ \oim{\tw f}$ and we call $\oimsal{f}$ the direct
  image functor.
\item We denote by $\opbsal{f}\cl\md[\Nsal]\to\md[\Msal]$ the functor
  $\opb{{\tw f}} \circ \oim{\rho_f}$ and we call $\opbsal{f}$ the inverse image
  functor.
\enum
\end{definition}
For $F\in \md[\Msal]$, $G\in \md[\Nsal]$, $U\in \Op_\Msal$ and $V\in \Op_\Nsal$,
we obtain
\eq
\label{eq:section-oimsal}
\sect(V;\oimsal{f} F) &\simeq &
\prolim[ W\in\Op_{N^f}, \, W \subset V] F(M\cap W) , \\
\label{eq:section-opbsal}
\sect(U;\opbsal{f} G) &\simeq &
\indlim[W\in\Op_{N^f},\, W\cap M = U] G(W) .
\eneq

\begin{lemma}\label{le:twgrhof2}
Let $f\cl M\to N$ and $g\cl N\to L$ be closed embeddings and let $h=g\circ f$.
We use the notations of the diagram~\eqref{diag:sitesNfLgLh}.
There is a natural isomorphism of functors
\eq\label{eq:twgrhof}
&&\oim{\tw g}\circ\opbddag{\rho_f}\isoto\opbddag{\lambda_h}\circ\oim{\ol g}.
\eneq
\end{lemma}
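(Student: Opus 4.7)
The plan is to establish~\eqref{eq:twgrhof} pointwise via a cofinality argument applied to the functor $U\mapsto U\cap N$. First I would unwind both sides at an arbitrary $W\in\Op_\Lg$. Using the explicit formula $(f^\ddag Q)(U)=\prolim[f^t(V)\to U]Q(V)$ from~\cite[(17.1.4)]{KS06}, together with the fact that $\rho_f^t$ and $\lambda_h^t$ are the inclusions of underlying categories while $\tw g^t$ and $\ol g^t$ both send an open subset to its intersection with $N$, one gets, for any presheaf $P$ on $\Nf$,
\begin{equation*}
\bl\oim{\tw g}\,\opbddag{\rho_f}P\br(W) \simeq \prolim[V'\in\sha] P(V'),
\qquad \bl\opbddag{\lambda_h}\,\oim{\ol g}P\br(W) \simeq \prolim[U\in\shb] P(U\cap N),
\end{equation*}
where $\sha=\{V'\in\Op_\Nf;\ V'\subset W\cap N\}$ and $\shb=\{U\in\Op_\Lg\cap\Op_\Lh;\ U\subset W\}$ are both cofiltrant thanks to Lemma~\ref{le:interfregular} (applied to $f$ for $\sha$, and to both $g$ and $h$ for $\shb$). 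The assignment $\phi\cl\shb\to\sha$, $U\mapsto U\cap N$, is well-defined by Lemma~\ref{le:twgrhof1}(i), and a direct check shows that the natural transformation of~\eqref{eq:twgrhof}, built from the identity $\lambda_h\circ\tw g=\ol g\circ\rho_f$ on underlying functors together with the unit and counit of adjunction, coincides at $W$ with the canonical map $\prolim[V'\in\sha]P(V')\to\prolim[U\in\shb]P(U\cap N)$ induced by $\phi$.

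Next I would verify that $\phi$ is cofinal for projective limits, so that this map is a bijection. By the usual criterion it suffices to show that for every $V'\in\sha$ the subcategory of $\shb$ consisting of those $U$ with $V'\subset U\cap N$ is non-empty and connected. Non-emptiness is precisely the content of Lemma~\ref{le:twgrhof1}(iii). For connectedness, given two witnesses $U_1,U_2$, set $U_3=U_1\cap U_2$: by Lemma~\ref{le:interfregular} applied to both $g$ and $h$, one has $U_3\in\Op_\Lg\cap\Op_\Lh$, and plainly $U_3\subset W$ and $V'\subset U_3\cap N$, so the inclusions $U_3\subset U_i$ supply the required zigzag. Hence $\phi$ is cofinal, the pointwise map is a bijection for every $W\in\Op_\Lg$, and~\eqref{eq:twgrhof} follows by naturality in $W$ and in $P$.

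The only geometrically non-trivial input is Lemma~\ref{le:twgrhof1}(iii), which is already in hand, so the proof is essentially formal. The subtlest point in the bookkeeping is the identification of the natural transformation of the statement with the limit map induced by $\phi$; this reduces to chasing restriction morphisms through the unit and counit of adjunction and raises no substantial difficulty.
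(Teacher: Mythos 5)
Your proof is correct and follows essentially the same route as the paper's: construct the morphism by adjunction from the identity of underlying functors, evaluate both sides on a presheaf at $W\in\Op_\Lg$ as projective limits, and conclude by cofinality using Lemma~\ref{le:twgrhof1}. You merely spell out the cofinality check (non-emptiness via Lemma~\ref{le:twgrhof1}~(iii), connectedness via Lemma~\ref{le:interfregular}) that the paper leaves implicit in the phrase ``the result follows from Lemma~\ref{le:twgrhof1}''.
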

\begin{proof}
The morphisms of functors
$\oim{\lambda_h}\circ\oim{\tw g}\circ\opbddag{\rho_f}
\simeq \oim{\ol g}\circ\oim{\rho_f}\circ\opbddag{\rho_f} \to \oim{\ol g}$
gives by adjunction the morphism in~\eqref{eq:twgrhof}. To prove that this
morphism is an isomorphism, let us choose $G\in\Psh(\cor_\Nf)$ and
$W\in\Op_\Lg$. We get the morphism
\eq\label{eq:twgrhofW}
&& \sect(W; (\oim{\tw g}\circ\opbddag{\rho_f})(G))
\to \sect(W; (\opbddag{\lambda_h}\circ\oim{\ol g})(G)) ,
\eneq
where $\sect(W; (\oim{\tw g}\circ\opbddag{\rho_f})(G))
\simeq \prolim[V\in\Op_\Nf ,\, V\subset N\cap W]G(V)$ and 
$\sect(W; (\opbddag{\lambda_h}\circ\oim{\ol g})(G)) 
\simeq\prolim[U\in\Op_\Lh,\, U\subset W]G(N\cap U)$.
Then the result follows from Lemma~\ref{le:twgrhof1}.
\end{proof}

\begin{proposition}\label{prop:compo-embed}
Let $f\cl M\to N$ and $g\cl N\to L$ be closed embeddings and let $h=g\circ f$.
There is a natural isomorphism of functors
$\oimsal{g}\circ\oimsal{f} \isoto \oimsal{h}$.
\end{proposition}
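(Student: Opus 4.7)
The plan is to unfold the three direct image functors via Definition~\ref{def:opbgf} and let Lemma~\ref{le:twgrhof2} do the heavy lifting. For $F \in \md[\cor_\Msal]$, we have by definition
\[
\oimsal{g}\circ\oimsal{f}\,(F) = \opbddag{\rho_g}\circ\oim{\tw g}\circ\opbddag{\rho_f}\circ\oim{\tw f}\,(F),
\]
while $\oimsal{h}\,(F) = \opbddag{\rho_h}\circ\oim{\tw h}\,(F)$. The first step is to commute $\oim{\tw g}$ past $\opbddag{\rho_f}$ using Lemma~\ref{le:twgrhof2}, which provides a natural isomorphism $\oim{\tw g}\circ\opbddag{\rho_f} \isoto \opbddag{\lambda_h}\circ\oim{\ol g}$. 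Substituting it turns the composite into $\opbddag{\rho_g}\circ\opbddag{\lambda_h}\circ\oim{\ol g}\circ\oim{\tw f}\,(F)$.

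The second step is a formal diagram chase in~\eqref{diag:sitesNfLgLh} at the presite level. Since $\opb{h}(W) = \opb{f}(\opb{g}(W))$ for any $W \subset L$, the underlying functors satisfy $\tw h^t = \tw f^t\circ \ol g^t$ on $\Op_{\Lg\cap\Lh}$, hence $\oim{\ol g}\circ\oim{\tw f} \simeq \oim{\tw h}$. Similarly, both $\rho_h^t$ and $(\lambda_h\circ\rho_g)^t$ coincide with the inclusion $\Op_{\Lg\cap\Lh}\hookrightarrow\Op_\Lsa$, and by uniqueness of right adjoints $\opbddag{\rho_g}\circ\opbddag{\lambda_h} \simeq \opbddag{\rho_h}$ (this can also be checked directly on the projective limit formula $(\opbddag{\rho}G)(U) = \prolim[V\subset U]G(V)$). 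Combining these identifications yields the desired natural isomorphism $\oimsal{g}\circ\oimsal{f}\,(F) \isoto \oimsal{h}\,(F)$.

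The main technical content has already been packaged into Lemma~\ref{le:twgrhof2}, whose proof rests on Lemma~\ref{le:twgrhof1}(iii) --- the refinement statement allowing one to replace $g$-regular opens by opens that are both $g$-regular and $h$-regular. A mild book-keeping point is that in the diagram~\eqref{diag:sitesNfLgLh} the morphisms $\rho_h$ and $\tw h$ are pictured as landing in $\Lg \cap \Lh$ rather than in the originally-defined target $\Lh = \Lsalh$; since the extra inclusion $\Op_{\Lg\cap\Lh}\hookrightarrow\Op_\Lh$ sits inside the composite inclusion $\Op_{\Lg\cap\Lh}\hookrightarrow\Op_\Lsa$ that defines $\rho_h^t$, the composite $\opbddag{\rho_h}\circ\oim{\tw h}$ is unaffected. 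In sum, once Lemma~\ref{le:twgrhof2} is available, the proposition is a purely formal consequence.
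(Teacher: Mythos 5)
Your argument is essentially identical to the paper's proof: both unfold the three functors via Definition~\ref{def:opbgf}, invoke Lemma~\ref{le:twgrhof2} to commute $\oim{\tw g}$ past $\opbddag{\rho_f}$, and then identify $\opbddag{\rho_g}\circ\opbddag{\lambda_h}\simeq\opbddag{\rho_h}$ and $\oim{\ol g}\circ\oim{\tw f}\simeq\oim{\tw h}$ formally from the diagram~\eqref{diag:sitesNfLgLh}. Your book-keeping remark about $\rho_h$ landing in $\Lg\cap\Lh$ rather than $\Lh$ addresses a cofinality point the paper leaves implicit, and your resolution of it is correct.
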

\begin{proof}
Applying Lemma~\ref{le:twgrhof2}, we define the isomorphism as
the composition
$ \opbddag{\rho_g}\circ\oim{\tw g}\circ\opbddag{\rho_f}\circ\oim{\tw f}
\isoto
\opbddag{\rho_g}\circ\opbddag{\lambda_h}\circ\oim{\ol g}\circ\oim{\tw f}
\simeq \opbddag{\rho_h}\circ\oim{\tw h}$.
\end{proof}

\begin{theorem}\label{th:oimopbfsal}
Let $f\cl M\to N$ be a closed embedding.
\bnum
\item
The functor $\oimsal{f}$ is right adjoint to the functor 
$\opbsal{f}$.
\item
The functor  $\oimsal{f}$ is left exact and the functor $\opbsal{f}$ is exact. 
\item
If $g\cl N\to L$ is another closed embedding, we have 
$\oimsal{(g\circ f)}\simeq \oimsal{g}\circ\oimsal{f}$ and $\opbsal{(g\circ f)}\simeq \opbsal{f}\circ\opbsal{g}$.
\enum
\end{theorem}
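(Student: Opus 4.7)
The plan is to exhibit $(\opbsal{f},\oimsal{f})$ as the composite of two elementary adjunctions attached to the diagram~\eqref{diag:siteNf}, then split the exactness question along this composition, and finally deduce the composition law for inverse images from that for direct images by the formal uniqueness of adjoints.

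For~(i), I will compose two adjunctions. By Proposition~\ref{pro:morphismNsalf}, $\tw f\cl\Msal\to\Nsalf$ is a morphism of sites, giving the standard adjoint pair $(\opb{\tw f},\oim{\tw f})$ on sheaves. For $\rho_f\cl\Nsal\to\Nsalf$, the general formula recalled in the notation paragraph gives the adjunction $(\oim{\rho_f},\opbddag{\rho_f})$ at the presheaf level; since coverings in $\Nsalf$ are by definition coverings in $\Nsal$ made of $f$-regular opens, $\oim{\rho_f}$ preserves the sheaf condition, and Lemma~\ref{lem:rhofddag=faisc} shows that $\opbddag{\rho_f}$ lands in sheaves. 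Hence the presheaf adjunction descends to the sheaf level, and composing the two gives
$\Hom(\opb{\tw f}\oim{\rho_f}G,F)\simeq\Hom(\oim{\rho_f}G,\oim{\tw f}F)\simeq\Hom(G,\opbddag{\rho_f}\oim{\tw f}F)$,
which is the asserted adjunction.

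For~(ii), $\oimsal{f}$ is left exact as a composition of right adjoints. For exactness of $\opbsal{f}=\opb{\tw f}\circ\oim{\rho_f}$ I will treat each factor separately. The functor $\opb{\tw f}$ is exact by the standard result for morphisms of sites whose underlying functor of presites is left exact, the latter being the second half of Proposition~\ref{pro:morphismNsalf}. The functor $\oim{\rho_f}\cl\md[\cor_\Nsal]\to\md[\cor_\Nsalf]$ is just restriction of the underlying presheaf to $\Op_\Nf$ and is visibly left exact. The only nontrivial point, and really the main obstacle in the theorem, is right exactness of $\oim{\rho_f}$: given an epimorphism $F\epito F''$ in $\md[\cor_\Nsal]$, an $f$-regular open $V$ and a section $s''\in F''(V)$, local lifts of $s''$ exist along some covering of $V$ in $\Nsal$, and Lemma~\ref{lem:freg-subcov} lets one refine that covering to one consisting of $f$-regular opens, producing the local lifts needed to verify the epimorphism condition in $\Nsalf$.

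For~(iii), the direct-image composition $\oimsal{(g\circ f)}\simeq\oimsal{g}\circ\oimsal{f}$ is exactly Proposition~\ref{prop:compo-embed}. The inverse-image composition then follows formally: both $\opbsal{(g\circ f)}$ and $\opbsal{f}\circ\opbsal{g}$ are, by part~(i), left adjoint to the same functor $\oimsal{(g\circ f)}\simeq\oimsal{g}\circ\oimsal{f}$, so they are canonically isomorphic by the uniqueness of adjoints.
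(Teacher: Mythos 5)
Your proof is correct and follows the paper's own route almost step for step: part~(i) is obtained by composing the sheaf-level adjunction $(\opb{\tw f},\oim{\tw f})$ with the presheaf adjunction $(\oim{\rho_f},\opbddag{\rho_f})$, restricted to sheaves via Lemma~\ref{lem:rhofddag=faisc}, and part~(iii) is Proposition~\ref{prop:compo-embed} together with uniqueness of adjoints, exactly as in the paper. The one place where you genuinely diverge is in~(ii): you single out the right exactness of $\oim{\rho_f}\cl\md[\cor_\Nsal]\to\md[\cor_\Nsalf]$ as ``the main obstacle'' and prove it directly by refining coverings with Lemma~\ref{lem:freg-subcov}. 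That argument is valid, but the paper sidesteps it entirely: since $\opbsal{f}$ is a left adjoint by~(i), it is automatically right exact, so one only needs its left exactness, which follows from the left exactness of the direct image $\oim{\rho_f}$ combined with the exactness of $\opb{\tw f}$ (Proposition~\ref{pro:morphismNsalf} and the left exactness of the morphism of sites $\tw f$). Your version establishes the slightly stronger fact that $\oim{\rho_f}$ itself is exact, at the cost of one more appeal to the geometry of $f$-regular coverings; the paper's version gets the stated claim purely formally from the adjunction. Both are sound.
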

\begin{proof}
(i) We have $\oimsal{f}= \opbddag{\rho_f}\circ\oim{\tw f}$ and 
$\opbsal{f}=\opb{\tw f}\circ\oim{\rho_f}$.
Since $(\opb{\tw f},\oim{\tw f})$ and $(\oim{\rho_f},\opbddag{\rho_f})$
are pairs of adjoint functors between categories of presheaves and since the
category of sheaves is a fully faithful subcategory of the category of
presheaves, the result follows.

\vspace{0.3ex}\noindent
(ii) By the adjunction property, it remains to show that  functor $\opbsal{f}$ is left exact, hence that the functor 
$\opb{\tw f}$ is exact. By Proposition~\ref{pro:morphismNsalf}, 
the morphism of sites $\tw f\cl \Msal\to\Nsalf$
is left exact in the sense of~\cite[Def.~17.2.4]{KS06}. Then the result follows from~\cite[Th.~17.5.2]{KS06}.

\vspace{0.3ex}\noindent
(iii) The functoriality of direct images follows from
Proposition~\ref{prop:compo-embed} and that of inverse images results by
adjunction.
\end{proof}

\section{Operations for  submersions}

Let $f\cl M\to N$ denote a morphism of real analytic manifolds.  In this
section we assume that $f$ is a submersion.  If $f$ is proper, it induces a
morphism of sites $\Msal \to \Nsal$, but otherwise, it does not even give a
morphism of presites. Following~\cite{KS01} we  shall introduce other sites $\Msb$
(denoted $\Msa$ in loc.\ cit.), similar to $\Msa$ but containing all open
subanalytic subsets of $M$, and $\Msbl$, similar to $\Msal$.  Then $\Msbl$ has
the same category of sheaves as $\Msal$ and any submersion $f\cl M\to N$
induces a morphism of sites $\fsbl\cl\Msbl\to\Nsbl$.

\subsubsection*{Another subanalytic topology}

One denotes by $\Op_{\Msb}$ the category of open subanalytic subsets of $M$ and
says that a family $\{U_i\}_{i\in I}$ of objects of $\Op_{\Msb}$ is a covering
of $U\in\Op_{\Msb}$ if $U_i\subset U$ for all $i\in I$ and, for each compact
subset $K$ of $M$, there exists a finite subset $J\subset I$ such that
$\bigcup_{j\in J}U_j\cap K=U\cap K$.  We denote by $\Msb$ the site
so-defined. The next result is obvious (and already mentioned in~\cite{KS01}).
\begin{proposition}
The morphism of sites $\Msb\to\Msa$ induces an equivalence of categories 
$\md[\cor_{\Msb}]\simeq\md[\cor_\Msa]$. 
\end{proposition}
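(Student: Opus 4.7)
The plan is to apply a standard comparison theorem between sites whose underlying categories sit fully faithfully one inside the other. The inclusion $u\cl\Op_\Msa\hookrightarrow\Op_\Msb$ is fully faithful, and since any $\Msa$-covering is finite by definition it automatically satisfies the $\Msb$-covering condition, so $u$ defines a morphism of sites $\pi\cl\Msb\to\Msa$ whose direct image $\oim{\pi}\cl\md[\cor_\Msb]\to\md[\cor_\Msa]$ is simply the restriction functor $F\mapsto F|_{\Op_\Msa}$. The goal is to exhibit a quasi-inverse.

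Three compatibilities underlie the comparison. First, $\Op_\Msa$ is closed under finite intersections in $\Op_\Msb$, since intersections of relatively compact subanalytic open sets are of the same type. Second, every $V\in\Op_\Msb$ admits a $\Msb$-covering by objects of $\Op_\Msa$: the family $\shu_V=\{U\in\Op_\Msa;\ U\subset V\}$ works, because for any compact $K\subset M$ one may choose a compact set $L$ with $K\subset\Int(L)$ (by covering $K$ with finitely many compact coordinate patches), whence $V\cap\Int(L)\in\shu_V$ already contains $V\cap K$. Third, the topology on $\Op_\Msa$ inherited from $\Msb$ coincides with the $\Msa$-topology: if $\{U_i\}_{i\in I}\subset\Op_\Msa$ is a $\Msb$-covering of $U\in\Op_\Msa$, applying the covering condition to the compact $K=\ol U$ yields a finite $J\subset I$ with $\bigcup_{j\in J}U_j\supset U$, and since each $U_j\subset U$ this is a finite $\Msa$-covering.

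Granted these, the quasi-inverse $G\mapsto \tilde G$ is defined for $G\in\md[\cor_\Msa]$ and $V\in\Op_\Msb$ by
\[
\tilde G(V) \eqdot \prolim[U\in\Op_\Msa,\,U\subset V] G(U).
\]
The unit $G\to (\tilde G)|_{\Op_\Msa}$ is tautologically an isomorphism since for $V=U\in\Op_\Msa$ the index family $\shu_U$ has $U$ as a maximal element. The counit $\widetilde{F|_{\Op_\Msa}}\to F$ is an isomorphism because $F(V)$ is, by the sheaf axiom applied to the $\Msb$-covering $\shu_V$, equal to the same projective limit. The one step requiring a modicum of care is checking that $\tilde G$ is a sheaf on $\Msb$: given a $\Msb$-covering $\{V_i\}_{i\in I}$ of $V\in\Op_\Msb$, one tests the sheaf axiom on each $U\in\shu_V$, for which the second and third compatibilities above extract from $\{V_i\cap U\}_{i\in I}$ a finite $\Msa$-covering of $U$, reducing the required exactness to the sheaf property of $G$ on $\Msa$. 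This reduction is the concrete content of the comparison theorem alluded to, and is the only (mild) obstacle in the argument.
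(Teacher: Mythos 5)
Your proof is correct. The paper gives no argument at all for this statement --- it is declared ``obvious (and already mentioned in \cite{KS01})'' --- and what you have written out is precisely the standard comparison lemma for sites (full faithfulness of $\Op_\Msa\hookrightarrow\Op_\Msb$, stability under intersections, covering of every $V\in\Op_\Msb$ by objects of $\Op_\Msa$, and agreement of the induced topology), which is evidently the argument the authors had in mind.
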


Similarly, we introduce another linear subanalytic topology $\Msbl$ as
follows. The objects of the presite $\Msbl$ are those of $\Msb$, namely the open
subanalytic subsets of $M$. In order to define the topology, we have to
generalize Definitions~\ref{def:regsit1} and~\ref{def:linearcov}.

\begin{definition}\label{def:regsit1b}
Let $\{U_i\}_{i\in I}$ be a finite family in $\Op_{\Msb}$.
We say that this family is $1$-regularly situated 
if for any compact subset $K\subset M$, there is a constant $C$ such that for any $x\in K$
\eq\label{eq:resitb}
&&d(x,M\setminus\bigcup_{i\in I}U_i)\leq C\cdot
\max_{i\in I}  d(x,M \setminus U_i).
\eneq
\end{definition}

\begin{definition}\label{def:linearcovb}
A linear covering of $U\in\Op_\Msb$ is a small family $\{U_i\}_{i\in I}$ of
objects of $\Op_{\Msb}$ such that $U_i\subset U$ for all $i\in I$ and 
\eq\label{eq:lcoveringsb} 
&&\left\{\parbox{64ex}{ for each relatively compact subanalytic open subset  $W\subset M$ 
there exists   a finite subset $I_0\subset I$ such that the  family $\{W\cap U_i\}_{i\in I_0}$ 
is $1$-regularly situated in $W$ and $\bigcup_{i\in I_0}(U_i\cap W)=U\cap W$.
}
\right.
\eneq
\end{definition}

\begin{proposition}\label{pro:eqvsblsal}
\bnum
\item
The family of  linear coverings satisfies the axioms
of Grothendieck topologies.
\item 
The functor $\oim{\rho}$ associated with the morphism of sites
$\rho\cl\Msbl\to\Msal$ defines an equivalence of categories
$\md[\cor_{\Msbl}]\simeq\md[\cor_\Msal]$.
\enum
\end{proposition}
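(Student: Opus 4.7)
Part (i): The plan is to adapt, essentially verbatim, the proof of Proposition~\ref{pro:linearcov=top}. Both Definitions~\ref{def:regsit1b} and~\ref{def:linearcovb} impose conditions that must hold on each compact subset $K \subset M$, respectively for each relatively compact subanalytic open $W \subset M$. Fixing such a $W$, the family $\{W \cap U_i\}$ lies in $\Op_\Msa$ and the distance manipulations carried out in Proposition~\ref{pro:linearcov=top} apply unchanged. Accordingly, I would rewrite each of COV1--COV4 by first localizing to a fixed rel.\ compact subanalytic open $W$ and then invoking the corresponding step of the earlier proof.

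Part (ii): The plan is to exhibit the inclusion $\Op_\Msa \hookrightarrow \Op_\Msb$ as a comparison between the sites $\Msal$ and $\Msbl$, whence the equivalence of sheaf categories. The morphism of sites $\rho$ of the statement is precisely the one given by this inclusion (in the contravariant convention used in Proposition~\ref{pro:morphismNsalf}). Two preparatory observations will be needed.

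First, for every $U \in \Op_\Msbl$ the family $\mathcal V_U \eqdot \{V \in \Op_\Msa ; V \subset U\}$ is a linear covering of $U$ in $\Msbl$. Indeed, for any rel.\ compact subanalytic open $W \subset M$, the intersection $W \cap U$ lies in $\mathcal V_U$ and, taken as a singleton subfamily, satisfies~\eqref{eq:lcoveringsb} with constant $1$.

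Second, for $V \in \Op_\Msa$, a family $\{V_i\}_{i\in I}$ in $\Op_\Msa$ with $V_i \subset V$ is a linear covering of $V$ in $\Msbl$ if and only if it is one in $\Msal$. The implication $\Msal \Rightarrow \Msbl$ uses COV3 of part~(i) applied within $\Msal$ to intersect with $W \cap V \in \Op_\Msa$. The converse follows by testing~\eqref{eq:lcoveringsb} with $W = V$ and then promoting the local constants $C_K$ of Definition~\ref{def:regsit1b} to a global constant using that $\ol V$ is compact and that~\eqref{eq:resit} is trivial outside $\ol V$.

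Granting these, I would conclude as follows. Given $G \in \md[\cor_\Msbl]$, its restriction $\oim{\rho}G$ to $\Op_\Msa$ is automatically a sheaf on $\Msal$ by the second observation, and the sheaf axiom for $G$ along $\mathcal V_U$ yields, for every $U \in \Op_\Msbl$,
\[
  G(U) \simeq \ker\Bigl(\prod_{V \in \mathcal V_U} G(V) \tto \prod_{V, V' \in \mathcal V_U} G(V \cap V')\Bigr) .
\]
This pins down $G$ in terms of $\oim{\rho}G$ both on objects and on morphisms, giving fully faithfulness of $\oim{\rho}$. For essential surjectivity, I would define, for $F \in \md[\cor_\Msal]$, a presheaf $\tilde F$ on $\Op_\Msbl$ by the same equalizer formula with $F$ in place of $G$; this manifestly restricts to $F$ on $\Op_\Msa$. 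The main obstacle will be verifying that $\tilde F$ is actually a sheaf on $\Msbl$: given a linear covering $\{U_i\}_{i\in I}$ of $U \in \Op_\Msbl$ in $\Msbl$, the strategy will be to intersect with each $V \in \mathcal V_U$ so as to produce, via COV3 of part~(i) together with the second observation, an honest $\Msal$-linear covering $\{V \cap U_i\}_{i\in I}$ of $V$; to apply the sheaf axiom for $F$ along this covering; and finally to reassemble by commuting the products and equalizers that define $\tilde F(U)$, $\tilde F(U_i)$, and $\tilde F(U_i\cap U_j)$. This Fubini-type rearrangement of limits is the technically most delicate step.
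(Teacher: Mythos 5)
The paper gives no proof of this proposition --- the statement is immediately followed by ``The verification is left to the reader'' --- so there is no argument of the authors to compare yours against; I can only judge it on its own terms, and it is sound. Part~(i) is indeed a pointwise localization of Proposition~\ref{pro:linearcov=top}: fixing a relatively compact subanalytic open $W$ and replacing each $U_i$ by $W\cap U_i$, every inequality in that proof carries over (in COV4 one just has to choose the finite subsets $I_0$ and the $J_i$ for the same $W$). Part~(ii) is the standard comparison-lemma argument for the full subcategory $\Op_\Msa\subset \Op_\Msb$, and your two preparatory observations are precisely its hypotheses; both check out --- in the converse direction of the second one, taking $K=\ol V$ in Definition~\ref{def:regsit1b} already yields a global constant, the inequality~\eqref{eq:resit} being trivial for $x\notin \ol V$. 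The step you flag as delicate does go through: $\tilde F(U)$ is the limit of $F$ over the poset $\mathcal V_U$ (which is closed under finite intersections), this limit commutes with the kernel of $\prod_i(\cdot)\tto\prod_{i,j}(\cdot)$ obtained from the $\Msal$-coverings $\{V\cap U_i\}_i$ of each $V$, and the remaining identification $\varprojlim_{V\in\mathcal V_U}F(V\cap U_i)\simeq \tilde F(U_i)$ holds because $V\mapsto V\cap U_i$ is right adjoint to the inclusion of posets $\mathcal V_{U_i}\into \mathcal V_U$, hence initial for limits. So there is no gap, only routine details left to be written out.
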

The verification is left to the reader.

\subsubsection*{Inverse and direct images}

\begin{proposition}\label{prop:fsbl-submersion}
Let $f\cl M\to N$ be a morphism of real analytic manifolds.
We assume that $f$ is a submersion.
Then $f$ induces a morphism of sites $\fsbl \cl \Msbl \to \Nsbl$.
\end{proposition}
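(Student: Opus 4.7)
The plan is to produce $\fsbl$ as the morphism of sites attached to the functor $\fsbl^t\cl \Op_\Nsbl \to \Op_\Msbl$, $V \mapsto \opb{f}(V)$. Since $f$ is real analytic, preimages of open subanalytic sets are open subanalytic, and $\fsbl^t$ commutes with finite intersections, so it is left exact in the sense of~\cite[Def.~17.2.4]{KS06}. The only thing to verify is that $\fsbl^t$ sends linear coverings in the sense of Definition~\ref{def:linearcovb} to linear coverings.

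So fix a linear covering $\{V_i\}_{i \in I}$ of $V \in \Op_\Nsb$ and a relatively compact subanalytic open $W \subset M$; the task is to produce a finite $I_0 \subset I$ such that $\{\opb{f}(V_i) \cap W\}_{i \in I_0}$ covers $\opb{f}(V) \cap W$ and is $1$-regularly situated in $W$ in the sense of Definition~\ref{def:regsit1b}. The key input is that, since $f$ is a submersion and $\ol W$ is compact, one can cover $\ol W$ by finitely many relatively compact subanalytic open subsets $W_1,\dots,W_r$ of $M$ on each of which there is a subanalytic bi-Lipschitz homeomorphism $\phi_\alpha \cl W_\alpha \isoto V_\alpha \times B_\alpha$, with $V_\alpha \in \Op_\Nsa$ and $B_\alpha$ an open subanalytic subset of some $\R^{k_\alpha}$, identifying $f\vert_{W_\alpha}$ with the first projection; this follows from the local normal form of submersions combined with the convention~\eqref{eq:distonM} that $d_M$ and $d_N$ are locally Lipschitz equivalent to Euclidean distances. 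I then select relatively compact subanalytic open subsets $W'_\alpha \subset W_\alpha$ with $\ol W \subset \bigcup_\alpha W'_\alpha$, and a relatively compact subanalytic open $W'' \subset N$ containing all the $\ol{V_\alpha}$.

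Applying the linear covering hypothesis in $\Nsbl$ to $W''$ then yields a finite $I_0 \subset I$ such that $\{V_i \cap W''\}_{i \in I_0}$ is $1$-regularly situated in $W''$ and $\bigcup_{i\in I_0}(V_i \cap W'') = V\cap W''$. The covering identity $\bigcup_{i\in I_0}(\opb{f}(V_i) \cap W) = \opb{f}(V)\cap W$ then follows from $f(W)\subset W''$. For the $1$-regular estimate on a given compact $K\subset W$, I localize: for $x\in K\cap\ol{W'_\alpha}$, the product identity $d\bl(x_1,x_2),(V_\alpha\setminus A)\times B_\alpha\br = d(x_1,V_\alpha\setminus A)$, valid for the Euclidean product metric and any $A\subset V_\alpha$, transfers the $1$-regular estimate on $W''$ into one for the pulled-back family on $W\cap W_\alpha$, modulo the bi-Lipschitz constants of $\phi_\alpha$.

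The principal obstacle lies in this last localization step, because $W\setminus\opb{f}(V_i)$ can a priori contain points outside the chart $W_\alpha$, beyond the reach of the product picture. I would handle it by exploiting the strictly positive separation $\eta_\alpha \eqdot d(\ol{W'_\alpha},M\setminus W_\alpha)$: for $x\in\ol{W'_\alpha}$ any point of $W\setminus\opb{f}(V_i)$ within distance less than $\eta_\alpha$ of $x$ automatically sits in $W\cap W_\alpha$, so in the regime where all distances $d(x,W\setminus\opb{f}(V_i))$ are less than $\eta_\alpha$ the chart argument applies; in the complementary regime $\max_i d(x,W\setminus\opb{f}(V_i))\geq\eta_\alpha$, which together with a crude upper bound on the left-hand side by the diameter of $W$ gives the inequality up to enlarging the constant by the ratio. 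Absorbing the bi-Lipschitz constants of the $\phi_\alpha$, the $1$-regular constant from~\eqref{eq:resitb} for the base family in $W''$, and these separation constants into a single $C=C(K)$ concludes.
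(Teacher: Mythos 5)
Your proof is correct and follows essentially the same route as the paper: reduce to local charts where the submersion becomes a projection $V_\alpha\times B_\alpha\to V_\alpha$ and exploit the product-metric identity $d\bigl((x_1,x_2),(V_\alpha\setminus A)\times B_\alpha\bigr)=d(x_1,V_\alpha\setminus A)$ to transfer the $1$-regular estimate from the base. The only difference is that the paper dispatches the localization step by invoking the locality of linear coverings (the $\Msbl$ analogue of Corollary~\ref{cor:1covlocal}) and then works in a global product $M=N\times L$ with the max metric, whereas you carry out that localization by hand via the separation constants $\eta_\alpha$ and the two-regime argument.
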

\begin{proof}
Let $V\in \Op_\Nsb$ and let $\{V_i\}_{i\in I}$ be a linear covering of $V$.  We
have to prove that $\{\opb{f}V_i\}_{i\in I}$ is a linear covering of $\opb{f}V$.
As in the case of $\Msa$, the definition of the linear coverings is local (see
Corollary~\ref{cor:1covlocal}). Hence we can assume that $M=N\times L$.
We can also assume that $d_M((x,y),(x',y')) = \max \{ d_N(x,x'),d_L(y,y')\}$,
for $x,x' \in N$ and $y,y'\in L$. Then for any $(x,y)\in M$ we have
$d_M((x,y), N\setminus \opb{f}V) = d_N(x, N\setminus V)$
and the result follows easily.
\end{proof}

By Propositions~\ref{pro:eqvsblsal} and~\ref{prop:fsbl-submersion} any
submersion $f\cl M\to N$ between real analytic manifolds induces a pair of
adjoint functors $(\opbsal{f}, \oimsal{f})$ between $\md[\Msal]$ and
$\md[\Nsal]$ and we get the analogue of Theorem~\ref{th:oimopbfsal}:

\begin{theorem}\label{th:oimopbfsalsub}
Let $f\cl M\to N$ be a submersion.
\bnum
\item
The functor $\oimsal{f}$ is right adjoint to the functor 
$\opbsal{f}$.
\item
The functor  $\oimsal{f}$ is left exact and the functor $\opbsal{f}$ is exact. 
\item
If $g\cl N\to L$ is another submersion, we have 
$\oimsal{(g\circ f)}\simeq \oimsal{g}\circ\oimsal{f}$ and $\opbsal{(g\circ f)}\simeq \opbsal{f}\circ\opbsal{g}$.
\enum
\end{theorem}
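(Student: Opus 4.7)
The plan is to reduce everything to the morphism of sites $\fsbl\cl\Msbl\to\Nsbl$ produced by Proposition~\ref{prop:fsbl-submersion} and transport the resulting structures along the equivalences of categories $\md[\cor_\Msbl]\simeq\md[\cor_\Msal]$ and $\md[\cor_\Nsbl]\simeq\md[\cor_\Nsal]$ given by Proposition~\ref{pro:eqvsblsal}(ii). In contrast with the closed embedding case, no auxiliary site $\Nsalf$ is needed because $f$ already defines a genuine morphism of (pre)sites via $\opb{f}$.

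For (i), the general sheaf-theoretic formalism attached to a morphism of sites yields a pair of adjoint functors $(\opb{\fsbl},\oim{\fsbl})$ between $\md[\cor_\Nsbl]$ and $\md[\cor_\Msbl]$. Transporting this pair across the two equivalences above produces exactly the pair $(\opbsal{f},\oimsal{f})$ of $\md[\Msal]$ and $\md[\Nsal]$; the identification with the original definitions amounts to noting that the equivalences come from the natural morphisms $\Msbl\to\Msal$ and $\Nsbl\to\Nsal$ of sites, so that sections of $\oimsal{f}F$ and $\opbsal{f}G$ on a subanalytic open set can be computed on either site.

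For (ii), the direct image $\oimsal{f}$ is left exact as a right adjoint. For exactness of $\opbsal{f}$, I would invoke~\cite[Th.~17.5.2]{KS06}: it is enough to show that the underlying functor of presites $\fsbl^t\cl\Op_\Nsb\to\Op_\Msb$, $V\mapsto\opb{f}V$, is left exact in the sense of~\cite[Def.~3.3.1]{KS06}, that is, for each $U\in\Op_\Msb$ the comma category of pairs $(V,\iota\cl U\hookrightarrow\opb{f}V)$ is cofiltrant. Non-emptiness is clear by taking $V=N$, and the identity $\opb{f}(V_1\cap V_2)=\opb{f}V_1\cap\opb{f}V_2$ provides the common refinement of any two objects, whence cofiltrantness. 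This step is the one to watch, but it is essentially formal once one has the morphism of sites.

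For (iii), composition of submersions is a submersion, and at the level of presite functors one has $\opb{(g\circ f)}=\opb{f}\circ\opb{g}$, so $(g\circ f)_{\sbl}=g_{\sbl}\circ f_{\sbl}$ as morphisms of sites. The functoriality of direct images under composition of morphisms of sites then transports across the equivalences to give $\oimsal{(g\circ f)}\simeq\oimsal{g}\circ\oimsal{f}$, and the corresponding isomorphism $\opbsal{(g\circ f)}\simeq\opbsal{f}\circ\opbsal{g}$ follows by the uniqueness of left adjoints.
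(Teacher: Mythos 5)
Your proposal is correct and follows essentially the same route as the paper: the authors establish the morphism of sites $\fsbl\cl\Msbl\to\Nsbl$ (Proposition~\ref{prop:fsbl-submersion}) and the equivalences $\md[\cor_{\Msbl}]\simeq\md[\cor_\Msal]$, $\md[\cor_{\Nsbl}]\simeq\md[\cor_\Nsal]$ (Proposition~\ref{pro:eqvsblsal}), then define $(\opbsal{f},\oimsal{f})$ by transport and leave the verification of (i)--(iii) as the evident analogue of Theorem~\ref{th:oimopbfsal}. Your filling-in of the details --- left exactness of the presite functor $V\mapsto\opb{f}V$ via $V=N$ and stability under intersection, and compositionality at the level of presite functors --- is exactly the intended argument.
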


\begin{remark}\label{rem:embedsubm}
Our two definitions of $\oimsal{f}$ for closed embeddings and submersions
do not give a definition for a general $f$ by composition.
For example let us consider the following commutative diagram
$$
\xymatrix@C=1.5cm{
M = \R^2   \ar[r]^i \ar[d]_p \ar[dr]^f  & \R^3 \ar[d]^q \\
\R   \ar[r]^j   & N = \R^2 ,
}
$$
where $i(x,y) = (x,y,0)$, $p(x,y) = x$, $q(x,y,z) = (x,z)$ and $j(x) = (x,0)$.
For $V\in \Op_\Nsb$ we define two families of open subsets of $\opb{f}(V)$:
\begin{align*}
I_1 &= \{M\cap W;\, W\in \Op_{\R^3_\sb},\, W \subset\opb{q} V, \, 
 \text{$W$ is $i$-regular} \} , \\
I_2 &= \{\opb{p}(\R\cap V');\; V' \in \Op_\Nsb,\, V' \subset V, \,
 \text{$V'$ is $j$-regular} \}.
\end{align*}
Then, for any $F\in \md[\Msbl]$ we have
\begin{align}
\label{eq:rem2defa}
\sect(V; \oimsal{q} \oimsal{i} F)
&\simeq \sect( \opb{q} V; \oimsal{i} F) \simeq \prolim[U\in I_1] F(U) ,\\
\label{eq:rem2defb}
\sect(V; \oimsal{j} \oimsal{p} F)
& \simeq \prolim[V' \subset V, \, \text{$V'$ $j$-regular}] 
 \sect(\R\cap V'; \oimsal{p}F)  \simeq \prolim[U\in I_2] F(U).
\end{align}
Let us take for $V$ the open set $V = \{(x,z);\, x^3 > z^2\}$.  Then the two
families $I_1$ and $I_2$ of open subsets of $\opb{f}(V) = \{(x,y); \; x>0\}$
are not cofinal. Indeed the set $W_0 \subset \R^3$ given by
$W_0 = \{(x,y,z);\, x^3 > y^2+z^2 \}$ is $i$-regular.
Hence $M\cap W_0 = \{(x,y);\, x^3 > y^2\}$ belongs to $I_1$.
On the other hand we see easily that, if $V'$ is $j$-regular and $V'\subset V$,
then $\R\cap V' \subset \mathopen{]}\varepsilon,+\infty[$, for some
$\varepsilon>0$.  Hence $M\cap W_0$ is not contained in any set of the family
$I_2$.

Let us define
$F = \indlim[\varepsilon>0] \cor_{[0,\varepsilon] \times \{0\}} \in \md[\Msbl]$.
Taking $U=M\cap W_0$ in~\eqref{eq:rem2defa} we can see that
$\sect(V; \oimsal{q} \oimsal{i} F) \simeq \cor$.
On the other hand~\eqref{eq:rem2defb} implies
$\sect(V; \oimsal{j} \oimsal{p} F) \simeq 0$.
Hence $\oimsal{q} \oimsal{i} \not\simeq \oimsal{j} \oimsal{p}$.
\end{remark}

\chapter{Construction of sheaves}\label{ShvSa4}

On the site $\Msa$, the sheaves $\Cinft_\Msa$ and $\Dbt_\Msa$ below have been
constructed in~\cite{KS96,KS01}.  By using the linear topology we shall
construct sheaves on $\Msal$ associated with more precise growth conditions.

All along this chapter, we follow Convention~\ref{eq:distonM}.

\section{Sheaves on the subanalytic site}

\subsubsection*{Temperate growth}
For the reader's convenience, let us recall first some definitions of~\cite{KS96,KS01}.
As usual, we denote by 
$\shhc_M^\infty$    (resp.\ $\sha_M$) 
the sheaf of complex valued functions of class $\Cinf$ (resp.\ real analytic), 
by $\Db_M$  (resp.\ $\shhb_M$) 
the sheaf of Schwartz's distributions (resp.\ Sato's hyperfunctions)
and by $\shhd_M$ the sheaf of  finite-order differential operators with coefficients in $\sha_M$.

\begin{definition}\label{def:cinftyt}
Let $U\in\Op_\Msa$ and let $f\in \Cinf_M(U)$. One says that $f$ has
{\it  polynomial growth} at $p\in M\setminus U$
if it satisfies the following condition.
For a local coordinate system
$(x_1,\dots,x_n)$ around $p$, there exist
a sufficiently small compact neighborhood $K$ of $p$
and a positive integer $N$
such that
\eq\label{eq:cinftyt0}
&&\sup_{x\in K\cap U}\big(d(x,K\setminus U)\big)^N\vert f(x)\vert
<\infty.
\eneq
We say that $f$ is  temperate at $p$ if all its derivatives
have polynomial growth at $p$. We say that $f$ is temperate 
if it is temperate at any point $p\in M\setminus U$.
\end{definition}

For $U\in\Op_\Msa$, 
we shall denote by $\Cinft_M(U)$ the subspace
of $\Cinf_M(U)$ consisting of temperate functions.

For $U\in\Op_\Msa$, we shall denote by $\Dbt_M(U)$ the space
of temperate distributions on $U$, defined by the exact sequence
\eqn
&&0\to\sect_{M\setminus U}(M;\Db_M)\to\sect(M;\Db_M)\to\Dbt_M(U)\to 0.
\eneqn    

It follows from~\eqref{eq:resitsuba} that  $U\mapsto \Cinft_M(U)$ is a sheaf and it follows from
the work of Lojasiewicz~\cite{Lo59}
that $U\mapsto \Dbt_M(U)$ is also a sheaf. 
We denote by $\Cinft_\Msa$ and $\Dbt_\Msa$ these sheaves on $\Msa$. 
The first one is called the sheaf of $\Cinf$-functions with temperate growth and the second the sheaf of temperate 
distributions.
Note that both sheaves are $\sect$-acyclic (see~\cite[Lem~7.2.4]{KS01} or Proposition~\ref{pro:Csoft0} below) and 
the sheaf $\Dbt_\Msa$ is flabby (see Definition~\ref{def:flabby}).

We also introduce the sheaf $\Cinf_\Msa$ of 
$\Cinf$-functions on $\Msa$ 
as
\eqn
&&\Cinf_\Msa\eqdot\oim{\rhosa}\Cinf_M. 
\eneqn
We denote as usual by $\shd_M$ the sheaf of rings of finite order differential operators on the real analytic manifold $M$. If $\iota_M\cl M\into X$ is a complexification of $M$, then 
$\shd_M\simeq\opb{\iota_M}\shd_X$. 
We set, following~\cite{KS01}:
\eq\label{def:DMsa}
&&\shhd_\Msa\eqdot\eim{\rhosa}\shhd_M.
\eneq
The sheaves  $\Cinft_\Msa$, $\Cinf_\Msa$ and  $\Dbt_\Msa$  
are $\shd_\Msa$-modules. 

\begin{remark}
The sheaves $\Cinft_\Msa$ and $\Dbt_\Msa$ are  respectively denoted by 
$\Cin[t]_M$ and $\Db_M^t$ in~\cite{KS01}.
\end{remark}

\subsubsection*{A cutoff lemma on $\Msa$}
Lemma~\ref{le:Ho00}  below  is an 
immediate corollary of a result of
H\"ormander~\cite[Cor.1.4.11]{Ho83} and was already used  in~\cite[Prop.~10.2]{KS96}.

\begin{lemma}\label{le:Ho00}
Let $Z_1$ and $Z_2$ be two closed subanalytic subsets of $M$.
Then there exists $\psi\in\Cinft_M(M\setminus(Z_1\cap Z_2))$ such that
$\psi=0$ on a neighborhood of $Z_1\setminus Z_2$ and $\psi=1$ on a neighborhood
of $Z_2\setminus Z_1$.
\end{lemma}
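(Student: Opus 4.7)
The plan is to reduce the lemma to a direct application of H\"ormander's refined partition of unity. Set $U = M\setminus(Z_1\cap Z_2)$, and observe that in the open subanalytic set $U$, the subsets $F_1 = Z_1\cap U = Z_1\setminus Z_2$ and $F_2 = Z_2\cap U = Z_2\setminus Z_1$ are closed in $U$ and disjoint. Since being temperate is a local condition on $M$ (Definition~\ref{def:cinftyt}), I may work in a local chart and assume $M$ is an open subset of $\R^n$.

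H\"ormander's Corollary~1.4.11 produces, for any closed subset $F$ of an open set of $\R^n$ and any two disjoint relatively closed subsets $A_1, A_2$ of $M\setminus F$, a $\Cinf$-function $\psi$ on $M\setminus F$ which equals $0$ on a neighborhood of $A_1$ and $1$ on a neighborhood of $A_2$, with the crucial estimate
\[
|\partial^\alpha\psi(x)| \leq C_\alpha \cdot d(x,F)^{-|\alpha|},\qquad x\in M\setminus F,
\]
for every multi-index $\alpha$ and some constant $C_\alpha>0$. I would apply this with $F = Z_1\cap Z_2$, $A_1 = F_1$, $A_2 = F_2$ to obtain the desired $\psi \in \Cinf(U)$.

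It remains to check that $\psi$ is temperate in the sense of Definition~\ref{def:cinftyt}. For a point $p\in U$, the function $\psi$ is smooth near $p$, so polynomial growth of all derivatives is automatic. For $p \in Z_1\cap Z_2 = M\setminus U$, pick any compact neighborhood $K$ of $p$ inside a local chart. If $K\cap U = \emptyset$ or $K\subset U$, condition~\eqref{eq:cinftyt0} holds trivially by convention; otherwise, since $K\setminus U \subset Z_1\cap Z_2$, the H\"ormander bound gives
\[
\sup_{x\in K\cap U}\bigl(d(x,K\setminus U)\bigr)^{|\alpha|}\,|\partial^\alpha\psi(x)|
\;\leq\; C_\alpha\sup_{x\in K\cap U}\Bigl(\tfrac{d(x,K\setminus U)}{d(x,Z_1\cap Z_2)}\Bigr)^{|\alpha|} \;\leq\; C_\alpha,
\]
because $K\setminus U \subset Z_1\cap Z_2$ forces $d(x,Z_1\cap Z_2)\leq d(x,K\setminus U)$. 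Hence every derivative of $\psi$ has polynomial growth at $p$, so $\psi$ is temperate.

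The main obstacle is nothing more than unpacking H\"ormander's statement into the exact form needed here: the refined partition of unity provides the bound in terms of $d(x,F)$ with $F$ the closed set we remove, and once $F$ is identified with $Z_1\cap Z_2$, the temperate-growth condition is immediate. No subanalytic argument beyond the fact that the distance to a subanalytic closed set is a subanalytic function is required, and even this is not strictly needed since condition~\eqref{eq:cinftyt0} is purely metric.
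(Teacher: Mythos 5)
Your overall strategy --- apply H\"ormander's Corollary~1.4.11 to $F=Z_1\cap Z_2$ and the two disjoint relatively closed sets $Z_1\setminus Z_2$, $Z_2\setminus Z_1$ --- is the same as the paper's (which gives no further detail), but your verification that $\psi$ is temperate has a genuine gap. First, the estimate H\"ormander's construction provides is of the form $\vert\partial^\alpha\psi(x)\vert\leq C_\alpha\,(d(x,Z_1)+d(x,Z_2))^{-\vert\alpha\vert}$, not $C_\alpha\, d(x,Z_1\cap Z_2)^{-\vert\alpha\vert}$; this is precisely why Lemma~\ref{le:Ho0}, which treats arbitrary closed sets, must \emph{assume} the linear separation inequality~\eqref{eq:Ho1}. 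In the present lemma that inequality is not assumed, and passing from H\"ormander's bound to a bound by a power of $d(x,Z_1\cap Z_2)^{-1}$ is exactly where the subanalyticity of $Z_1,Z_2$ enters, via Lojasiewicz's regular separation inequality $d(x,Z_1\cap Z_2)^{N}\leq C(d(x,Z_1)+d(x,Z_2))$ on compact sets (compare~\eqref{eq:resitsuba}). Second, and independently, your final display is backwards: since $K\setminus U\subset Z_1\cap Z_2$ one has $d(x,Z_1\cap Z_2)\leq d(x,K\setminus U)$, so the ratio $d(x,K\setminus U)/d(x,Z_1\cap Z_2)$ is bounded \emph{below} by $1$, not above by a constant. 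It is in fact unbounded in general (take $x$ near $\partial K$, close to a piece of $Z_1\cap Z_2$ lying just outside $K$), so the exponent $N=\vert\alpha\vert$ in~\eqref{eq:cinftyt0} does not suffice; one needs a second Lojasiewicz comparison, between $d(x,K\cap Z_1\cap Z_2)$ and $d(x,Z_1\cap Z_2)$ on the compact set $K$, to produce an admissible (larger) $N$.

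The closing claim that no subanalytic input is needed is therefore not a harmless shortcut: for arbitrary closed sets the statement is false. For instance, in $\R^2$ take $Z_1=\{y\leq 0\}$ and $Z_2=\{(x,y);\, x\neq 0,\ y\geq e^{-1/x^2}\}\cup\{(0,y);\,y\geq 0\}$, so that $Z_1\cap Z_2=\{0\}$. Any $\psi$ vanishing near $Z_1\setminus Z_2$ and equal to $1$ near $Z_2\setminus Z_1$ must satisfy $\vert\partial_y\psi\vert\geq e^{1/x^2}$ somewhere on the segment from $(x,0)$ to $(x,e^{-1/x^2})$, i.e.\ at distance about $\vert x\vert$ from $Z_1\cap Z_2$, which violates~\eqref{eq:cinftyt0} for every $N$. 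Your argument becomes correct once you insert the two Lojasiewicz inequalities above and let $N$ depend on the resulting exponents, and this is the content hidden behind the paper's phrase ``immediate corollary''.
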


\begin{proposition}\label{pro:Csoft0} 
Let $\shf$ be a sheaf of  $\Cinft_\Msa$-modules on $\Msa$.
Then $\shf$ is $\sect$-acyclic. 
\end{proposition}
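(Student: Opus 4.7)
The plan is to apply the criterion of Proposition~\ref{pro:MV1}: it suffices to show that for any $U_1,U_2\in\Op_\Msa$, the Mayer--Vietoris sequence
\[
0\to \shf(U_1\cup U_2)\to \shf(U_1)\oplus\shf(U_2)\to \shf(U_1\cap U_2)\to 0
\]
is exact (recall that on $\Msa$, $\{U_1,U_2\}$ is always a covering of $U_1\cup U_2$). Left exactness is automatic since $\shf$ is a sheaf, so the only real task is to establish surjectivity of the restriction map $\shf(U_1)\oplus\shf(U_2)\to \shf(U_1\cap U_2)$.

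For surjectivity, given $s\in\shf(U_1\cap U_2)$, I would apply Lemma~\ref{le:Ho00} to the closed subanalytic sets $Z_1\eqdot M\setminus U_1$ and $Z_2\eqdot M\setminus U_2$. Note that $Z_1\cap Z_2=M\setminus(U_1\cup U_2)$, $Z_1\setminus Z_2=U_2\setminus U_1$, and $Z_2\setminus Z_1=U_1\setminus U_2$. The lemma produces $\psi\in\Cinft_M(U_1\cup U_2)$ with $\psi=0$ on a subanalytic open neighborhood $V_2\subset U_1\cup U_2$ of $U_2\setminus U_1$ and $\psi=1$ on a subanalytic open neighborhood $V_1\subset U_1\cup U_2$ of $U_1\setminus U_2$.

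Using the $\Cinft_\Msa$-module structure of $\shf$, form $\psi\cdot s\in\shf(U_1\cap U_2)$ and $(1-\psi)\cdot s\in\shf(U_1\cap U_2)$. The key point is the extension by zero: since $\psi\cdot s$ vanishes on the open set $V_2\cap U_2$ (where $\psi=0$), the sections $\psi\cdot s\in\shf(U_1\cap U_2)$ and $0\in\shf(V_2\cap U_2)$ agree on $V_2\cap U_1\cap U_2$ and, since $\{U_1\cap U_2,\, V_2\cap U_2\}$ covers $U_2$, they glue via the sheaf axiom to a section $\tw s_2\in\shf(U_2)$. By the symmetric construction, $(1-\psi)\cdot s$ extends by zero to a section $\tw s_1\in\shf(U_1)$. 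By construction $\tw s_1|_{U_1\cap U_2}+\tw s_2|_{U_1\cap U_2}=s$, so setting $s_1=\tw s_1$ and $s_2=-\tw s_2$ gives a preimage of $s$ under the Mayer--Vietoris map.

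The main subtlety is the extension-by-zero step, which uses crucially that $\psi$ (from H\"ormander's refined cutoff lemma) is \emph{temperate}, so that multiplication by $\psi$ preserves sections of $\shf$, and that $\psi$ vanishes on a genuine open neighborhood of $U_2\setminus U_1$ (not merely on $U_2\setminus U_1$ itself) so that the gluing covering is well-defined in $\Msa$. Once this is in place, Proposition~\ref{pro:MV1}(i)$\Rightarrow$(ii) delivers the $\sect$-acyclicity.
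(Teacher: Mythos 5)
Your strategy is exactly the paper's: reduce to condition (i) of Proposition~\ref{pro:MV1} and split a section $s$ over $U_1\cap U_2$ by means of H\"ormander's cutoff function and the $\Cinft_\Msa$-module structure. The reduction to surjectivity, the use of Lemma~\ref{le:Ho00}, and the final identity $\tw s_1|_{U_1\cap U_2}+\tw s_2|_{U_1\cap U_2}=s$ are all as in the paper (whose own proof defers the details to \cite[Prop.~10.2]{KS96} and to Proposition~\ref{pro:Csoft}).

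There is, however, one step that does not work as written: the assertion that Lemma~\ref{le:Ho00} provides a \emph{subanalytic} open neighborhood $V_2$ of $U_2\setminus U_1$ on which $\psi$ vanishes. The lemma only yields some open neighborhood, and in general one cannot interpolate a subanalytic open set between a subanalytic set and an arbitrary open neighborhood of it: take $A=\{(t,0);\;0<t<1\}$ in $\R^2$ and $W=\{(x_1,x_2);\;x_1>0,\ \vert x_2\vert<e^{-1/x_1}\}$; for any subanalytic open $V$ with $A\subset V$ and $(0,0)\notin V$, the subanalytic function $t\mapsto d((t,0),\R^2\setminus V)$ is bounded below by $c\,t^N$ near $t=0$ by the Lojasiewicz inequality, so $V\not\subset W$. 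Since your extension-by-zero must be performed over a covering of $U_2$ by objects of $\Op_\Msa$, this point genuinely needs an argument. The repair is the one the paper carries out for the $\Msal$-analogue (Lemma~\ref{lem:Ho1} and Proposition~\ref{pro:Csoft}): first fix subanalytic open sets $U'_i\subset U_i$ with $U_i=U'_i\cup(U_1\cap U_2)$ and $\ol{U'_1}\cap\ol{U'_2}\cap U=\emptyset$ --- for instance the sets $U^\varepsilon_i$ of \eqref{eq:defUeps1}--\eqref{eq:defUeps2}, see Lemma~\ref{lem:prelim_Horm}~(i),(ii), which requires no linear covering hypothesis --- and then apply Lemma~\ref{le:Ho00} to the closed subanalytic sets $(M\setminus U)\cup\ol{U'_1}$ and $(M\setminus U)\cup\ol{U'_2}$ (in the appropriate order) rather than to $M\setminus U_1$ and $M\setminus U_2$. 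The resulting $\psi$ then belongs to $\Cinft_M(U)$ and vanishes identically on the prescribed subanalytic set $U'_2$ while being equal to $1$ on $U'_1$, and your gluing goes through verbatim over the coverings $\{U'_i,\,U_1\cap U_2\}$ of $U_i$.
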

\begin{proof}
By Proposition~\ref{pro:MV1}, it is enough to prove that 
for $U_1,U_2$ in $\Op_\Msa$,  the sequence
$0\to \shf(U_1\cup U_2)\to \shf(U_1)\oplus \shf(U_2)\to \shf(U_1\cap U_2)\to 0$
is exact.  
This follows from Lemma~\ref{le:Ho00} (see~\cite[Prop.~10.2]{KS96} or Proposition~\ref{pro:Csoft} below).  
\end{proof}

\subsubsection*{Gevrey growth}
The definition below of the sheaf  $\GG_\Msa$ is inspired by the definition of the sheaves of
$\Cinf$-functions of Gevrey classes, but is completely different from the classical one. Here we are interested in the growth of functions at the boundary contrarily to the classical setting where one is interested in the Taylor expansion of the function. As usual, there are two kinds of regularity which can be interesting: regularity at the interior or at the boundary. Since we shall soon consider the Dolbeault complexes of our new sheaves, the interior regularity is irrelevant and we are only interested in the growth at the boundary.

We refer to~\cite{Ko73b} for an exposition on classical Gevrey functions or distributions  and their link with Sato's theory of boundary values of holomorphic functions. Note that there is also a recent study by~\cite{HM11} of these sheaves using the tools of subanalytic geometry. 

In \S~\ref{section:shvonMsal} we shall define more refined sheaves by using the linear subanalytic topology.

\begin{definition}\label{def:GG}
Let $U\in\Op_\Msa$ and let $f\in \Cinf_M(U)$. We
say that $f$ has {\it $0$-Gevrey growth} at $p\in M\setminus U$ if
it satisfies the following condition.  For a local coordinate system
$(x_1,\dots,x_n)$ around $p$, there exist a sufficiently small compact
neighborhood $K$ of $p$, $h>0$ and $s>1$ such that
\eq\label{eq:gevrey}
&&\sup_{x\in K\cap U}\big(\exp(-h\cdot d(x,K\setminus U)^{1-s})\big)\vert f(x)\vert<\infty.
\eneq
We say that $f$ has Gevrey  growth at $p$   if all its derivatives
have  $0$-Gevrey growth at $p$. We say that $f$  has Gevrey growth
if it has such a growth at any point $p\in M\setminus U$.

We denote by $G_M(U)$ the subspace of $\Cinf_M(U)$ consisting of functions with 
Gevrey growth and by $\GG_\Msa$ the presheaf $U\mapsto G_M(U)$ on $\Msa$. 
\end{definition}

The next result is clear in view of~\eqref{eq:resitsuba} and Proposition~\ref{pro:Csoft0}.
\begin{proposition}
\banum
\item
The presheaf  $\GG_\Msa$ is a sheaf on $\Msa$,
\item
the sheaf $\GG_\Msa$ is a $\shd_\Msa$-module,
\item
the sheaf $\GG_\Msa$ is a $\Cinft_\Msa$-module,
\item
the sheaf $\GG_\Msa$ is $\sect$-acyclic.
\eanum
\end{proposition}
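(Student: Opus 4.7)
The plan is to reduce each part to familiar machinery already developed in the text: Proposition~\ref{pro:MV0} for the sheaf axiom, the $\Cinft$-module structure combined with Proposition~\ref{pro:Csoft0} for acyclicity, and direct computation for the module structures.

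For part (a), since we work on $\Msa$, by Proposition~\ref{pro:MV0} it suffices to check that $\GG_\Msa$ is separated and that the Mayer--Vietoris sequence
\eqn
&&0\to G_M(U_1\cup U_2)\to G_M(U_1)\oplus G_M(U_2)\to G_M(U_1\cap U_2)
\eneqn
is exact for all $U_1,U_2\in\Op_\Msa$. Since $\Cinf_M$ is a sheaf, a compatible pair $(f_1,f_2)$ glues to a unique $\Cinf$-function $f$ on $U_1\cup U_2$, so the only substantive point is that $f$ inherits Gevrey growth. I fix $p\in M$ and a compact neighborhood $K$ as in Definition~\ref{def:GG}; applying~\eqref{eq:resitsuba} to the covering $\{K\cap U_1,K\cap U_2\}$ of $K\cap(U_1\cup U_2)$ yields a constant $C>0$ and an integer $N\geq1$ with
\eqn
&& d(x,K\setminus(U_1\cup U_2))^N\leq C\cdot\max_{i}d(x,K\setminus U_i).
\eneqn
Given bounds $|\partial^\alpha f(x)|\leq A_i\exp\bl h_i\,d(x,K\setminus U_i)^{1-s_i}\br$ with $s_i>1$ on $K\cap U_i$, the inequality above yields, for each $x\in K\cap(U_1\cup U_2)$ and the index $i$ realizing the maximum, the estimate
\eqn
&& d(x,K\setminus U_i)^{1-s_i}
\leq C^{s_i-1}\,d(x,K\setminus(U_1\cup U_2))^{N(1-s_i)}.
\eneqn
Choosing $s'>1$ with $N(s_i-1)\leq s'-1$ for $i=1,2$ and adjusting $h'$ accordingly gives a bound of the required form for $\partial^\alpha f$ on $K\cap(U_1\cup U_2)$. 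This is the main technical point; everything else is bookkeeping over the (countably many) multi-indices $\alpha$ and the finite covering.

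For parts (b) and (c), I would argue pointwise via the Leibniz rule. If $P\in\shd_M$ is a finite-order differential operator with real-analytic coefficients and $f\in G_M(U)$, then each $\partial^\alpha(Pf)$ is a finite $\sha_M$-linear combination of derivatives of $f$, and real-analytic coefficients are locally bounded with locally bounded derivatives, so $Pf\in G_M(U)$; this proves (b). For (c), given $g\in\Cinft_M(U)$ and $f\in G_M(U)$, Leibniz gives
\eqn
&& \partial^\alpha(gf)(x)=\sum_{\beta\leq\alpha}\binom{\alpha}{\beta}\partial^\beta g(x)\,\partial^{\alpha-\beta}f(x),
\eneqn
and each $\partial^\beta g$ is polynomially bounded by $d(x,K\setminus U)^{-N_\beta}$ while $\partial^{\alpha-\beta}f$ is bounded by $A\exp\bl h\,d(x,K\setminus U)^{1-s}\br$; since for any $N$ one has $d(x,K\setminus U)^{-N}\leq C_N\exp\bl d(x,K\setminus U)^{1-s}\br$ near the boundary, the product is again of Gevrey type, with $h$ replaced by a slightly larger constant.

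Part (d) is then immediate: by (c), $\GG_\Msa$ is a sheaf of $\Cinft_\Msa$-modules, hence $\sect$-acyclic by Proposition~\ref{pro:Csoft0}. I expect the only real obstacle to be the compatibility of the Gevrey exponents $s_i$ in the gluing step of (a) and the appearance of the exponent $N$ from~\eqref{eq:resitsuba}; once one permits enlarging $s$ and $h$, this is harmless.
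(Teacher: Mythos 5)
Your proposal is correct and follows exactly the route the paper intends: the paper's proof is the one-line remark that the result is ``clear in view of~\eqref{eq:resitsuba} and Proposition~\ref{pro:Csoft0}'', and your argument simply fleshes out those two ingredients (the covering inequality~\eqref{eq:resitsuba} to transfer the Gevrey bound across a Mayer--Vietoris gluing, the Leibniz rule for the module structures, and Proposition~\ref{pro:Csoft0} for acyclicity). Note that Definition~\ref{def:GG} allows $K$, $h$ and $s$ to depend on the derivative $\partial^\alpha f$, so the uniformity bookkeeping you flag at the end is even lighter than you suggest.
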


\section{Sheaves on the linear subanalytic site}\label{section:shvonMsal}

By Lemma~\ref{le:rhodimfB}, 
 if a sheaf $\shf$ on $\Msa$ is $\sect$-acyclic, then $\roim{\rhosal}\shf$ is concentrated in degree $0$. This applies in particular to the sheaves  
 $\Cinft_\Msa$, $\Dbt_\Msa$ and $\GG_\Msa$.
 
 In the sequel, we shall  use the following notations. We set
\eqn
&&\Cinft_\Msal\eqdot \oim{\rhosal}\Cinft_\Msa,\quad \Dbt_\Msal\eqdot \oim{\rhosal}\Dbt_\Msa,\quad
 \GG_\Msal\eqdot \oim{\rhosal}\GG_\Msa.
\eneqn

\subsubsection*{Temperate growth of a given order}
\begin{definition}\label{def:cinftyt2}
Let $U\in\Op_\Msa$, let $f\in \Cinf_M(U)$ and let $t\in\R_{\geq0}$. We say that $f$ has
{\it  polynomial growth of order $\leq t$} at $p\in M\setminus U$
if it satisfies the following condition.
For a local coordinate system
$(x_1,\dots,x_n)$ around $p$, there exists
a sufficiently small compact neighborhood $K$ of $p$
such that
\eq\label{eq:cinftyt}
&&\sup_{x\in K\cap U}\big(d(x,K\setminus U)\big)^t\vert f(x)\vert
<\infty.
\eneq
We say that $f$ is  temperate of order $t$ at $p$ if, for each $m\in\N$, 
all its derivatives of order $\leq m$ have polynomial growth of order $\leq t+m$ at $p$. We say that $f$ is temperate of order $t$ 
if it is temperate of order $t$  at any point $p\in M\setminus U$.
\end{definition}

For $U\in\Op_\Msa$,
we denote by $\Cin[t]_M(U)$ the subspace
of $\Cinf_M(U)$ consisting of  functions temperate of order $t$ and we denote by $\Cin[t]_\Msal$
the presheaf on $\Msal$ so obtained. 

The next result is clear by Proposition~\ref{pro:MV0}.
\begin{proposition}\label{pro:Cinftfilt}
\bnum
\item
The presheaves  $\Cin[t]_\Msal$ \lp$t\geq0$\rp\,  are sheaves on $\Msal$,
\item
the sheaf $\Cin[0]_\Msal$ is a sheaf of rings,
\item
for $t\geq0$, $\Cin[t]_\Msal$ is a $\Cin[0]_\Msal$-module and there are natural morphisms 
$\Cin[t]_\Msal\tens[{\Cin[0]_\Msal}]\Cin[t']_\Msal\to\Cin[t+t']_\Msal$.
\enum
\end{proposition}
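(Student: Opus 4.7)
My plan is to apply the sheaf criterion of Proposition~\ref{pro:MV0} for the site $\Msal$: a presheaf $F$ is a sheaf iff $F(\emptyset) = 0$ and, for every linear covering $\{U_1,U_2\}$ of $U_1\cup U_2$, the sequence $0\to F(U_1\cup U_2)\to F(U_1)\oplus F(U_2)\to F(U_1\cap U_2)$ is exact. For $\Cin[t]_\Msal$ the vanishing on $\emptyset$ and the injectivity at the left are immediate (as $\{U_1,U_2\}$ also covers $U_1\cup U_2$ in the usual topology). The only substantial point is that when $f_1\in\Cin[t]_M(U_1)$ and $f_2\in\Cin[t]_M(U_2)$ agree on $U_1\cap U_2$, the glued $\Cinf$-function $f$ on $U=U_1\cup U_2$ is again temperate of order $t$.

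To verify this, fix $p\in M$ and $m\in\N$. By Definition~\ref{def:regsit1} there exists $C>0$ such that $d(x,M\setminus U)\leq C\max_i d(x,M\setminus U_i)$ for all $x\in M$. Choose a compact neighborhood $K$ of $p$ small enough so that (a) for $|\alpha|\leq m$ and $i=1,2$ we have uniform bounds
\[
M_{i,\alpha}\eqdot \sup_{x\in K\cap U_i} d(x,K\setminus U_i)^{t+m}\,|\partial^\alpha f_i(x)|<\infty,
\]
(a finite intersection of neighborhoods suffices), and (b) for $x$ in $K$ the closest points of $M\setminus U$ and $M\setminus U_i$ to $x$ lie in $K$, so the local and ambient distances agree. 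Given $x\in K\cap U$, choose $i$ with $d(x,M\setminus U_i)$ maximal; since $d(x,M\setminus U_i)\geq d(x,M\setminus U)/C>0$ we have $x\in U_i$, hence $\partial^\alpha f(x)=\partial^\alpha f_i(x)$. Then
\[
d(x,K\setminus U)^{t+m}\,|\partial^\alpha f(x)|
\leq C^{t+m}\,d(x,K\setminus U_i)^{t+m}\,|\partial^\alpha f_i(x)|
\leq C^{t+m} M_{i,\alpha},
\]
which proves $f\in\Cin[t]_M(U)$ and establishes~(i).

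For (ii) and (iii), I would invoke the Leibniz rule: if $f\in\Cin[t]_M(U)$ and $g\in\Cin[t']_M(U)$, then for any multi-index $\alpha$ with $|\alpha|\leq m$,
\[
\partial^\alpha(fg)=\sum_{\beta\leq\alpha}\binom{\alpha}{\beta}\partial^\beta f\cdot\partial^{\alpha-\beta}g.
\]
The factor $\partial^\beta f$ has polynomial growth of order $\leq t+|\beta|$ and $\partial^{\alpha-\beta}g$ of order $\leq t'+|\alpha-\beta|$ on a common small compact neighborhood of any point (finite intersection), so their product has growth of order $\leq (t+|\beta|)+(t'+|\alpha-\beta|)=t+t'+|\alpha|$. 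Summing gives $fg\in\Cin[t+t']_M(U)$; specializing to $t=t'=0$ yields the ring structure of (ii), and the general case supplies the multiplication maps of (iii). The whole argument hinges on the linear-covering inequality in the gluing step of (i); everything else is bookkeeping via Leibniz and elementary distance estimates.
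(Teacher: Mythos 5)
Your proof is correct and follows the same route as the paper, which simply observes that the result is clear from the Mayer--Vietoris criterion of Proposition~\ref{pro:MV0}; the one substantive point, namely that the inequality $d(x,M\setminus U)\leq C\max_i d(x,M\setminus U_i)$ defining a linear covering propagates the growth bound from the $U_i$ to $U$, is exactly the estimate you supply, and your Leibniz-rule argument for (ii) and (iii) is the intended bookkeeping. Nothing is missing.
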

We also introduce the sheaf
\eqn
&&\Cinftt_\Msal\eqdot\indlim[t]\Cin[t]_\Msal.
\eneqn
(Of course, the limit is taken in the category of sheaves on $\Msal$.)
Then, for $0\leq t\leq t'$, there are natural monomorphisms of sheaves on $\Msal$ :
\eq\label{eq:Cinftfilt}
&&  \Cin[0]_\Msal\into\Cin[t]_\Msal\into\Cin[t']_\Msal\into \Cinftt_\Msal \into \Cinft_\Msal.
\eneq
Note that the inclusion $\Cinftt_\Msal \into \Cinft_\Msal$ is strict since
there exists a function $f$ (say on an open subset $U$ of $\R$) with polynomial
growth of order $\leq t$ and such that its derivative does not have polynomial
growth of order $\leq t +1$.

\subsubsection*{Gevrey growth of a given order}

\begin{definition}\label{def:gevrey}
Let $U\in\Op_\Msa$, let  $s\in ]1,+\infty[$ and let $f\in \Cinf_M(U)$. We
say that $f$ has {\it $0$-Gevrey growth of type $(s)$} at $p\in M\setminus U$ if
it satisfies the following condition.  For a local coordinate system
$(x_1,\dots,x_n)$ around $p$, there exists a sufficiently small compact
neighborhood $K$ of $p$ such that
\eq\label{eq:gevrey2}
&&\sup_{x\in K\cap U}\big(\exp(-h\cdot d(x,K\setminus U)^{1-s})\big)\vert f(x)\vert<\infty 
\eneq
for all $h\in ]0,+\infty[$.
We say that $f$ has Gevrey  growth of type $(s)$   if all its derivatives
have $0$-Gevrey growth of type $(s)$ 
at $p$. We say that $f$  has Gevrey growth of type $(s)$ 
if it has such a growth at any point $p\in M\setminus U$.

Similarly, one defines $f$ of Gevrey  growth of type $\{s\}$ when replacing ~\eqref{eq:gevrey2} for all 
 $h\in ]0,+\infty[$ with the same condition for some  $h\in ]0,+\infty[$.
\end{definition}

\begin{definition}\label{def:gevrey2}
For $U\in\Op_\Msa$ and $s\in ]1,+\infty[$, we denote by 
$G_M^{(s)}(U)$ and $G_M^{\{s\}}(U)$ the spaces of functions of Gevrey growth of type $(s)$ and $\{s\}$, respectively.

We denote by $\Ga[s]_\Msal$ and $\Gb[s]_\Msal$ the presheaves on $\Msal$ so obtained. 
\end{definition}
Clearly, the presheaves  $\Ga[s]_\Msal$ and $\Gb[s]_\Msal$ do not depend on the choice of the distance.

\begin{proposition}\label{pro:gevrey}
\bnum
\item
The presheaves $\Ga[s]_\Msal$ and  $\Gb[s]_\Msal$ are sheaves on $\Msal$,
\item
the sheaves $\Ga[s]_\Msal$ and  $\Gb[s]_\Msal$ are  $\Cinft_\Msal$-modules,
\item
the presheaves $\Ga[s]_\Msal$ and  $\Gb[s]_\Msal$ are $\sect$-acyclic,
\item

we have natural monomorphisms of sheaves on $\Msal$ for $1< s< s'$
\eqn
&& \Ga[s]_\Msal\into \Gb[s]_\Msal\into  \Ga[s']_\Msal\into \Gb[s']_\Msal.
\eneqn
\enum
\end{proposition}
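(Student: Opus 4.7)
The plan is to dispatch (iv) and (ii) by direct inspection of the definitions, and to derive (i) and (iii) simultaneously from the Mayer--Vietoris criteria of Propositions~\ref{pro:MV0} and~\ref{pro:MV1}, the key technical input being a Gevrey-type cutoff lemma obtained from H\"ormander's refined partition of unity together with Lemma~\ref{lem:prelim_Horm2}.

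First (iv): for $1<s<s'$ and $d>0$ small one has $d^{1-s'}\leq d^{1-s}$, so $\exp(h d^{1-s})\leq\exp(h' d^{1-s'})$ as soon as $d$ is small enough, uniformly for $h,h'>0$. Hence $G^{s,h}_M(U)\subset G^{s',h'}_M(U)$ locally near the boundary, and the four monomorphisms in (iv) follow by taking the appropriate intersection or union in $h$, $h'$. For (ii), Leibniz's formula together with the fact that any polynomial factor $d(x,K\setminus U)^{-N}$ coming from derivatives of a temperate $\Cinf$-function is absorbed into any factor $\exp(h\,d(x,K\setminus U)^{1-s})$ shows that multiplication preserves the classes $G^{s,h}_M$; this gives the $\Cinft_\Msal$-module structure.

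Now (i) and (iii) together. By Propositions~\ref{pro:MV0} and~\ref{pro:MV1} it suffices, for $F=\Ga[s]_\Msal$ or $F=\Gb[s]_\Msal$, to prove that for every pair $\{U_1,U_2\}$ which is a linear covering of $U=U_1\cup U_2$ the sequence
\[
0\to F(U)\to F(U_1)\oplus F(U_2)\to F(U_1\cap U_2)\to 0
\]
is exact. The first two arrows are exact because the underlying $\Cinf$-presheaf is a sheaf on $M$; the entire issue is surjectivity of the last map. Apply Lemma~\ref{lem:prelim_Horm2} to produce $U_1'\subset U_1$, $U_2'\subset U_2$ and the closed sets $Z_i=(M\setminus U)\cup\ol{U_i'}$ with $Z_1\cap Z_2=M\setminus U$ and
\[
d(x,Z_1\cap Z_2)\leq C\bigl(d(x,Z_1)+d(x,Z_2)\bigr),\qquad x\in M.
\]
H\"ormander's refined partition of unity (\cite[\S1.4]{Ho83}) then provides, for any fixed $s>1$, a function $\psi\in\Cinf(M\setminus(Z_1\cap Z_2))$ with $\psi\equiv 0$ near $Z_1\setminus Z_2$, $\psi\equiv 1$ near $Z_2\setminus Z_1$, and Gevrey-type derivative bounds
\[
|\partial^\alpha\psi(x)|\leq A_0\,(A_0|\alpha|)^{s|\alpha|}\,d(x,Z_1\cap Z_2)^{-|\alpha|}.
\]
Given $f\in G^{s,h}_M(U_1\cap U_2)$, set $f_1=\psi f$ (extended by $0$ across $\ol{U_1'}$ to a smooth function on $U_1$) and $f_2=-(1-\psi)f$ (extended by $0$ across $\ol{U_2'}$ to $U_2$); then $f_1-f_2=f$ on $U_1\cap U_2$. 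Estimating $\partial^\alpha(\psi f)$ by Leibniz, using the Stirling-type inequality
\[
\sum_{\beta\leq\alpha}\binom{\alpha}{\beta}(|\beta|!)^{s}(|\alpha-\beta|!)^{s}\leq B^{|\alpha|}(|\alpha|!)^{s},
\]
the bounds on $\partial^\alpha\psi$, and the distance comparison $d(x,Z_1\cap Z_2)=d(x,M\setminus U)\leq C'd(x,M\setminus U_1\cap U_2)$ obtained from Lemma~\ref{lem:prelim_Horm2}(iii), one checks that $f_1\in G^{s,h'}_M(U_1)$ and $f_2\in G^{s,h'}_M(U_2)$ for a new constant $h'$ depending on $h,s,A_0,C,C'$. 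This yields the required surjectivity at the level of $G^{s,h}_M$ with a controlled change of parameter. Passing to $\Ga[s]=\prolim_hG^{s,h}$ (where $h'=h$ can be arranged by applying the cutoff lemma with the same $s$ for each $h$) and to $\Gb[s]=\indlim_hG^{s,h}$ (where any enlargement of $h$ is harmless) gives the exact MV sequence for both sheaves, hence both (i) and (iii).

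The principal obstacle will be the derivative estimate for $\psi f$: chasing the constants through Leibniz and through the Stirling-type estimate while keeping the exponent $s$ unchanged, and translating the H\"ormander bound phrased in terms of $d(x,Z_1\cap Z_2)$ into a bound phrased in terms of $d(x,K\setminus U_i)$ via Lemma~\ref{lem:prelim_Horm2}(iii), is the place where the Gevrey case genuinely exceeds the temperate case of Proposition~\ref{pro:Csoft0}. Provided this bookkeeping goes through with a multiplicative change $h\mapsto h'$ depending only on $s$ and geometric constants, the proof of the proposition is complete.
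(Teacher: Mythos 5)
Your proof reaches the right conclusions but takes a genuinely different, and considerably heavier, route than the paper. The paper disposes of (i), (ii), (iv) as immediate consequences of the definitions and of~\eqref{eq:resit}, and then deduces (iii) from (ii) via Proposition~\ref{pro:Csoft}: any sheaf of $\Cin[0]_\Msal$-modules is $\sect$-acyclic, the only analytic input being the \emph{temperate} cutoff $\psi\in\Cin[0]_M(U)$ of Lemma~\ref{lem:Ho1} (itself from Lemma~\ref{le:Ho0} and Lemma~\ref{lem:prelim_Horm2}), with the gluing on $U_i$ handled by the sheaf axiom for the linear covering $\{U'_i,U_1\cap U_2\}$ rather than by hand. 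You instead manufacture a cutoff in the Gevrey class itself, with bounds $|\partial^\alpha\psi|\leq A_0(A_0|\alpha|)^{s|\alpha|}d(x,Z_1\cap Z_2)^{-|\alpha|}$, and run a Leibniz--Stirling estimate. This is where the proposal has a genuine gap, and also where it solves a problem that is not there: Definition~\ref{def:gevrey} only asks that \emph{each} derivative of $f$ separately satisfy~\eqref{eq:gevrey2}, with a constant allowed to depend on the order of the derivative; there is no uniform-in-$\alpha$ control to preserve. Consequently a plain temperate cutoff suffices (each Leibniz term $d^{-|\beta|}\exp(h\,d^{1-s})$ is absorbed into $\exp((h+\varepsilon)d^{1-s})$ since $s>1$, and the shift in $h$ disappears in $\prolim_h$ and $\indlim_h$), and the Gevrey-refined H\"ormander cutoff you invoke is neither what~\cite[Cor.~1.4.11]{Ho83} provides nor proved anywhere in your argument. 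As written, then, the key lemma of your proof is asserted rather than established; the fix is simply to drop it and quote (ii) together with Proposition~\ref{pro:Csoft}, exactly as the paper does.

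Two smaller slips: in (iv) the inequality should read $d^{1-s'}\geq d^{1-s}$ for $d<1$ (your stated direction is reversed, though the conclusion $\exp(h\,d^{1-s})\leq\exp(h'\,d^{1-s'})$ for $d$ small is correct); and the comparison $d(x,M\setminus U)\leq C'\,d(x,M\setminus(U_1\cap U_2))$ is false in general and also unnecessary --- what you need is the trivial inequality $d(x,M\setminus(U_1\cap U_2))\leq d(x,M\setminus U)$ to transport the bound on $\partial^\alpha\psi$ to $U_1\cap U_2$, together with the fact that $\{U'_i,U_1\cap U_2\}$ is a linear covering of $U_i$ to glue with the zero section across $U'_i$.
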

\begin{proof}
(i), (ii) and (iv) are obvious and (iii) will follow from (ii) and Proposition~\ref{pro:Csoft} below 
(see Corollary~\ref{co:s-growth}). 
\end{proof}
We set
\eqn
&&\Gc_\Msal\eqdot\indlim[s>1]\Gb[s]_\Msal.
\eneqn
Hence, we have monomorphisms of sheaves on $\Msal$ for $0\leq t$ and $1< s$
\eqn
&&\Cin[0]_\Msal\into\Cin[t]_\Msal\into \Cinftt_\Msal\into \Cinft_\Msal\\
&&\hspace{15ex}\into \Ga[s]_\Msal\into \Gb[s]_\Msal \into 
\Gc_\Msal\into\GG_\Msal\into\Cinf_\Msal.
\eneqn

\begin{definition}\label{def:s-growth3}
If $\shf_\Msal$ is one of the sheaves
$\Cin[t]_\Msal$,  $\Cinftt_\Msal$, $\Ga[s]_\Msal$, $\Gb[s]_\Msal$ or
$\Gc_\Msal$, we set $\shf_\Msa\eqdot \epb{\rhosal}\,\shf$. 
\end{definition}

Let us apply Theorem~\ref{th:Lipbnd} and Corollary~\ref{co:s-growth}. We get
that if $U\in\Op_\Msa$ is weakly Lipschitz and if $\shf_\Msal$ denotes one of the sheaves
above,  then 
\eqn
&&\rsect(U;\shf_\Msa)\simeq \sect(U;\shf_\Msal).
\eneqn

We call $\Cin[t]_\Msa$, $\Cinftt_\Msa$, $\Ga[s]_\Msa$, $\Gb[s]_\Msa$ and $\Gc_\Msa$ the
sheaves on $\Msa$ of  $\Cinf$-functions of growth $t$, strictly temperate
growth,  Gevrey growth of type $(s)$ and  $\{s\}$ and strictly Gevrey growth,
respectively. Recall that on $\Msa$, we also have the sheaf $\Cinft_\Msa$ of $\Cinf$-functions of  temperate
growth, the sheaf $\Dbt_\Msa$ of temperate distributions and the sheaf $\GG_\Msa$ of $\Cinf$-functions of  Gevrey growth.

\subsubsection*{Rings of differential operators}
Let $M$ be a real analytic manifold. 
Recall that $\shhd_M$ denotes the sheaf of finite order analytic differential operators
on $M$ and that we have set in~\eqref{def:DMsa}
\eq\label{def:DMsa2}
&&\shhd_{\Msa}\eqdot\eim{\rhosa}\shhd_M.
\eneq
 Now we set
\eq\label{def:DMsal}
&&\shhd_\Msal\eqdot\oim{\rhosal}\shhd_\Msa.
\eneq
Hence, $\shhd_{\Msa}$ is the sheaf on $\Msa$ associated with the presheaf $U\mapsto\shd_M(\ol U)$
and  $\Msal$ is its direct image on $\Msal$.
We define similarly the sheaves $\shhd_\sht(m)$ of differential operators of order $\leq m$ on the site $\sht=M,\Msa, \Msal$.

\begin{lemma}\label{le:shdmcinft}
There are natural morphisms $\shd_\Msal(m)\tens\Cin[t]_\Msal\to\Cin[t+m]_\Msal$ making 
 $\Cinftt_\Msal$ and  $\Cinft_\Msal$ left $\shd_\Msal$-modules.\\
 The sheaves  $\Ga[s]_\Msal$ and  $\Gb[s]_\Msal$ are naturally 
 left $\shd_\Msal$-modules.
\end{lemma}
\begin{proof}
This follows immediately from Definitions~\ref{def:cinftyt2} and~\ref{def:gevrey}. 
\end{proof}

By using the functor $\epb{\rhosal}$, we  will construct new sheaves (in the
derived sense) 
on $\Msa$ associated with the  sheaves previously constructed on $\Msal$. 

\begin{theorem}\label{th:rightadj2}
\bnum
\item
The functor $\oim{\rhosal}\cl\md[\shhd_\Msa]\to\md[\shhd_\Msal]$ has finite cohomological dimension.
\item
The functor $\roim{\rhosal}\cl\RD(\shhd_\Msa)\to\RD(\shhd_\Msal)$ commutes with small direct sums.
\item
The functor $\roim{\rhosal}$ in {\rm (ii)} admits a right adjoint
 $\epb{\rhosal}\cl\RD(\shhd_\Msal)\to\RD(\shhd_\Msa)$.
\item
The functor $\epb{\rhosal}$ induces a functor $\epb{\rhosal}\cl\RD^+(\shhd_\Msal)\to\RD^+(\shhd_\Msa)$.
\enum
\end{theorem}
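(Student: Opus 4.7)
The plan is to parallel the proof of Theorem~\ref{th:rightadj}, deducing each assertion for $\shd$-modules from the corresponding statement for $\cor$-modules via the forgetful functor $\for\cl\md[\shd_\Msa]\to\md[\cor_\Msa]$ and its counterpart on $\Msal$. The central new ingredient I would establish first is the \emph{Key Lemma}: if $I$ is an injective object of $\md[\shd_\Msa]$, then $\for I$ is $\sect$-acyclic on $\Msa$.

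To prove the Key Lemma I would use that $\shd_\Msa=\eim{\rhosa}\shd_M$ is flat over $\cor_\Msa$ (since $\shd_M$ is locally free as an $\sha_M$-module and $\sha_M$ is flat over $\cor$). For any covering $\{U_1,U_2\}$ of $U_1\cup U_2$ in $\Msa$, tensoring the exact \v{C}ech sequence of Lemma~\ref{le:Cechexact} with $\shd_\Msa$ over $\cor_\Msa$ preserves exactness, and applying $\Hom[\shd_\Msa](-,I)$ together with the adjunction $\Hom[\shd_\Msa](\shd_\Msa\tens[\cor_\Msa]F,I)\simeq\Hom[\cor_\Msa](F,\for I)$ yields the Mayer--Vietoris short exact sequence $0\to\for I(U_1\cup U_2)\to\for I(U_1)\oplus\for I(U_2)\to\for I(U_1\cap U_2)\to 0$, which by Proposition~\ref{pro:MV1}~(i) forces $\for I$ to be $\sect$-acyclic.

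By Lemma~\ref{le:rhodimfB}, $\sect$-acyclic sheaves are $\oim{\rhosal}$-acyclic, so any injective resolution $F\to I^\scbul$ in $\md[\shd_\Msa]$ gives an $\oim{\rhosal}$-acyclic resolution $\for F\to\for I^\scbul$ in $\md[\cor_\Msa]$. Since $\for$ is exact and commutes with $\oim{\rhosal}$ (because the underlying presites of $\Msa$ and $\Msal$ coincide), one obtains a natural isomorphism $\for\,\roim{\rhosal}F\simeq\roim{\rhosal}(\for F)$ in $\RD(\cor_\Msal)$ for every $F\in\RD(\shd_\Msa)$. With this compatibility in hand, (i) follows from Proposition~\ref{pro:cohdimrho} together with the faithfulness of $\for$, while (ii) follows from Proposition~\ref{pro:sectUoplusB} and the fact that $\for$ is conservative on derived categories (it reflects isomorphisms on cohomology, which already holds on the underlying abelian categories).

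For (iii) I would apply Proposition~\ref{pro:rhooplus} to $\oim{\rhosal}\cl\md[\shd_\Msa]\to\md[\shd_\Msal]$: hypothesis~(a) is (i); hypothesis~(b) follows from (ii) via Lemma~\ref{le:sectUindlim} applied underneath $\for$; and hypothesis~(c)---that small direct sums of injective $\shd_\Msa$-modules are $\oim{\rhosal}$-acyclic---combines the Key Lemma, Corollary~\ref{cor:limit-Gam-acyc}, and the compatibility above. Part (iv) is then immediate from Lemma~\ref{le:adjfcohdim}. The principal obstacle is the Key Lemma: one must carefully verify the flatness of $\shd_\Msa$ over $\cor_\Msa$ in the subanalytic setting and push Mayer--Vietoris through the tensor--hom adjunction for modules over a noncommutative sheaf of rings; once this is secured, the rest is a formal transport of the $\cor$-module results along $\for$.
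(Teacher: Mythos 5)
Your proposal is correct and follows essentially the same route as the paper: everything is transported along the forgetful functor $\for$ (exact, conservative, commuting with $\oim{\rhosal}$ and with direct sums), and (iii)--(iv) then come from Brown representability via Proposition~\ref{pro:rhooplus} and Lemma~\ref{le:adjfcohdim}. The only difference is that the paper shortcuts your Key Lemma by observing that $\for$ has the exact left adjoint $\shd_\Msa\tens[\C_\Msa]\scbul$ (flatness is automatic since the base is the field $\C$, so your worry on this point is vacuous) and hence sends injectives to injectives, which immediately gives the commutation of the derived square.
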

\begin{proof}
Consider the quasi-commutative diagram of categories
\eqn
&&\xymatrix{
\md[\shhd_\Msa]\ar[rr]^{\oim{\rhosal}}\ar[d]_{\for}&&\md[\shhd_\Msal]\ar[d]_{\for}\\
\md[\C_\Msa]\ar[rr]^{\oim{\rhosal}}&&\md[\C_\Msal].
}\eneqn
The functor $\for\cl \md[\shhd_\Msa]\to\md[\C_\Msa]$ is exact and sends injective objects to injective objects, 
and similarly with $\Msal$ instead of $\Msa$. It follows that 
 the diagram below commutes:
\eqn
&&\xymatrix{
\RD(\shhd_\Msa)\ar[rr]^{\roim{\rhosal}}\ar[d]_{\for}&&\RD(\shhd_\Msal)\ar[d]_{\for}\\
\RD(\C_\Msa)\ar[rr]^{\roim{\rhosal}}&&\RD(\C_\Msal).
}\eneqn
Moreover, the two functors $\for$ in the last diagram above are conservative. Then

\vspace{0.2ex}\noindent
(i) and (ii) follow from the corresponding result for $\C_\Msa$-modules.

\vspace{0.2ex}\noindent
(iii) and (iv) follow from the Brown representability theorem, (see
Proposition~\ref{pro:rhooplus}).
\end{proof}

\section{A refined cutoff lemma}

Lemma~\ref{le:Ho0}  below will play an important role in this paper and  is an 
immediate corollary of a result of
H\"ormander~\cite[Cor.1.4.11]{Ho83}. Note that H\"ormander's result was 
already used  in~\cite[Prop.~10.2]{KS96} (see Lemma~\ref{le:Ho00} above).

H\"ormander's  result is stated for $M=\R^n$ but we check in Lemma~\ref{le:Ho0-bis}
that it can be extended to an arbitrary manifold. 

\begin{lemma}\label{le:Ho0}
Let $Z_1$ and $Z_2$ be two closed subsets of $M\eqdot\R^n$.
Assume that there exists $C>0$ such that
\eq\label{eq:Ho1}
&& d(x, Z_1\cap Z_2)\leq C(d(x,Z_1)+d(x,Z_2))\mbox{ for any $x\in M$}.
\eneq
Then there exists $\psi\in\Cin[0]_M(M\setminus(Z_1\cap Z_2))$ such that
$\psi=0$ on a neighborhood of $Z_1\setminus Z_2$ and $\psi=1$ on a neighborhood
of $Z_2\setminus Z_1$.
\end{lemma}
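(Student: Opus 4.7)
The plan is to reduce the construction of $\psi$ directly to H\"ormander's refined partition of unity (Corollary~1.4.11 of~\cite{Ho83}). Set $U_i \eqdot M\setminus Z_i$ for $i=1,2$ and $U \eqdot U_1\cup U_2 = M\setminus(Z_1\cap Z_2)$. Hypothesis~\eqref{eq:Ho1} reads $d(x,M\setminus U) \leq 2C \max\{d(x,M\setminus U_1),\, d(x,M\setminus U_2)\}$ for all $x\in M$, which is exactly the quantitative regularity of the cover $\{U_1,U_2\}$ of $U$ that H\"ormander's construction requires. His result then furnishes a $\Cinf$ partition of unity $\chi_1 + \chi_2 = 1$ on $U$ with $\supp\chi_i\subset U_i$ and derivative estimates $|\partial^\alpha \chi_i(x)| \leq C_\alpha\, d(x,Z_1\cap Z_2)^{-|\alpha|}$ for $x\in U$ and every multi-index $\alpha$.

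I would then set $\psi \eqdot \chi_1$. Since $\supp\chi_1\subset U_1 = M\setminus Z_1$ is closed in $U$, its complement $U\setminus \supp\chi_1$ is open in $M$ and contains $Z_1\setminus Z_2$ (which lies in $U\setminus U_1$), so $\psi$ vanishes on an open neighborhood of $Z_1\setminus Z_2$. Using $\psi = 1 - \chi_2$ on $U$ and the symmetric property $\supp\chi_2 \subset U_2$, one obtains $\psi = 1$ on an open neighborhood of $Z_2\setminus Z_1$.

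It remains to verify $\psi\in\Cin[0]_M(U)$: for each $p\in M$ a sufficiently small compact neighborhood $K$ of $p$ must satisfy $\sup_{x\in K\cap U} d(x,K\setminus U)^{|\alpha|}|\partial^\alpha\psi(x)| < \infty$ for every $\alpha$. If $p\not\in Z_1\cap Z_2$, pick $K$ disjoint from $Z_1\cap Z_2$ and the sup vanishes by convention~\eqref{eq:cinftyt}. If $p\in Z_1\cap Z_2$, take $K$ to be a small closed ball centered at $p$; by the triangle inequality, the nearest point of $Z_1\cap Z_2$ to any $x\in K\cap U$ close enough to $p$ still lies in $K$, giving $d(x,K\setminus U) = d(x,Z_1\cap Z_2)$, and combined with the H\"ormander bound this produces the uniform estimate $d(x,K\setminus U)^{|\alpha|}|\partial^\alpha\psi(x)|\leq C_\alpha$. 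The main obstacle is precisely this last comparison of distances: a priori the nearest point of $Z_1\cap Z_2$ to $x\in K$ may lie outside $K$, in which case $d(x,K\setminus U)$ could exceed $d(x,Z_1\cap Z_2)$; a careful nested choice of $K$ versus the inner neighborhood on which the bound is required resolves this via the triangle inequality, since $p\in K\cap(Z_1\cap Z_2)$ provides a uniform reference point.
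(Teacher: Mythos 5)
Your reduction is the same as the paper's: the authors state this lemma as an immediate corollary of H\"ormander's Corollary~1.4.11, which is verbatim the statement together with the derivative bounds $|\partial^\alpha\psi(x)|\leq C_\alpha\, d(x,Z_1\cap Z_2)^{-|\alpha|}$, and they give no further argument. Your partition-of-unity formulation is essentially the form of H\"ormander's result from which that corollary is derived, so there is no real difference of route, and your support argument for the vanishing of $\psi$ near $Z_1\setminus Z_2$ and for $\psi=1$ near $Z_2\setminus Z_1$ is fine.

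The one step you add --- deducing $\psi\in\Cin[0]_M(M\setminus(Z_1\cap Z_2))$ from the derivative bounds --- is where your write-up has a gap; you have correctly located it but not closed it. Since $K\setminus U=K\cap Z_1\cap Z_2\subset Z_1\cap Z_2$, one always has $d(x,K\setminus U)\geq d(x,Z_1\cap Z_2)$, i.e.\ the comparison you need goes the wrong way a priori, and for $x\in K\cap U$ near $\partial K$ the ratio $d(x,K\setminus U)/d(x,Z_1\cap Z_2)$ can be arbitrarily large no matter how small a ball $K=\ol{B(p,r)}$ you take (place points of $Z_1\cap Z_2$ just outside $\partial K$ accumulating at a point of $\partial K\cap Z_1\cap Z_2$). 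Using $p$ as a ``uniform reference point'' only gives $d(x,K\setminus U)\leq d(x,p)$, which is not comparable to $d(x,Z_1\cap Z_2)$ either, and restricting the estimate to an inner neighborhood does not match Definition~\ref{def:cinftyt2}, whose supremum runs over all of $K\cap U$. The clean fix is to exploit the freedom in the choice of the compact neighborhood: take $K\eqdot\ol{B(p,r)}\cup\bigl(\ol{B(p,2r)}\cap Z_1\cap Z_2\bigr)$, which is still a compact neighborhood of $p$. Then $K\cap U=\ol{B(p,r)}\cap U$, and for $x$ there the nearest point $y$ of $Z_1\cap Z_2$ satisfies $d(x,y)\leq d(x,p)\leq r$, hence $d(y,p)\leq 2r$ and $y\in K\setminus U$; therefore $d(x,K\setminus U)\leq d(x,Z_1\cap Z_2)$ and H\"ormander's bound gives the required finiteness of $\sup_{x\in K\cap U}d(x,K\setminus U)^{|\alpha|}|\partial^\alpha\psi(x)|$ directly.
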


\begin{lemma}\label{le:Ho0-bis}
Let $M$ be a manifold.  Let $Z_1$ and $Z_2$ be two closed subsets
of $M$ such that $M\setminus (Z_1\cap Z_2)$ is relatively compact and such
that~\eqref{eq:Ho1} holds for some $C>0$.
Then the conclusion of Lemma~\ref{le:Ho0} holds true.
\end{lemma}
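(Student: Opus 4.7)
The plan is to reduce to Lemma~\ref{le:Ho0} by a finite coordinate covering of the compact set $K\eqdot\ol{M\setminus(Z_1\cap Z_2)}$, followed by a gluing via a smooth partition of unity. Since $K$ is compact, cover it by finitely many relatively compact open sets $V_1,\dots,V_N$ such that each $\ol{V_i}$ lies in the domain $W_i$ of an analytic chart $\phi_i\cl W_i\isoto\phi_i(W_i)\subset\R^n$, and such that $\phi_i$ is bi-Lipschitz on $\ol{V_i}$ between $d_M$ and the Euclidean distance; this is possible by~\eqref{eq:distonM} together with compactness of $\ol{V_i}$.

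On each chart I pull the problem back to $\R^n$. Set $Z^i_j\eqdot\phi_i(Z_j\cap\ol{V_i})$, which is a compact (hence closed) subset of $\R^n$. The bi-Lipschitz property of $\phi_i$ on $\ol{V_i}$ transfers~\eqref{eq:Ho1} into an analogous inequality for $\{Z^i_1,Z^i_2\}$ on the image $\phi_i(\ol{V_i})$, and after possibly enlarging the constant one extends this to all of $\R^n$, since outside a bounded neighborhood of $Z^i_1\cup Z^i_2$ the three distances involved are mutually comparable up to an additive term bounded by the diameter of $Z^i_1\cup Z^i_2$. Applying Lemma~\ref{le:Ho0} then produces $\tw\psi_i\in\Cin[0]_{\R^n}(\R^n\setminus(Z^i_1\cap Z^i_2))$ vanishing near $Z^i_1\setminus Z^i_2$ and equal to $1$ near $Z^i_2\setminus Z^i_1$; I set $\psi_i\eqdot\tw\psi_i\circ\phi_i$ on $V_i\setminus(Z_1\cap Z_2)$.

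Finally, choose a smooth partition of unity $\{\chi_i\}_{i=1}^N$ subordinate to $\{V_i\}$ on a neighborhood $\Omega$ of $K$ and define $\psi\eqdot\sum_{i=1}^N\chi_i\psi_i$ on $\Omega\setminus(Z_1\cap Z_2)$, extended by any constant on $M\setminus\Omega$ (which lies in $\Int(Z_1\cap Z_2)$, where the local growth condition of Definition~\ref{def:cinftyt2} is vacuous since one may choose a compact neighborhood $K_0$ with $K_0\cap U=\emptyset$). The boundary conditions $\psi=0$ near $Z_1\setminus Z_2$ and $\psi=1$ near $Z_2\setminus Z_1$ are preserved because $\sum_i\chi_i=1$ on $\Omega$ and each $\psi_i$ satisfies them individually. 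The derivative growth bounds follow by the Leibniz rule, using that the $\chi_i$ have bounded derivatives on the compact $\ol\Omega$. The main obstacle is verifying that the growth estimates on $\psi_i$, expressed in $\R^n$ in terms of $d(\cdot,Z^i_1\cap Z^i_2)$, correctly translate into estimates relative to $d(\cdot,Z_1\cap Z_2)$ on $M$. This is handled by observing that the condition in Definition~\ref{def:cinftyt2} is local: for any $p\in M$, a sufficiently small compact neighborhood $K_0$ of $p$ sits inside some chart $V_i$ and satisfies $K_0\cap(Z_1\cap Z_2)=K_0\cap(Z_1\cap Z_2\cap\ol{V_i})$, so the two distances agree on $K_0$, and the bi-Lipschitz equivalence then yields the required pointwise estimate of order $t=0$.
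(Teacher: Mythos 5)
Your reduction to Lemma~\ref{le:Ho0} by a finite atlas breaks down at the localization step, and this is the heart of the matter rather than a repairable detail. You replace $Z_j$ by the truncated sets $Z_j\cap\ol{V_i}$ and assert that the bi-Lipschitz chart ``transfers''~\eqref{eq:Ho1} to the pair $\{Z^i_1,Z^i_2\}$. It does not: \eqref{eq:Ho1} is not stable under intersection with $\ol{V_i}$, because all three distances increase under truncation, by uncontrolled amounts. Concretely, let $Z_1$ and $Z_2$ cross uniformly transversally at points $w_n$ lying just outside $\ol{V_i}$, at distance $\eta_n\to0$ from $\partial V_i$, and accumulating at a point $w_*\in\partial V_i$ much more slowly, say $d(w_n,w_*)\asymp\sqrt{\eta_n}$ (take the crossings to be small disjoint ``crosses'' of size $\asymp\eta_n$ whose branches reach into $V_i$). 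Then~\eqref{eq:Ho1} holds globally with a uniform constant, but at the point $a_n\in Z_1\cap\ol{V_i}$ nearest to $w_n$ one has $d(a_n,Z_1\cap\ol{V_i})+d(a_n,Z_2\cap\ol{V_i})\lesssim\eta_n$ while $d(a_n,Z_1\cap Z_2\cap\ol{V_i})=d(a_n,w_*)\gtrsim\sqrt{\eta_n}$, since the crossing points $w_n$ are discarded by the truncation and the only surviving common point is $w_*$. The ratio $\eta_n^{-1/2}$ is unbounded, so no constant $C'$ works and Lemma~\ref{le:Ho0} cannot be invoked for $\{Z^i_1,Z^i_2\}$ (the truncated intersection can even be empty while $Z^i_1$ and $Z^i_2$ come arbitrarily close to each other). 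The same failure defeats your claim that the inequality then extends from $\phi_i(\ol{V_i})$ to all of $\R^n$ after enlarging the constant: the problem is precisely at bounded points near $\partial(\phi_i(\ol{V_i}))$, not at infinity. The truncated inequality does hold on any compact subset of the open set $V_i$ (the witness $w$ furnished by~\eqref{eq:Ho1} lies in $V_i$ once the right-hand side is small compared with the distance to $\partial V_i$), but with a constant degenerating near $\partial V_i$, which is not enough to feed into Lemma~\ref{le:Ho0}, whose hypothesis is global.

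The paper avoids truncation altogether: it fixes one closed embedding $M\hookrightarrow\R^N$, uses the relative compactness of $M\setminus(Z_1\cap Z_2)$ to compare $d_M$ and $d_{\R^N}$ there by a single bi-Lipschitz constant, checks by a nearest-point argument that the \emph{same} sets $Z_1,Z_2$ satisfy~\eqref{eq:Ho1} in $\R^N$, applies Lemma~\ref{le:Ho0} once, and restricts the resulting function to $M$. If you insist on a chart-by-chart argument you would have to apply Lemma~\ref{le:Ho0} only after modifying $Z^i_1,Z^i_2$ away from $\phi_i(\supp\chi_i)$ so as to restore a global linear inequality on $\R^n$, using that the truncated inequality is valid on $\supp\chi_i\Subset V_i$; none of this is in your write-up, and it is considerably more delicate than the single global embedding.
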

\begin{proof}
We consider an embedding of $M$ in some $\R^N$ and we denote by $d_M$,
$d_{\R^N}$ the distance on $M$ or $\R^N$.  We have a constant $D\geq 1$
such that $D^{-1}d_{\R^N}(x,y) \leq d_M(x,y) \leq D \, d_{\R^N}(x,y)$, for all
$x,y \in M\setminus (Z_1\cap Z_2)$.

Let $x\in \R^N$ and let $x'\in M$ such that $d_{\R^N}(x,x') = d_{\R^N}(x,M)$.
In particular $d_{\R^N}(x,x') \leq d_{\R^N}(x,Z_1)$.
Then we have, assuming $x'\not\in Z_1\cap Z_2$,
\begin{align*}
d_{\R^N}(x, Z_1 \cap Z_2)
& \leq d_{\R^N}(x,x') + D\, d_M(x', Z_1 \cap Z_2) \\ 
& \leq d_{\R^N}(x,x') + DC ( d_M(x',Z_1) + d_M(x',Z_2) ) \\
& \leq d_{\R^N}(x,x') + D^2 C ( d_{\R^N}(x',Z_1) + d_{\R^N}(x',Z_2) ) \\
& \leq (1+ 2D^2C)d_{\R^N}(x,x') + D^2 C (d_{\R^N}(x,Z_1) + d_{\R^N}(x,Z_2)) \\
& \leq (1+3D^2C) ( d_{\R^N}(x,Z_1) + d_{\R^N}(x,Z_2) ) .
\end{align*}
If $x' \in  Z_1\cap Z_2$, then $d_{\R^N}(x, Z_1 \cap Z_2) = d_{\R^N}(x,M)
\leq d_{\R^N}(x,Z_1)$ and the same inequality holds trivially.
Hence we can apply Lemma~\ref{le:Ho0} to $Z_1,Z_2 \subset \R^N$ and obtain
a function $\psi\in\Cin[0]_{\R^N}(\R^N\setminus(Z_1\cap Z_2))$.
Then $\psi|_{M\setminus(Z_1\cap Z_2)}$ belongs to
$\Cin[0]_M(M\setminus(Z_1\cap Z_2))$ and satisfies the required properties.
\end{proof}

\begin{lemma}\label{lem:Ho1}
Let $U_1, U_2 \in \Op_{\Msa}$ and set $U=U_1 \cup U_2$.
We assume that $\{U_1,U_2\}$ is a linear covering of $U$.
Then there exist $U'_i \subset U_i$, $i=1,2$,
and $\psi\in\Cin[0]_M(U)$ such that
\bnum
\item $\{U'_i, U_1\cap U_2\}$ is a linear covering of $U_i$,
\item $\psi|_{U'_1} =0$ and $\psi|_{U'_2} =1$.
\enum
\end{lemma}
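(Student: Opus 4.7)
The plan is to chain together the two immediately preceding lemmas, which are tailored to deliver exactly this result. Specifically, Lemma~\ref{lem:prelim_Horm2} produces the geometric data, and Lemma~\ref{le:Ho0-bis} produces the cutoff function from that data.

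First I would apply Lemma~\ref{lem:prelim_Horm2} to the linear covering $\{U_1, U_2\}$ of $U$. This gives open subsets $U'_i \subset U_i$, $i=1,2$, satisfying condition (i) of the present lemma directly, together with the separation property $\ol{U'_1} \cap \ol{U'_2} \cap U = \emptyset$, and, setting $Z_i = (M\setminus U) \cup \ol{U'_i}$, the inequality
\[
d(x, Z_1\cap Z_2) \leq C(d(x,Z_1)+d(x,Z_2)), \quad x\in M,
\]
with $Z_1\cap Z_2 = M\setminus U$.

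Next, since $M\setminus(Z_1\cap Z_2) = U$ is relatively compact (as $U\in\Op_\Msa$), the hypotheses of Lemma~\ref{le:Ho0-bis} are met. Applying it to these $Z_1, Z_2$ yields a function $\psi\in\Cin[0]_M(U)$ vanishing on a neighborhood of $Z_1\setminus Z_2$ and equal to $1$ on a neighborhood of $Z_2\setminus Z_1$. To conclude condition (ii), I would verify the inclusions $U'_1\subset Z_1\setminus Z_2$ and $U'_2\subset Z_2\setminus Z_1$: indeed $U'_1\subset \ol{U'_1}\subset Z_1$, while $U'_1\subset U$ gives $U'_1\cap(M\setminus U)=\emptyset$ and the separation property $\ol{U'_1}\cap\ol{U'_2}\cap U=\emptyset$ gives $U'_1\cap \ol{U'_2}=\emptyset$, so $U'_1\cap Z_2=\emptyset$; the symmetric argument handles $U'_2$.

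There is essentially no obstacle beyond bookkeeping: the real content is packaged in Lemma~\ref{lem:prelim_Horm2} (the careful choice of shrinkings $U^\varepsilon_i$ ensuring both the linear covering property and the metric separation needed to feed into H\"ormander's partition of unity) and in Lemma~\ref{le:Ho0-bis} (H\"ormander's construction transported from $\R^n$ to a manifold). The main thing to check is that the ``neighborhood of $Z_i\setminus Z_j$'' in which $\psi$ takes the prescribed constant value really contains the whole of $U'_i$, which follows immediately from the inclusions above.
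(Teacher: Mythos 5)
Your proposal is correct and follows exactly the paper's own proof, which likewise takes the $U'_i$ from Lemma~\ref{lem:prelim_Horm2}, sets $Z_i = (M\setminus U)\cup\ol{U'_i}$, and invokes Lemma~\ref{le:Ho0-bis}. The only difference is that you spell out the (correct) verification that $U'_i\subset Z_i\setminus Z_j$, which the paper leaves implicit.
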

\begin{proof}
We choose $U'_i \subset U_i$, $i=1,2$, as in Lemma~\ref{lem:prelim_Horm2}
and we set $Z_i = (M\setminus U) \cup \ol{U'_i}$. Then the result follows
from Lemmas~\ref{lem:prelim_Horm2} and~\ref{le:Ho0-bis}.
\end{proof}

\begin{proposition}\label{pro:Csoft} 
Let $\shf$ be a sheaf of  $\Cin[0]_\Msal$-modules on $\Msal$.
Then $\shf$ is $\sect$-acyclic. 
\end{proposition}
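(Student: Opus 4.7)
The plan is to verify the characterization of $\sect$-acyclicity from Proposition~\ref{pro:MV1}(i): for every linear covering $\{U_1,U_2\}$ of $U_1\cup U_2$, the Mayer--Vietoris sequence
$$0\to\shf(U_1\cup U_2)\to\shf(U_1)\oplus\shf(U_2)\to\shf(U_1\cap U_2)\to 0$$
is exact. The left portion (injectivity and exactness at the middle term) is automatic from the sheaf axiom as recorded in Proposition~\ref{pro:MV0}(ii), so the only content is surjectivity of the last map.

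To produce a preimage of a given $s\in\shf(U_1\cap U_2)$, the plan is to use a cutoff of $\Cin[0]$ class supplied by Lemma~\ref{lem:Ho1}. That lemma gives open sets $U'_i\subset U_i$ ($i=1,2$) such that $\{U'_i,U_1\cap U_2\}$ is a linear covering of $U_i$, together with $\psi\in\Cin[0]_M(U_1\cup U_2)$ satisfying $\psi|_{U'_1}=0$ and $\psi|_{U'_2}=1$. Since $\shf$ is a module over $\Cin[0]_\Msal$, multiplication by the restrictions of $\psi$ and of $1-\psi$ sends $s$ to sections $\psi s,(1-\psi)s\in\shf(U_1\cap U_2)$.

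The key point is that $\psi s$ vanishes on $U'_1\cap U_2$, so the pair $(0,\psi s)\in\shf(U'_1)\oplus\shf(U_1\cap U_2)$ has equal restrictions on the overlap $U'_1\cap U_2$. Applying Proposition~\ref{pro:MV0}(ii) to the sheaf $\shf$ for the linear covering $\{U'_1,U_1\cap U_2\}$ of $U_1$, these two local sections glue to a unique $s_1\in\shf(U_1)$ with $s_1|_{U_1\cap U_2}=\psi s$. Symmetrically, $(1-\psi)s$ vanishes on $U'_2\cap U_1$, so there is $s_2\in\shf(U_2)$ with $s_2|_{U_1\cap U_2}=-(1-\psi)s$. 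Then
$$s_1|_{U_1\cap U_2}-s_2|_{U_1\cap U_2}=\psi s+(1-\psi)s=s,$$
establishing surjectivity and hence the desired exactness.

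The argument is self-contained once Lemma~\ref{lem:Ho1} is available, and there is no real obstacle: all the heavy lifting (the existence of the cutoff, which ultimately relies on the refined Hörmander partition from Lemma~\ref{le:Ho0-bis} applied to the sets produced by Lemma~\ref{lem:prelim_Horm2}) has already been done. The one subtle point to check is that the two gluings invoke only the already-available sheaf property of $\shf$ on concrete linear coverings of $U_1$ and of $U_2$, not the Mayer--Vietoris surjectivity being proved — which is fine, since Proposition~\ref{pro:MV0}(ii) provides only left exactness and is the sole input needed to glue local sections that already agree on overlaps.
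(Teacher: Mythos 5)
Your proof is correct and follows essentially the same route as the paper's: reduce to surjectivity of the Mayer--Vietoris map via Proposition~\ref{pro:MV1}, invoke Lemma~\ref{lem:Ho1} to get $U'_1,U'_2$ and the cutoff $\psi$, multiply $s$ by $\psi$ and $1-\psi$, and glue each piece with the zero section over $U'_i$ using the linear covering $\{U'_i,U_1\cap U_2\}$ of $U_i$. Your explicit remark that only the left-exact (separatedness and gluing) part of the sheaf axiom is used, so no circularity arises, is a point the paper leaves implicit.
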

\begin{proof}
By Proposition~\ref{pro:MV1}, it is enough to prove that 
for any $\{U_1,U_2\}$ which is a covering of $U_1\cup U_2$,  the sequence
$0\to \shf(U_1\cup U_2)\to \shf(U_1)\oplus \shf(U_2)\to \shf(U_1\cap U_2)\to 0$
is exact.  
This follows from Lemma~\ref{lem:Ho1}, similarly as in the proof
of~\cite[Prop.~10.2]{KS96}.  The only non trivial fact is the surjectivity at
the last term, which we check now.

We choose $U'_i \subset U_i$, $i=1,2$,
and $\psi\in\Cin[0]_M(U)$ as in Lemma~\ref{lem:Ho1}.
Let $s\in\sect(U_1\cap U_2;\shf)$.
Since $\{U'_i, U_1\cap U_2\}$ is a linear covering of $U_i$, $i=1,2$,
we can define $s_1\in\sect(U_1;\shf)$ and $s_2\in\sect(U_2;\shf)$ by
\eqn
& s_1\vert_{U_1\cap U_2} = \psi\cdot s,\; s_1|_{U'_1} = 0
\quad\text{and}\quad
s_2\vert_{U_1\cap U_2}=(1-\psi)\cdot s,\; s_2|_{U'_2} = 0.
\eneqn
Then $s_1|_{U_1\cap U_2} + s_2|_{U_1\cap U_2} = s$, as required. 
\end{proof}

\begin{corollary}\label{co:s-growth}
The sheaves $\Cinftt_\Msal$,  $\Cinft_\Msal$, $\Dbt_\Msal$,  
$\Cin[t]_\Msal$ \lp $t\in\R_{\geq0}$\rp, $\Ga[s]_\Msal$ and $\Gb[s]_\Msal$  \lp $s>1$\rp,
$\Gc_\Msal$ and $\GG_\Msal$ are $\sect$-acyclic.
\end{corollary}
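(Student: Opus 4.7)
The plan is to reduce everything to Proposition~\ref{pro:Csoft}: any sheaf of $\Cin[0]_\Msal$-modules on $\Msal$ is $\sect$-acyclic. So for each sheaf in the list I need only exhibit a natural $\Cin[0]_\Msal$-module structure (or, in the case of a filtrant colimit, invoke Corollary~\ref{cor:limit-Gam-acyc} instead). The backbone is the chain of monomorphisms of sheaves of rings/modules already recorded just before Definition~\ref{def:s-growth3}:
\[
\Cin[0]_\Msal\into\Cinftt_\Msal\into\Cinft_\Msal\into\Ga[s]_\Msal\into\Gb[s]_\Msal\into\Gc_\Msal\into\GG_\Msal\into\Cinf_\Msal.
\]

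First I would handle $\Cin[t]_\Msal$ for $t\geq 0$: by Proposition~\ref{pro:Cinftfilt}(ii)--(iii), $\Cin[0]_\Msal$ is a sheaf of rings and $\Cin[t]_\Msal$ is a $\Cin[0]_\Msal$-module, so Proposition~\ref{pro:Csoft} applies. For $\Cinftt_\Msal=\indlim[t]\Cin[t]_\Msal$ I can either appeal to Corollary~\ref{cor:limit-Gam-acyc} (filtrant colimit of $\sect$-acyclic sheaves) or note that it is again a $\Cin[0]_\Msal$-module. For $\Cinft_\Msal=\oim{\rhosal}\Cinft_\Msa$, I observe that $\Cinft_\Msa$ is a sheaf of rings, hence so is $\Cinft_\Msal$; the inclusion $\Cin[0]_\Msal\hookrightarrow\Cinft_\Msal$ of~\eqref{eq:Cinftfilt} then makes $\Cinft_\Msal$ a $\Cin[0]_\Msal$-module.

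For $\Dbt_\Msal=\oim{\rhosal}\Dbt_\Msa$, I use that $\Dbt_\Msa$ is a $\Cinft_\Msa$-module, so applying $\oim{\rhosal}$ gives a $\Cinft_\Msal$-module structure (since pushforward of sheaves of rings and modules is compatible), which restricts to a $\Cin[0]_\Msal$-module structure. The Gevrey sheaves $\Ga[s]_\Msal$ and $\Gb[s]_\Msal$ are $\Cinft_\Msal$-modules by Proposition~\ref{pro:gevrey}(ii), hence $\Cin[0]_\Msal$-modules. For $\Gc_\Msal=\indlim[s>1]\Gb[s]_\Msal$, I apply Corollary~\ref{cor:limit-Gam-acyc} once the previous cases are in hand. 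Finally, $\GG_\Msal=\oim{\rhosal}\GG_\Msa$ and $\GG_\Msa$ is a $\Cinft_\Msa$-module (as stated just after Definition~\ref{def:GG}), so the same argument as for $\Dbt_\Msal$ makes $\GG_\Msal$ a $\Cin[0]_\Msal$-module.

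There is no real obstacle here; the only mild subtlety is checking that the module structures on $\Msa$ are preserved under $\oim{\rhosal}$ (which is clear since $\oim{\rhosal}$ is a left exact functor between categories of sheaves on sites with the same underlying presite, so it carries sheaves of rings to sheaves of rings and module sheaves to module sheaves). All the heavy lifting has been done in Proposition~\ref{pro:Csoft}, whose proof rested on the refined cutoff Lemma~\ref{lem:Ho1}; what remains is purely formal bookkeeping.
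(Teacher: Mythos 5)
Your proof is correct and is essentially the paper's own argument: the corollary is stated immediately after Proposition~\ref{pro:Csoft} with no separate proof, the intended reduction being exactly yours, namely that each sheaf in the list carries a $\Cin[0]_\Msal$-module structure (via Proposition~\ref{pro:Cinftfilt}, Proposition~\ref{pro:gevrey}~(ii), the chain of monomorphisms, and the fact that $\oim{\rhosal}$ — being the identity on sections for the coarser topology — preserves ring and module structures), with the filtrant colimits $\Cinftt_\Msal$ and $\Gc_\Msal$ also covered by Corollary~\ref{cor:limit-Gam-acyc}. There is nothing to correct.
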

Let $\shf_\Msal$ denote one of the sheaves appearing in
Corollary~\ref{co:s-growth} and let
$\shf_\Msa\eqdot\epb{\rhosal}\shf_\Msal\in\RD^+(\shd_\Msa)$. Then, if $U$ is
weakly Lipschitz, $\rsect(U;\shf_\Msa)$ is concentrated in degree $0$ and
coincides with $\shf_\Msal(U)$.

\section{A comparison result}

In the next lemma, we  set $M\eqdot \R^n$ and we denote by $dx$ the Lebesgue measure. 
As usual, for $\alpha\in\N^n$ we denote by $D_x^\alpha$ the differential operator 
 $(\partial/\partial_{x_1})^{\alpha_1}\dots(\partial/\partial_{x_n})^{\alpha_n}$ and we denote by 
 $\Delta=\sum_{i=1}^n\partial^2/\partial x_i^2$ the Laplace operator on $M$.

In all this section, we consider an open set $U\in\Op_\Msa$. We set for short
\eqn
&&d(x)=d(x,M\setminus U).
\eneqn
For a locally integrable function $\phi$ on $U$ and $s\in\R_{\geq0}$, we set 
\eq\label{eq:linftynorm}
&&\vvert\phi\vvert_\infty=\sup_{x\in U}\vert \phi(x)\vert,\quad
\vvert\phi\vvert^s_\infty=\vvert d(x)^s\phi(x)\vvert_\infty.
\eneq

\begin{proposition}\label{pro:harmon}
There exists a constant $C_\alpha$ such that for any locally integrable function $\phi$ on $U$, 
one has the estimate for $s\geq0$:
\eq\label{eq:linftynorm1}
&&\vvert D_x^\alpha\phi\vvert^{s+\vert\alpha\vert}_\infty\leq 
C_\alpha\bl\vvert \phi\vvert^s_\infty+\vvert \Delta D_x^\alpha\phi\vvert^{s+\vert\alpha\vert+2}_\infty\br.\label{eq:estim1}
\eneq
 \end{proposition}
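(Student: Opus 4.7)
The approach is pointwise: I bound $|D^\alpha\phi(x_0)|$ for each $x_0 \in U$ by localizing on the ball $B(x_0,r)$ with $r = d(x_0)/2$, which lies in $U$, and then take the supremum over $x_0$. I choose a cut-off $\chi \in \Cinf_c(B(x_0,r))$ with $\chi \equiv 1$ on a neighborhood of $x_0$ and $|D^\beta \chi| \leq C_\beta r^{-|\beta|}$, and let $E$ denote the fundamental solution of $-\Delta$ on $\R^n$, which satisfies $|D^\beta E(z)| \lesssim |z|^{2-n-|\beta|}$ (up to a logarithmic factor when $n = 2$, affecting only constants). Since $\chi D^\alpha \phi \in \Cinf_c(B(x_0,r))$ and $\chi = 1$ near $x_0$, the Poisson equation gives
\begin{equation*}
D^\alpha\phi(x_0) = -\int E(x_0 - z)\bigl[(\Delta\chi)(z) D^\alpha\phi(z) + 2\nabla\chi(z) \cdot \nabla D^\alpha\phi(z) + \chi(z) \Delta D^\alpha\phi(z)\bigr]\,dz.
\end{equation*}

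For the first two terms, $\Delta\chi$ and $\nabla\chi$ are supported in the annulus $A = \{r/2 \leq |z-x_0| \leq r\}$, so I will integrate by parts $|\alpha|$ and $|\alpha|+1$ times respectively to transfer all derivatives of $\phi$ onto the smooth kernels $E(x_0 - \cdot)\Delta\chi$ and $E(x_0 - \cdot)\partial_j\chi$; boundary terms vanish because $\chi$ and its derivatives vanish near $\partial B(x_0,r)$. By Leibniz's rule combined with the estimates $|D^\beta E| \lesssim r^{2-n-|\beta|}$ and $|D^\gamma\chi| \lesssim r^{-|\gamma|}$ on $A$, the resulting integrands are bounded pointwise by $C_\alpha\, r^{-n-|\alpha|}|\phi(z)|$. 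Since $d(z) \geq r$ for $z \in B(x_0,r)$ by the triangle inequality and $s \geq 0$, one has $|\phi(z)| \leq r^{-s}\vvert\phi\vvert^s_\infty$; integrating over $A$, whose volume is $\lesssim r^n$, the sum of the first two integrals is bounded by $C_\alpha\, r^{-s-|\alpha|}\vvert\phi\vvert^s_\infty$.

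For the third integral, the same lower bound $d(z) \geq r$ on $B(x_0,r)$ yields $|\Delta D^\alpha\phi(z)| \leq r^{-s-|\alpha|-2}\vvert\Delta D^\alpha\phi\vvert^{s+|\alpha|+2}_\infty$, while $\int_{B(x_0,r)} |E(x_0-z)|\,dz \leq C r^2$ in every dimension, so this term is bounded by $C\, r^{-s-|\alpha|}\vvert\Delta D^\alpha\phi\vvert^{s+|\alpha|+2}_\infty$. Summing the three contributions and multiplying through by $d(x_0)^{s+|\alpha|} = (2r)^{s+|\alpha|}$ yields
\begin{equation*}
d(x_0)^{s+|\alpha|}\,|D^\alpha\phi(x_0)| \leq C_\alpha\bigl(\vvert\phi\vvert^s_\infty + \vvert\Delta D^\alpha\phi\vvert^{s+|\alpha|+2}_\infty\bigr),
\end{equation*}
and taking the supremum over $x_0 \in U$ gives~\eqref{eq:estim1}. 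The main technical step will be the combinatorial bookkeeping in the Leibniz expansion after the iterated integrations by parts: I need to verify that the total homogeneity $r^{-n-|\alpha|}$ of the kernels on the annulus combines correctly with the volume $r^n$ of $A$ and the weight $d(z)^{-s} \leq r^{-s}$ to produce precisely $r^{-s-|\alpha|}$, which balances $d(x_0)^{s+|\alpha|}$.
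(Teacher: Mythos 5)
Your strategy is sound and is in essence the same as the paper's: represent $D_x^\alpha\phi$ at the center of the ball $B(x_0,d(x_0)/2)$ by convolution against a parametrix of the Laplacian localized at scale $r=d(x_0)/2$, send the $\Delta D_x^\alpha\phi$ contribution through the integrable kernel (gaining $r^2$), move the $|\alpha|$ derivatives in the remaining terms onto the smooth part of the kernel by integration by parts (losing $r^{-|\alpha|}$), and use $d(z)\geq r$ on the ball to convert the weighted sup-norms. The paper packages this as a fixed compactly supported parametrix $\delta=\Delta K+R$ rescaled to $\tw K_c=c^{2-n}K(\cdot/c)$, $\tw R_c=c^{-n}R(\cdot/c)$ with $c=d(x)/2$; your choice $K=\chi E$, $R=-\Delta\chi\,E-2\nabla\chi\cdot\nabla E$ is one construction of such a parametrix, so the two proofs are the same argument in different notation.

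There is, however, a genuine gap in dimension $n=2$, and your parenthetical ``up to a logarithmic factor\dots affecting only constants'' is exactly where it hides. For $n=2$ the fundamental solution is $E(w)=-\tfrac{1}{2\pi}\log|w|$, so on the annulus $r/2\leq|w|\leq r$ one has $|E(w)|\sim|\log r|$ rather than $r^{2-n}=1$, and $\int_{B(x_0,r)}|E(x_0-z)|\,dz\sim r^2|\log r|$ rather than $Cr^2$. Since $r=d(x_0)/2\to0$ as $x_0$ approaches $\partial U$, this logarithm is an unbounded function on $U$, not a constant: your final estimate would read $C_\alpha\bl1+|\log d(x_0)|\br\bl\vvert\phi\vvert^s_\infty+\vvert\Delta D_x^\alpha\phi\vvert^{s+|\alpha|+2}_\infty\br$, which is strictly weaker than~\eqref{eq:estim1} and insufficient for the application (the case $X=\C$, i.e.\ $M=\R^2$, in Proposition~\ref{pro:ot=ott2}). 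The paper's normalization avoids this automatically, since $\int|\tw K_c|=c^2\int|K|$ exactly in every dimension; for $n=2$ the rescaled kernel $c^{2-n}K(w/c)$ equals $\chi(w/c)\bl E(w)+\tfrac{1}{2\pi}\log c\br$, i.e.\ the fundamental solution shifted by the correct $r$-dependent constant. You can repair your proof the same way: because $\int\Delta(\chi D_x^\alpha\phi)\,dz=0$, you may replace $E(x_0-z)$ by $E(x_0-z)+\tfrac{1}{2\pi}\log r$ throughout; the shifted kernel is $O(1)$ on the annulus and has $L^1$-norm $O(r^2)$ on the ball, after which your computation goes through verbatim. For $n\neq2$ no change is needed.
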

\begin{proof}
We shall adapt the proof of~\cite[Prop.~10.1]{KS96}.

\vspace{0.2ex}\noindent
(i) Let us take a distribution $K(x)$ and a $\Cinf$ function
$R(x)$ such that
\eqn
&&\delta(x)=\Delta K(x)+R(x)
\eneqn
(where $\delta(x)$ is the Dirac distribution at the origin)
and the support of $K(x)$ and
the support of $R(x)$ are contained in $\{x\in M; |x|\le 1\}$.
Then $K(x)$ is integrable.
For $c>0$ and for a function $\psi$ set:
\eqn
&&\psi_c(x)=\psi(c^{-1}x), \,\tw K_c=c^{2-n}K_c\mbox{ and }\tw R_c=c^{-n}R_c.
\eneqn
Then we have again
\eqn
&&\delta(x)=\Delta \tw K_c(x)+\tw R_c(x)\,.
\eneqn
Hence we have for any distribution $\psi$
\eq\label{eq:estim0}
&&\psi(x)=\int \tw K_c(x-y)(\Delta \psi)(y)dy+\int \tw R_c(x-y)\psi(y)dy\,.
\eneq
Now for $x\in U$, set $c(x)=d(x)/2$. We set
\eqn
&&A_\alpha(x)=\vert\int \tw K_{c(x)}(x-y)(\Delta D_y^\alpha \phi)(y)dy\vert,\\
&&B_\alpha(x)=\vert\int \tw R_{c(x)}(x-y)D_y^\alpha\phi(y)dy\vert.
\eneqn
Since $\int\vert \tw K_{c(x)}(x-y)\vert dy=c(x)^2\int\vert K(\frac{x}{c(x)}-y)\vert dy$, we get
\eqn
&&\int\vert\tw K_{c(x)}(x-y)\vert dy\leq C_1 d(x)^2
\eneqn
for some constant $C_1$.

\vspace{0.3ex}\noindent
(ii) We have
\eqn
A_\alpha(x)&\leq&\Big(\sup_{|x-y|\le c(x)}|(D_y^\alpha\Delta \phi)(y)|\Big)
\int\vert \tw K_{c(x)}(x-y)|dy\\
&\le& C_1\,\Big(\sup_{|x-y|\le c(x)}|(D_y^\alpha\Delta \phi)(y)|\Big) \cdot d(x)^{2}.
\eneqn
Hence, 
\eq
d(x)^{s+\vert\alpha\vert}A_\alpha(x)
&\le& C_1\,\Big(\sup_{|x-y|\le c(x)}|(D_y^\alpha\Delta \phi)(y)|\Big) \cdot d(x)^{s+\vert\alpha\vert+2}\nonumber\\
\label{eq:estim2}
&\le& 2^{s+\vert\alpha\vert+2} C_1\,\Big(\sup_{|x-y|\le c(x)}|d(y)^{s+\vert\alpha\vert+2}(D_y^\alpha\Delta \phi)(y)|\Big) \\
&\le& 2^{s+\vert\alpha\vert+2} C_1\vvert \Delta D_x^\alpha\phi\vvert^{s+\vert\alpha\vert+2}_\infty.\nonumber
\eneq
Here we have used the fact that on the ball centered at $x$ and radius $c(x)$, we have $d(x)\leq 2 d(y)$. 

\vspace{0.2ex}\noindent
(iii) Since $\tw R_c(x-y)$ is supported by the ball of center $x$ and radius $c(x)$, we have
\eqn
 B_\alpha(x)&=&|\int_{B(x,c(x))} D_y^\alpha \tw R_{c(x)}(x-y)\phi(y)dy|\\
&=&c(x)^{-\vert\alpha\vert}|\int_{B(x,c(x))} c(x)^{-n}(D_y^\alpha R)_{c(x)}(x-y)\phi(y)dy|\\
&\le&c(x)^{-\vert\alpha\vert}\sup_{\vert x-y\vert\le c(x)}\vert \phi(y)\vert\cdot \int\vert D_y^\alpha R(y) \vert dy.
\eneqn
Here we have used the fact that 
$D_y^\alpha R_{c(x)}(y)=c(x)^{-|\alpha|}(D_y^\alpha R)_{c(x)}(y)$.

As in (ii), we deduce that
\eq\label{eq:estim3}
\nonumber d(x)^{s+\vert\alpha\vert}B_\alpha(x)&\leq &C_2 \sup_{\vert x-y\vert\le c(x)}\vert d(y)^s\phi(y)\vert\\
&\leq&C_2\vvert \phi\vvert^{s}_\infty.
\eneq
for some constant $C_2$. 

\vspace{0.2ex}\noindent
(iv) By choosing $\psi=D^\alpha_x\phi$ in~\eqref{eq:estim0} 
the estimate~\eqref{eq:estim1}  follows from~\eqref{eq:estim2} and~\eqref{eq:estim3}.
\end{proof}

\section{Sheaves on complex manifolds}

Let $X$ be a complex manifold of complex dimension $d_X$ and denote by $X_\R$
the real analytic 
underlying manifold. Denote by $\ol X$ the complex manifold conjugate to
$X$. (The holomorphic 
functions on $\ol X$ are the anti-holomorphic functions on $X$.)
Then $X\times \ol X$ is a complexification of $X_\R$ and $\sho_{\ol X}$ is a 
$\shhd_{X\times\ol X}$-module which plays the role of the Dolbeault complex. In the sequel, when there is no risk of confusion, we write for short $X$ instead of $X_\R$.

\begin{notation}
In the sequel, we will often have to consider the composition $\roim{\rhosal}\circ\eim{\rhosa}$. For convenience, we introduce a notation. We set
\eq\label{not:roeim}
&&\oeim{\rhosl}\eqdot\oim{\rhosal}\circ\eim{\rhosa}.
\eneq
\end{notation}

\subsubsection*{Sheaves on complex manifolds}
By applying the Dolbeault functor $\rhom[\shhd_\olXsal](\oeim{\rhosl}\sho_{\ol X},\scbul)$ to one of the sheaves 
$$
\setlength{\arraycolsep}{1.8mm}
\begin{array}{lllllll}
\Cinftt_\Xsal,&  \Cinft_\Xsal,& \Ga[s]_\Xsal,& \Gb[s]_\Xsal,&  \Gc_\Xsal,& \GG_\Xsal, &\Cinf_\Xsal,  \end{array}
$$
we obtain respectively the sheaves
$$
\setlength{\arraycolsep}{3.2mm}
\begin{array}{lllllll}
\Ott_\Xsal,& \Ot_\Xsal, & \OGa[s]_\Xsal, &\OGb[s]_\Xsal,& \OGc_\Xsal,&\OG_\Xsal,& \sho_\Xsal.
\end{array}
$$
All these objects belong to $\RD^+(\shd_\Xsal)$.
Then we can apply the functor $\epb{\rhosal}$ and we obtain the sheaves 
$$
\setlength{\arraycolsep}{3.2mm}
\begin{array}{lllllll}
\Ott_\Xsa, &\Ot_\Xsa, & \OGa[s]_\Xsa,& \OGb[s]_\Xsa,& \OGc_\Xsa,&\OG_\Xsa,& \sho_\Xsa. 
\end{array}
$$
Note that the functor $\epb{\rhosal}$ commutes with the Dolbeault functor. More precisely: 

\begin{lemma}\label{le:epbrhodolbeault}
Let $\shc$ be an object of $\RD^+(\shd_\XRsal)$. There is a natural isomorphism
\eq\label{eq:epbrhodolbeault}
&&\epb{\rhosal}\rhom[\shhd_\olXsal](\oeim{\rhosl}\sho_{\ol X}, \shc_\Xsal)\simeq
\rhom[\shhd_\olXsa](\eim{\rhosa}\sho_{\ol X}, \epb{\rhosal}\shc_\Xsal).
\eneq
\end{lemma}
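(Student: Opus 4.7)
The strategy is to deduce the lemma from a general Grothendieck-duality-type commutation formula
\eqn
&&\epb{\rhosal}\rhom[\shhd_\olXsal](P, G) \simeq \rhom[\shhd_\olXsa](\opb{\rhosal}P, \epb{\rhosal}G),
\eneqn
valid for $G\in \RD^+(\shhd_\olXsal)$ and for $P\in\RD^-(\shhd_\olXsal)$ admitting, locally on $X$, a bounded resolution by locally free $\shhd_\olXsal$-modules of finite rank. Applied with $P = \eim{\rhosl}\sho_{\ol X}$ and $G = \shc_\Xsal$, this gives~\eqref{eq:epbrhodolbeault}: Proposition~\ref{pro:663} yields $\eim{\rhosl}\sho_{\ol X}\simeq \oim{\rhosal}\eim{\rhosa}\sho_{\ol X}$, whence by Lemma~\ref{le:rhoff} one has $\opb{\rhosal}\eim{\rhosl}\sho_{\ol X}\simeq\eim{\rhosa}\sho_{\ol X}$, which is exactly the first argument on the right of~\eqref{eq:epbrhodolbeault}.

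To verify the resolution hypothesis I would apply the exact functor $\eim{\rhosl}$ (Proposition~\ref{pro:663}) to the Spencer resolution of $\sho_{\ol X}$ over $\shhd_{\ol X}$, whose $k$-th term $\shhd_{\ol X}\tens_{\sho_{\ol X}}\bigwedge^k\Theta_{\ol X}$ is locally free of rank $\binom{d_X}{k}$ over $\shhd_{\ol X}$. Since $\eim{\rhosl}$ commutes with finite direct sums, the resulting bounded resolution of $\eim{\rhosl}\sho_{\ol X}$ has $k$-th term locally isomorphic to $\shhd_\olXsal^{\binom{d_X}{k}}$. The commutation formula itself is then constructed as the composition
\eqn
\epb{\rhosal}\rhom[\shhd_\olXsal](P, G)
&\to& \epb{\rhosal}\rhom[\shhd_\olXsal](P, \roim{\rhosal}\epb{\rhosal}G) \\
&\simeq& \epb{\rhosal}\roim{\rhosal}\rhom[\shhd_\olXsa](\opb{\rhosal}P, \epb{\rhosal}G) \\
&\simeq& \rhom[\shhd_\olXsa](\opb{\rhosal}P, \epb{\rhosal}G),
\eneqn
using the adjunction unit $G\to \roim{\rhosal}\epb{\rhosal}G$ from Theorem~\ref{th:rightadj2}, the standard identity $\roim{\rhosal}\rhom[\shhd_\olXsa](\opb{\rhosal}-,-)\simeq\rhom[\shhd_\olXsal](-,\roim{\rhosal}-)$ (valid because $\opb{\rhosal}$ is exact by Lemma~\ref{le:rhoff}), and the isomorphism $\epb{\rhosal}\roim{\rhosal}\simeq\id$ from Corollary~\ref{cor:rightadj}, extended to $\shhd$-modules via the forgetful-functor square of Theorem~\ref{th:rightadj2}.

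That this natural morphism is an isomorphism is a local question on $X$ and is compatible with finite direct sums in $P$; devissage along the Spencer resolution therefore reduces it to the trivial case $P=\shhd_\olXsal$, where both sides identify with $\epb{\rhosal}G$ using $\opb{\rhosal}\shhd_\olXsal\simeq\shhd_\olXsa$. The main obstacle is the careful verification of the duality-style identity $\roim{\rhosal}\rhom[\shhd_\olXsa](\opb{\rhosal}-,-)\simeq\rhom[\shhd_\olXsal](-,\roim{\rhosal}-)$ in the $\shhd$-module framework and of its compatibility with this devissage; it ultimately rests on a projection-formula computation for locally free $\shhd_\olXsal$-modules of finite rank, which reduces via finite direct sums to the trivial case of $\shhd_\olXsal$ itself, using Proposition~\ref{pro:sectUoplusB} together with its $\shhd$-analogue established in Theorem~\ref{th:rightadj2}.
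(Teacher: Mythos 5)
Your overall strategy coincides with the paper's: the proof given there is the single observation that $\eim{\rhosl}\sho_{\ol X}$ admits a finite resolution by free $\shhd_\olXsal$-modules of finite rank (the Spencer/Dolbeault resolution), after which the commutation of $\epb{\rhosal}$ with $\rhom[\shhd_\olXsal](\scbul,\shc_\Xsal)$ reduces by d\'evissage to the trivial case of $\shhd_\olXsal$ itself. Your identification $\opb{\rhosal}\eim{\rhosl}\sho_{\ol X}\simeq\eim{\rhosa}\sho_{\ol X}$ via Proposition~\ref{pro:663} and Lemma~\ref{le:rhoff}, and the reduction to $P=\shhd_\olXsal$, are both correct, and your local formulation of the freeness of the Spencer terms is if anything more careful than the paper's.

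There is, however, a concrete error in your construction of the comparison morphism. For the adjoint pair $(\roim{\rhosal},\epb{\rhosal})$ the unit is $\id\to\epb{\rhosal}\roim{\rhosal}$ and the counit is $\roim{\rhosal}\epb{\rhosal}\to\id$; the morphism $G\to\roim{\rhosal}\epb{\rhosal}G$ that you invoke as ``the adjunction unit'' does not exist for a general $G$ --- the counit points the other way and is not invertible (if it were, $\epb{\rhosal}$ would carry no new information). Consequently the first arrow of your composition is undefined. The natural morphism must be built in the opposite direction: start from $\rhom[\shhd_\olXsa](\opb{\rhosal}P,\epb{\rhosal}G)\simeq\epb{\rhosal}\roim{\rhosal}\rhom[\shhd_\olXsa](\opb{\rhosal}P,\epb{\rhosal}G)$ (Corollary~\ref{cor:rightadj}), apply your adjunction identity to obtain $\epb{\rhosal}\rhom[\shhd_\olXsal](P,\roim{\rhosal}\epb{\rhosal}G)$, and then compose with the genuine counit $\roim{\rhosal}\epb{\rhosal}G\to G$ to land in $\epb{\rhosal}\rhom[\shhd_\olXsal](P,G)$. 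With this repair the remainder of your argument --- d\'evissage along the Spencer resolution down to $P=\shhd_\olXsal$, where both sides become $\epb{\rhosal}G$ --- goes through and yields the isomorphism~\eqref{eq:epbrhodolbeault}, the asserted statement being symmetric in the direction of the comparison map.
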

\begin{proof}
This follows from the fact that the  $\shhd_X$-module $\sho_{\ol X}$ admits a global locally finite free resolution.
\end{proof}

Recall the natural isomorphism~\cite[Th.~10.5]{KS96}
\eqn
\Ot_\Xsa&\isoto&\rhom[\shhd_\olXsa](\eim{\rhosa}\sho_{\ol X}, \Dbt_\Xsa).
\eneqn

\begin{proposition}\label{pro:ot=ott2}
The natural morphism
\eqn
&&\Ott_\Xsal\to\Ot_\Xsal
\eneqn
is an  isomorphism in $\RD^+(\shhd_\Xsal)$.
\end{proposition}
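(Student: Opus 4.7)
The plan is to reduce, via the Dolbeault resolution of $\eim{\rhosl}\sho_{\ol X}$, to the exactness of the quotient $\bar\partial$-complex with coefficients in $\Cinft_\Xsal/\Cinftt_\Xsal$, and then to exploit the harmonic estimate of Proposition~\ref{pro:harmon}. The $\shhd_{\ol X}$-module $\eim{\rhosl}\sho_{\ol X}$ admits a global finite free resolution given by the Koszul complex $\shhd_{\ol X}\tens\Lambda^\bullet\overline{T^*X}$ with $\bar\partial$ as differential, so both $\Ott_\Xsal$ and $\Ot_\Xsal$ are represented by Dolbeault complexes with coefficients in $\Cinftt_\Xsal$ and $\Cinft_\Xsal$ respectively, and the morphism in the statement is induced term-by-term by the inclusion $\Cinftt_\Xsal\into\Cinft_\Xsal$ of~\eqref{eq:Cinftfilt}.

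To check that this morphism of complexes is a quasi-isomorphism in $\RD^+(\shhd_\Xsal)$, I would use that by Corollary~\ref{co:s-growth} all sheaves in sight are $\sect$-acyclic on $\Xsal$; the claim thus reduces to the quasi-isomorphism of the Dolbeault complexes of sections over every $U$ in a cofinal family of weakly Lipschitz open sets in $\Op_\Xsa$, for instance coordinate polydisks, where sections coincide with the classical spaces of forms with the prescribed growth.

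The heart of the argument is the degree-zero case, which is a direct application of Proposition~\ref{pro:harmon}: a $\bar\partial$-closed element of $\Cinft(U)$ is a holomorphic function $f$ on $U$, so $\Delta f=0$, and the estimate reduces to $\vvert D^\alpha f\vvert_\infty^{s+\vert\alpha\vert}\leq C_\alpha\vvert f\vvert_\infty^s$; hence every temperate holomorphic function is automatically strictly temperate of the same order. For the positive-degree cohomology, given $\omega\in\Cinft^{(0,q)}(U)$ with $\bar\partial\omega\in\Cinftt^{(0,q+1)}(U)$, I would first obtain a temperate $\bar\partial$-primitive $\eta_0\in\Cinft^{(0,q-1)}(U)$ by the classical $L^\infty$-theory of $\bar\partial$ on polydisks; then, since the Euclidean Laplacian commutes with $\bar\partial$ and is elliptic, applying Proposition~\ref{pro:harmon} componentwise to $\eta_0$ upgrades its temperate bounds into simultaneous polynomial growth bounds on every derivative modulo a strictly temperate correction, yielding $\eta\in\Cinft^{(0,q-1)}(U)$ with $\omega-\bar\partial\eta\in\Cinftt^{(0,q)}(U)$.

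The main obstacle is this last bootstrap in degrees $q\geq 1$: Proposition~\ref{pro:harmon} is designed precisely to trade $\Delta$-control for full derivative control, so the natural strategy is to iterate it against a temperate $\bar\partial$-primitive, but controlling the assembly of the correction terms to produce a single global $\eta\in\Cinft^{(0,q-1)}(U)$ with remainder in $\Cinftt^{(0,q)}(U)$ requires care. The degree-zero step, by contrast, is a clean and direct consequence of Proposition~\ref{pro:harmon}.
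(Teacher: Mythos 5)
Your reduction to complexes of sections and your degree-zero step are sound and coincide with part~(i) of the paper's argument: a harmonic (in particular holomorphic) $\phi\in\sect(U;\Cinft_\Msal)$ satisfies $\vvert\phi\vvert^s_\infty<\infty$ for some $s$, and the estimate~\eqref{eq:estim1} with $\Delta\phi=0$ then controls all derivatives, so $\phi\in\sect(U;\Cinftt_\Msal)$. The genuine gap is in degrees $q\geq1$, which you yourself flag as ``the main obstacle'' without resolving it, and the route you sketch does not work as stated. First, your $\omega$ is only $\bar\partial$-closed modulo $\Cinftt$, so it admits no $\bar\partial$-primitive to start the bootstrap. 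Second, and more seriously, there is no off-the-shelf ``classical $L^\infty$-theory of $\bar\partial$'' yielding primitives with these boundary-distance weights on subanalytic open sets: the existence of temperate $\bar\partial$-primitives is precisely the hard content of~\cite[Th.~10.5]{KS96}, so invoking it is circular, and in any case a temperate primitive need not exist since $H^q$ of the temperate Dolbeault complex need not vanish --- the statement to prove is that the map on cohomology is bijective, not that the cohomology vanishes.

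The idea you are missing is the reduction, in all degrees simultaneously, to the single operator $\Delta=4\sum_i\partial_{z_i}\partial_{\bar z_i}$. Writing the Dolbeault complex as the Koszul complex of the commuting operators $\bar\partial_1,\dots,\bar\partial_{d_X}$, the contraction homotopy $h=4\sum_i\partial_{z_i}\iota_{d\bar z_i}$ gives $\bar\partial h+h\bar\partial=\Delta$, so $\Delta$ acts as zero on the Dolbeault cohomology of the quotient $\shg(U)=\sect(U;\Cinft_\Msal)/\sect(U;\Cinftt_\Msal)$. Hence it suffices to prove that $\Delta$ is bijective on $\shg(U)$, i.e.\ that the vertical arrows in the diagram of two-term complexes $0\to\sect(U;\Cinftt_\Msal)\to[\Delta]\sect(U;\Cinftt_\Msal)\to0$ and $0\to\sect(U;\Cinft_\Msal)\to[\Delta]\sect(U;\Cinft_\Msal)\to0$ give a quasi-isomorphism; this is how the paper proceeds, following~\cite[Th.~10.5]{KS96}. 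Two inputs then finish the proof: the surjectivity of $\Delta$ on $\sect(U;\Cinft_\Msal)$, coming from~\cite[Prop.~10.1]{KS96}, which you never invoke, and Proposition~\ref{pro:harmon}, which shows both that temperate harmonic functions are strictly temperate and that a temperate solution $\phi$ of $\Delta\phi=\psi$ with $\psi$ strictly temperate is itself strictly temperate. Your componentwise bootstrap of a hypothetical primitive has no substitute for these steps and cannot be completed as written.
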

\begin{proof}
Let $U\in\Op_\Msa$. Consider the diagram (in which $M=\R^{2n}$)
\eqn
&&\xymatrix{
0\ar[r]&\sect(U;\Cinftt_\Msal)\ar[r]^-{\Delta}\ar[d]&\sect(U;\Cinftt_\Msal)\ar[r]\ar[d]&0\\
0\ar[r]&\sect(U;\Cinft_\Msal)\ar[r]^-{\Delta}&\sect(U;\Cinft_\Msal)\ar[r]&0.
}\eneqn
As in the proof of~\cite[Th.~10.5]{KS96}, we are reduced to prove that the vertical arrows
induce a qis from the top line to the bottom line. We shall apply Proposition~\ref{pro:harmon}.

\vspace{0.3ex}\noindent
(i) Let $\phi\in \sect(U;\Cinft_\Msal)$ with $\Delta\phi=0$. 
There exists some $s\geq0$ such that $\vvert d(x)^s\phi\vvert_\infty<\infty$.
Then $\vvert d(x)^{s+\vert\alpha\vert}D^\alpha_x\phi\vvert_\infty<\infty$ by~\eqref{eq:estim1}.

\vspace{0.3ex}\noindent
(ii) It follows from~\cite[Prop.10.1]{KS96} that the arrow in the bottom is surjective. Now let 
$\psi\in\sect(U;\Cinftt_\Msal)$. There exists $\phi\in\sect(U;\Cinft_\Msal)$ with $\Delta\phi=\psi$. Then it follows 
from~\eqref{eq:estim1} that $\phi\in\sect(U;\Cinftt_\Msal)$.
\end{proof}

\begin{remark}\label{pro:ot=ott}
It is natural to expect that the morphism
\eqn
&&\OGc_\Xsal\to\OG_\Xsal
\eneqn
is an  isomorphism in $\RD^+(\shhd_\Xsal)$. The proof of Proposition~\ref{pro:ot=ott2} can be adapted with the exception that one does not know if the map $\Delta\cl  \GG_\Xsa(U) \to \GG_\Xsa(U)$ is surjective. 
\end{remark}

\subsubsection*{Solutions of holonomic $\shd$-modules}
The next result is a reformulation of a theorem of Kashiwara~\cite{Ka84}.

\begin{theorem}
Let $\shm$ be a regular holonomic $\shd_X$-module. Then the natural morphism
\eqn
&&\rhom[\shd_\Xsa](\eim{\rhosa}\shm, \Ot_\Xsa)\to \rhom[\shd_\Xsa](\eim{\rhosa}\shm, \sho_\Xsa)
\eneqn
is an isomorphism.
\end{theorem}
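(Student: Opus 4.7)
The plan is to identify both sides of the stated isomorphism with $\oim{\rhosa}G$, where $G\eqdot\rhom[\shd_X](\shm,\sho_X)$ is the $\R$-constructible complex of holomorphic solutions of $\shm$, and then to verify that the canonical comparison morphism becomes the identity on $\oim{\rhosa}G$.

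First, I would treat the right-hand side, which is essentially formal. Since $\sho_\Xsa=\oim{\rhosa}\sho_X$ lies in the essential image of $\oim{\rhosa}$ and $\opb{\rhosa}\oim{\rhosa}\simeq\id$, the adjunction between the exact functors $\eim{\rhosa}$ and $\opb{\rhosa}$, compatible with the $\shd$-module structure via $\shd_\Xsa=\eim{\rhosa}\shd_X$, yields
\eqn
&&\rhom[\shd_\Xsa](\eim{\rhosa}\shm,\sho_\Xsa)\simeq\oim{\rhosa}\rhom[\shd_X](\shm,\sho_X)=\oim{\rhosa}G.
\eneqn
No regularity hypothesis is used for this step; only the fact that $\sho_\Xsa$ is of the form $\oim{\rhosa}(\scbul)$ matters.

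Next I would identify the left-hand side with $\oim{\rhosa}G$, and this is where regularity of $\shm$ is essential. The same formal adjunction breaks down because $\Ot_\Xsa$ is not in the essential image of $\oim{\rhosa}$. The input is therefore Kashiwara's theorem \cite{Ka84}, recalled in the introduction: for $\shm$ regular holonomic the canonical morphism $\shm\isoto\opb{\rhosa}\rhom[\cor_\Xsa](G,\Ot_\Xsa)$ is an isomorphism in $\RD(\shd_X)$. Using the $\R$-constructibility of $G$ (so that $G$ is seen indifferently on $X$ or on $\Xsa$ via the fully faithful exact functor $\oim{\rhosa}$), the counit $\eim{\rhosa}\opb{\rhosa}\rhom[\cor_\Xsa](G,\Ot_\Xsa)\to\rhom[\cor_\Xsa](G,\Ot_\Xsa)$ of the adjunction $(\eim{\rhosa},\opb{\rhosa})$ combined with Kashiwara's isomorphism produces a canonical morphism $\eim{\rhosa}\shm\to\rhom[\cor_\Xsa](G,\Ot_\Xsa)$. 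Applying $\rhom[\shd_\Xsa](\scbul,\Ot_\Xsa)$ and the tensor--hom adjunction on $\Xsa$ identifies
\eqn
&&\rhom[\shd_\Xsa](\eim{\rhosa}\shm,\Ot_\Xsa)\simeq\oim{\rhosa}G.
\eneqn

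The main obstacle is precisely this second identification, which in content is equivalent to Kashiwara's theorem: it encodes the statement that every holomorphic solution of a regular holonomic $\shd_X$-module is already temperate. Once both sides of the theorem are identified with $\oim{\rhosa}G$, the naturality square for the inclusion $\Ot_\Xsa\to\sho_\Xsa$ commutes because both identifications specialize to the identity of $G$ after applying $\opb{\rhosa}$, and $\oim{\rhosa}$ restricted to $\mdrc[\cor_X]$ is fully faithful; hence the comparison morphism is an isomorphism.
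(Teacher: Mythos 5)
Your treatment of the right-hand side is essentially sound: since $\sho_\Xsa\simeq\roim{\rhosa}\sho_X$ (note it is $\roim{\rhosa}$, not $\oim{\rhosa}$ --- this object is not concentrated in degree $0$ when $d_X>1$), a local free resolution of the coherent module $\shm$ together with the adjunction $(\eim{\rhosa},\opb{\rhosa})$ gives $\rhom[\shd_\Xsa](\eim{\rhosa}\shm,\sho_\Xsa)\simeq\roim{\rhosa}\Sol(\shm)\simeq\oim{\rhosa}G$. The gap is in the left-hand side. What your construction actually yields is only a canonical morphism $\oim{\rhosa}G\to\rhom[\shd_\Xsa](\eim{\rhosa}\shm,\Ot_\Xsa)$: the counit produces $\eim{\rhosa}\shm\to\rhom[\C_\Xsa](G,\Ot_\Xsa)$, and the tensor--hom adjunction merely transposes the resulting evaluation pairing. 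Adjunctions match morphisms; they do not identify objects, and the assertion that this comparison morphism is an isomorphism is precisely the content of the theorem. It does not follow from the reconstruction isomorphism $\shm\isoto\opb{\rhosa}\rhom(G,\Ot_\Xsa)$, because that statement only gives you control after applying $\opb{\rhosa}$, and $\opb{\rhosa}$ is not conservative: one has $\opb{\rhosa}\Ot_\Xsa\simeq\sho_X\simeq\opb{\rhosa}\sho_\Xsa$, so the morphism of the theorem becomes an isomorphism after $\opb{\rhosa}$ for \emph{every} coherent $\shm$, regular or not. Since the conclusion fails for irregular holonomic modules (for $\shm=\shd_X\exp(1/t)$ on $X=\C$ the temperate solution complex, computed in \cite{KS03}, differs from $\oim{\rhosa}\Sol(\shm)$), no argument that only tests objects through $\opb{\rhosa}$ can close; regularity must intervene at the level of the site $\Xsa$ itself, and it does not do so effectively anywhere in your proof.

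For comparison, the paper gives no proof at all: it states the result as a reformulation of a theorem of Kashiwara \cite{Ka84}, namely the fact that for a regular holonomic module the temperate distribution solutions coincide with all distribution solutions; combined with the Dolbeault identification $\Ot_\Xsa\isoto\rhom[\shhd_\olXsa](\eim{\rhosa}\sho_{\ol X},\Dbt_\Xsa)$ recalled from \cite{KS96}, this is exactly the displayed isomorphism. That input of \cite{Ka84} is an analytic theorem proved by d\'evissage and resolution of singularities, reducing to explicit estimates in the normal crossing case. The correct fix is therefore to cite that comparison theorem directly (your identification of the two sides with $\oim{\rhosa}G$ can then be retained as bookkeeping), rather than to try to extract it from the reconstruction formula by adjunction.
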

The next result was a conjecture of~\cite{KS03} and has recently been proved by Morando~\cite{Mr13}
(see also~\cite{KS15} for a rather different proof)
by using  the deep results of Mochizuki~\cite{Mo09} (completed by those of Kedlaya~\cite{Ke10, Ke11} for the analytic case). 

\begin{theorem}\label{conj:construct1}
Let $\shm$ be a  holonomic $\shd_X$-module. Then for any $G\in\Derb_\Rc(\C_X)$, 
\eqn
&&\opb{\rhosa}\rhom(G,\rhom[\shd_\Xsa](\eim{\rhosa}\shm, \Ot_\Xsa))\in \Derb_\Rc(\C_X).
\eneqn
\end{theorem}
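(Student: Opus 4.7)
The statement is quoted from Morando's article~\cite{Mr13}, so my plan is to sketch the strategy of that proof, which ultimately rests on Mochizuki--Kedlaya's structure theorem for holonomic $\shd$-modules. Since $\Rc$-constructibility is a local property, I would first reduce to a local statement near an arbitrary point $x_0\in X$. Next, by Kashiwara's theorem (stated just above as Theorem~10.5 combined with the known regular case), the problem is already settled when $\shm$ is regular holonomic; so one may try to isolate the irregular part. The natural tool is to express the temperate solution complex $\shs\shol_X(\shm) \eqdot \rhom[\shd_\Xsa](\eim{\rhosa}\shm,\Ot_\Xsa)$ and then study $\opb{\rhosa}\rhom(G,\shs\shol_X(\shm))$ via a combination of $t$-exactness properties of $\opb{\rhosa}$ and the six-functor formalism on $\Xsa$.

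The main step is to use the Mochizuki--Kedlaya structure theorem: there exists a proper modification $\pi\cl \wt X\to X$ such that $\pi^+\shm$ has, at each point of a normal crossing divisor $D\subset \wt X$, a good formal decomposition into elementary local models of the form $\she^\varphi\tens\shn$, where $\varphi$ is a meromorphic function with poles along $D$ and $\shn$ is a regular meromorphic connection. Using the proper direct image and the compatibility $\roim{\pi}\circ\shs\shol_{\wt X} \simeq \shs\shol_X\circ\pi^+$ (for the appropriate sheafified versions of these functors), one is reduced to proving the analogous constructibility statement for $\pi^+\shm$ on $\wt X$. Then, exploiting the good decomposition, one further reduces to the case of an elementary model $\she^\varphi\tens \shn$.

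The heart of the proof is a direct computation of $\shs\shol_{\wt X}(\she^\varphi\tens\shn)$ near the pole locus of $\varphi$. This uses the real oriented blow-up $\varpi\cl \wt X^{\rm re}\to \wt X$ along $D$ and the Stokes structure on $\opb{\varpi}D$: a temperate holomorphic solution of $\she^\varphi\tens \shn$ on an open subanalytic subset $U$ exists precisely on the sectors where $\Re\varphi$ is bounded above. Since these sectors are cut out by subanalytic inequalities and $\shn$ has a $\Rc$-constructible solution complex by Kashiwara, one deduces that $\shs\shol_{\wt X}(\she^\varphi\tens\shn)$ comes, via $\opb{\rhosa}$, from an $\Rc$-constructible sheaf on $\wt X$. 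The stability of $\Derb_\Rc$ under the functors $\rhom(G,\scbul)$ and under proper direct image then yields the conclusion on $X$.

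The main obstacle is thus the reduction to elementary models via Mochizuki--Kedlaya and the careful tracking through the real oriented blow-up that the temperate solutions of $\she^\varphi\tens\shn$ arise as $\opb{\rhosa}$ of an $\Rc$-constructible sheaf (i.e.\ are controlled by the Stokes filtration). The rest -- stability of $\Derb_\Rc$ under the standard operations, reduction to a local problem, and use of $\roim{\pi}$ -- is of a more formal nature once the elementary case is handled.
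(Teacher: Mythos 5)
The paper gives no proof of this statement: it is stated as a theorem only because it has been established by Morando~\cite{Mr13} using the results of Mochizuki and Kedlaya, and the text simply records that citation. Your sketch is a faithful outline of the strategy of that cited work (reduction by proper modification and good formal decomposition to elementary models $\she^\varphi\tens\shn$, then analysis of temperate solutions on Stokes sectors via the real blow-up), so it matches the justification the paper itself relies on.
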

It is natural to  conjecture that this theorem still holds when replacing the sheaf  $\Ot_\Xsa$ with one of the sheaves $\OGa[s]_\Xsa$ or 
 $\OGb[s]_\Xsa$.

In~\cite{KS03}, the object $\hom[\shd_\Xsa](\eim{\rhosa}\shm, \Ot_\Xsa)$ is explicitly calculated when $X=\C$ and, denoting by $t$ a holomorphic coordinate on $X$, $\shm$ is associated with the operator $t^2\partial_t+1$, that is, 
$\shm=\shd_X\exp(1/t)$. 

It is well-known, after~\cite{Ra78} (see also~\cite{Ko73a}), that the 
holomorphic solutions of an ordinary linear differential equation singular at the origin have Gevrey growth, the 
growth being related to the slopes of the Newton polygon.

\begin{conjecture}
Let $\shm$ be a holonomic $\shd_X$-module. Then
the natural morphism
\eqn
&&\rhom[\shd_\Xsa](\eim{\rhosa}\shm, \OG_\Xsa)\to \rhom[\shd_\Xsa](\eim{\rhosa}\shm,\sho_\Xsa)
\eneqn
is an isomorphism, or, equivalently, 
\eqn
&&\rhom[\shd_\Xsa](\eim{\rhosa}\shm, \OG_\Xsa)\isoto\roim{\rhosa}\rhom[\shd_X](\shm,\sho_X).
\eneqn
Moreover,   there exists a discrete set $Z\subset \R_{>1}$ such that
 the morphisms $\rhom[\shd_\Xsa](\shm,\OGa[s]_\Xsa)\to \rhom[\shd_\Xsa](\shm,\OGa[t]_\Xsa)$
are isomorphisms for $s\leq t$ in the same components of $\R_{>1}\setminus Z$. 
\end{conjecture}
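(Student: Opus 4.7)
The plan is to combine Kashiwara's regular Riemann--Hilbert theorem (already invoked for $\Ot_\Xsa$) with the irregular Riemann--Hilbert picture of Mochizuki--Kedlaya, using the classical one-dimensional Gevrey asymptotic theory of Ramis as the main local input. Throughout, one works with $\OGc_\Xsal$ on the linear site and passes to $\OG_\Xsa$ only at the end via $\epb{\rhosal}$, exploiting Proposition~\ref{pro:ot=ott} (which shows $\OGc_\Xsal\isoto\OG_\Xsal$) together with Lemma~\ref{le:epbrhodolbeault} to replace $\rhom[\shd_\Xsa](\eim{\rhosa}\shm,\OG_\Xsa)$ by its Dolbeault pre-image on $\Xsal$.

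First I would reduce to a good formal situation. By Mochizuki's theorem (used in Theorem~\ref{conj:construct1} to control $\Ot_\Xsa$), there exists a proper subanalytic modification $\pi\cl \tw X\to X$, a composition of blow-ups with smooth centres, such that $\pi^*\shm$ is a good meromorphic connection along a normal crossing divisor $D\subset \tw X$. Since $\pi$ is proper, one expects $R\pi_*$ to commute with both $\roim{\rhosa}$ and with taking $\rhom[\shd](-,\OG)$-solutions; factoring $\pi$ as an alternation of closed embeddings and submersions allows one to apply Theorems~\ref{th:oimopbfsal} and~\ref{th:oimopbfsalsub} together with a projection formula. This reduces the first assertion to the case when $\shm$ admits, at every point $p\in D$, a formal decomposition
\[
\widehat{\shm}_p\simeq\bigoplus_\alpha\bl\she^{\phi_\alpha}\tens\shr_\alpha\br,
\]
where $\phi_\alpha\in\sho(*D)_p/\sho_p$ and each $\shr_\alpha$ is regular holonomic. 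For the regular factors, the first isomorphism is a consequence of Kashiwara's theorem applied to $\sho_\Xsa$ pushed through $\roim{\rhosa}$. For the exponential factors, one applies Ramis's theorem: in each Stokes sector along $D$, the holomorphic solutions of $\she^{\phi_\alpha}\tens\shr_\alpha$ admit Gevrey asymptotic expansions of a precise order, and by choosing $s>1$ larger than the maximal Poincar\'e rank of $\phi_\alpha$ one obtains that every holomorphic solution actually lies in $\OGb[s]$.

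Next I would identify the discrete set $Z$ with the set of critical Gevrey indices coming from the slopes of $\shm$. In dimension one, a meromorphic connection has slopes $p_1<\dots<p_r$ in $\Q_{\geq0}$; one sets $s_i=1+1/p_i$ and takes $Z=\{s_1,\dots,s_r\}\subset\R_{>1}$. Between two consecutive critical indices, the sectorial decomposition of $\she^\phi$ is unchanged and the space of Gevrey solutions is stable, yielding the second assertion locally. In higher dimensions, $Z$ is the finite, hence discrete, union of the critical indices attached to the irreducible components of $D$ after the Mochizuki modification; the invariance of $\rhom[\shd_\Xsa](\shm,\OGa[s]_\Xsa)$ along each connected component of $\R_{>1}\setminus Z$ then reduces, by the formal decomposition, to the one-dimensional statement.

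The main obstacle will be the compatibility of the Mochizuki--Kedlaya modification with the linear subanalytic topology: one must prove that, for the proper map $\pi$, the derived direct image commutes with $\roim{\rhosal}$ and with the sheaves $\OGa[s]$, $\OGb[s]$, and this requires a projection formula of the form $R\pi_*\epb{\rhosal}\simeq\epb{\rhosal}R\pi_*$ for the pieces of $\pi$, which is not formal because $\pi$ is neither a closed embedding nor a submersion in general (cf.\ Remark~\ref{rem:embedsubm}). A secondary difficulty is the precise matching, for an elementary local model $\she^\phi$, between the sectorial $L^\infty$ bounds furnished by Ramis's theorem and the pointwise Gevrey estimate~\eqref{eq:gevrey2} with $d(x,K\setminus U)$ as the reference distance; the refined cutoff lemma (Lemma~\ref{lem:Ho1}) and Proposition~\ref{pro:harmon} should suffice to transfer the former into the latter on open sets with Lipschitz boundary, but a genuinely new estimate may be needed near the irregular locus itself.
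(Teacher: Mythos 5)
This statement is labelled a \emph{Conjecture} in the paper: the authors give no proof of it, and they explicitly note that even the weaker constructibility statement for $\Ot_\Xsa$ (Theorem~\ref{conj:construct1}) required the deep results of Mochizuki and Kedlaya and was only recently established by Morando. So there is no argument in the paper against which to measure yours; the question is whether your sketch closes the conjecture, and it does not.

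The central gap is the reduction step. Your plan hinges on pushing the problem forward along a proper modification $\pi\cl\tw X\to X$ and commuting $R\pi_*$ with $\roim{\rhosa}$, with $\epb{\rhosal}$, and with the formation of $\rhom[\shd](-,\OG)$. None of this is available: Chapter~\ref{ShvSa3} only constructs direct images on the linear site for closed embeddings and for submersions \emph{separately}, and Remark~\ref{rem:embedsubm} shows that the two constructions genuinely disagree on a composition, so "factoring $\pi$ as an alternation of closed embeddings and submersions" does not produce a well-defined functor, let alone a projection formula. You flag this yourself as "the main obstacle", which is an accurate self-assessment but means the argument is a research programme rather than a proof. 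A second substantive gap is local: even granting a good formal model $\bigoplus_\alpha\she^{\phi_\alpha}\tens\shr_\alpha$, Ramis's theorem controls the Gevrey asymptotics of flat sections sector by sector, i.e.\ the degree-zero solutions of an ordinary differential operator; to conclude that $\rhom[\shd_\Xsa](\eim{\rhosa}\shm,\OG_\Xsa)\to\rhom[\shd_\Xsa](\eim{\rhosa}\shm,\sho_\Xsa)$ is an isomorphism you need acyclicity of the full Dolbeault-type complex with Gevrey weights (a $\ol\partial$-estimate in the spirit of Proposition~\ref{pro:harmon} but adapted to the weights $\exp(-h\,d(x,K\setminus U)^{1-s})$ of~\eqref{eq:gevrey2}), and no such estimate is supplied or cited. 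The identification of the discrete set $Z$ with slope-derived indices is a plausible heuristic consistent with~\cite{Ra78}, but it inherits both gaps above. The statement therefore remains open.
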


\chapter{Filtrations}\label{ShvSa5}

\section{Derived categories of filtered objects}
In this section, we shall recall results of~\cite{Sn99} completed  in~\cite{SSn13}.

\subsubsection*{Complements on abelian categories}
In this subsection we state and prove some elementary results (some of them being well-known)  on abelian
and derived categories that we shall need.

Let $\shc$ be an abelian category and let $\mon$ be a small category. As usual, one denotes by 
$\Fct(\mon,\shc)$ the abelian category of functors from $\mon$ to $\shc$. Recall that 
the kernel of a morphism $u\cl X\to Y$ is the functor $\lambda\mapsto\ker u(\lambda)$
and similarly with the cokernel or more generally with limits and colimits. 

\begin{lemma}\label{le:comp3}
Assume that $\shc$ is a Grothendieck category. Then
\banum
\item
the category $\Fct(\mon,\shc)$ is a Grothendieck category,
\item
if $F\in \Fct(\mon,\shc)$ is injective, then for $\lambda\in\mon$,  $F(\lambda)$ is injective in $\shc$.
\eanum
\end{lemma}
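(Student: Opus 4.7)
The plan is to verify the Grothendieck axioms pointwise for part (a), and to use an adjunction argument for part (b).

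For part (a), I would argue as follows. The category $\Fct(\mon,\shc)$ is abelian: kernels, cokernels, images and coimages exist and are computed pointwise in $\shc$. Small filtrant colimits also exist pointwise, and since filtrant colimits in $\shc$ are exact, so are filtrant colimits in $\Fct(\mon,\shc)$. Hence it remains only to produce a generator. Pick a generator $G$ of $\shc$, and for each $\lambda\in\mon$ define a functor $h_\lambda(G)\in\Fct(\mon,\shc)$ by
\[
h_\lambda(G)(\mu)\;\eqdot\;\bigoplus_{\mon(\lambda,\mu)} G,
\]
with the obvious transition morphisms. The key formal property is a Yoneda-type adjunction: for any $F\in\Fct(\mon,\shc)$,
\[
\Hom_{\Fct(\mon,\shc)}(h_\lambda(G),F)\;\simeq\;\Hom_\shc(G,F(\lambda)).
\]
Since $\mon$ is small, the direct sum $H\eqdot\bigoplus_{\lambda\in\mon} h_\lambda(G)$ exists in $\Fct(\mon,\shc)$. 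If $u\cl F\to F'$ is a non-zero morphism, then $u(\lambda)\neq 0$ for some $\lambda$, and since $G$ is a generator of $\shc$ there is a morphism $G\to F(\lambda)$ whose composition with $u(\lambda)$ is non-zero; via the displayed isomorphism this produces a morphism $h_\lambda(G)\to F$, hence a morphism $H\to F$, for which the composition with $u$ is non-zero. Thus $H$ is a generator.

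For part (b), I would package the construction above as an adjunction. For each $\lambda\in\mon$ the evaluation functor $\mathrm{ev}_\lambda\cl\Fct(\mon,\shc)\to\shc$, $F\mapsto F(\lambda)$, admits the left adjoint $X\mapsto h_\lambda(X)$ defined by the same formula as above. This left adjoint is exact, because direct sums are exact in the Grothendieck category $\shc$ (direct sums are filtrant colimits of finite direct sums, and filtrant colimits are exact by AB5). Since $\mathrm{ev}_\lambda$ has an exact left adjoint, it sends injective objects to injective objects; hence $F(\lambda)=\mathrm{ev}_\lambda(F)$ is injective in $\shc$ whenever $F$ is injective in $\Fct(\mon,\shc)$.

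The only subtle point is the construction of the generator and the adjoint; everything else is a routine pointwise verification. No serious obstacle is expected, since the adjunction $h_\lambda\dashv\mathrm{ev}_\lambda$ is formal and its exactness is a direct consequence of AB5 in $\shc$.
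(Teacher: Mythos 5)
Your proposal is correct and follows essentially the same route as the paper: both construct, for each $\lambda\in\mon$, the left adjoint $G\mapsto G_\lambda$ (your $h_\lambda(G)$) to evaluation at $\lambda$, use the adjunction isomorphism $\Hom(G_\lambda,F)\simeq\Hom(G,F(\lambda))$ to obtain generators from a generator of $\shc$, and deduce (b) from the exactness of this left adjoint. The only cosmetic difference is that the paper keeps the small family $\{G_\lambda\}_{\lambda\in\mon}$ as a system of generators and cites a criterion from the literature, whereas you sum them into a single generator and check the AB axioms pointwise by hand.
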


\begin{proof}
The category $\Fct(\mon,\shc)$ is equivalent to the category
$\Psh(\mon^\rop,\shc)$ of preshaves on $\mon^\rop$ with values in $\shc$.
It follows that, for any given $\lambda\in \Lambda$, the functor
$\Fct(\mon,\shc) \to \shc$, $F \mapsto F(\lambda)$ has a left adjoint.
We can define it as follows (see {\em e.g.}~\cite[Not. 17.6.13]{KS06}).  For
$G\in \shc$ we define $G_\lambda \in \Fct(\mon,\shc)$ by
$$
G_\lambda(\mu) =\bigoplus_{\Hom[\mon](\lambda,\mu)}G.
$$
Then we can check directly that
\eq
\label{eq:Glambda1}
&&\mbox{the functor $\shc\ni G\mapsto G_\lambda\in\Fct(\mon,\shc)$ is exact,}\\
\label{eq:Glambda2}
&&\Hom[\Fct(\mon,\shc)](G_\lambda,F) \simeq  \Hom[\shc](G,F(\lambda))
\quad\text{for any $F\in \Fct(\mon,\shc)$.}
\eneq

\vspace{0.3ex}\noindent
(a) Applying {\em e.g.} Th.~17.4.9 of loc.\ cit., it remains to show that
$\Fct(\mon,\shc)$ admits a small system of generators. Let $G$ be a generator of
$\shc$.  It follows from~\eqref{eq:Glambda2} that the family
$\{G_\lambda\}_{\lambda\in\mon}$ is a small system of generators in
$\Fct(\mon,\shc)$.

\vspace{0.3ex}\noindent
(b) Follows from~\eqref{eq:Glambda2} and~\eqref{eq:Glambda1}.
\end{proof}

We consider two abelian categories $\shc$
and $\shc'$ and a left exact functor
$\rho\cl\shc\to\shc'$.  The functor $\rho$ induces a functor 
\eq\label{eq:fctoimrho}
&&\tw\rho\cl\Fct(\mon,\shc)\to\Fct(\mon,\shc').
\eneq

\begin{lemma}\label{le:comp2}
Assume that $\shc$ is a Grothendieck category.
\banum
\item
The functor $\tw\rho$ is left exact. 
\item
Let $I$ be a small category and assume that $\rho$ commutes with colimits
indexed by $I$. Then the functor $\tw\rho$ in~\eqref{eq:fctoimrho}
commutes with colimits indexed by $I$.
\item
Assume that  $\rho$ has cohomological
dimension $\leq d$, that is, $R^j\rho=0$ for $j> d$. Then 
 $\tw\rho$ has cohomological dimension $\leq d$.
\item
Assume that $\rho$ commutes with small direct sums and that small direct sums of injective objects in $\shc$ are acyclic for the functor $\rho$. Then small direct sums of injective objects in $\Fct(\mon,\shc)$ are acyclic for
the functor $\tw\rho$.
\eanum
\end{lemma}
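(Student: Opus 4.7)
The whole lemma rests on the single observation that limits, colimits, kernels, cokernels and direct sums in $\Fct(\mon,\shc)$ are computed pointwise (i.e. evaluation at each $\lambda\in\mon$ commutes with all such constructions), and that $\tw\rho$ is itself defined pointwise by $(\tw\rho F)(\lambda) = \rho(F(\lambda))$. Thus (a) is immediate: a finite limit in $\Fct(\mon,\shc)$ evaluated at $\lambda$ equals the corresponding finite limit in $\shc$, and $\rho$ preserves it by left exactness. Part (b) is the same argument with ``finite limit'' replaced by ``$I$-indexed colimit''.

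For (c) the plan is to show the pointwise formula
\eq\label{eq:pointwise_deriv}
&& R^j\tw\rho(F)(\lambda) \simeq R^j\rho(F(\lambda)), \quad \lambda\in\mon, \; j\in\Z, \; F\in\Fct(\mon,\shc).
\eneq
Since $\shc$ is Grothendieck, so is $\Fct(\mon,\shc)$ by Lemma~\ref{le:comp3}(a), and in particular it has enough injectives. Choose an injective resolution $F\to I^\scbul$ in $\Fct(\mon,\shc)$. By Lemma~\ref{le:comp3}(b) each $I^k(\lambda)$ is injective in $\shc$, hence $F(\lambda)\to I^\scbul(\lambda)$ is an injective resolution in $\shc$ (exactness being pointwise). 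Since $\tw\rho$ is pointwise, $(\tw\rho I^\scbul)(\lambda) = \rho(I^\scbul(\lambda))$, which gives~\eqref{eq:pointwise_deriv}. The hypothesis on $\rho$ then forces $R^j\tw\rho(F)(\lambda) = 0$ for $j>d$ and all $\lambda$, and a functor into $\Fct(\mon,\shc')$ is zero iff it is pointwise zero, so $R^j\tw\rho = 0$ for $j>d$.

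For (d), let $\{F_i\}_{i\in I}$ be a small family of injective objects in $\Fct(\mon,\shc)$. By Lemma~\ref{le:comp3}(b) each $F_i(\lambda)$ is injective in $\shc$. Since direct sums in $\Fct(\mon,\shc)$ are pointwise,
\eqn
&& \bigl(\bigoplus_{i\in I} F_i\bigr)(\lambda) \simeq \bigoplus_{i\in I} F_i(\lambda),
\eneqn
which is a direct sum of injectives in $\shc$ and therefore $\rho$-acyclic by hypothesis. Applying formula~\eqref{eq:pointwise_deriv} to $\bigoplus_i F_i$ yields $R^j\tw\rho(\bigoplus_i F_i)(\lambda) \simeq R^j\rho(\bigoplus_i F_i(\lambda)) = 0$ for every $\lambda$ and every $j>0$; hence $\bigoplus_i F_i$ is $\tw\rho$-acyclic.

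The only nontrivial point is the justification of~\eqref{eq:pointwise_deriv}, and it reduces entirely to Lemma~\ref{le:comp3}(b): without the pointwise injectivity of global injectives we could not reduce the computation of $R\tw\rho$ to that of $R\rho$ one $\lambda$ at a time. Once that is in hand, (c) and (d) follow without further effort.
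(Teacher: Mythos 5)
Your proposal is correct and follows essentially the same route as the paper: all four parts are reduced to the pointwise nature of $\tw\rho$ and of (co)limits in $\Fct(\mon,\shc)$, with the key identity $R^j\tw\rho(F)(\lambda)\simeq R^j\rho(F(\lambda))$ obtained exactly as in the paper by evaluating an injective resolution in $\Fct(\mon,\shc)$ and invoking Lemma~\ref{le:comp3}~(b). The paper phrases (b) via the equivalence $\Fct(I,\Fct(\mon,\shc))\simeq\Fct(\mon,\Fct(I,\shc))$, which is just another way of saying colimits are computed pointwise, so there is no substantive difference.
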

\begin{proof}
(a) is obvious.

\vspace{0.3ex}\noindent
(b) follows from the equivalence $\Fct(I,\Fct(\mon,\shc))\simeq\Fct(\mon,\Fct(I,\shc))$ and similarly with 
$\shc'$.

\vspace{0.3ex}\noindent
(c) By Lemma~\ref{le:comp3}~(a), the category $\Fct(\mon,\shc)$ admits enough injectives.  
Let $F\in \Fct(\mon,\shc)$ and let $F\to F^\scbul$ be an injective resolution of $F$, that is, 
$F^\scbul$ is a complex in degrees $\geq0$ of injective objects and $F\to F^\scbul$ is a qis. 
By Lemma~\ref{le:comp3}~(b), for $\lambda\in\mon$, 
$ F^\scbul(\lambda)$ is an injective resolution of $F(\lambda)$
and by the hypothesis, $H^j(\rho(F^\scbul(\lambda)))\simeq0$ for $j>d$ and $\lambda\in\mon$. This implies
that $R^j\rho(F)\simeq H^j(\rho(F^\scbul))$ is $0$ for $j>d$.

\vspace{0.3ex}\noindent
(d) For a given $\lambda\in\mon$ we denote by $i_\lambda^\shc$ the functor
$\Fct(\mon,\shc) \to \shc$, $F\mapsto F(\lambda)$. 
Then $i_\lambda^\shc$  is exact and, by Lemma~\ref{le:comp3}~(b), we have
$i_\lambda^{\shc'} \circ R \tw\rho \simeq R\rho \circ i_\lambda^\shc$.
Let $F\in \Fct(\mon,\shc)$ be a small direct sum of injective objects.
Since $i_\lambda^\shc$ commutes with direct sums, it follows from
Lemma~\ref{le:comp3}~(b) again that $i_\lambda^\shc(F)$ is a small direct sum
of injective objects in $\shc$. By the hypothesis we obtain
$R^j \rho \circ i_\lambda^\shc(F) \simeq 0$, for all $j>0$.  Hence
$i_\lambda^{\shc'} \circ R^j \tw\rho(F) \simeq 0$, for all $j>0$.  Since this
holds for all $\lambda\in\mon$ we deduce $R^j \tw\rho(F) \simeq 0$, for all
$j>0$, as required. 
\end{proof}

\subsubsection*{Abelian tensor categories}

Recall (see {\em e.g.}~\cite[Ch.~5]{KS06})   that a tensor Grothendieck category  $\shc$ is a Grothendieck category endowed with a biadditive functor 
$\tens\cl\shc\times\shc\to\shc$ satisfying functorial associativity isomorphisms. 
We do not recall here what is a tensor category with unit, a ring object $A$ in $\shc$,  a ring object with unit and an $A$-module $M$. In the sequel, all tensor categories will be with unit and a
ring object means a ring object with unit.

We shall consider 
\eq\label{hyp:mon2}
&&\left\{\parbox{60ex}{
a Grothendieck tensor category $\shc$ \lp with unit\rp\, in which small inductive limits   commute with $\tens$.
}\right.\eneq

\begin{lemma}\label{le:comp9}
Let  $\shc$ be as in~\eqref{hyp:mon2} and let $A$ be a ring object \lp with unit\rp\, in $\shc$. Then
\banum
\item
The category $\md[A]$ is a Grothendieck category,
\item 
the forgetful functor $\for\cl\md[A]\to\shc$ is exact and conservative,
\item 
the natural functor $\tw\for\cl \RD(A)\to\RD(\shc)$ is conservative.
\eanum
\end{lemma}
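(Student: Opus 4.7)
The plan is to establish (a), (b), (c) in order; the substance is concentrated in (a).

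For (a), I would first verify that $\md[A]$ is abelian with kernels and cokernels computed in $\shc$, since the $A$-module structure descends canonically to subobjects and quotients. Small inductive limits are also computed in $\shc$: the hypothesis that $\tens$ commutes with them supplies the requisite $A$-action on the colimit, and filtered colimits remain exact in $\md[A]$ because they are so in $\shc$. It remains to produce a small generator. Choosing a generator $G$ of $\shc$, the candidate is $A\tens G$ equipped with $A$-action induced by multiplication on the left factor. The adjunction
\[ \Hom[A](A\tens G, M)\simeq\Hom[\shc](G,\for M),\quad M\in\md[A], \]
turns an epimorphism $\bigoplus_I G\epito \for M$ in $\shc$ into a morphism $u\cl\bigoplus_I(A\tens G)\to M$ in $\md[A]$. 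Applying $\for$, the morphism $\for(u)$ factors as $A\tens(\bigoplus_I G)\to A\tens M\to M$; the first arrow is an epimorphism because $A\tens-$ is right exact (it commutes with cokernels by hypothesis~\eqref{hyp:mon2}), and the second is split epi via the unit of $A$. Hence $u$ is epi in $\md[A]$, and invoking Th.~17.4.9 of~\cite{KS06}, $\md[A]$ is a Grothendieck category.

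Part (b) is immediate: exactness of $\for$ follows from the description of kernels and cokernels above, and conservativity from the fact that $\for M=0$ forces $M=0$ in $\md[A]$.

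Part (c) is a standard consequence of (b). Since $\for$ is exact, the induced functor $\tw\for$ commutes with cohomology, so $H^i(\tw\for X)\simeq\for(H^iX)$ for all $X\in\RD(A)$ and $i\in\Z$. If $\tw\for X\simeq 0$ in $\RD(\shc)$, then all $\for(H^iX)$ vanish; by (b), all $H^iX$ vanish; hence $X\simeq 0$ in $\RD(A)$. The only non-routine step is verifying in (a) that $A\tens G$ generates, which reduces via the adjunction to right exactness of $A\tens-$ and to the unit-splitting of the module action.
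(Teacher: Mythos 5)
Your proof is correct, but it is organized differently from the paper's: for parts (a) and (b) the paper simply cites \cite[Prop.~4.4]{SSn13}, whereas you give a self-contained argument, producing the generator $A\tens G$ from a generator $G$ of $\shc$ via the free--forgetful adjunction $\Hom[A](A\tens G,M)\simeq\Hom[\shc](G,\for M)$ and checking that the action map $A\tens M\to M$ is split epi by the unit axiom. This makes explicit exactly where the hypotheses of~\eqref{hyp:mon2} enter (commutation of $\tens$ with small inductive limits gives right exactness of $A\tens\scbul$, the computation of colimits of $A$-modules in $\shc$, and the identification $\bigoplus_I(A\tens G)\simeq A\tens\bigoplus_IG$), at the cost of reproducing material the paper delegates to~\cite{SSn13}. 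Part (c) you prove exactly as the paper does: exactness of $\for$ gives $\for H^j(X)\simeq H^j(\tw\for(X))$, and conservativity of $\for$ on objects finishes. One small wording caveat in (a): the $A$-action descends to kernels and cokernels of morphisms of $A$-modules, not to arbitrary subobjects in $\shc$; for the kernel $K$ of $u\cl M\to N$ the composite $A\tens K\to A\tens M\to M\to N$ vanishes because it equals $\mu_N\circ(A\tens u)$ precomposed with $A\tens K\to A\tens M$, so no left exactness of $\tens$ is required. Your argument uses only this, so the claim is harmless as used.
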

\begin{proof}
(a) and (b) are proved in~\cite[Prop.~4.4]{SSn13}.

\vspace{0.3ex}\noindent
(c) Since $\RD(A)$ and $\RD(\shc)$ are triangulated, it is enough to check that 
if $X\in\RD(A)$ verifies $\tw\for(X)\simeq0$, then $X\simeq0$. Let $X$ be such an object and let $j\in\Z$. 
Since $\for$ is exact, $\for H^j(X)\simeq H^j(\tw\for(X))\simeq0$. Since $\for$ is conservative,
we get $H^j(X)\simeq 0$. 
\end{proof}

\subsubsection*{Derived categories of filtered objects}

We shall consider  
\eq\label{hyp:mon1}
&&\left\{\parbox{60ex}{
a filtrant preordered additive monoid $\mon$ (viewed as a tensor category with unit),\\
 a category $\shc$ as in~\eqref{hyp:mon2}.
}\right.\eneq

Denote by 
$\Fct(\mon,\shhc)$ the abelian category of functors from $\mon$ to $\shhc$. It is naturally endowed with a structure of a tensor category with unit by setting for $M_1,M_2\in\Fct(\mon,\shc)$,
\eqn
&&(M_1\tens M_2)(\lambda) =\indlim[\lambda_1+\lambda_2\leq\lambda] M_1(\lambda_1)\tens M_2(\lambda_2).
\eneqn
A $\mon$-ring $A$ of $\shc$ is  a ring with unit of the tensor category $\Fct(\mon,\shc)$ and we denote by 
$\md[A]$ the abelian category of $A$-modules.

We denote by $\Fil_\mon\shc$ the full subcategory of $\Fct(\mon,\shhc)$ consisting of functors $M$ such that for each morphism 
$\lambda\to\lambda'$ in $\mon$, the morphism $M(\lambda)\to M(\lambda')$ is a monomorphism. This is a quasi-abelian category. Let 
\eqn
&&\iota\cl \Fil_\mon\shc\to \Fct(\mon,\shhc)
\eneqn
 denote the inclusion functor. This functor admits a left adjoint $\kappa$ 
and the category $\Fil_\mon\shc$  is again a tensor category by setting
\eqn
&&M_1\tens_F M_2 =\kappa(\iota(M_1)\tens\iota(M_2)).
\eneqn
A ring object in the tensor category $\Fil_\mon\shc$ will be called a $\mon$-filtered ring in $\shc$ and usually denoted $FA$. 
An $FA$-module $FM$ is then simply a module over $FA$ in $\Fil_\mon\shc$ and we denote by 
$\md[FA]$ the quasi-abelian category of $FA$-modules. 

It follows from Lemmas~\ref{le:comp3} and~\ref{le:comp9} that $\md[\iota FA]$ is a Grothendieck category.

\begin{notation}
In the sequel, for a ring object $B$ in a tensor category, we shall write $\RD^*(B)$ instead of $\RD^*(\md[B])$, $*=+,-,\rb,\ub$.
\end{notation}

The next theorem is due to~\cite{SSn13} and generalizes previous results of~\cite{Sn99}.
\begin{theorem} \label{th:eqv1}
Assume~\eqref{hyp:mon1}.
Let $FA$ be a $\mon$-filtered ring in $\shc$.
Then the category $\md[FA]$ is quasi-abelian, the functor $\iota\cl\md[FA]\to\md[\iota FA]$ is strictly exact and 
induces  an equivalence of categories for $*=\ub,+,-,\rb$:
\eq\label{eq:eqvfilt1}
&&
\iota\cl\RD^*(FA)\isoto\RD^*(\iota FA).
\eneq
\end{theorem}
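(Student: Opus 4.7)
The plan is to reduce the theorem to a general statement about derived categories of quasi-abelian subcategories of Grothendieck abelian categories. First, I would show that $\md[FA]$ is quasi-abelian. By Lemmas~\ref{le:comp3} and~\ref{le:comp9}, $\md[\iota FA]$ is a Grothendieck abelian category, and $\md[FA]$ identifies with the full subcategory of modules whose underlying functor $\mon \to \shc$ has monomorphic transition maps. This subcategory is closed under kernels and extensions in $\md[\iota FA]$. The left adjoint $\kappa$ to $\iota$ extends from $\Fct(\mon,\shc)$ to modules via the ``image filtration'' construction $N(\lambda) \mapsto \im(N(\lambda) \to \indlim[\mu] N(\mu))$ on underlying functors; the filtrantness of $\mon$ together with the commutation of $\tens$ with filtrant colimits in $\shc$ ensures this produces an $FA$-module. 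Standard results then give that $\md[FA]$ is quasi-abelian, with strict cokernels computed by applying $\kappa$ to the abelian cokernel.

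Next, I would verify that $\iota$ is strictly exact and fully faithful: kernels in $\md[FA]$ coincide with those in $\md[\iota FA]$, and a strict short exact sequence in $\md[FA]$ is exactly a short exact sequence in $\md[\iota FA]$ whose terms all have monic transitions. For the derived equivalence, I would identify the left heart $\mathrm{LH}(\md[FA])$ with $\md[\iota FA]$. An object of the left heart is represented by a monomorphism $M' \monoto M$ in $\md[FA]$, and the candidate equivalence sends it to $\coker(\iota M' \to \iota M)$. Essential surjectivity reduces to the key construction: for every $N \in \md[\iota FA]$, produce a short exact sequence $0 \to \iota M' \to \iota M \to N \to 0$ with $M', M \in \md[FA]$. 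This is obtained by first choosing an epimorphism onto $N$ from a ``free'' filtered $FA$-module (using the adjunction with $\kappa$ together with the small system of generators furnished by Lemma~\ref{le:comp3}), then observing that the kernel, being a submodule of a filtered object, itself has monic transitions. Schneiders' theorem \lp see \cite{Sn99}\rp\ then yields $\RD^*(\md[FA]) \simeq \RD^*(\mathrm{LH}(\md[FA])) \simeq \RD^*(\md[\iota FA])$, and one checks that the composite equivalence agrees with the functor induced by $\iota$.

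The main obstacle lies in the third step: producing the two-term resolution by objects of $\md[FA]$ and verifying the identification of the left heart with $\md[\iota FA]$. The hypothesis that $\mon$ is a filtrant preordered additive monoid is essential here, since it guarantees both that the image-filtration adjoint $\kappa$ is well-defined and compatible with the $FA$-module structure, and that enough ``free'' filtered $FA$-modules exist to produce the required resolutions for arbitrary $N \in \md[\iota FA]$. A secondary technical point is verifying that the equivalence at the level of left hearts and the functor $\iota$ induce compatible t-structures on the derived categories, so that the equivalence~\eqref{eq:eqvfilt1} restricts correctly to the bounded-below, bounded-above, and bounded variants.
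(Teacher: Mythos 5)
You should first be aware that the paper does not prove this theorem: it states it with the attribution ``due to~\cite{SSn13}'' and uses it as a black box, so there is no internal proof to compare against. Your outline reconstructs what is essentially the strategy of that reference: realize $\md[FA]$ as a full additive subcategory of the Grothendieck category $\md[\iota FA]$ closed under subobjects, show that every object of $\md[\iota FA]$ is a quotient of an object of $\md[FA]$, deduce quasi-abelianness and the identification of the left heart $\mathrm{LH}(\md[FA])\simeq\md[\iota FA]$, and conclude by Schneiders' equivalence between $\RD^*(\she)$ and $\RD^*(\mathrm{LH}(\she))$ for a quasi-abelian category $\she$ (see~\cite{Sn99}). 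The easy half of the key step is fine as you state it: a subobject in $\md[\iota FA]$ of a module with monomorphic transition maps again has monomorphic transition maps, so kernels of morphisms between objects of $\md[FA]$ stay in $\md[FA]$. Your remarks on the extension of $\kappa$ to modules (using filtrancy of $\mon$ and exactness of filtrant colimits) and on the boundedness conditions $*$ are also correctly placed and routine.

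The point you pass over too quickly is the other half of the key step: the existence, for an arbitrary $N\in\md[\iota FA]$, of an epimorphism $\iota M\epito N$ with $M\in\md[FA]$. ``Free filtered $FA$-modules'' are delicate precisely because the convolution tensor product on $\Fct(\mon,\shc)$ does \emph{not} preserve the subcategory of functors with monomorphic transitions: the colimit defining $(M_1\tens M_2)(\lambda)$ is indexed by the non-filtrant category of pairs $(\lambda_1,\lambda_2)$ with $\lambda_1+\lambda_2\leq\lambda$, which is exactly why the paper defines $M_1\tens_F M_2$ as $\kappa(\iota M_1\tens\iota M_2)$ rather than $\iota M_1\tens\iota M_2$. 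Consequently the free $\iota FA$-module on a generator $G_\lambda$ need not lie in $\md[FA]$; and if you repair it by applying $\kappa$, you obtain a quotient of that free module through which the structural epimorphism onto $N$ has no reason to factor, since $N$ itself is not filtered. At this point a genuine argument is needed --- for instance exhibiting explicit $FA$-modules built from $FA$ with shifted filtration tensored with objects of $\shc$ placed in a single degree, and checking by hand that their transition maps are monomorphisms; this uses the filtrancy of $\mon$ and the commutation of $\tens$ with filtrant colimits in an essential way. This verification is where the real content of~\cite{SSn13} lies, and as written your step producing the two-term resolution asserts it rather than proves it.
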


\begin{notation}\label{not:colimFA}
Let $\mon$ and $\shc$ be as in~\eqref{hyp:mon1}. The functor 
$\sindlim\cl\Fct(\mon,\shc)\to\shc$ is exact. Let $FA$ be a $\mon$-filtered ring in $F_\mon\shc)$ and set 
\eq\label{eq:colimFA1}
&&A\eqdot\indlim[\lambda]A(\lambda).
\eneq
(For short, we write $A(\lambda)$ instead of $FA(\lambda)$.)
 The functor $\sindlim$  induces an exact functor 
\eq\label{eq:colimFA2}
&&\sindlim\cl\md[FA]\to\md[A],
\eneq
thus, using Theorem~\ref{th:eqv1}, for $*=\ub,+,-,\rb$, a functor 
\eq\label{eq:colimFA3}
&&\sindlim\cl\RD^*(FA)\to\RD^*(A).
\eneq
Since one often considers $FA$ as a filtration on the ring $A$, we shall denote by $\for$ (forgetful) the functor
$\sindlim$:
\eq\label{eq:colimFA4}
&&\for\cl\RD^*(FA)\to\RD^*(A),\quad \for\eqdot\sindlim.
\eneq
\end{notation}

\subsubsection*{Complements on filtered objects}
\begin{lemma}\label{le:comp7}
Let $\mon$ and $\shc$ be as in~\eqref{hyp:mon1}  and let  $\shc'$ be another
Grothendi\-eck tensor category satisfying the same hypotheses as $\shc$.  Let
$FB$ be a $\mon$-filtered ring in $\shc'$.
\banum
\item
Let $\sigma\cl\shc'\to\shc$ be an exact functor of tensor categories
\lp see Definition~4.2.2 in~{\rm \cite{KS06}}\rp.  Denote by
$\tw\sigma\cl \Fct(\mon,\shc')\to\Fct(\mon,\shc)$ the natural functor
associated with $\sigma$.  Then
\bnum
\item
$FA\eqdot\tw\sigma(FB)$ has a natural structure of a $\mon$-filtered ring with
values in $\shc$,
\item
the functor $\tw\sigma$ induces an exact functor
$\tw\sigma_\mon\cl\md[\iota FB]\to\md[\iota FA]$
hence a functor $\sigma_\mon\cl\md[FB]\to\md[FA]$.
\enum
\item
Assume moreover that the functor $\sigma$ has a right adjoint $\rho$ which is
fully faithful \lp hence $\rho$ is left exact and $\sigma\rho \simeq \id_{\shc}$\rp.
Denote by $\tw\rho\cl \Fct(\mon,\shc)\to\Fct(\mon,\shc')$ the natural functor
associated with $\rho$.
Then
\bnum
\item
$\tw\rho$ is fully faithful and right adjoint to $\tw\sigma$,
\item
$\tw\rho$ induces a left exact fully faithful functor
$\tw\rho_\mon\cl\md[\iota FA]\to\md[\iota FB]$ right adjoint to
$\tw\sigma_\mon$ and a fully faithful functor $\rho_\mon\cl\md[FA]\to\md[FB]$
right adjoint to $\sigma_\mon$.
\enum
\item
The diagram below, in which the horizontal arrows are the forgetful functors,
is commutative when composing horizontal and down vertical arrows, or when
composing horizontal and up vertical arrows
$$
\xymatrix{
\md[FA] \ar@<.5ex>[d]^-{\rho_\mon} \ar[r] & \md[\iota FA] \ar@<.5ex>[d]^{\tw\rho_\mon} \ar[r]
& \Fct(\mon,\shhc) \ar@<.5ex>[d]^{\tw\rho}  \\
\md[FB] \ar@<.5ex>[u]^{\sigma_\mon} \ar[r] & \md[\iota FB] \ar@<.5ex>[u]^{\tw\sigma_\mon} \ar[r] 
        &  \Fct(\mon,\shc') \ar@<.5ex>[u]^{\tw\sigma}.
}
$$
\eanum
\end{lemma}
\begin{proof}
(a) We first recall that a $\mon$-ring $A$ of a tensor category $\shc$ is the
data of $A(\lambda) \in \shc$, for each $\lambda\in\mon$,  morphisms
$\mu_A^{\lambda,\lambda'} \cl A(\lambda) \tens A(\lambda') \to
A(\lambda+\lambda')$,
for all $\lambda,\lambda' \in \Lambda$, and
$\varepsilon_A \cl \mathbf{1}_\shc \to A(0)$, where $\mathbf{1}_\shc$ is the
unit of $\shc$ and $0$ the unit of $\Lambda$.  These morphisms satisfy three
commutative diagrams (which we do not recall here) expressing the associativity
of $\mu_A$ and the fact that $\varepsilon_A$ is a unit.  Similarly a module $M$
over $A$ is the data of $M(\lambda) \in \shc$, for each $\lambda\in\mon$, and
morphisms
$\mu_M^{\lambda,\lambda'} \cl A(\lambda) \tens M(\lambda') \to
M(\lambda+\lambda')$,
for all $\lambda,\lambda' \in \Lambda$, satisfying two commutative diagrams
left to the reader.

Let us go back to the situation of the lemma.  For a $\mon$-filtered ring $FB$ of $\shc'$
and an $FB$-module $N$,  setting
$FA = \tw\sigma(FB)$,  the morphisms $\mu_N^{\lambda,\lambda'}$ induce
$$
\mu_{ \tw\sigma(N)}^{\lambda,\lambda'} \cl
A(\lambda) \tens \sigma(N(\lambda'))
 \simeq \sigma(B(\lambda) \tens N(\lambda')) 
 \to \sigma(N(\lambda+\lambda')) .
$$
For $N=\iota FB$ we obtain $\mu_A^{\lambda,\lambda'}$. We define
$\varepsilon_A = \sigma(\varepsilon_B)$. We leave to the reader the
verification that $\varepsilon_A$, $\mu_A^{\cdot,\cdot}$ and
$\mu_{ \tw\sigma(N)}^{\cdot,\cdot}$ satisfy the required commutative diagrams.
This defines the functor $\tw\sigma_\mon$. We see easily that $\tw\sigma_\mon$
is exact. Since $FB$ is $\mon$-filtered, the exactness of $\sigma$ implies
that $FA$ is $\mon$-filtered and that $\tw\sigma_\mon$ induces the functor
$\sigma_\mon$ of the lemma.

\spa
(b) The statement~(i) is straightforward. Let us define $\tw\rho_\mon$.
For a $\iota FA$-module $M$ the data of
$$
\mu_{M}^{\lambda,\lambda'} \cl \sigma(B(\lambda) \tens \rho(M(\lambda')))
\simeq A(\lambda) \tens M(\lambda')  \to M(\lambda+\lambda')
$$
give by adjunction
$\mu_{ \tw\rho(M)}^{\lambda,\lambda'} \cl B(\lambda) \tens \rho(M(\lambda'))
\to \rho(M(\lambda+\lambda'))$
and define a structure of $\iota FB$-module on $\rho(M)$.  Since $\rho$ is
left exact $\tw\rho_\mon$ induces $\rho_\mon$.  The adjunction properties are
clear, as well as $\tw\sigma_\mon \tw\rho_\mon \simeq \id$ and
$\sigma_\mon \rho_\mon \simeq \id$.  Hence $\tw\rho_\mon$ and $\rho_\mon$ are
fully faithful.

\spa
(c) is clear.
\end{proof}

\begin{theorem}\label{th:abstractBrown}
{\rm(1)} We make the assumptions of   {\rm Lemma~\ref{le:comp7}~(a)-(b)}
 and assume moreover that
\bnum 
\item
$\rho$ has  cohomological dimension $\leq d$,
\item
for any $M\in \md[\iota FA]$, there exists a monomorphism $M \to I$ in
$\md[\iota FA]$ such that $I(\lambda)$ is $\rho$-acyclic,
for all $\lambda\in \Lambda$.
\enum
Then the derived functor $R\rho_\mon\cl\RD^*(FA)\to\RD^*(FB)$
\lp $*=\ub,+$\rp\, exists.  It is fully faithful and admits a left adjoint
$\opb\rho_\mon\cl \RD^*(FB)\to\RD^*(FA)$ \lp$*=\ub,+$\rp.

\spaa
{\rm(2)} Assume moreover that 
\bnum
\item[{\rm(iii)}]
$\rho$  commutes with small direct sums,
\item[\rm(iv)]
 small direct sums of injective objects in $\shc$ are acyclic for the functor $\rho$.
\enum
Then 
the derived functor $R\rho_\mon\cl\RD(FA)\to\RD(FB)$ commutes
with small direct sums and 
 admits a right adjoint $\epb\rho_\mon\cl \RD(FB)\to\RD(FA)$. Moreover, 
 $\epb\rho_\mon$ induces a functor $\RD^+(FB)\to\RD^+(FA)$.

\spaa
{\rm(3)} We make the assumptions of {\rm Lemma~\ref{le:comp7}~(a)} and assume
moreover that $\sigma$ is fully faithful and has a right adjoint $\rho$ which
is exact. Then the derived functor $\sigma_\mon\cl\RD^*(FA)\to\RD^*(FB)$
\lp$*=\ub,+,\rb$\rp\, is well defined, is fully faithful and admits a right adjoint
$\rho_\mon\cl \RD^*(FB)\to\RD^*(FA)$  \lp$*=\ub,+,\rb$\rp.
\end{theorem}

\begin{proof}
By Theorem~\ref{th:eqv1}, it is enough to prove the statements when replacing 
$FA$ and $FB$ with $\iota FA$ and $\iota FB$, respectively and $\rho_\mon$ with $\tw\rho_\mon$.

\medskip\noindent
(1) Let us first prove that $\tw\rho_\mon\cl\md[\iota FA]\to\md[\iota FB]$
admits a derived functor and has cohomological dimension $\leq d$.

We let $\shi$ be the subcategory of $\md[\iota FA]$ which consists of the 
$I\in \md[\iota FA]$ such that $I(\lambda)$ is $\rho$-acyclic,
for all $\lambda\in \Lambda$.  Using the hypothesis~(iv) and the relation
$\for \circ \tw\rho_\mon \simeq \tw\rho \circ\for$ we see that the subcategory
$\shi$ is $\tw\rho_\mon$-injective.  Hence $R\tw\rho_\mon$ exists.
We also see that $\for(\shi)$ is a $\tw\rho$-injective family.
Hence $\for \circ R\tw\rho_\mon \simeq R\tw\rho \circ\for$.
Now the assertion on the cohomological dimension follows from
Lemma~\ref{le:comp2}-(c).

By Lemma~\ref{le:comp7}, the functor $\tw\rho_\mon$ is right adjoint to
$\tw\sigma_\mon$. This functor $\tw\sigma_\mon$ induces $\opb{\tw\rho_\mon}$ on
the derived category which is left adjoint to $R\tw\rho_\mon$.  The relation
$\tw\sigma_\mon \tw\rho_\mon \simeq \id$ gives
$\opb{\tw\rho_\mon} R\tw\rho_\mon \simeq \id$. Hence $R\tw\rho_\mon$ is fully
faithful.

\spaa
(2) By the Brown representability theorem, 
it is enough to prove that  
\eq\label{eq:commsmall}
&& \mbox{$R\tw\rho_\mon$ commutes with small direct sums. }
\eneq
We consider the functor $\tw\rho\cl \Fct(\mon,\shc)\to\Fct(\mon,\shc')$.  The
hypotheses of Proposition~\ref{pro:rhooplus} are satisfied by
Lemma~\ref{le:comp2}.  Therefore the functor $\tw\rho$ has cohomological
dimension $\leq d$ and the functor
$R\tw\rho\cl \RD(\Fct(\mon,\shc))\to\RD(\Fct(\mon,\shc'))$ commutes with small
direct sums.

Now we prove~\eqref{eq:commsmall}. Let $\{X_i\}_{i\in I}$ be a family of objects
of $\RD(\iota FA)$.  There is a natural morphism
$\bigoplus_{i\in I}R\tw\rho_\mon (X_i)\to R\tw\rho_\mon(\bigoplus_{i\in I}X_i)$
in $\RD(\iota FB)$ and it follows from Lemma~\ref{le:comp9} that this morphism
is an isomorphism.

\spaa
(3) is obvious.
\end{proof}

\section{Filtrations on $\sho_\Xsal$}

In the sequel, if $FM$ is a filtered object in $\shc$ over the ordered additive
monoid $\R$, we shall write $F^sM$ instead of $(FM)(s)$ to denote the image of
the functor $FM$ at $s\in\R$.
This induces a functor $\RD(\Fil_\R\shc) \to \RD(\shc)$
denoted in the same way $FM \mapsto F^sM$.

\subsubsection*{The filtered ring of differential operators}

Recall that the sheaf $\shd_M$ of finite order differential operators on $M$ has a natural $\N$-filtration 
given by the order.

Recall that the rings $\shd_\Msa$ and $\shd_\Msal$ as well as the sheaves
$\shd_\Msa(m)$ and $\shd_\Msal(m)$ are defined in~\eqref{def:DMsa2}
and~\eqref{def:DMsal}. 
We remark that $\opb{\rhosa}(\shd_\Msa(m)) \simeq \shd_M(m)$ and
$\opb{\rhosal}(\shd_\Msal(m)) \simeq \shd_\Msa(m)$.

\begin{definition}\label{not:filtD}
Let $\sht$ be the site $M$ or $\Msa$ or $ \Msal$.
We define the filtered sheaf $\Fi\shd_\sht$  over $\mon= \R$ by setting:
\eqn
&&\Fi^s\shd_\sht=\shd_\sht([s])
\eneqn
where $[s]$ is the integral part of $s$ and $\shd_\sht([s])$ is the sheaf of differential operators of order $\leq[s]$. 
In particular, $\Fi^s\shd_\sht=0$ for $s<0$. We denote by 
$\md[\Fi\shd_\sht]$ the category of filtered modules over $\shd_\sht$. 
\end{definition}

Let $M_\sht$ be either $M$, $\Msa$ or $\Msal$. 
In the sequel, we look at $\md[\C_{M_\sht}]$ as an abelian Grothendieck tensor category with unit and 
at $\Fi\shd_{M_\sht}$ as a $\mon$-ring object in $F_\mon\shc$ ( with $\mon=\R$) and $\shc=\md[\C_{M_\sht}]$.
Note that Definition~\ref{not:filtD} is in accordance with Lemma~\ref{le:comp7}~(a)~(i).

Since $\opb{\rhosa}(\shd_\Msa(m)) \simeq \shd_M(m)$ and
$\opb{\rhosal}(\shd_\Msal(m)) \simeq \shd_\Msa(m)$ we can apply
Lemma~\ref{le:comp7}~(a) with the exact functors 
$\sigma = \opb{\rhosa}$ or $\sigma = \opb{\rhosal}$.
We obtain the functors
\begin{equation}\label{eq:opbrhosalfilt}
  \begin{split}
\opb{\rhosa}&\cl\md[\Fi\shhd_\Msa] \to \md[\Fi\shhd_M],\\
\opb{\rhosal}&\cl\md[\Fi\shhd_\Msal] \to \md[\Fi\shhd_\Msa].    
  \end{split}
\end{equation}
We will also use the fully faithful right adjoint of $\opb{\rhosal}$ given by
Lemma~\ref{le:comp7}~(b)
\begin{equation}\label{eq:oimrhosalfilt}
\oim{\rhosal}\cl\md[\Fi\shhd_\Msa] \to \md[\Fi\shhd_\Msal].
\end{equation}

\begin{theorem}\label{th:epbrhoFD}
\bnum
\item
The functor $\oim{\rhosal}$ in~\eqref{eq:oimrhosalfilt}  admits a right derived functor
$\roim{\rhosal}\cl\RD^*(\Fi\shhd_\Msa)\to\RD^*(\Fi\shhd_\Msal)$ \lp$*=\ub,+$\rp\,
which is fully faithful and admits a left adjoint functor
$\opb{\rhosal}\cl\RD^*(\Fi\shhd_\Msal)\to\RD^*(\Fi\shhd_\Msa)$ \lp$*=\ub,+$\rp.
\item 
The functor $\roim{\rhosal}$ \lp$*=\ub,+$\rp\, commutes with small direct sums
and admits a right adjoint
$\epb{\rhosal}\cl\RD^*(\Fi\shhd_\Msal)\to\RD^*(\Fi\shhd_\Msa)$
\lp$*=\ub,+$\rp.
\item
The functor $\opb{\rhosa}\cl\RD^+(\Fi\shhd_\Msa)\to\RD^+(\Fi\shhd_M)$ has a
fully faithful right adjoint
$\reim{\rhosa}\cl\RD^+(\Fi\shhd_M)\to\RD^+(\Fi\shhd_\Msa)$.
\enum
\end{theorem}

\begin{proof}
(i)--(ii) We shall apply Theorem~\ref{th:abstractBrown}~(1)-(2) with
$\shc=\md[\C_\Msa]$, $\shc'=\md[\C_\Msal]$, $\rho=\oim{\rhosal}$,
$\sigma=\opb{\rhosal}$, $\mon=\R$, $FA=\Fi\shd_\Msa$, $FB=\Fi\shd_\Msal$.  Let
us check hypotheses (i)--(iv) of Theorem~\ref{th:abstractBrown}.
Hypothesis~(i) follows from Proposition~\ref{pro:cohdimrho}. The
hypotheses~(iii) and~(iv) follow from Lemma~\ref{le:sectUindlim}. By
Lemma~\ref{le:comp9} we know that $\md[\iota \Fi\shd_\Msa]$ has enough
injectives.  Hence to check the hypothesis (ii) it is enough to prove that if
$I\in \md[\iota \Fi\shd_\Msa]$ is injective, then $I(\lambda)$ is
$\oim{\rhosal}$-acyclic for any $\lambda\in \Lambda$.

By Lemmas~\ref{le:rhodimfB} and~\ref{le:flabby} it is enough to prove that
$I(\lambda)$ is flabby. For any $U\in\Op_\Msa$ we have
\eq\label{eq:sectUlambda}
\sect(U; I(\lambda))
 \simeq 
\Hom[{\md[\iota \Fi\shd_\Msa]}]( (\shd_\Msa^{[-\lambda]})_U , I) ,
\eneq
where $\shd_\Msa^{[-\lambda]}$ denotes the object $\iota \Fi\shd_\Msa$ with the
filtration shifted by $\lambda$, that is,
$F^s\shd_\Msa^{[-\lambda]} = F^{s-\lambda}\shd_\Msa$;
this isomorphism sends a section $s$ of $I(\lambda)$ to the morphism
$1\mapsto s$ (which is filtered because $1 \in F^\lambda\shd_\Msa^{[-\lambda]}$). 
Hence the flabbiness of $I(\lambda)$
follows from the injectivity of $I$ and the exact sequence
$0 \to (\shd_\Msa^{[\lambda]})_U \to (\shd_\Msa^{[\lambda]})_V$, for any inclusion
$U\subset V$. 
This completes the proof of (i)--(ii). 

\spaa 
(iii) We apply Theorem~\ref{th:abstractBrown}~(3) with $\rho=\opb{\rhosa}$,
$\sigma=\eim{\rhosa}$.
\end{proof}

We define a functor
\eqn
&&\Fi\hom\cl \mdrc[\C_M]\times \md[\Fi\shd_\Msa]\to\md[\Fi\shd_\Msa]
\eneqn
by setting for  $G\in\mdrc[\C_M]$ and $\Fi \shm\in\md[\Fi\shd_\Msa]$
\eqn
&& \hom(G,\Fi \shm)(\lambda)=\hom(G,\shm(\lambda)).
\eneqn
Using Theorem~\ref{th:eqv1},  this functor admits a derived functor
\eqn
&& \Fi\rhom\cl \Derb_\Rc(\C_M)\times \RD^+(\Fi\shd_\Msa)\to\RD^+(\Fi\shd_\Msa).
\eneqn

Recall the functor $\for$ in~\eqref{eq:colimFA4}.
\begin{lemma}\label{le:rhomcomfor}
Let $G\in\Derb_\Rc(\C_M)$ and let $\Fi \shm\in\RD^+(\Fi\shd_\Msa)$. Then 
\eqn
&\Fi^\lambda \rhom(G, \Fi \shm)\simeq \rhom(G,\Fi^\lambda \shm), \\
&\for\Fi\rhom(G, \Fi \shm)\simeq \rhom(G,\for\Fi \shm).
\eneqn
 \end{lemma}
\begin{proof}
The first isomorphism follows directly from Lemma~\ref{le:comp3}~(b) and
we only prove the second one.

\spa
(i) Since the problem is local on $M$, we may assume that $G$ has compact support. 

\spa
(ii) By standard arguments, we may then reduce to the case where $G=\C_U$, $U\in\Op_\Msa$. 

\spa
(iii) Using Theorem~\ref{th:eqv1}, we may replace $\Fi \shm\in\RD^+(\Fi\shd_\Msa)$ with 
an object $\tw \shm\in \RD^+(\Fct(\R,\md[\C_\Msa]))$. Let us represent $\tw \shm$ by a complex of injective objects 
$I^\scbul\in \RC^+(\Fct(\R,\md[\C_\Msa]))$. Then,
\eqn
\for\Fi\rhom(\C_U, \Fi \shm)&\simeq&\sindlim\rsect(\C_U, \tw \shm)\\
&\simeq&\sindlim\sect(U;I^\scbul)\\
&\underset{\mathrm{(a)}}\simeq& \sect(U;\sindlim I^\scbul)
\underset{\mathrm{(b)}}\simeq \rsect(U;\sindlim I^\scbul)\\
&\simeq& \rsect(U;\sindlim\tw \shm)\simeq \rsect(U;\for\Fi \shm). 
\eneqn
Isomorphism (a) follows from Lemma~\ref{le:sectUindlim} and  isomorphism (b) follows from Lemma~\ref{le:comp3}~(b) and Corollary~\ref{cor:limit-Gam-acyc}.
\end{proof}

On a complex manifold $X$, we endow the $\shd_X$-module $\sho_X$ with the
filtration $\Fi\sho_X$ given by
\eq\label{eq:FO}
&&\Fi^s\sho_X=\begin{cases}
0&\mbox{ if }s<0,\\
\sho_X&\mbox{ if }s\geq0.
\end{cases}
\eneq
By applying the  functors $\eim{\rhosa}$ and $\oim{\rhosal}$, we get the objects $\eim{\rhosa}\sho_X$ and $\oeim{\rhosl}\sho_X$
of $\md[F\shd_\Xsa]$ and $\md[F\shd_\Xsal]$, respectively. One shall be aware that these objects are in degree $0$ 
contrarily to the sheaf $\sho_\Xsa$ (when $d_X>1$). 

\subsubsection*{The $L^\infty$-filtration on $\Cinft_\Msal$}

Recall that on the site $\Msal$, the sheaf $\Cinftt_\Msal$ is 
endowed with a filtration, given by the sheaves 
$\Cin[t]_\Msal$ ($t\in\R_{\geq0}$). We also set 
\eqn
&&\Cin[t]_\Msal=0\mbox{ for }t<0.
\eneqn
Using Lemma~\ref{le:shdmcinft} and Theorem~\ref{th:epbrhoFD}, we set:
\begin{definition}\label{def:FtepCinf}
\banum
\item
We denote by $\Finf \Cinft_\Msal$ the object of $\md[F\shhd_\Msal]$
given by the sheaves $\Cin[t]_\Msal$ ($t\in\R$). 
\item
We set $\Finf\Cinft_\Msa\eqdot\epb{\rhosal}\,\Finf\Cinft_\Msal$, an object of $\RD^+(\Fi\shhd_\Msa)$. 
\eanum
 We call these filtrations the $L^\infty$-filtration on 
$\Cinft_\Msal$ and $\Cinft_\Msa$, respectively.
\end{definition}
Hence, 
\begin{itemize}
\item
$\Finf^s\Cinft_\Msal=\Cin[s]_\Msal$ for $s\in\R$,
\item
we have morphisms $\Fi^r\shd_\Msal\tens \Finf^s\Cinft_\Msal\to \Finf^{s+r}\Cinft_\Msal$,
\item
using Notation~\ref{not:colimFA},
$\for \Finf\Cinft_\Msal\simeq\Cinftt_\Msal$ and similarly with $\Msa$ instead of $\Msal$. 
\end{itemize}

If $U\in\Op_\Msa$ is weakly Lipschitz, we thus have for $s\geq0$:
\eq
&&\rsect(U;\Finf^s\Cinft_\Msa)\simeq\Cin[s]_M(U).
\eneq

\begin{remark}\label{rem:L2}
One could have also endowed $\Cinft_\Msal$ with the $L^2$-filtration constructed similarly as the $L^\infty$-filtration, when replacing the norm in~\eqref{eq:linftynorm} with the $L^2$-norm:
\eq\label{eq:ltwonorm}
&&\vvert\phi\vvert_2=(\int_U\vert \phi(x)\vert^2dx)^{1/2},\quad
\vvert\phi\vvert^s_2=\vvert d(x)^s\phi(x)\vvert_2.
\eneq
One gets the filtered sheaves  $\Fi_2\Cinft_\Msal$ and $\Fi_2\Cinft_\Msa$. 
\end{remark}

\subsubsection*{The $L^\infty$-filtration on $\Ot_\Xsal$}
On a complex manifold $X$, we set:
\eq
\hspace{2ex}\Finf\Ot_\Xsal&\eqdot&\rhom[\Fi\shhd_\olXsal](\oeim{\rhosl}\sho_{\ol  X}, \Finf\Cinft_\Xsal)\in \RD^+(\Fi\shhd_\Xsal),\label{eq:FOsal}\\
\hspace{2ex}\Finf\Ot_\Xsa
&\eqdot&\rhom[\Fi\shhd_\olXsa](\eim{\rhosa}\sho_{\ol  X}, \Finf\Cinft_\Xsa)
\label{eq:FOsa}\\
&\simeq&\epb{\rhosal}\,\Finf\Ot_\Xsal  \in \RD^+(\Fi\shhd_\Xsa).\nonumber
\eneq

\begin{proposition}\label{pro:spencerfiltration}
The object $\Finf^s\Ot_\Xsal$ is represented by the complex of sheaves on $\XRsal$:
\eq\label{eq:FOs}
&&0\to \Finf^s\Cin[(0,0)]_\Xsal\to[\ol\partial] \Finf^{s+1}\Cin[(0,1)]_\Xsal\to\cdots\to
\Finf^{s+d_X}\Cin[(0,d_X)]_\Xsal\to0.
\eneq
\end{proposition}
\begin{proof}
Recall that the Spencer complex $\SP_X(\shd_X)$ is the complex of left $\shd_X$-modules 
\eq\label{eq:SP1}
&&
\SP_X(\shd_X)\eqdot\quad 
0\to\shd_X\tens[\sho]\bigwedge^{d_x}\Theta_X\to[d]\cdots\to
\shd_X\tens[\sho]\Theta_X\to \shd_X\to 0.
\eneq
Moreover, there is an isomorphism of complexes, in any local chart, 
\eq\label{eq:SPkoszul}
&&\SP_X(\shd_X)\simeq K_\bullet(\shd_X;\cdot\partial_1,\dots,\cdot\partial_{d_X})
\eneq
where the right hand side is the co-Koszul complex 
of the  sequence $\cdot\partial_1,\dots,\cdot\partial_{d_X}$ 
acting on the right on $\shd_X$. This implies 
that the left $\shd$-linear morphism $\shd_X\to \sho_X$ induces an
isomorphism 
$\SP_X(\shd_X)\isoto \sho_X$ in $\RD^\rb(\shd_X)$.

If we endow $\shd_X\tens[\sho]\bigwedge^k\Theta_X$, $k=0,\ldots,d_X$, with the
filtration $\Fi^s(\shd_X\tens[\sho]\bigwedge^i\Theta_X) =
\Fi^{s-k}(\shd_X)\tens[\sho]\bigwedge^i\Theta_X$, then $\SP_X(\shd_X)$ gives a
complex in $\md[\Fi\shd_\sht]$ and we obtain $\SP_X(\shd_X)\isoto \sho_X$ in
$\RD^\rb(\Fi\shhd_X)$. Applying this to $\ol X$ and using the
definition~\eqref{eq:FOsa} we obtain the result. 
\end{proof}

\begin{corollary}
Let $U\subset X$ be an open relatively compact subanalytic subset. Assume that $U$ is weakly Lipschitz. Then 
the object $\rsect(U;\Finf^s\Ot_\Xsa)$ is represented by the complex 
\eq\label{eq:FOsU}
&&\\
&&0\to \Cin[s,(0,0)]_X(U)\to[\ol\partial]\Cin[s+1,(0,1)]_X(U)\to\cdots\to\Cin[s+d_X,(0,d_X)]_X(U)\to0.\nonumber
\eneq
\end{corollary}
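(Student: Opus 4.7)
The plan is to chain together the previous proposition (which represents $\Finf^s\Ot_\Xsal$ by an explicit bounded Dolbeault-type complex on $\Xsal$), the defining equation~\eqref{eq:FOsa} expressing $\Finf^s\Ot_\Xsa$ as $\epb{\rhosal}\,\Finf^s\Ot_\Xsal$, and the weak Lipschitz hypothesis on $U$ via Theorem~\ref{th:Lipbnd}. Forgetting the filtered $\shd$-module structure and working in $\Derb(\C_\Xsa)$ or $\Derb(\C_\Xsal)$ is enough, since the stated identification is at the level of the underlying complex of $\C$-vector spaces.

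First I would invoke the previous proposition to write
\[
\Finf^s\Ot_\Xsal \;\simeq\; C^\bullet \;\eqdot\; \bigl(0 \to \Finf^s\Cin[(0,0)]_\Xsal \to[\ol\partial] \cdots \to \Finf^{s+d_X}\Cin[(0,d_X)]_\Xsal \to 0\bigr)
\]
in $\Derb(\C_\Xsal)$. Because $C^\bullet$ is a bounded complex, one has $C^\bullet\in \Derb(\C_\Xsal)$, and Theorem~\ref{th:Lipbnd}(ii) then yields, under the weak Lipschitz hypothesis on $U$,
\[
\rsect(U;\Finf^s\Ot_\Xsa) \;\simeq\; \rsect(U;\epb{\rhosal}C^\bullet) \;\simeq\; \rsect(U;C^\bullet).
\]
It remains to compute $\rsect(U;C^\bullet)$ in terms of the sections of the individual terms.

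For this, observe that each sheaf $\Finf^{s+k}\Cin[(0,k)]_\Xsal = \Cin[s+k,(0,k)]_\Xsal$ is a sheaf of $\Cin[0]_\Xsal$-modules (the $(0,k)$-forms with coefficients of temperate growth of order $s+k$), and hence is $\sect$-acyclic by Proposition~\ref{pro:Csoft}. Consequently, the hypercohomology spectral sequence of the bounded complex $C^\bullet$ degenerates at $E_1$ into the termwise global sections, so that $\rsect(U;C^\bullet)$ is represented by
\[
0 \to \Cin[s,(0,0)](U) \to[\ol\partial] \Cin[s+1,(0,1)](U) \to \cdots \to \Cin[s+d_X,(0,d_X)](U) \to 0,
\]
which is precisely the asserted description.

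The argument is essentially formal once all the pieces are in place; there is no single ``hard'' step. The mild point of care is to ensure the termwise-sections computation of $\rsect(U;C^\bullet)$ is valid: this rests on verifying that $\Cin[s+k,(0,k)]_\Xsal$ is a $\Cin[0]_\Xsal$-module (so that Proposition~\ref{pro:Csoft} applies and supplies the needed $\sect$-acyclicity) together with the boundedness of $C^\bullet$, so that the appeal to Theorem~\ref{th:Lipbnd}(ii) is legitimate and no unbounded derived category issues arise.
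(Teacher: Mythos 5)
Your argument is correct and is precisely the route the paper intends: the corollary is left without proof, but the text following Definition~\ref{def:s-growth3} points exactly to the combination you use, namely the preceding proposition, the identification $\rsect(U;\epb{\rhosal}F)\simeq\rsect(U;F)$ from Theorem~\ref{th:Lipbnd}~(ii) for weakly Lipschitz $U$, and the $\sect$-acyclicity of sheaves of $\Cin[0]_\Msal$-modules from Proposition~\ref{pro:Csoft} (Corollary~\ref{co:s-growth}), which lets you compute $\rsect(U;C^\scbul)$ termwise. Nothing essential is missing.
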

Applying the functor $\opb{\rhosa}$, one recovers the filtration introduced in~\eqref{eq:FO}:
\eq
&&\opb{\rhosa}\Finf\Ot_\Xsa\simeq \Fi\sho_X.
\eneq

\section[A filtration on regular holonomic modules]{A functorial filtration on regular holonomic modules}

Good filtrations on  holonomic modules already exist in the literature, in the regular case (see~\cite{KK81,BK86,Sa88,Sa90}) and also in the irregular case (see~\cite{Ma96}). But these filtrations are constructed on each holonomic module and are by no means functorial. Here we directly construct objects of $\RD^+(\Fi\shd_X)$, the derived category of 
filtered $\shd$-modules. 

Denote by $\Derb_{\holreg}(\shd_X)$ the full triangulated subcategory of $\Derb(\shd_X)$ consisting of objects with 
regular holonomic cohomology. 
To  $\shm\in\Derb_{\holreg}(\shd_X)$, one associates 
\eqn
&&\Sol(\shm)\eqdot \rhom[\shd](\shm,\sho_X).
\eneqn
We know by~\cite{Ka75} that $\Sol(\shm)$ belongs to $\Derb_{\Cc}(\C_X)$, that is, $\Sol(\shm)$  has  $\C$-constructible cohomology. Moreover, one can recover $\shm$ from $\Sol(\shm)$ by the formula:
\eq\label{eq:RHcorr}
\shm\simeq \opb{\rhosa}\rhom(\Sol(\shm),\Ot_\Xsa).
\eneq
This is the Riemann-Hilbert correspondence obtained by Kashiwara in~\cite{Ka80,Ka84}.

Using the filtration $\Finf\Ot_\Xsa$ on $\sho_\Xsa$ we can set:
\begin{definition}\label{th:Filt1}
Let  $\shm$ be a regular holonomic module. We define the filtered Riemann-Hilbert functors $\RHFinfsa$ and 
$\RHFinf$ by the formulas
\eqn
\RHFinfsa\cl\RD_{\holreg}^+(\shd_X)&\to&\RD^+(\Fi\shd_\Xsa),\\
\hspace{5ex}\shm&\mapsto&\Fi\rhom(\Sol(\shm), \Finf\Ot_\Xsa),\\
\RHFinf= \opb{\rhosa}\RHFinfsa\cl \RD_{\holreg}^+(\shd_X)&\to&\RD^+(\Fi\shd_X).
\eneqn
\end{definition}
Note that $\RHFinfsa$ and $\RHFinf$ are  triangulated functors.

Recall Notation~\ref{not:colimFA} and the functor $\for$.

\begin{proposition}
In the diagram below
\eqn
&& \Derb_{\holreg}(\shd_X)\to[ \RHFinf] \RD^+(\Fi\shd_X)\to[\for] RD^+(\shd_X)
\eneqn 
 the composition is isomorphic to the identity functor.
 \end{proposition}
 \begin{proof}
 Since $\opb{\rhosa}$ commutes with inductive limits, the diagram below commutes: 
 \eqn
 &&\xymatrix{
  \Derb_{\holreg}(\shd_X)\ar[rr]^-{\RHFinfsa}
           &&\RD^+(\Fi\shd_\Xsa)\ar[r]^-\for\ar[d]_{\opb{\rhosa}}&\RD^+(\shd_\Xsa)\ar[d]_{\opb{\rhosa}}\\
            &&\RD^+(\Fi\shd_X)\ar[r]^-\for&\RD^+(\shd_X).
}\eneqn
Now let $\shm\in \Derb_{\holreg}(\shd_X)$ and set for short $G=\Sol_X(\shm)$. By using Lemma~\ref{le:rhomcomfor}
we get
\eqn
\for\Fi\rhom(G,\Finf\Ot_\Xsa)&\simeq &\rhom(G,\for\Finf\Ot_\Xsa)\\
&\simeq&\rhom(G,\Ot_\Xsa)
\eneqn
and we conclude with~\eqref{eq:RHcorr}.
 \end{proof}

 \begin{notation}
 The module $\shm$ endowed with the filtration obtained by applying the functor  $\RHFinfsa$ or $\RHFinf$, 
 will simply be denoted by  $\Finfsa\shm$ or $\Finf\shm$, respectively. 
  \end{notation}

\begin{example}\label{exa:normalform}
Let $D$ be a normal crossing divisor in $X$ and let $\shm$ be  a regular holonomic module such that 
$\Sol(\shm)\simeq\C_{X\setminus D}$. Let $W\in\Op_\Xsa$ with smooth boundary transversal to the strata of 
$D$ so that $W\setminus D$ is weakly Lipschitz. Set $U\eqdot W\setminus D$. 
Then, by Lemma~\ref{le:rhomcomfor},
$\rsect(W;\Finfsa^s\shm)\simeq\rsect(U;\Finf^s\Ot_\Xsa)$ and therefore 
the object 
$\rsect(W;\Finf^s\shm)$ is represented by the complex~\eqref{eq:FOsU}.
\end{example}

 \begin{remark}\label{rem:L2bis}
By using the filtration $\Fi_2$ on $\Cinft_\Xsal$ (see Remark~\ref{rem:L2}),
one can also endow $\Ot_\Xsal$ with an $L^2$-filtration and define similarly $\Fi_2\Ot_\sal$. 
Unfortunately, H\"ormander's theory does not apply  immediately to this situation. 
More precisely, for $U$ open in $\R^n$, denote by $L^2(U;loc)$ the space of functions $\phi$  which are locally 
in $L^2$ for the Lebesgue measure and define 
\eq\label{eq:ltwonorm2}
&&L^{2,s}(U)=\{\phi\in L^2(U;loc);\vvert\phi\vvert^s_2<\infty,\},
\eneq
where $\vvert\phi\vvert^s_2$ is defined in~\eqref{eq:ltwonorm2}.

For $U$ relatively compact and  open in $\C^n$, denote by $W^{2,s,(p,q)}(U)$ the space of $(p,q)$-forms 
with coefficients in $L^{2,s}(U)$  and set 
\eqn
&&W_0^{2,s,(p,q)}(U)=\{\phi\in W^{2,s,(p,q)}(U);\ol\partial\phi\in W^{2,s,(p,q+1)}(U)\}.
\eneqn
Now we define $\tw\Fi_2\Ot_\Xsal$ as the Dolbeault complex 
\eqn\label{eq:FO2s}
&&\tw\Fi^s_2\Ot_\Xsal(U)\eqdot
0\to W_0^{2,s,(0,0)}(U)\to[\ol\partial]\cdots\to[\ol\partial] W_0^{2,s,(0,n)}(U)\to0.
\eneqn
Then \cite[Th~2.2.3]{Ho65} asserts that if $U$ is pseudoconvex,
$\tw\Fi^s_2\Ot_\Xsal(U)$ is concentrated in degree $0$.
However $\Fi^m\shd_\Xsal$ does not send $W_0^{2,s,\cdot}$ in
$W_0^{2,s+m,\cdot}$ and $\tw\Fi_2\Ot_\Xsal$ is not defined as an object of
$\RD(\Fi\shd_\Xsal)$.  
\end{remark}

Given a regular holonomic $\shd_X$-module $\shm$,  natural questions arise.
\bnum  
\item\vspace{-1.ex}
Does there exist an integer $r$ such that $H^j(\Finf^s\shl)\to H^j(\Finf^{s+r}\shl)$ is the zero morphism for
$s\gg0$ and  $j\not=0$.
\item\vspace{-1.ex}
Is the filtration  $H^0(\Finf\shm)$ a good filtration?
\item\vspace{-1.ex}
Does there exist a discrete set $Z\subset \R_{\geq0}$ such that
the morphisms $\Finf^s\shm\to \Finf^t\shm$ ($s\leq t$) are isomorphisms for $[s,t]$ contained in a connected component
 of $\R_{\geq0}\setminus Z$?
\enum
Note that it may be convenient to use better the  $L^2$-filtration  
(see Remark~\ref{rem:L2bis}).

One can also ask the question of comparing these filtrations with other filtrations already existing in the literature.

\providecommand{\bysame}{\leavevmode\hbox to3em{\hrulefill}\thinspace}
\begin{bibdiv}
\begin{biblist}

\bib{BK86} {article}{
author={Barlet, Daniel},
author={Kashiwara, Masaki},
title={Le r\'eseau $L^2$ d'un syst\`eme holonome r\'egulier},
journal={Invent. Math.},
volume= {86}, 
date={1986}, 
pages={35-62},
}

\bib{BM88}{article}{
author={Bierstone, Edward}
author={Milman, Pierre D.},
title={Semi-analytic and subanalytic sets},
journal={Publ. Math. IHES},
volume= {67}, 
date={1988},
pages={5-42},
}

\bib{EP10} {article}{
author={Edmundo, Mario J.}
author={Prelli, Luca},
title={Sheaves on $\cal T$-topology },
journal={Journ.  Math. Soc. Japan },
eprint={arXiv:1002.0690}, 
}

\bib{HM11} {article}{
author={Honda, Naofumi}
author={Morando, Giovani},
title={Stratified Whitney jets and tempered ultradistibutions on the subanalytic site},
journal={Bull. Soc. Math. France},
volume={139},
date={2011},
pages={ 923-943},
}

\bib{Ho65}{article}{
author={ H\"ormander, Lars }
title={$L^2$-estimates and existence theorems for the $\ol\partial$ operator},
journal={Acta Mathematica},
volume={113},
date={1965 },
pages={ 89-152},
}

\bib{Ho83}{book}{
author={H\"ormander, Lars}
title={The analysis of linear partial differential operators I,II}
series={Grundlehren der Math. Wiss}
publisher={Springer-Verlag}
volume={256, 257}
date={1983}
}

\bib{Ka75}{article}{
author={Kashiwara, Masaki},
title={On the maximally overdetermined systems of linear differential equations I},
journal={Publ. Res. Inst. Math. Sci.},
volume={10},
date={1975},
pages={563-579},
}

\bib{Ka80}  {article}{
author={Kashiwara, Masaki},
title={Faisceaux constructibles et syst{\`e}mes holon{\^o}mes d'{\'e}quations aux d{\'e}riv{\'e}es partielles lin{\'e}aires {\`a} points singuliers r{\'e}guliers},
journal={S{\'e}minaire Goulaouic-Schwartz, exp 19},
date={1980},
publisher={{\'E}cole Polytech., Palaiseau},
}

\bib{Ka84}  {article}{
author={Kashiwara, Masaki},
title={The Riemann-Hilbert problem for holonomic systems},
journal={Publ.\ RIMS, Kyoto Univ. },
volume={ 20},
date={1984},
pages={319-365},
}

\bib{Ka03}{book}{
 author={Kashiwara, Masaki},
 title={$D$-modules and microlocal calculus},
 series={Translations of Mathematical Monographs},
 volume={217},
publisher={American Mathematical Society, Providence, RI},
 date={2003},
 pages={xvi+254},
}

\bib{KK81}{article}{
 author={Kashiwara, Masaki},
 author={Kawai, Takahiro},
title={On holonomic systems of microdifferential equations III, Systems with regular singularities},
journal={Publ. Rims, Kyoto Univ. },
volume={17}, 
date={1981},
pages={813-979},  
}

\bib{KS90}{book}{
 author={Kashiwara, Masaki}
 author={Schapira, Pierre},
 title={Sheaves on manifolds},
 series={Grundlehren der Mathematischen Wissenschaften},
 volume={292},
 publisher={Springer-Verlag, Berlin},
 date={1990},
 pages={x+512},
}

\bib{KS96} {book}{
 author={Kashiwara, Masaki}
 author={Schapira, Pierre}
title={Moderate and formal cohomology associated with constructible sheaves}
series={M{\'e}moires Soc. Math. France}
volume={64}
date={1996}
}

\bib{KS01} {book}{
author={Kashiwara, Masaki},
 author={Schapira, Pierre},
title={Ind-sheaves},
series={Ast\'erisque},
volume={271},
publisher={Soc. Math. France},
date={2001},
journal = {ArXiv e-prints},
note = {\tt arXiv:1003.3304},
}

\bib{KS03} {article}{
 author={Kashiwara, Masaki},
 author={Schapira, Pierre},
title= {Microlocal study of ind-sheaves I: micro-support and regularity},
journal={Ast\'erisque},
volume={284},
publisher={Soc. Math. France},  
 date={2003},
 pages={143-164},
 }

\bib{KS06}{book}{
   author={Kashiwara, Masaki},
   author={Schapira, Pierre},
   title={Categories and sheaves},
  series={Grundlehren der Mathematischen Wissenschaften}
   volume={332},
   publisher={Springer-Verlag, Berlin}
   date={2006},
   pages={x+497},
}

\bib{KS15}{book}{
   author={Kashiwara, Masaki},
   author={Schapira, Pierre},
   title={Regular and irregular holonomic D-modules},
   date={2015}
journal = {ArXiv e-prints},
     note = {\tt arXiv:1507.00118}
     }

\bib{Ke10} {article}{
author={Kedlaya, Kiran S.},
title={Good formal structures for flat meromorphic connections, I: Surfaces},
journal={Duke Math. J.},
volume={154},
date={2010},
pages={343-418},
}
 
 \bib{Ke11} {article}{
author={Kedlaya, Kiran S.},
title={Good formal structures for flat meromorphic connections,  II: Excellent schemes},
journal={J. Amer. Math. Soc.} ,
volume={24},
date={2011},
pages={183-229},
}

\bib{Ko73a} {article}{
author={Komatsu, Hikosaburo},
title={On the regularity of hyperfunction solutions of linear ordinary differential equations with real analytic coefficients},
journal={J. Fac. Sci. Univ. Tokyo},
series={ Sect Math},
volume={20}, 
date={1973},
pages={107-119},
}

\bib{Ko73b}  {article}{
author={Komatsu, Hikosaburo},
title={Ultradistributions I. Structure theorem and a characterisation,}
journal={J. Fac. Sci. Univ. Tokyo},
series={Sect Math},
volume={20},
date={1973},
 }
 
\bib{Le14} {article}{
author={Lebeau, Gilles},
title={Sobolev spaces and Sobolev sheaves}, 
journal={Ast\'erisque, Soc. Math. France},
publisher={Soc. Math. France,}
note={this volume}
date={2016}
}

\bib{Lo59} {article}{
author={Lojaciewicz, Stanislaw}
title={Sur le probl{\`e}me de la division,}
journal={Studia Math}
volume= {8} 
date={1959} 
pages={87-136}
}

\bib{Ma96} {article}{
author={Malgrange, Bernard}
title={Connexions m\'eromorphes II, le r\'eseau canonique,} 
journal={Inventiones Math }
volume={124} 
date={1996}
pages={ 367-387}
}

\bib{Mo09} {article}{
author={Mochizuki, Takuro}
title={ Good formal structure for meromorphic flat connections on smooth 
projective surfaces,} 
series={Algebraic Analysis and around, Advances Studies in Pure Math. }
volume={54}, 
publisher={ Math. Soc. Japan}
date={2009}
pages={223-253}
}

\bib{Mr13} {article}{
author={Morando, Giovani},
title={Constructibility of tempered solutions of holonomic D-modules},
date={2013},
journal = {ArXiv e-prints},
     note = {\tt arXiv:1311.6621}
}

\bib{Pa14} {article}{
author={Parusinski, Adam}
title={Regular covers of open relatively compact subanalytic sets},
journal={Ast\'erisque, Soc. Math. France},
note={This volume}
publisher={Soc. Math. France},
date={2016}
}

\bib{Pr08} {article}{
author={Prelli, Luca}
title={Sheaves on subanalytic sites}, 
journal={Rend. Sem. Mat. Univ. Padova,}
volume={120}  
date={2008}
pages={167-216}
}

\bib{Ra78}  {article}{
author={Ramis, Jean-Pierre},
title={D\'evissage Gevrey,}
journal={Ast\'erisque},
volume={59-60},
publisher={Soc. Math. France},
 date={1978},
 pages={73-204},
}

\bib{Sa88} {article}{
author={Saito, Morihiko}
title={Modules de Hodge polarisables,} 
journal={Publ. Res. Inst. Math. Sci.}
volume=  {24} 
date={1988}
pages={849-995}
}

\bib{Sa90}{article}{
author={Saito, Morihiko}
title={Mixed Hodge modules,} 
journal={Publ. Res. Inst. Math. Sci.}
volume=  { 29} 
date={1990}
pages={221-333}
}

\bib{SSn13}{article}{
   author = {Schapira, Pierre},
   author = {Schneiders, Jean-Pierre},
    title = {Derived category of filtered objects},
journal={Ast\'erisque, Soc. Math. France}
note={This volume}
date={2016}, 
 eprint={arXiv:1306.1359},
}

\bib{Sn99}{article}{
   author={Schneiders, Jean-Pierre},
   title={Quasi-abelian categories and sheaves},
   journal={M\'em. Soc. Math. Fr. (N.S.)},
   date={1999},
   number={76},
}

\end{biblist}
\end{bibdiv}

\vspace*{1cm}
\noindent
\parbox[t]{21em}
{\scriptsize{
\noindent
St{\'e}phane Guillermou\\
Institut Fourier, Universit{\'e} de Grenoble I, \\
email: Stephane.Guillermou@ujf-grenoble.fr\\
%

\medskip\noindent
Pierre Schapira\\
Sorbonne Universit{\'e}s, UPMC Univ Paris 6\\
Institut de Math{\'e}matiques de Jussieu\\
e-mail: pierre.schapira@imj-prg.fr\\
http://www.math.jussieu.fr/\textasciitilde schapira/
}}

\end{document}